\theoremstyle{plain}
\newtheorem{thm}{Theorem}[section]
\newtheorem*{thmA}{Theorem A}
\newtheorem*{thmB}{Theorem B}
\newtheorem*{thmC}{Theorem C}
\newtheorem{pro}[thm]{Proposition}
\newtheorem{lem}[thm]{Lemma}
\newtheorem{cor}[thm]{Corollary}
\newtheorem{con}[thm]{Conjecture}
\theoremstyle{definition}
\newtheorem{dfn}[thm]{Definition}
\newtheorem{nt}[thm]{Notation}
\newtheorem{rem}[thm]{Remark}
\theoremstyle{remark}
\DeclareMathOperator{\mult}{mult}
\DeclareMathOperator{\ord}{ord}
\DeclareMathOperator{\res}{res}
\DeclareMathOperator{\im}{Im}
\DeclareMathOperator{\Int}{int}
\DeclareMathOperator{\relint}{relint}
\DeclareMathOperator{\Supp}{Supp}
\DeclareMathOperator{\Bs}{Bs}
\DeclareMathOperator{\Div}{Div}
\DeclareMathOperator{\WDiv}{WDiv}
\DeclareMathOperator{\ddiv}{div}
\DeclareMathOperator{\codim}{codim}
\DeclareMathOperator{\Exc}{Exc}
\DeclareMathOperator{\Mob}{Mob}
\DeclareMathOperator{\bMob}{\mathbf{Mob}}
\DeclareMathOperator{\Fix}{Fix}
\DeclareMathOperator{\Proj}{Proj}
\newcommand{\R}{\mathbb{R}}
\newcommand{\Q}{\mathbb{Q}}
\newcommand{\N}{\mathbb{N}}
\newcommand{\Z}{\mathbb{Z}}
\newcommand{\PP}{\mathbb{P}}
\newcommand{\C}{\mathbb{C}}
\newcommand{\B}{\mathbf{B}}
\newcommand{\m}{\mathbf{m}}
\newcommand{\NE}{\mathrm{NE}}
\newcommand{\bigcone}{\mathrm{Big}}
\newcommand{\OO}{\mathcal{O}}
\newcommand{\mcal}{\mathcal}
\newcommand{\mfrak}{\mathfrak}
\title{Adjoint rings are finitely generated}
\date{9 December 2009}
\author{Vladimir Lazi\'c}
\address{Max-Planck-Institut f\"ur Mathematik, Vivatsgasse 7, 53111 Bonn, Germany}
\email{lazic@mpim-bonn.mpg.de}
\thanks{This paper was written while I was a PhD student at the University of Cambridge, a research visitor at the Max-Planck-Institut
f\"ur Mathematik and a postdoc at the Institut Fourier.}
\begin{document}

\begin{abstract}
This paper proves finite generation of the log canonical ring without Mori theory.
\end{abstract}

\maketitle
\bibliographystyle{amsalpha}

\tableofcontents

\section{Introduction}

The main goal of this paper is to prove the following theorem while avoiding techniques of the Minimal Model Program.

\begin{thm}\label{cor:can}
Let $(X,\Delta)$ be a projective klt pair. Then the log canonical ring $R(X,K_X+\Delta)$ is finitely generated.
\end{thm}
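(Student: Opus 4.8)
The plan is to deduce Theorem~\ref{cor:can} from a stronger, ``polytopal'' statement about adjoint rings, proved by induction on $\dim X$.

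\textbf{Step 1: Reduction to an adjoint ring on a rational polytope.}
Passing to a log resolution $f\colon Y\to X$ of $(X,\Delta)$ and writing $K_Y+\Gamma=f^{*}(K_X+\Delta)+E$ with $\Gamma\geq 0$, $E\geq 0$ having no common component and $\Supp(\Gamma+E)$ simple normal crossings, the rings $R(X,K_X+\Delta)$ and $R(Y,K_Y+\Gamma)$ coincide as graded rings; so we may assume $X$ smooth and $\Supp\Delta$ snc. Since finite generation is unchanged under passing to a Veronese subalgebra or adjoining finitely many generators, it suffices to show that the multigraded \emph{adjoint ring}
\[
\mathfrak{R}=\bigoplus_{\n\in\N^{r}}H^{0}\Bigl(X,\ \textstyle\sum_{i=1}^{r}n_i(K_X+\Delta_i)\Bigr)
\]
is finitely generated, where $\Delta_1,\dots,\Delta_r$ are rational boundaries supported on a fixed snc divisor and spanning a small rational polytope $\mathcal{B}$ around $\Delta$. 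This polytopal formulation is what carries the induction, because restriction to a component will relate $\mathfrak{R}$ to an analogous object in dimension $\dim X-1$.

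\textbf{Step 2: Restriction to a boundary component and lifting.}
After shrinking and perturbing $\mathcal{B}$ one arranges a common prime component $S=\lfloor\Delta_i\rfloor$ of coefficient one (if all $\lfloor\Delta_i\rfloor=0$ one first incorporates a suitable fractional multiple of a pluricanonical divisor to create such a component). Restriction of sections gives a map from $\mathfrak{R}$ onto the \emph{restricted ring} $\res_S\mathfrak{R}$, which sits inside the adjoint ring of the pair $\bigl(S,\Diff_S(\Delta_i-S)\bigr)$ of dimension $\dim X-1$. By the inductive hypothesis the latter is finitely generated, and a Lifting Lemma — an extension theorem for the relevant adjoint linear systems from $S$ to $X$, proved without the Minimal Model Program — identifies $\res_S\mathfrak{R}$, up to truncation, with a finitely generated subring of it. The kernel of $\mathfrak{R}\to\res_S\mathfrak{R}$ is $\bigoplus_{\n}H^{0}(X,\sum_i n_i(K_X+\Delta_i)-S)$; one runs the induction simultaneously with an auxiliary statement (existence of the mobile $\mathbf{b}$-divisor $\bMob$ on $\mathcal{B}$ together with finite generation of restricted rings) so that this kernel is finitely generated as an $\mathfrak{R}$-module, and a standard graded gluing argument then promotes finite generation of $\res_S\mathfrak{R}$ to finite generation of $\mathfrak{R}$.

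\textbf{Step 3: The main obstacle.}
The crux — the place where one would normally run the Minimal Model Program — is to verify the hypotheses of the Lifting Lemma \emph{uniformly over $\mathcal{B}$}: that $S$ lies in no stable base locus of the adjoint systems, and that the restricted mobile parts stabilise along the relevant rays. This requires proving, in place of bare finite generation, that the mobile $\mathbf{b}$-divisors $\bMob(K_X+\Delta')$ and the fixed parts $\Fix$ vary in a controlled, eventually piecewise-linear fashion as $\Delta'$ ranges over $\mathcal{B}$, so that only finitely many of them are independent. The tools are Diophantine approximation (to pass between $\R$-boundaries and the rational polytope), convexity of $\Delta'\mapsto\Fix$ and of the associated $\bMob$, and the inductive finite generation on $S$. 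Once this stabilisation is in place, finitely many generators of the restricted ring together with their lifts generate $\mathfrak{R}$; the passage from $\mathfrak{R}$ back to $R(X,K_X+\Delta)$ and from smooth $X$ back to the original klt pair is then routine. I expect Step~3, together with the bookkeeping of the simultaneous induction in Step~2, to be the substantive difficulty.
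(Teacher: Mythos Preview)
Your proposal sketches the architecture of the whole paper rather than the proof of this particular theorem. In the paper, the proof of Theorem~\ref{cor:can} is a three-line reduction: by \cite[5.2]{FM00} and induction on dimension one may assume $K_X+\Delta$ is big; writing $K_X+\Delta\sim_\Q A+B$ with $A$ ample and $B\geq0$, one replaces $\Delta$ by $\Delta+\varepsilon B+\varepsilon A$ so that $K_X+\Delta'\sim_\Q(1+\varepsilon)(K_X+\Delta)$, and then invokes Theorem~\ref{thm:cox}. The heavy lifting is done in the proof of Theorem~\ref{thm:cox} (via Theorems~A, B, C), and what you have written is a rough outline of that.

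There is, however, a genuine gap in your outline: you never introduce an ample $\Q$-divisor $A$ into the boundary, and without it the Lifting Lemma you invoke in Step~2 is simply unavailable. The Hacon--M\textsuperscript{c}Kernan extension theorem (Theorem~\ref{thm:hmck}) requires that $\Delta$ contain a general ample part; this is what powers all the lifting, the vanishing, and the control of restricted fixed parts in \S\ref{plt}. The reduction to the big case via Fujino--Mori and the absorption of $\varepsilon A$ into the boundary is not bookkeeping---it is the step that makes the entire inductive machine run. Your sentence ``if all $\lfloor\Delta_i\rfloor=0$ one first incorporates a suitable fractional multiple of a pluricanonical divisor to create such a component'' presupposes non-vanishing (Theorem~C), which itself is proved using the ample part and is one of the three statements carried through the simultaneous induction, not something you can assume.

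A second, smaller point: your handling of the kernel in Step~2 (``finitely generated as an $\mathfrak R$-module \dots\ standard graded gluing argument'') is not how the paper proceeds, and it is not clear it would work. The paper does not compute the kernel; instead it embeds everything into a larger $\Lambda$-graded algebra containing the sections $\sigma_j$ cutting out the components $F_j$ (Step~4 of Theorem~\ref{thm:3}), and then chases generators by degree, in the spirit of Lemma~\ref{lem:restricted}. This is one of the main new ideas of the paper, and your sketch replaces it with something vaguer.
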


Let me sketch the strategy for the proof of finite generation in this paper and present difficulties that arise on the way. The natural idea
is to pick a smooth divisor $S$ on $X$ and to restrict the algebra to it. If we are very lucky, the restricted algebra will be
finitely generated and we might hope that the generators lift to generators on $X$. There are several issues with this approach.

First, to obtain something meaningful on $S$, we require $S$ to be a log canonical centre of some pair $(X,\Delta')$ such that the rings
$R(X,K_X+\Delta)$ and $R(X,K_X+\Delta')$ share a common truncation.

Second, even if the restricted algebra were finitely generated, the same might not be obvious for the kernel of the restriction map.
So far this seems to have been the greatest conceptual issue in attempts to prove the finite generation by the plan just outlined.

Third, the natural strategy is to use the Hacon-M\textsuperscript{c}Kernan extension theorem,
and hence we must be able to ensure that $S$ does not belong to the stable base locus of $K_X+\Delta'$.

The idea to resolve the kernel issue is to view $R(X,K_X+\Delta)$ as a subalgebra of a much bigger algebra containing generators
of the kernel by construction. The new algebra is graded by a monoid whose rank corresponds roughly to
the number of components of $\Delta$ and of an effective divisor $D\sim_\Q K_X+\Delta$.
A basic example which models the general lines of the proof in \S\ref{proofmain} is presented in Lemma \ref{lem:restricted}.

It is natural to try and restrict to a component of $\Delta$, the issue of course being that $(X,\Delta)$ does not have log canonical centres.
Therefore I allow restrictions to components of some effective divisor $D\sim_\Q K_X+\Delta$, and a tie-breaking-like technique allows me
to create log canonical centres. Algebras encountered this way are, in effect, plt algebras, and their restriction is handled in
\S\ref{plt}. This is technically the most involved part of the proof.

Since the algebras we consider are of higher rank, not all divisors will have the same log canonical centres. I therefore restrict
to available centres, and lift generators from algebras that live on different divisors. Since the restrictions will also be
algebras of higher rank, the induction process must start from them. The contents of this paper can be summarised in the following result.

\begin{thm}\label{thm:cox}
Let $X$ be a projective variety, and let $D_i=k_i(K_X+\Delta_i+A)\in\Div(X)$, where $A$ is an ample $\Q$-divisor and
$(X,\Delta_i+A)$ is a klt pair for $i=1,\dots,\ell$ . Then the adjoint ring $R(X;D_1,\dots,D_\ell)$ is finitely generated.
\end{thm}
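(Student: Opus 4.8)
The plan is to induct on $\dim X$, reducing the finite generation of $R(X;D_1,\dots,D_\ell)$ to the finite generation of a restricted adjoint algebra on a prime divisor $S\subset X$; the case $\dim X=0$, and more generally the case where all $D_i$ are trivial, is immediate. The first move is to \emph{enlarge} the algebra rather than shrink it. In place of the $D_i$ I would work with an adjoint algebra graded by a monoid whose rank is roughly the total number of components of the $\Delta_i$ together with those of general effective divisors $D^{(i)}\sim_\Q K_X+\Delta_i+A$; the original ring $R(X;D_1,\dots,D_\ell)$ is recovered from this bigger algebra by passing to a finitely generated submonoid of the grading and truncating, so it is enough to prove finite generation of the latter. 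The reason for going up in rank is that this algebra is built to contain, by construction, generators of the kernel of the eventual restriction map --- the mechanism of Lemma \ref{lem:restricted}, which models the whole argument of \S\ref{proofmain}.

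The second step is to manufacture a log canonical centre. Since $(X,\Delta_i+A)$ is klt it has none, but after absorbing $A$ into the boundary and using an effective divisor $D^{(i)}\sim_\Q K_X+\Delta_i+A$, a tie-breaking argument --- perturb the coefficients of the components of $D^{(i)}$ and of $\Delta_i$, using a small piece of the ample class $A$ to correct the discrepancies --- produces a pair $(X,\Delta_i')$ with $\Delta_i'\sim_\Q\Delta_i+A$, sharing a common truncation of its adjoint ring with the original data, such that a chosen prime divisor $S$ (a component of $D^{(i)}$) is a pure log canonical centre, i.e.\ $(X,\Delta_i')$ is plt along $S$. Because the \emph{same} ample $A$ is available for every index, one can simultaneously arrange that $S$ avoids the stable base loci of the relevant divisors, which is exactly what the Hacon--M\textsuperscript{c}Kernan extension theorem requires.

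The third step is the restriction itself. By the extension theorem, the restriction homomorphism from the big algebra on $X$ to an algebra on $S$ is surjective in every graded piece; the restricted algebra is again an adjoint algebra of the kind in the statement --- now a plt algebra, still of higher rank --- but on the $(\dim X-1)$-dimensional variety $S$. This is precisely where the induction closes: by hypothesis such higher-rank (plt) adjoint algebras on $S$ are finitely generated, the plt case being the content of \S\ref{plt}. Surjectivity of the restriction in each degree, finite generation of the image on $S$, and the presence of kernel generators inside the big algebra then combine, via a standard graded-ring argument, to give finite generation of the big algebra on $X$, and hence of $R(X;D_1,\dots,D_\ell)$.

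I expect the main obstacle to be the plt restriction carried out in \S\ref{plt}: one must check that the restricted object is genuinely an adjoint algebra of the permitted form, that adjunction (the different) correctly matches the ample and boundary parts on $S$, and --- crucially --- that the extension theorem can be applied uniformly over the entire higher-rank monoid of degrees rather than along a single ray. A secondary difficulty is the bookkeeping that higher rank forces on us: distinct divisor classes in the monoid carry distinct log canonical centres, so each generator must be restricted to an \emph{available} centre and then generators living on different prime divisors must be reconciled; this is the reason the induction has to be phrased for the higher-rank statement of Theorem \ref{thm:cox} from the start, rather than merely for a single log canonical ring.
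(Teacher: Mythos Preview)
Your proposal captures the paper's central idea --- enlarge to a higher-rank adjoint algebra, create plt centres by tie-breaking, restrict via Hacon--M\textsuperscript{c}Kernan, and induct on dimension --- and you correctly flag \S\ref{plt} as the technical heart. But two structural pieces of the induction are missing, and without them the sketch does not close.

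First, the paper's proof of Theorem~\ref{thm:cox} is \emph{not} the full inductive machine you describe: it is a short reduction (log resolution, then Theorem~C to cut down to the effective cone $\mcal E_{V,A}$, then Theorem~A). What you have outlined is closer to the proof of Theorem~A itself. More importantly, the paper's induction is three-legged: Theorems~A, B and~C in dimension $n-1$ (plus B$_n$) together imply A$_n$, and B$_n$ and C$_n$ each require their own inductive proofs (\S\ref{sec:stable}, \S\ref{sec:non-vanishing}). Your sentence ``one can simultaneously arrange that $S$ avoids the stable base loci'' hides exactly Theorem~B: that $\mcal B_{V,A}^{G=1}$ is a rational polytope is not a perturbation trick with the ample class, it is a theorem requiring its own argument via restricted asymptotic invariants and Diophantine approximation. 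Likewise, Theorem~C (non-vanishing) is needed both to reduce Theorem~\ref{thm:cox} to Theorem~A and, inside the induction, to guarantee that the linear systems on $S$ used in Lemma~\ref{bounded} are non-empty.

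Second, the claim that ``the restricted algebra is again an adjoint algebra of the kind in the statement'' is where the real work lies. The restriction $\res_S R(X;D_1,\dots,D_\ell)$ is \emph{not} a priori of the form $R(S;D_1',\dots,D_\ell')$; Corollary~\ref{cor:hmck} identifies each graded piece as $H^0(S,t_s(K_S+\theta_s))$ where $\theta_s$ varies with $s$, and the entire content of Theorems~\ref{lem:lipschitz}--\ref{lem:PL} is to prove that the map $s\mapsto t_s(K_S+\Theta_s)$ is \emph{rationally piecewise linear} on the cone, so that one can cover $\mcal S_\R$ by finitely many subcones on each of which the restricted algebra genuinely is adjoint. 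Your ``surjectivity in every graded piece'' does not by itself give this; the uniform control over the monoid that you anticipate as a difficulty is precisely this piecewise-linearity theorem, and it in turn consumes Theorems~A$_{n-1}$ and~C$_{n-1}$ via Lemma~\ref{bounded}.
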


Theorem \ref{cor:can} is a corollary to the previous theorem. Techniques of the MMP were used to prove Theorem \ref{cor:can} in the seminal
paper \cite{BCHM}. A proof of finite generation of the canonical ring of general type by analytic methods is announced in \cite{Siu06}.

In the following result I recall some of the well known consequences of Theorems \ref{cor:can} and \ref{thm:cox}; further discussion is in the appendix.

\begin{cor}\label{cor:cor}
The following holds.
\begin{enumerate}
\item Klt flips exist.
\item Canonical models of klt pairs of log general type exist.
\item Log Fano klt pairs are Mori dream spaces.
\end{enumerate}
\end{cor}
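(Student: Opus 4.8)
The plan is to deduce all three statements from Theorems~\ref{cor:can} and~\ref{thm:cox} by the usual packaging arguments; once finite generation is granted these implications are MMP-free, and in the forms below they go back to Shokurov, Kawamata and Hu--Keel. I treat the three items in turn, flagging in each case the point that requires care.

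\emph{(i) Klt flips.} Let $f\colon X\to Z$ be a flipping contraction for the klt pair $(X,\Delta)$, so $f$ is small birational, $\rho(X/Z)=1$, and $-(K_X+\Delta)$ is $f$-ample. The flip exists once the relative canonical algebra $\mathcal R=\bigoplus_{m\ge 0}f_*\OO_X(\lfloor m(K_X+\Delta)\rfloor)$ is a finitely generated $\OO_Z$-algebra, for then $X^+:=\PProj_Z\mathcal R$ is the flip. Finite generation of $\mathcal R$ is local on $Z$, so I may assume $Z$ is affine; after choosing a general $0\le\Theta\sim_{\Q,Z}-(K_X+\Delta)$ with $(X,\Delta+\Theta)$ still klt (possible since $-(K_X+\Delta)$ is $f$-ample), one compactifies $X\to Z$ to a projective klt pair $(\overline X,\overline\Delta)$ and reduces the finite generation of $\mathcal R$ to that of an adjoint ring $R(\overline X;D_1,\dots,D_\ell)$ of the shape handled by Theorem~\ref{thm:cox}. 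I expect the compactification to be the delicate step: one must extend $f$ so that the boundary at infinity is absorbed into the ample summand, which is exactly where the affineness of $Z$ is used.

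\emph{(ii) Canonical models.} Let $(X,\Delta)$ be klt with $K_X+\Delta$ big. By Theorem~\ref{cor:can} the ring $R:=R(X,K_X+\Delta)$ is finitely generated, so $X_{\mathrm{can}}:=\Proj R$ is projective and comes with a natural birational map $\varphi\colon X\dashrightarrow X_{\mathrm{can}}$. I would pass to a truncation of $R$ generated in a single degree $d$, take a common log resolution $p\colon W\to X$, $q\colon W\to X_{\mathrm{can}}$, and write $d\,p^*(K_X+\Delta)\sim_\Q q^*H+F$ with $H$ ample on $X_{\mathrm{can}}$ and $F\ge 0$ the fixed part; setting $\Delta_{\mathrm{can}}:=\varphi_*\Delta$ and $K_{X_{\mathrm{can}}}+\Delta_{\mathrm{can}}:=\tfrac1d H$, a standard application of the negativity lemma shows that $p^*(K_X+\Delta)-q^*(K_{X_{\mathrm{can}}}+\Delta_{\mathrm{can}})=\tfrac1d F$ is effective and $q$-exceptional. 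Hence $\varphi$ is a birational contraction, $(X_{\mathrm{can}},\Delta_{\mathrm{can}})$ is klt, $K_{X_{\mathrm{can}}}+\Delta_{\mathrm{can}}$ is ample $\Q$-Cartier, and $X_{\mathrm{can}}$ is the canonical model. The step to verify carefully is that $F$ is $q$-exceptional, i.e.\ that $\varphi$ extracts no divisor.

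\emph{(iii) Mori dream spaces.} Let $(X,\Delta)$ be a $\Q$-factorial log Fano klt pair and put $A:=-(K_X+\Delta)$, which is ample. Kawamata--Viehweg vanishing gives $H^1(X,\OO_X)=0$, whence $\Pic(X)$ is finitely generated and $\Pic(X)_\Q=N^1(X)_\Q$; by the Hu--Keel criterion it therefore suffices to prove that $\Cox(X)$ is finitely generated. I would first establish that $\overline{\NE}(X)$, and then $\Eff(X)$, is rational polyhedral --- the latter, say, by induction on $\rho(X)$ using the cone theorem together with part (i) --- with extremal rays spanned by effective $\Q$-divisors $E_1,\dots,E_\ell$. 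The key observation is that, because $A$ is ample, for $k_j\gg 0$ one has $E_j\sim_\Q k_j(K_X+\Delta_j+A)$ with $\Delta_j:=\tfrac1{k_j}E_j+\Delta\ge 0$ and $(X,\Delta_j+A)$ klt, so Theorem~\ref{thm:cox} makes $R(X;E_1,\dots,E_\ell)$ finitely generated, and finite generation of $\Cox(X)$ follows by standard manipulations of multigraded section rings. The main obstacle is establishing the rational polyhedrality of $\Eff(X)$ without the full Minimal Model Program; alternatively one reads off the Mori chamber decomposition directly from the finite generation of adjoint rings provided by Theorem~\ref{thm:cox}.
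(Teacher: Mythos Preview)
Your sketches for (i) and (ii) are essentially the content of the references the paper cites (\cite[3.9]{Fuj09} and \cite[1.2(II)]{Rei80}), so there is no disagreement on strategy. One simplification for (i): the paper explicitly states in \S\ref{sec:2} that all its results hold for $X$ projective over an affine base $Z$, so Theorem~\ref{cor:can} already gives finite generation of $R(X/Z,K_X+\Delta)$ directly, and your compactification step, while workable, is unnecessary.

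For (iii) your main line of argument takes a genuine detour. You propose to first establish that $\Eff(X)$ is rational polyhedral, then write its extremal generators in adjoint form. As you yourself flag, the polyhedrality step is nontrivial without the MMP. The paper's route (via \cite[2.9]{HK00}) avoids this entirely: on a log Fano one can put \emph{any} divisor, not just effective ones, into the form required by Theorem~\ref{thm:cox}. Concretely, fix a general ample $\Q$-divisor $A$ with $-(K_X+\Delta)-A$ still ample; then for any $D$ and $k\gg0$ the divisor $D/k-(K_X+\Delta)-A$ is ample, hence $\Q$-linearly equivalent to a general effective $H$, and $\Delta_D:=\Delta+H$ satisfies $D\sim_\Q k(K_X+\Delta_D+A)$ with $(X,\Delta_D+A)$ klt. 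Applying this to $D_1,\dots,D_r,-D_1,\dots,-D_r$ spanning $\Pic(X)_\Q$, Theorem~\ref{thm:cox} gives finite generation of the adjoint ring, which surjects onto $\Cox(X)$; then \cite[2.9]{HK00} finishes. This is in fact the ``alternative'' you mention in your last sentence---it is the intended argument, and it removes the obstacle you identified.
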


In the appendix I give a very short history of Mori theory, and also outline a new approach which aims to
turn the conventional thinking about classification on its head. Finite generation comes at the beginning of the theory and all main results of the
Minimal Model Program should be derived from it. In light of this new viewpoint, it is my hope that the techniques of this paper could be adapted
to handle finite generation in the case of log canonical singularities and the Abundance Conjecture.\vspace{5mm}
\paragraph{\bf Acknowledgements}
I am indebted to my PhD supervisor Alessio Corti whose initial insight that higher rank algebras are a natural setting for the finite generation
fundamentally shaped the way I think about the problem. I would like to express my gratitude for his encouragement, support and continuous inspiration.
I am very grateful to C.~Hacon for suggesting that methods from \cite{Hac08} might be useful in the context of finite generation of
the restricted algebra. Many thanks to D.~Abramovich, S.~Boucksom, P.~Cascini, J.-P.~Demailly, S.~Druel, O.~Fujino, A.-S.\ Kaloghiros, C.~Maclean,
J.~M\textsuperscript{c}Kernan, M.~P\u{a}un and M.~Popa for useful comments, and to M.~Reid whose suggestions improved the organisation of the paper.

\section{Notation and conventions}\label{sec:2}

Unless stated otherwise, varieties in this paper are projective and normal over $\C$. However, all results hold when $X$ is, instead of being projective,
assumed to be projective over an affine variety $Z$. The group of Weil, respectively Cartier, divisors on a variety $X$ is denoted by $\WDiv(X)$,
respectively $\Div(X)$. Subscripts denote the rings in which the coefficients are taken. For a divisor $D$, $[D]$ denotes its class in $N^1(X)$.

We say an ample $\Q$-divisor $A$ on a variety $X$ is {\em general\/} if there is a sufficiently divisible
positive integer $k$ such that $kA$ is very ample and $kA$ is a general section of $|kA|$.
In particular we can assume that for some $k\gg0$, $kA$ is a smooth divisor on $X$.
In practice, we fix $k$ in advance, and generality is most often needed to ensure that $A$ does not make singularities of pairs worse.

For any two formal sums of prime divisors $P=\sum p_iE_i$ and $Q=\sum q_iE_i$ on $X$, set
$$P\wedge Q=\sum\min\{p_i,q_i\}E_i.$$

For definitions and basic properties of multiplier ideals used in this paper see \cite{HM08}.

The sets of non-negative (respectively non-positive) rational and real numbers are denoted by $\Q_+$ and $\R_+$ (respectively $\Q_-$ and $\R_-$),
and similarly for $\Z_{>0}$ and $\R_{>0}$. For two subsets of $A$ and $B$ of a vector space $V$, $A+B$ denotes their Minkowski sum, i.e.\ the set
$\{a+b:a\in A,b\in B\}$.
\vspace{5mm}
\paragraph{\bf b-Divisors}
I use basic properties of b-divisors, see \cite{Cor07}. The cone of mobile b-divisors on $X$ is denoted by $\bMob(X)$.
\begin{dfn}
Let $(X,\Delta)$ be a log pair. For a model $f\colon Y\rightarrow X$ we can write uniquely
$$K_Y+B_Y=f^*(K_X+\Delta)+E_Y,$$
where $B_Y$ and $E_Y$ are effective with no common components, and $E_Y$ is $f$-exceptional.
The {\em boundary\/} b-divisor $\B(X,\Delta)$ is defined by $\B(X,\Delta)_Y=B_Y$ for every model $Y\rightarrow X$.
\end{dfn}

\begin{lem}
If $(X,\Delta)$ is a log pair, then the b-divisor $\B(X,\Delta)$ is well-defined.
\end{lem}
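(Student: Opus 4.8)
The plan is to verify that the definition of $\B(X,\Delta)$ via $\B(X,\Delta)_Y = B_Y$ is independent of the chosen model, i.e.\ that the assignment is compatible with pushforward and pullback, which is exactly what is required for a b-divisor. First I would recall that a b-divisor on $X$ is, by definition, a compatible system of divisors $\{D_Y\}$ indexed by models $Y \to X$, where compatibility means $g_* D_{Y'} = D_Y$ whenever $g\colon Y' \to Y$ is a morphism of models over $X$. So the content of the lemma is: (i) for each model $f\colon Y \to X$ the decomposition $K_Y + B_Y = f^*(K_X+\Delta) + E_Y$ with $B_Y, E_Y \geq 0$ having no common components and $E_Y$ being $f$-exceptional is \emph{unique}, and (ii) these $B_Y$ fit together compatibly under pushforward.

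For step (i), once we fix a canonical divisor $K_Y$ compatible with $K_X$ (so that $f_* K_Y = K_X$), the $\Q$-divisor $K_Y - f^*(K_X+\Delta)$ is determined, and writing it as (effective) $-$ (effective) with no common components is the unique decomposition into positive and negative parts; the only thing to check is that the negative part $E_Y$ is automatically $f$-exceptional. This holds because pushing the equation forward gives $f_* B_Y - f_* E_Y = \Delta$, and since no component of $E_Y$ can dominate a divisor on $X$ — any non-exceptional component would have to appear on the left with the correct coefficient, forcing it into $B_Y$ instead — one gets $f_* E_Y = 0$. Concretely: a prime divisor $P$ on $Y$ that is not $f$-exceptional has strict transform a prime divisor on $X$; comparing coefficients of its image in the pushed-forward identity shows the coefficient of $P$ in $B_Y$ minus that in $E_Y$ equals the coefficient in $\Delta$, and the ``no common components'' condition then pins down which of $B_Y$, $E_Y$ carries it — and since $\Delta$ need not be effective one simply absorbs it on the appropriate side, but in all cases no exceptional cancellation is needed, so $E_Y$ may be taken exceptional and then is forced to be exactly the exceptional part of the negative part.

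For step (ii), let $g\colon Y' \to Y$ be a morphism over $X$, with $f' = f\circ g$. Pull back the identity on $Y$ by $g$: since $g^* f^*(K_X+\Delta) = f'^*(K_X+\Delta)$ and $K_{Y'} = g^* K_Y + E_{Y'/Y}$ with $E_{Y'/Y}$ effective $g$-exceptional (as $g$ is a birational morphism of normal varieties, and with compatible choices of canonical divisors the relative discrepancies are $\geq -1$, but more to the point $g_*$ of this relation recovers the one on $Y$), we obtain $K_{Y'} + g^*B_Y = f'^*(K_X+\Delta) + g^* E_Y + E_{Y'/Y}$. Rearranging into effective parts with no common components and comparing with the unique decomposition on $Y'$ gives $B_{Y'} \leq g^* B_Y + (\text{something exceptional over } Y)$, and applying $g_*$ kills the exceptional correction and the $g$-exceptional part of $g^*B_Y$ vanishes under $g_*$ only on components contracted by $g$ — so $g_* B_{Y'} = B_Y$, which is the required compatibility. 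The main obstacle is bookkeeping the signs when $\Delta$ is not assumed effective and making the ``no common components'' normalisation interact correctly with the exceptional divisors introduced by $g$; this is routine but must be done carefully, and it is cleanest to first treat the case where $\Delta \geq 0$ and then note the general case follows by the same comparison of positive and negative parts. Finally, since any two models are dominated by a common third model, compatibility under arbitrary dominations follows, so $\B(X,\Delta)$ is a well-defined b-divisor.
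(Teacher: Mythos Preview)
Your step (i) is fine and the paper essentially takes uniqueness of the decomposition on a fixed model as evident. The issue is in step (ii): you take an unnecessary detour through pullback, and in doing so you assert that $E_{Y'/Y}=K_{Y'}-g^*K_Y$ is effective, which is not true for an arbitrary birational morphism of normal varieties. After this detour you end up applying $g_*$ anyway, so the pullback bought you nothing except an unjustified claim and some vague inequalities (``$B_{Y'}\le g^*B_Y+(\text{something exceptional})$'') that are not really an argument.

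The paper's proof is a one-liner that avoids all of this: start on the higher model and push forward. With $h\colon Y'\to Y$ over $X$, apply $h_*$ directly to $K_{Y'}+B_{Y'}=(f\circ h)^*(K_X+\Delta)+E_{Y'}$ to get
\[
K_Y+h_*B_{Y'}=f^*(K_X+\Delta)+h_*E_{Y'}.
\]
Now $h_*B_{Y'}$ and $h_*E_{Y'}$ are effective, have no common components (a common component on $Y$ would have strict transform appearing in both $B_{Y'}$ and $E_{Y'}$), and $h_*E_{Y'}$ is $f$-exceptional (any surviving component has strict transform $g$-exceptional, hence its image in $X$ has codimension $\ge 2$). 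By the uniqueness you established in step (i), this forces $h_*B_{Y'}=B_Y$. No pullback, no relative canonical divisor, no effectivity worries.
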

\begin{proof}
Let $g\colon Y'\rightarrow X$ be a model such that there is a proper birational morphism $h\colon Y'\rightarrow Y$. Pushing forward
$K_{Y'}+B_{Y'}=g^*(K_X+\Delta)+E_{Y'}$ via $h_*$ yields
$$K_Y+h_*B_{Y'}=f^*(K_X+\Delta)+h_*E_{Y'},$$
and thus $h_*B_{Y'}=B_Y$ since $h_*B_{Y'}$ and $h_*E_{Y'}$ have no common components.
\end{proof}

If $\{D\}$ denotes the fractional part of a divisor $D$, we have:

\begin{lem}\label{disjoint}
Let $(X,\Delta)$ be a log canonical pair. There exists a log resolution $Y\rightarrow X$ such that the components of $\{\B(X,\Delta)_Y\}$ are disjoint.
\end{lem}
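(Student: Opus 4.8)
The plan is to start from an arbitrary log resolution and then blow up further to separate the fractional components.

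First I would fix any log resolution $f_0\colon Y_0\to X$ of the pair $(X,\Delta)$, so that $\B(X,\Delta)_{Y_0}$ has simple normal crossing support. Write $\{\B(X,\Delta)_{Y_0}\} = \sum_i c_i F_i$ with $0 < c_i < 1$ and the $F_i$ distinct prime divisors. By the definition of $\B$, these fractional coefficients are exactly the components that appear with positive coefficient strictly less than $1$ in the discrepancy computation $K_{Y_0}+B_{Y_0} = f_0^*(K_X+\Delta)+E_{Y_0}$; they record the exceptional (or boundary) divisors along which the pair is log canonical but not canonical. The claim is insensitive to the choice of starting resolution, so I only need to modify $Y_0$ further.

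Second, the key observation is that $\B(X,\Delta)$ is a well-defined b-divisor (by the previous lemma), so its trace on any higher model is controlled: if $g\colon Y\to Y_0$ is a further log resolution, then $\B(X,\Delta)_Y = g^*(\B(X,\Delta)_{Y_0}) - (\text{effective }g\text{-exceptional})$ in the sense of b-divisors, and more precisely the strict transforms of the $F_i$ retain coefficient $c_i$ while newly extracted divisors have coefficients determined by log discrepancies. So the multiset of ``problematic'' coefficients does not grow in an uncontrolled way. Now I would repeatedly blow up the (smooth, by the snc assumption) intersections $F_i \cap F_j$ for $i \neq j$ among fractional components. Each such blow-up introduces one new exceptional divisor $G$; since the pair is log canonical and the center $F_i\cap F_j$ has codimension $2$ in $Y_0$ lying on two boundary components with coefficients $c_i, c_j < 1$, the discrepancy computation gives the coefficient of $G$ in $\B(X,\Delta)_Y$ equal to $c_i + c_j - 1 < 1$ (and it is $\geq 0$ by log canonicity, with equality only if $c_i+c_j=1$, in which case $G$ is not a fractional component and causes no further trouble). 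Crucially, after this blow-up the strict transforms of $F_i$ and $F_j$ no longer meet, while the snc property is preserved and no new pair of fractional components meeting in codimension one that didn't ``come from'' an old pair is created.

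Third I would make the termination argument precise. Order the fractional components and perform the blow-ups along the loci where two of them still intersect; one bookkeeping device is to stratify by the number of fractional components passing through a point, i.e.\ consider $\max\{\, |\{i : x \in F_i\}| \,: x \in Y,\ F_i \text{ fractional}\}$, and show each round of blow-ups strictly decreases this quantity (a point on the new exceptional divisor $G$ lies on $G$ plus the strict transforms of those $F_k$, $k\neq i,j$, through the old point, but $G$ itself may be integral, and in any case the new fractional components through any point number one fewer than before). After finitely many steps we reach a model $Y\to X$ on which no two fractional components of $\B(X,\Delta)_Y$ intersect, which is the desired resolution.

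The main obstacle I expect is the second step: controlling exactly what happens to the coefficients of $\B(X,\Delta)_Y$ under blow-up, and in particular verifying that blowing up the snc center $F_i\cap F_j$ really does produce a new divisor with coefficient $c_i+c_j-1$ (so that it is either integral or again fractional-but-now-separated) rather than introducing fresh fractional components that reconnect things. This is a standard but slightly delicate log discrepancy computation in snc coordinates, and one must also check that the blow-ups can be organised so the process genuinely terminates rather than trading one intersection for another indefinitely — the stratification by multiplicity of fractional components is the cleanest way I see to guarantee termination.
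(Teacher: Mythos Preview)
The paper does not give a proof of this lemma at all; it simply cites \cite[2.36]{KM98} and \cite[6.7]{HM05}. Your proposal is essentially a sketch of the standard argument found in those references: start from any log resolution and repeatedly blow up pairwise intersections of fractional components, using the discrepancy computation to track the new coefficients and a stratification by the number of fractional components through a point to guarantee termination. So your approach is correct and is the same as what the cited sources do.

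One small caveat on your second step: the formula $c_i+c_j-1$ for the coefficient of the new exceptional divisor $G$ in $\B(X,\Delta)_Y$ is valid only for a component of $F_i\cap F_j$ that is not generically contained in any other boundary or exceptional component; if a component of $F_i\cap F_j$ lies inside some further $F_k$ (which the snc condition permits), the coefficient picks up an extra $c_k$ term. This does not break the argument---one simply blows up the deepest strata first, or equivalently organises the induction on the maximal number of fractional components through a point as you suggest---but it is worth stating explicitly, since otherwise the claim that ``no new pair of fractional components meeting\dots is created'' is not quite obvious.
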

\begin{proof}
See \cite[2.36]{KM98} or \cite[6.7]{HM05}.
\end{proof}
\vspace{5mm}
\paragraph{\bf Convex geometry}
If $\mcal{S}=\sum\N e_i$ is a submonoid of $\N^n$, I denote $\mcal{S}_\Q=\sum\Q_+e_i$ and $\mcal{S}_\R=\sum\R_+e_i$.
A monoid $\mcal{S}\subset\N^n$ is {\em saturated\/} if $\mcal{S}=\mcal{S}_\R\cap\N^n$.

If $\mcal{S}=\sum_{i=1}^n\N e_i$ and $\kappa_1,\dots,\kappa_n$ are positive integers,
the submonoid $\sum_{i=1}^n\N \kappa_ie_i$ is called a {\em truncation\/} of $\mcal{S}$.
If $\kappa_1=\dots=\kappa_n=\kappa$, I denote $\mcal S^{(\kappa)}=\sum_{i=1}^n\N \kappa e_i$, and this truncation does not depend on a
choice of generators of $\mcal S$.

A submonoid $\mcal{S}=\sum\N e_i$  of $\N^n$ (respectively a cone $\mcal{C}=\sum\R_+ e_i$ in $\R^n$) is called {\em simplicial\/}
if its generators $e_i$ are linearly independent in $\R^n$, and the $e_i$ form a {\em basis\/} of $\mcal{S}$
(respectively $\mcal{C}$).

I often use without explicit mention that if $\lambda\colon\mcal M\rightarrow\mcal S$
is an additive surjective map between finitely generated saturated monoids, and if $\mcal C$ is a rational polyhedral cone in $\mcal S_\R$, then
$\lambda^{-1}(\mcal S\cap\mcal C)=\mcal M\cap\lambda^{-1}(\mcal C)$. In particular, if $\mcal M$ and $\mcal S$ are saturated,
the inverse image of a saturated finitely generated submonoid of $\mcal S$ is a saturated finitely generated submonoid of $\mcal M$.

For a polytope $\mcal P\subset\R^n$, I denote $\mcal P_\Q=\mcal P\cap\Q^n$. A polytope is {\em rational\/} if it is the convex hull of finitely
many rational points. The dimension of a polytope $\mcal P$, denoted $\dim\mcal P$, is the dimension of the smallest affine space containing $\mcal P$.

If $\mcal B\subset\R^n$ is a convex set, then $\R_+\mcal B$ denotes the set $\{rb:r\in\R_+,b\in\mcal B\}$. In particular,
if $\mcal B$ is a rational polytope, $\R_+\mcal B$ is a rational polyhedral cone.

\begin{rem}\label{rem:4}
The following will be used in the proof of Theorem \ref{lem:PL}. Assume that $\mcal P\subset\R^n$ is an $n$-dimensional polytope
and let $\{\mcal H_\alpha\}$ be a collection of countably many affine hyperplanes in $\R^n$. Then $\mcal P\not\subset\bigcup_\alpha\mcal H_\alpha$.
To see this, I argue by induction on $n$. The statement obviously stands for $n=1$, so assume that $n>1$. Fix a point $p$ in the interior of $\mcal P$,
and assume $\mcal P\subset\bigcup_\alpha\mcal H_\alpha$. Since the number of affine hyperplanes passing through $p$ is uncountable, there is an affine
hyperplane $\mcal H\ni p$ different from all $\mcal H_\alpha$. Now $\mcal P\cap\mcal H\subset\bigcup_\alpha(\mcal H_\alpha\cap\mcal H)$, but this is a
contradiction since $\{\mcal H_\alpha\cap\mcal H\}$ is at most countable collection of hyperplanes in $\mcal H$.
\end{rem}

Let $\mcal C\subset \R^n$ be a rational polyhedral cone and $V$ an $\R$-vector space with an ordering.
A function $f\colon\mcal{C}\rightarrow V$ is: {\em positively homogeneous\/} if $f(\lambda x)=\lambda f(x)$ for $x\in\mcal C,\lambda\in\R_+$,
and {\em superlinear\/} if $\lambda f(x)+\mu f(y)\leq f(\lambda x+\mu y)$ for
$x,y\in\mcal C,\lambda,\mu\in\R_+$. It is {\em piecewise linear\/} if there is a finite polyhedral decomposition $\mcal{C}=\bigcup\mcal{C}_i$ such that
$f_{|\mcal{C}_i}$ is linear for every $i$; additionally, if each $\mcal C_i$ is a rational cone, it is {\em rationally piecewise linear\/}.
Similarly for {\em sublinear, superadditive, subadditive, (rationally) piecewise linear\/}.
Assume furthermore that $f$ is linear and $\dim\mcal{C}=n$. The {\em linear extension of $f$ to $\R^n$\/} is the unique linear function
$\ell\colon\R^n\rightarrow V$ such that $\ell_{|\mcal{C}}=f$.

Unless otherwise stated, cones considered in this paper do not contain lines,
and the {\em relative interior\/} of a cone $\mcal{C}=\sum\R_+e_i\subset\R^n$, denoted by $\relint\mcal{C}$, is the origin union the topological interior
of $\mcal{C}$ in the space $\sum\R e_i$; if $\dim\mcal{C}=n$, we call it the {\em interior\/} of $\mcal{C}$ and
denote it by $\Int\mcal{C}$. The boundary of a closed set $\mcal D$ is denoted by $\partial\mcal D$. If a norm $\|\cdot\|$ on $\R^n$ is given,
then for $x\in\R^n$ and for any $r>0$, the closed ball of radius $r$ with centre at $x$ is denoted by $B(x,r)$.
Unless otherwise stated, the norm considered is always the sup-norm $\|\cdot\|_\infty$, and note that then $B(x,r)$ is a hypercube in the
Euclidean norm.

I will need the following lemma in the proof of Theorem \ref{cor:linear}.

\begin{lem}\label{lem:4}
Let $\mcal D\subset\R^\ell$ be a closed cone, let $z_i\in\R^\ell\backslash\mcal D$ be linearly independent points, and denote $\mcal Z=\sum_i\R_+z_i$
and $\mcal C=\mcal Z+\mcal D$. Assume that $x_m\in\mcal C$ is a sequence converging to $x_\infty=\sum_i\alpha_iz_i$ with $\alpha_i>0$
for all $i$, and that $x_m\neq x_\infty$ for all $m$. Assume further that if $x_\infty=z+d$ with $z\in\mcal Z$ and $d\in\mcal D$,
then $d=0$, and in particular $x_\infty\notin\mcal D$. Then for every $m_0\in\N$ there exist $m\geq m_0$ and $x_m'\in\mcal C$ such that $x_m\in(x_\infty,x_m')$.
\end{lem}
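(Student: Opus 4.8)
The plan is to work with a (non-unique) decomposition of each $x_m$ along the two cones. Since $x_m\in\mcal C=\mcal Z+\mcal D$ and the linearly independent $z_i$ form a basis of the simplicial cone $\mcal Z$, for every $m$ I fix an expression $x_m=\sum_i\lambda_{m,i}z_i+d_m$ with $\lambda_{m,i}\in\R_+$ and $d_m\in\mcal D$. The heart of the proof is the claim that necessarily $\lambda_{m,i}\to\alpha_i$ for every $i$ and $d_m\to 0$. Granting this, fix $m$ large enough that $\lambda_{m,i}>\alpha_i/2$ for all $i$ (possible since $\alpha_i>0$) and set $x_m':=2x_m-x_\infty$. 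Then $x_m'=\sum_i(2\lambda_{m,i}-\alpha_i)z_i+2d_m$, where each coefficient $2\lambda_{m,i}-\alpha_i$ is positive and $2d_m\in\mcal D$ because $\mcal D$ is a cone, so $x_m'\in\mcal Z+\mcal D=\mcal C$; moreover $x_m=\tfrac12 x_\infty+\tfrac12 x_m'$ and $x_m'\neq x_\infty$ (otherwise $x_m=x_\infty$), hence $x_m\in(x_\infty,x_m')$. This in fact gives the conclusion for all sufficiently large $m$, which is more than required.

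To prove the claim I would first show that the sequences $(\lambda_{m,i})_m$ are bounded. If not, after passing to a subsequence $L_m:=\max_i\lambda_{m,i}\to\infty$; dividing the decomposition by $L_m$ and using $x_m\to x_\infty$ yields $\sum_i(\lambda_{m,i}/L_m)z_i+d_m/L_m\to 0$, and after a further subsequence $\lambda_{m,i}/L_m\to\gamma_i\in[0,1]$ with $\max_i\gamma_i=1$. Then $d_m/L_m\to-\sum_i\gamma_iz_i=:d_0$, and $d_0\in\mcal D$ since each $d_m/L_m\in\mcal D$ and $\mcal D$ is closed. Linear independence of the $z_i$ together with $\max_i\gamma_i=1$ force $d_0\neq 0$; but then $x_\infty=\sum_i(\alpha_i+\gamma_i)z_i+d_0$ exhibits $x_\infty$ as an element of $\mcal Z$ plus the nonzero element $d_0$ of $\mcal D$, contradicting the hypothesis on $x_\infty$. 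Hence the $\lambda_{m,i}$ are bounded, and consequently so is $d_m=x_m-\sum_i\lambda_{m,i}z_i$.

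Given boundedness, to conclude it is enough to show that every convergent subsequence of $((\lambda_{m,i})_i,d_m)_m$ has limit $((\alpha_i)_i,0)$. Along such a subsequence assume $\lambda_{m,i}\to\beta_i$ (so $\beta_i\ge0$) and $d_m\to d_\infty$, with $d_\infty\in\mcal D$ by closedness; passing to the limit gives $x_\infty=\sum_i\beta_iz_i+d_\infty$, so the hypothesis forces $d_\infty=0$, and then $\sum_i\beta_iz_i=x_\infty=\sum_i\alpha_iz_i$ with linear independence forces $\beta_i=\alpha_i$. Since every subsequence of the bounded sequence has a further subsequence with this same limit, the full sequence converges: $\lambda_{m,i}\to\alpha_i$ and $d_m\to 0$, as claimed.

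The step I expect to be the main obstacle is precisely the boundedness and convergence of the chosen decomposition: because the decomposition $x_m=\sum_i\lambda_{m,i}z_i+d_m$ is not unique, a priori nothing prevents the coefficients from running off to infinity, or settling at a different limit, while being compensated inside $\mcal D$. The hypothesis that $x_\infty$ admits only the trivial decomposition $x_\infty=x_\infty+0$ is exactly what excludes this, via the rescaling and compactness argument above, and the strict positivity of the $\alpha_i$ is then what supplies the room to push the segment slightly beyond $x_m$ without leaving $\mcal C$.
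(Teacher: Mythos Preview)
Your proof is correct and takes a genuinely different, more elementary route than the paper. The paper first passes to an affine hyperplane $H\ni x_\infty$ so that $\mcal Z\cap H$ is a compact polytope, rewrites each $x_m$ as a convex combination $\alpha_m s_m+(1-\alpha_m)d_m$ of points on the rays through its $\mcal Z$- and $\mcal D$-components, and then argues via compactness of $\mcal Z\cap H$ that $\alpha_m$ stays bounded away from $0$; only after passing to several subsequences does it locate $x_m'$ using that $x_\infty\in\relint\mcal Z$. Your argument bypasses all of this by working directly with the simplicial coordinates $\lambda_{m,i}$: the rescaling trick cleanly proves boundedness, the unique-decomposition hypothesis on $x_\infty$ then pins down the limit, and the explicit choice $x_m'=2x_m-x_\infty$ makes the verification that $x_m'\in\mcal C$ a one-line computation. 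As you note, your argument in fact yields the conclusion for \emph{all} sufficiently large $m$, which is stronger than the paper's ``for every $m_0$ there exists $m\geq m_0$'' and avoids the repeated passage to subsequences.
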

\begin{proof}
Let $H\ni x_\infty$ be an affine hyperplane such that $\mcal Z_H=\mcal Z\cap H$ is a polytope. For every $m\gg0$, the intersection
of $\R_+x_m$ and $H$ is a point, and denote it by $y_m$. If there is $y_m'\in\mcal C$ such that $y_m\in(x_\infty,y_m')$, then it is
easy to see that there is $x_m'\in\R_+y_m'$ such that $x_m\in(x_\infty,x_m')$. Therefore, replacing
$x_m$ by $y_m$ and passing to a subsequence, I can assume that $x_m\in H$ for all $m$.

Write $x_m=s_m'+d_m'$ for every $m$, where $s_m'\in\mcal Z$ and $d_m'\in\mcal D$; note that $s_m'\neq0$ for $m\gg0$ as $x_\infty\notin\mcal D$.
Since $\R_+s_m'$ intersects $H$, then $\R_+d_m'$ also intersects $H$, and denote the intersection points by $s_m$ and $d_m$, respectively.
Setting $\alpha_m=\frac{\|x_m-d_m\|}{\|s_m-d_m\|}$, we have
\begin{equation}\label{equ:5}
x_m=\alpha_ms_m+(1-\alpha_m)d_m.
\end{equation}
Observe that $\|s_m-x_m\|$ is bounded from above for $m\gg0$ since $\mcal Z_H$ is compact, and that $\|x_m-d_m\|$ is bounded from below for $m\gg0$
as $x_m\notin\mcal D$ and $\mcal D$ is closed. Therefore $\frac{1}{\alpha_m}=1+\frac{\|s_m-x_m\|}{\|x_m-d_m\|}$ is bounded from above, and thus
$\alpha_m$ is bounded away from zero as $m\rightarrow\infty$. By passing to a subsequence we can assume that
$\lim\limits_{m\rightarrow\infty}\alpha_m=\alpha_\infty>0$, and that $\lim\limits_{m\rightarrow\infty}s_m=s_\infty\in\mcal Z_H$
since $\mcal Z_H$ is compact.
Therefore, $d_\infty=\lim\limits_{m\rightarrow\infty}d_m$ exists in $\mcal D$, and $x_\infty=\alpha_\infty s_\infty+(1-\alpha_\infty)d_\infty$. But then
$(1-\alpha_\infty)d_\infty=0$ by assumptions of the lemma.

Thus $\lim\limits_{m\rightarrow\infty}\alpha_ms_m=x_\infty$. If $x_\infty=\alpha_ms_m$ for some $m$, then by \eqref{equ:5} we have $x_m\in(x_\infty,x_m')$,
where $x_m'=(1-\alpha_m)d_m$. Therefore, I can assume that $x_\infty\neq\alpha_ms_m$ for all $m$. Since $x_\infty\in\relint\mcal Z$ by
assumption and $\alpha_ms_m\in\mcal Z$, for $m\gg0$ there exist $\hat s_m\in\mcal Z$ and $t_m\in(0,1)$ such that $\alpha_ms_m=t_mx_\infty+(1-t_m)\hat s_m$. Let
$\hat d_m=\frac{1-\alpha_m}{1-t_m}d_m\in\mcal D$ and set $x_m'=\hat s_m+\hat d_m$. Then it is easy to check that $x_m=t_mx_\infty+(1-t_m)x_m'$, and we are done.
\end{proof}

\begin{rem}\label{rem:3}
The following situation will appear in the proof of Theorem \ref{cor:linear}. Let $\mcal W_i$ be finitely many half-spaces of $\R^\ell$ bounded by
affine hyperplanes $\mcal H_i$, and let $\mcal Q=\bigcap_i\mcal W_i$. Let $x_m\in\R^\ell\backslash\mcal Q$ be a convergent sequence of points
and fix $z\in\R^\ell$. Assume that for each $m\in\N\cup\{\infty\}$ there exists a point $y_m\in(x_m+\R_-z)\cap\partial\mcal Q$ closest to $x_m$,
and that $x_\infty=\lim\limits_{m\rightarrow\infty}x_m\in\mcal H_i$ for all $i$. Then I claim that a subsequence of $y_m$ converges to $x_\infty$.
To see this, by passing to a subsequence I can assume that $y_m\in\mcal H_{i_0}$ for all $m$ and for a fixed $i_0$.
If $\R z\cap\mcal H_{i_0}=\emptyset$, then $x_m\in\mcal H_{i_0}$ for all $m$,
and by replacing $\R^\ell$ by $\mcal H_{i_0}$ we can finish by induction on $\ell$. If $\R z\cap\mcal H_{i_0}\neq\emptyset$, then
$\{y_m\}=(x_m+\R_-z)\cap\mcal H_{i_0}$, and it is easy to see that $\lim\limits_{m\rightarrow\infty}y_m=x_\infty$.
\end{rem}
\paragraph{\bf Asymptotic invariants}
The standard references on asymptotic invariants arising from linear series are \cite{Nak04,ELMNP}.

\begin{dfn}
Let $X$ be a variety and $D\in\WDiv(X)_\R$. For $k\in\{\Z,\Q,\R\}$, define
$$|D|_k=\{C\in\WDiv(X)_k:C\geq0,C\sim_kD\}.$$
If $T$ is a prime divisor on $X$ such that $T\not\subset\Fix|D|$, then $|D|_T$ denotes the image of the linear system $|D|$ under restriction
to $T$. The {\em stable base locus\/} of $D$ is $\B(D)=\bigcap_{C\in|D|_\R}\Supp C$ if $|D|_\R\neq\emptyset$, otherwise we set $\B(D)=X$.
The {\em diminished base locus\/} is $\B_-(D)=\bigcup_{\varepsilon>0}\B(D+\varepsilon A)$ for an ample divisor $A$; this does not depend on
a choice of $A$. In particular, $\B_-(D)\subset\B(D)$.
\end{dfn}

We denote $\WDiv(X)^{\kappa\geq0}=\{D\in\WDiv(X):|D|_\R\neq\emptyset\}$, and similarly for $\Div(X)^{\kappa\geq0}$ and for versions of these sets with
subscripts $\Q$ and $\R$. Observe that when $D\in\WDiv(X)$, the condition $|D|_\R\neq\emptyset$ is equivalent to $\kappa(X,D)\geq0$ by Lemma
\ref{lem:2} below, where $\kappa$ is the Iitaka dimension.

It is elementary that $\B(D_1+D_2)\subset\B(D_1)\cup\B(D_2)$ for $D_1,D_2\in\WDiv(X)_\R$. In other words, the set
$\{D\in\WDiv(X)_\R:x\notin\B(D)\}$ is convex for every point $x\in X$.
By \cite[3.5.3]{BCHM}, $\B(D)=\bigcap_{C\in|D|_\Q}\Supp C$ when $D$ is a $\Q$-divisor, which is the standard definition of the stable base locus.

\begin{dfn}
Let $Z$ be a closed subvariety of a smooth variety $X$ and let $D\in\Div(X)_\R^{\kappa\geq0}$.
The {\em asymptotic order of vanishing of $D$ along $Z$\/} is
$$\ord_Z\|D\|=\inf\{\mult_ZC:C\in|D|_\R\}.$$
\end{dfn}

\begin{rem}
In the case of rational divisors, the infimum above can be taken over rational divisors, see Lemma \ref{lem:2} below.
More generally, one can consider any discrete valuation $\nu$ of $k(X)$ and define
$$\nu\|D\|=\inf\{\nu(C):C\in|D|_\Q\}$$
for an effective $\Q$-divisor $D$. Then \cite{ELMNP} shows that $\nu\|D_1\|=\nu\|D_2\|$ if $D_1$ and $D_2$ are numerically equivalent big divisors,
and that $\nu$ extends to a sublinear function on $\bigcone(X)_\R$. When $E$ is a prime divisor on a birational model over $X$, I write
$\ord_E\|\cdot\|$ for the corresponding geometric valuation.
\end{rem}

\begin{rem}\label{rem:10}
Nakayama \cite{Nak04} defines a function $\sigma_Z\colon\overline{\bigcone(X)}\rightarrow\R_+$ by
$$\sigma_Z\|D\|=\lim_{\varepsilon\downarrow0}\ord_Z\|D+\varepsilon A\|$$
for any ample $\R$-divisor $A$, and shows that it agrees with $\ord_Z\|\cdot\|$ on big classes. Then we define the formal sum
$N_\sigma\|D\|=\sum\sigma_Z\|D\|\cdot Z$ over all prime divisors $Z$ on $X$. Analytic properties of these
invariants were studied in \cite{Bou04}.
\end{rem}

We now define the restricted version of the invariant introduced.

\begin{dfn}
Let $S$ be a smooth divisor on a smooth variety $X$ and let $D\in\Div(X)_\R^{\kappa\geq0}$ be such that $S\not\subset\B(D)$.
Let $P$ be a closed subvariety of $S$. The {\em restricted asymptotic order of vanishing of $D$ along $P$\/} is
$$\ord_P\|D\|_S=\inf\{\mult_P C_{|S}:C\in|D|_\R,S\not\subset\Supp C\}.$$
\end{dfn}

\begin{lem}\label{lem:2}
Let $X$ be a smooth variety, $D\in\Div(X)_\Q^{\kappa\geq0}$ and let $D'\geq0$ be an $\R$-divisor such that $D\sim_\R D'$. Then for every
$\varepsilon>0$ there is a $\Q$-divisor $D''\geq0$ such that $D\sim_\Q D''$, $\Supp D'=\Supp D''$ and $\|D'-D''\|<\varepsilon$. In particular,
if $S\subset X$ is a smooth divisor such that $S\not\subset\B(D)$, then for every closed subvariety $P\subset S$ we have
$$\ord_P\|D\|_S=\inf\{\mult_P C_{|S}:C\in|D|_\Q,S\not\subset\Supp C\}.$$
\end{lem}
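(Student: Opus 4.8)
The plan is to produce, for a given $\R$-divisor $D'\geq 0$ with $D\sim_\R D'$, a nearby rational divisor $D''\geq 0$ with $D\sim_\Q D''$ and the same support, by a standard perturbation-of-coefficients argument. Write $D' = D + \sum_{j=1}^r r_j(f_j)$ where $f_j\in k(X)^\times$ and $r_j\in\R$; here $(f_j)$ denotes the principal divisor. Consider the affine map $\Phi\colon\R^r\to\WDiv(X)_\R$ sending $(t_1,\dots,t_r)\mapsto D+\sum_j t_j(f_j)$. This is an affine map defined over $\Q$ (its linear part and the translation vector $D$ have rational, indeed integral, coefficients once we fix the finitely many prime divisors appearing in $D$ and the $(f_j)$), and $\Phi(r_1,\dots,r_r)=D'\geq 0$. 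The set $U=\{t\in\R^r : \Phi(t)\geq 0 \text{ and } \Supp\Phi(t)=\Supp D'\}$ is the intersection of finitely many open and closed half-space conditions on the coefficients: for each prime $E$ with $\mult_E D'>0$ we impose the open condition $\mult_E\Phi(t)>0$, and for each prime $E$ with $\mult_E D'=0$ we impose the closed condition $\mult_E\Phi(t)=0$. The point $(r_1,\dots,r_r)$ lies in $U$, and $U$ is cut out by affine-linear conditions with rational coefficients, so rational points are dense in $U$ near $(r_1,\dots,r_r)$. Choosing a rational $t\in U$ sufficiently close to $(r_1,\dots,r_r)$ and setting $D''=\Phi(t)$ gives $D''\geq 0$, $\Supp D''=\Supp D'$, $D\sim_\Q D''$ (since $t$ is rational), and $\|D'-D''\|<\varepsilon$ by continuity of $\Phi$.

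For the ``in particular'' clause, fix a smooth divisor $S\not\subset\B(D)$ and a closed subvariety $P\subset S$. The inequality $\ord_P\|D\|_S\leq\inf\{\mult_P C_{|S}:C\in|D|_\Q,\,S\not\subset\Supp C\}$ is immediate since the $\Q$-infimum is taken over a subset of the divisors allowed in the $\R$-infimum. For the reverse inequality, let $C\in|D|_\R$ with $C\geq 0$, $C\sim_\R D$, and $S\not\subset\Supp C$; it suffices to find $\Q$-divisors $C''\in|D|_\Q$ with $S\not\subset\Supp C''$ and $\mult_P C''_{|S}$ arbitrarily close to $\mult_P C_{|S}$. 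Apply the first part with $D'=C$: for every $\varepsilon>0$ we get $C''\geq 0$, $C''\sim_\Q D$, $\Supp C''=\Supp C$ (so in particular $S\not\subset\Supp C''$), and $\|C-C''\|<\varepsilon$. Restricting to $S$ and taking multiplicity along $P$, the quantity $\mult_P C''_{|S}$ differs from $\mult_P C_{|S}$ by at most a constant (depending only on $P\subset S$) times $\|C-C''\|$, hence tends to $\mult_P C_{|S}$ as $\varepsilon\to 0$. Taking the infimum over $C\in|D|_\R$ yields the claimed equality. The same argument, with $\mult_P(\cdot)$ replaced by $\mult_P(\cdot)$ on $X$ or by a geometric valuation, recovers the analogous statement for $\ord_P\|D\|$ noted in the preceding remark.

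The only mild subtlety — and the step I would be most careful about — is ensuring that the support is preserved \emph{exactly}: shrinking coefficients that are positive in $D'$ is harmless (the open conditions), but one must check that no prime divisor absent from $D'$ is forced to acquire a nonzero coefficient, and conversely that a prime with $\mult_E D'>0$ does not drop to $0$. Both are handled by the open/closed half-space description of $U$ above: the closed conditions $\mult_E\Phi(t)=0$ are already satisfied at $t=(r_j)$ and define a rational affine subspace on which we are free to perturb, while the open conditions $\mult_E\Phi(t)>0$ persist under small perturbation. Since all these conditions are linear with rational coefficients, a rational point of the prescribed form exists in every neighbourhood of $(r_1,\dots,r_r)$ within that affine subspace, which is all we need.
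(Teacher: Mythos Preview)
Your proof is correct and follows essentially the same approach as the paper: both write $D'=D+\sum_i r_i(f_i)$, observe that the vanishing conditions $\mult_{F_j}D'=0$ cut out a rational affine subspace of the parameter space, and pick a nearby rational point in that subspace so that the strict positivity conditions persist. You additionally spell out the ``in particular'' clause, which the paper leaves implicit; this is fine and the continuity argument for $\mult_P(\cdot_{|S})$ is valid since the finitely many components of $C$ are fixed and do not contain $S$.
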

\begin{proof}
Let $D'=D+\sum_{i=1}^pr_i(f_i)$ for $r_i\in\R$ and $f_i\in k(X)$. Let $F_1,\dots,F_N$ be the components of $D$ and of all $(f_i)$, and assume that
$\mult_{F_j}D'=0$ for $j\leq\ell$ and $\mult_{F_j}D'>0$ for $j>\ell$. Let $(f_i)=\sum_{j=1}^N\varphi_{ij}F_j$ for all $i$,
and $D=\sum_{j=1}^N\delta_jF_j$. Then we have $\delta_j+\sum_{i=1}^p\varphi_{ij}r_i=0$ for $j=1,\dots,\ell$. Let $\mcal K\subset\R^p$
be the space of solutions of the system $\sum_{i=1}^p\varphi_{ij}x_i=-\delta_j$ for $j=1,\dots,\ell$. Then $\mcal K$ is a rational affine subspace
and $(r_1,\dots,r_p)\in\mcal K$, thus for $0<\eta\ll1$ there is a rational point $(s_1,\dots,s_p)\in\mcal K$ with $\|s_i-r_i\|<\eta$ for all $i$.
Therefore for $\eta$ sufficiently small, setting $D''=D+\sum_{i=1}^ps_i(f_i)$ we have the desired properties.
\end{proof}

\begin{rem}\label{rem:11}
Let $\mcal B^S_-(X)\subset\overline{\bigcone(X)}$ be the set of classes of divisors $D$ such that $S\not\subset\B_-(D)$.
Similarly as in Remark \ref{rem:10}, \cite{Hac08} introduces the function $\sigma_P\|\cdot\|_S\colon\mcal B^S_-(X)\rightarrow\R_+$ by
$$\sigma_P\|D\|_S=\lim_{\varepsilon\downarrow0}\ord_P\|D+\varepsilon A\|_S$$
for any ample $\R$-divisor $A$.
Then one can define a formal sum $N_\sigma\|D\|_S=\sum\sigma_P\|D\|_S\cdot P$ over all prime divisors $P$ on $S$. If $S\not\subset\B(D)$, then
$\lim_{\varepsilon\downarrow\varepsilon_0}\ord_P\|D+\varepsilon A\|_S=\ord_P\|D+\varepsilon_0A\|_S$ for every $\varepsilon_0>0$ and
for any ample divisor $A$ on $X$ similarly as in \cite[2.1.1]{Nak04}.
\end{rem}

In this paper I need a few basic properties cf.\ \cite{Hac08}.

\begin{lem}\label{lem:restrictedord}
Let $S$ be a smooth divisor on a smooth projective variety $X$ and let $P$ be a closed subvariety of $S$.
\begin{enumerate}
\item Let $D\in\Div(X)_\R^{\kappa\geq0}$ be such that $S\not\subset\B(D)$. If $A$ is an ample $\R$-divisor on $X$, then $\ord_P\|D+A\|_S\leq\ord_P\|D\|_S$,
and in particular $\sigma_P\|D\|_S\leq\ord_P\|D\|_S$.
\item If $D\in\mcal B^S_-(X)$, and if $A_m$ is a sequence of ample $\R$-divisors on $X$ such that
$\lim\limits_{m\rightarrow\infty}\|A_m\|=0$, then
$$\lim\limits_{m\rightarrow\infty}\ord_P\|D+A_m\|_S=\sigma_P\|D\|_S.$$
\item If $D,E\in\mcal B^S_-(X)$, then
$$\lim\limits_{\varepsilon\downarrow0}\sigma_P\|(1-\varepsilon)D+\varepsilon E\|_S=\sigma_P\|D\|_S.$$
\item Let $D$ be a pseudo-effective $\Q$-divisor on $X$ such that $\sigma_P\|D\|_S=0$. If $A$ is an ample $\Q$-divisor on $X$,
then there is $l\in\Z_{>0}$ such that $\mult_P\Fix|l(D+A)|_S=0$.
\end{enumerate}
\end{lem}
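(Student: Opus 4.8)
The plan is to prove the four statements of Lemma~\ref{lem:restrictedord} more or less in order, since the later parts rely on the earlier ones together with the elementary convexity of the stable base locus recalled before Definition~1.15 (namely, that $\B(D_1+D_2)\subset\B(D_1)\cup\B(D_2)$, so the locus of divisor classes $D$ with $S\not\subset\B(D)$ is convex, and likewise for $\B_-$), and on Lemma~\ref{lem:2}, which lets us restrict all infima to $\Q$-divisors.

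For (1): given an ample $\R$-divisor $A$, pick any $C\in|D|_\R$ with $S\not\subset\Supp C$; then $C+A\in|D+A|_\R$, $S\not\subset\Supp(C+A)$ (note $A$ general, or rather $A$ can be moved so that $S\not\subset\Supp A$, but in fact any $C'\in|A|_\R$ with $S\not\subset\Supp C'$ suffices and such exist since $A$ is ample hence $S\not\subset\B(A)$), and $(C+C')_{|S}\geq C_{|S}$, so $\mult_P(C+C')_{|S}\geq$ nothing useful — rather, the point is $\ord_P\|D+A\|_S\le\mult_P(C+C')_{|S}\le\mult_P C_{|S}+\mult_PC'_{|S}$; taking $C'$ with $\mult_PC'_{|S}$ arbitrarily small (possible since $\ord_P\|A\|_S=0$ as $A$ is ample, by moving $C'$) and then infimizing over $C$ gives $\ord_P\|D+A\|_S\le\ord_P\|D\|_S$. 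Letting $A\to0$ along a sequence and using the definition of $\sigma_P$ gives $\sigma_P\|D\|_S\le\ord_P\|D\|_S$.

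For (2) and (3): these are the standard continuity/convexity statements. For (2), monotonicity (from (1): $\ord_P\|D+A_m\|_S$ is squeezed as $A_m\to0$, combined with the fact that for fixed ample $A$ the function $\varepsilon\mapsto\ord_P\|D+\varepsilon A\|_S$ is non-increasing and its limit as $\varepsilon\downarrow0$ is $\sigma_P\|D\|_S$ by definition and by the Remark~\ref{rem:11} left-continuity statement) forces the limit over any null sequence of ample divisors to equal $\sigma_P\|D\|_S$; one compares an arbitrary $A_m$ with $\varepsilon A$ for suitable $\varepsilon$ from both sides using ampleness. For (3), write $(1-\varepsilon)D+\varepsilon E+\delta A=(1-\varepsilon)(D+\tfrac{\delta}{2(1-\varepsilon)}A)+\varepsilon(E+\tfrac{\delta}{2\varepsilon}A)$ and use superadditivity/convexity of $\ord_P\|\cdot\|_S$ in the big cone together with part (2) to pass to the limit; the reverse inequality uses that $\sigma_P\|\cdot\|_S$ is convex and lower semicontinuous along the segment, exactly as in \cite[III.1.7]{Nak04} and \cite{Hac08}.

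The real content is (4), and I expect it to be the main obstacle. Here $\sigma_P\|D\|_S=0$ with $D$ pseudo-effective and $A$ ample; we want an \emph{honest} linear system $|l(D+A)|$ (over $\Z$) whose restriction to $S$ has no multiplicity along $P$. First, $D+A$ is big, so $S\not\subset\B(D+A)$ follows once we know $S\not\subset\B_-(D)$, which holds because $\sigma_P\|D\|_S$ is defined, i.e.\ $D\in\mcal B^S_-(X)$; shrinking $A$ slightly and using (1)+(2), $\ord_P\|D+\tfrac12 A\|_S$ is as close to $\sigma_P\|D\|_S=0$ as we like, so we may pick a $\Q$-divisor $C\in|D+\tfrac12 A|_\Q$ with $S\not\subset\Supp C$ and $\mult_P C_{|S}<\eta$ for any prescribed $\eta>0$. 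Writing $D+A=C+(\tfrac12A)$ and clearing denominators, $l(D+A)\sim lC+l(\tfrac12A)$ for divisible $l$; the term $l(\tfrac12 A)$ is very ample, so a general member misses $P$ in its restriction to $S$, but we must control $lC_{|S}$, whose multiplicity along $P$ is $l\cdot\mult_PC_{|S}$ and hence unbounded as $l\to\infty$. The fix is the usual one: $\mult_P\Fix|l(D+A)|_S\le lC_{|S}$ is too weak; instead one argues that since $\mult_P\|D+A\|_S=\sigma_P\|D+A\|_S=0$ (now $D+A$ big, so $\ord_P=\sigma_P$) and $\mult_P\Fix|l(D+A)|_S$ is, up to the discrepancy between linear and $\R$-linear systems, an integer that decreases to $l\cdot\ord_P\|D+A\|_S=0$ after passing to a truncation — more precisely, $\tfrac1l\mult_P\Fix|l(D+A)|_S\to\ord_P\|D+A\|_S=0$ and the left side is a non-negative integer divided by $l$, so for $l$ in a suitable arithmetic progression it equals $0$. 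Making this last step rigorous — that the restricted fixed-part multiplicities actually \emph{converge} to the restricted asymptotic invariant, not merely bound it — is the crux, and it is where one invokes the multiplicativity of $\Fix$ under truncation (so that $\tfrac1{lm}\mult_P\Fix|lm(D+A)|_S$ is eventually monotone in $m$) together with the approximation by $C$ above to pin the limit at $0$; this is precisely the restricted analogue of \cite[2.1.1]{Nak04} alluded to in Remark~\ref{rem:11}, combined with \cite{Hac08}.
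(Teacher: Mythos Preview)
Your arguments for (1)--(3) are essentially correct and follow the same lines as the paper, though stated more diffusely; the paper dispatches (1) as trivial, proves (2) by the same sandwich argument you describe, and for (3) writes $(1-\varepsilon)D+\varepsilon E = D + \varepsilon(E-D)$ and squeezes using an ample $A$ with $E-D+A$ ample.

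Part (4), however, has a genuine gap. Your key step is the assertion that since $\tfrac{1}{l}\mult_P\Fix|l(D+A)|_S$ tends to $\ord_P\|D+A\|_S=0$ and the numerator is a non-negative integer, the multiplicity must vanish for some $l$. This inference is simply false: $a_l/l\to 0$ with $a_l\in\N$ does not force any $a_l$ to be zero (take $a_l=1$). Your attempted patch via subadditivity and monotonicity of $\tfrac{1}{lm}\mult_P\Fix|lm(D+A)|_S$ does not help either, for the same reason: a decreasing sequence of positive rationals with denominators going to infinity can perfectly well tend to $0$ without ever hitting it. The gap between the asymptotic invariant vanishing and the fixed part of some \emph{integral} linear system vanishing along $P$ is real and cannot be bridged by elementary limit arguments.

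The paper's argument for (4) is entirely different and uses multiplier ideals. One first fixes $l$ so that $l(D+A)$ is Cartier and $\tfrac{l}{2}A-(K_X+S)-(n+1)H$ is ample for a chosen very ample $H$. Since $\ord_P\|D+\tfrac12 A\|_S\le\sigma_P\|D\|_S=0$, there is a $\Q$-divisor $\Delta\sim_\Q D+\tfrac12 A$ with $S\not\subset\Supp\Delta$ and $\mult_P\Delta_{|S}<1/l$, so $(S,l\Delta_{|S})$ is klt at the generic point of $P$ and hence $\mcal J_{l\Delta_{|S}}$ is trivial there. Nadel vanishing plus Castelnuovo--Mumford regularity (\cite[5.7]{HM08}) then gives global generation of $\mcal J_{l\Delta_{|S}}(l(D+A))$, and these sections lift to $H^0(X,l(D+A))$ by \cite[4.4(3)]{HM08}. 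This produces an actual section of $|l(D+A)|_S$ not vanishing along $P$, whence $\mult_P\Fix|l(D+A)|_S=0$. The whole point is that vanishing theorems manufacture the section you need; no asymptotic or counting argument will do this for you.
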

\begin{proof}
Statement (1) is trivial. The proof of (2) is standard: fix an ample divisor $A$ on $X$, and let $0<\varepsilon\ll1$. For $m\gg0$
the divisor $\varepsilon A-A_m$ is ample, and so by (1) we have
$$\ord_P\|D+\varepsilon A\|_S=\ord_P\|D+A_m+(\varepsilon A-A_m)\|_S\leq\ord_P\|D+A_m\|_S.$$
Letting $m\rightarrow\infty$, and then $\varepsilon\downarrow0$ we obtain
$$\sigma_P\|D\|_S\leq\lim\limits_{m\rightarrow\infty}\ord_P\|D+A_m\|_S,$$
and similarly for the opposite inequality.

For (3), let $A$ be an ample $\Q$-divisor such that $E-D+A$ is ample. Then by convexity,
\begin{align*}
\sigma_P\|D\|_S&=\lim_{\varepsilon\downarrow0}\sigma_P\|D+\varepsilon(E-D+A)\|_S\leq\lim_{\varepsilon\downarrow0}\sigma_P\|D+\varepsilon(E-D)\|_S\\
&\leq\lim_{\varepsilon\downarrow0}\big((1-\varepsilon)\sigma_P\|D\|+\varepsilon\sigma_P\|E\|_S\big)=\sigma_P\|D\|_S,
\end{align*}
thus the desired equality follows.

Finally, for (4), set $n=\dim X$, let $H$ be a very ample divisor on $X$, and fix a positive integer $l$ such that $l(D+A)$ is Cartier and $H'=\frac l2A-(K_X+S)-(n+1)H$
is ample. Since $\ord_P\|D+\frac12A\|_S\leq\sigma_P\|D\|_S=0$ by (1), there exists a $\Q$-divisor
$\Delta\sim_\Q D+\frac12A$ such that $S\not\subset\Supp\Delta$ and $\mult_P\Delta_{|S}<1/l$.
Since $l(D+A)_{|S}\sim_\Q K_S+l\Delta_{|S}+H'_{|S}+(n+1)H_{|S}$, by Nadel vanishing we have
$$H^i(S,\mcal J_{l\Delta_{|S}}(l(D+A)+mH))=0$$
for $m\geq-n$, and so the sheaf $\mcal J_{l\Delta_{|S}}(l(D+A))$ is globally generated
by \cite[5.7]{HM08} and its sections lift to $H^0(X,l(D+A))$ by \cite[4.4(3)]{HM08}. Since $\mult_P(l\Delta_{|S})<1$, the pair $(S,l\Delta_{|S})$
is klt around the generic point $\eta$ of $P$. Therefore the sheaf $\mcal J_{l\Delta_{|S}}$ is trivial at $\eta$, and so $\mult_P\Fix|l(D+A)|_S=0$.
\end{proof}
\begin{rem}\label{rem:5}
Analogously to Lemma \ref{lem:restrictedord}(4), one can prove that if $D$ is a pseudo-effective $\R$-divisor such that $\sigma_Z\|D\|=0$ for a closed subvariety of $Z$ of $X$,
then $Z\not\subset\B_-(D)$. In particular, if $D_{|Z}$ is defined, then $D_{|Z}-N_\sigma\|D\|_Z$ is pseudo-effective.
Further, let $f\colon Y\rightarrow X$ be a log resolution and denote $Z'=f_*^{-1}Z$. Then I claim $\sigma_{Z'}\|f^*D\|=0$.
To that end, we have first that $Z\not\subset\B(D+\varepsilon A)$ for an ample divisor $A$ and for any $\varepsilon>0$.
Therefore $Z'\not\subset\B(f^*D+\varepsilon f^*A)$, and thus $\sigma_{Z'}\|f^*D+\varepsilon f^*A\|\leq\ord_{Z'}\|f^*D+\varepsilon f^*A\|=0$. But then
$$\sigma_{Z'}\|f^*D\|=\lim_{\varepsilon\downarrow0}\sigma_{Z'}\|f^*D+\varepsilon f^*A\|=0$$
by \cite[2.1.4(2)]{Nak04}.
\end{rem}
\paragraph{\bf Convex sets in $\WDiv(X)_\R$}
Let $X$ be a variety and let $V$ be a finite dimensional affine subspace of $\WDiv(X)_\R$. Fix an ample $\Q$-divisor $A$ and a
prime divisor $G$ on $X$, and define
\begin{align*}
\mcal L_V&=\{\Phi\in V:K_X+\Phi\textrm{ is log canonical}\},\\
\mcal E_{V,A}&=\{\Phi\in\mcal L_V:K_X+\Phi+A\textrm{ is pseudo-effective}\},\\
\mcal B_{V,A}^G&=\{\Phi\in\mcal L_V:G\not\subset\B(K_X+\Phi+A)\},\\
\mcal B_{V,A}^{G=1}&=\{\Phi\in\mcal L_V:\mult_G\Phi=1,G\not\subset\B(K_X+\Phi+A)\}.
\end{align*}
If $V$ is a rational affine subspace, the set $\mcal L_V$ is a rational polytope by \cite[3.7.2]{BCHM}. Similarly as in Lemma \ref{bounded}
below, one can prove that Theorem \ref{thm:cox} implies that then also $\mcal E_{V,A}$, $\mcal B_{V,A}^G$ and $\mcal B_{V,A}^{G=1}$ are rational polytopes.
\begin{rem}\label{rem:8}
Assume the notation as above. In the proofs of Theorems \ref{thm:EimpliesLGA} and \ref{cor:linear} we will have the following situation. Let
$\mcal P_1$ and $\mcal P_2$ be properties of divisor classes in $N^1(X)$; namely, assume $\mcal P_1$ is the property that the class
is pseudo-effective, and $\mcal P_2$ that $\sigma_G\|\psi\|=0$ for a class $\psi\in\overline{\bigcone(X)}$. Denote
$\mcal P^1_{V,A}=\{\Phi\in\mcal L_V:K_X+\Phi+A\textrm{ has }\mcal P_1\}$, $\mcal P^2_{V,A}=\{\Phi\in\mcal L_V:\mult_G\Phi=1,K_X+\Phi+A\textrm{ has }\mcal P_2\}$
and $\mcal C^i=\R_+(K_X+A+\mcal P^i_{V,A})\subset\Div(X)_\R$. Assume that we know that $\mcal P^i_{V,A}$ are closed convex sets, and in particular
that $\mcal C^i$ are closed cones, and that we need to prove that $\mcal C^i$ are polyhedral.

The strategy is as follows. Fix $i$ and assume the contrary, i.e.\ that $\mcal C^i$ has infinitely many extremal rays.
Then there are distinct divisors $\Delta_m\in\mcal P^i_{V,A}$ for $m\in\N\cup\{\infty\}$ such that the rays $\R_+\Upsilon_m$
are extremal in $\mcal C^i$ and $\lim\limits_{m\rightarrow\infty}\Delta_m=\Delta_\infty$, where $\Upsilon_m=K_X+\Delta_m+A$.
I achieve contradiction by showing that for some $m\gg0$ there is a point $\Upsilon_m'\in\mcal C^i$ such that $\Upsilon_m\in(\Upsilon_\infty,\Upsilon_m')$,
so that the ray $\R_+\Upsilon_m$ cannot be extremal in $\mcal C^i$.

I make the following observations. Let $W$ be the vector space spanned by the components of $K_X$, $A$ and by the prime divisors in $V$.
I claim that we can assume that $[\Upsilon_m]\neq[\Upsilon_\infty]$ for all $m\gg0$. Assuming the contrary and passing to a subsequence, we have
$[\Upsilon_m]=[\Upsilon_\infty]$ for all $m$, and let $\phi\colon W\rightarrow N^1(X)$ be the map sending a divisor to its numerical class.
Then since $\phi^{-1}([\Upsilon_\infty])$ is an affine subspace of $W$,
there is a divisor $\Phi_m\in\phi^{-1}([\Upsilon_\infty])$ such that $\Upsilon_m\in(\Upsilon_\infty,\Phi_m)$, and note that $\Phi_m$ has $\mcal P^i$
since $[\Phi_m]=[\Upsilon_\infty]$. Since $\R_+(K_X+A+\mcal L_V)$ is a rational polyhedral cone, for $m\gg0$ we have
$$[\Upsilon_\infty,\Upsilon_m]\subsetneq\big(\Upsilon_\infty+\R_+(\Upsilon_m-\Upsilon_\infty)\big)\cap\R_+(K_X+A+\mcal L_V),$$
so in particular there exists a divisor $\Upsilon_m'\in(\Upsilon_m,\Phi_m)\cap\R_+(K_X+A+\mcal L_V)$ which has $\mcal P^i$ since
$[\Upsilon_m']=[\Upsilon_\infty]$. Also $\mult_G\Upsilon_m'=\mult_G\Upsilon_\infty$, so
$\Upsilon_m'\in\mcal C^i$ and $\R_+\Upsilon_m\subset\relint(\R_+\Upsilon_\infty+\R_+\Upsilon_m')$, which implies that $\R_+\Upsilon_m$ is not
an extremal ray of $\mcal C^i$.

Further, I claim that in order to achieve contradiction, it is enough to prove that for $m\gg0$, there is a class $\widehat\Phi_m\in N^1(X)$ which has
$\mcal P^i$ and a real number $0<t<1$ such that $[\Upsilon_m]=t[\Upsilon_\infty]+(1-t)\widehat\Phi_m$. To see this, let
$\Phi_m=\frac{1}{1-t}(\Upsilon_m-t\Upsilon_\infty)$. Then $[\Phi_m]=\widehat\Phi_m$, and thus $\Phi_m$ has $\mcal P^i$. Since
$\Upsilon_m\in(\Upsilon_\infty,\Phi_m)$ and $\mcal P_{V,A}^i$ is convex, we finish the proof of the claim as above.

Therefore, I am allowed to, and will without explicit mention in the proofs of Theorems \ref{thm:EimpliesLGA} and \ref{cor:linear}, increase $V$ and
consider divisors up to $\R$-linear equivalence, since this does not change their numerical classes.
\end{rem}

\section{Outline of the induction}

As part of the induction, I will prove the following three theorems.

\begin{thmA}
Let $X$ be a smooth projective variety, and let $D_i=k_i(K_X+\Delta_i+A)\in\Div(X)$, where $A$ is an ample $\Q$-divisor and $(X,\Delta_i+A)$ is a log
smooth log canonical pair with $|D_i|\neq\emptyset$ for $i=1,\dots,\ell$ . Then the adjoint ring $R(X;D_1,\dots,D_\ell)$ is finitely generated.
\end{thmA}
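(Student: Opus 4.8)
The plan is to prove Theorem A by induction on $\dim X$ and on the number of generators $\ell$, using a restriction-to-a-divisor argument of the kind advertised in the introduction. The base case is $\dim X = 0$, which is trivial, and the case $\ell = 1$ is (a slightly strengthened form of) the statement that a single adjoint ring $R(X, k(K_X+\Delta+A))$ is finitely generated; in the log smooth setting one reduces this to the same restriction machinery. So the heart of the matter is the inductive step, where I would proceed as follows.

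\textbf{Step 1: Reduction to a common log canonical centre.} First I would use a tie-breaking argument to find, after possibly passing to a truncation of the grading monoid $\mcal S = \sum \N D_i$ (harmless by the general principle that an algebra is finitely generated iff a truncation is), an effective divisor $D \sim_\Q K_X + \Delta + A$ and a modified boundary $\Delta'$ such that $(X, \Delta')$ is log canonical with a prescribed smooth divisor $S = \Sing$-free component of $\Delta'$ as a log canonical centre, while $R(X; D_1, \dots, D_\ell)$ and the new algebra built from $K_X + \Delta' + A$ share a common truncation. The point of working with a divisor $D$ in the linear system, rather than with components of $\Delta_i$ directly, is exactly that $(X, \Delta_i + A)$ need not have log canonical centres; allowing $D$ restores the freedom to create one. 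After a further log resolution (using Lemma \ref{disjoint} to make the fractional parts disjoint), I may assume everything is log smooth, and $S \not\subset \B(K_X + \Delta' + A)$ will be arranged using that $A$ is ample and the relevant Iitaka dimensions are $\geq 0$.

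\textbf{Step 2: Pass to a higher-rank algebra and restrict.} Rather than restricting $R(X; D_1, \dots, D_\ell)$ directly — whose kernel under restriction to $S$ is hard to control — I would embed it into a larger algebra $\mcal R$ graded by a monoid of rank roughly $\ell$ plus the number of components of $\Delta'$ and of $D$, built so that generators of the restriction kernel are present \emph{by construction} (this is the mechanism modelled in Lemma \ref{lem:restricted}, which I am allowed to cite). Then the restriction $\mcal R \to \mcal R_S$ to $S$ is, up to truncation, the analogue of the plt-type algebra studied in \S\ref{plt}, and the Hacon–M\textsuperscript{c}Kernan extension theorem applies because $S \not\subset \B(\cdot)$ — this is precisely why Step 1 took care to keep $S$ off the stable base locus. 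By the inductive hypothesis applied on $S$ (which has strictly smaller dimension), $\mcal R_S$ is finitely generated; lifting its generators through the surjection, together with the built-in kernel generators, shows $\mcal R$ is finitely generated, hence so is the subalgebra $R(X; D_1, \dots, D_\ell)$ after undoing the truncation.

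\textbf{Main obstacle.} The genuinely hard step is Step 2, specifically showing the restricted algebra $\mcal R_S$ is of the \emph{same shape} that the induction can consume — i.e.\ that restriction produces again a finitely generated-type adjoint/plt algebra on $S$ with an ample summand, so that the inductive hypothesis on $S$ genuinely applies. This requires the extension theorem to be available in the higher-rank setting uniformly over the relevant polytope of boundaries, and it forces the induction to be set up so that its inputs are already higher-rank algebras (as the introduction notes). Controlling the diminished/restricted base loci via the invariants $\ord_P\|\cdot\|_S$ and $\sigma_P\|\cdot\|_S$ from Lemma \ref{lem:restrictedord}, and checking that these vary in the rational-polyhedral way needed for the generators to lift, is where essentially all the technical work lies; everything else is bookkeeping with monoids and truncations.
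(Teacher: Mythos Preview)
Your proposal captures the broad restriction-to-a-divisor philosophy but has a genuine structural gap in Step 1, and misses a key ingredient of the inductive scheme.

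\textbf{The single-centre problem.} You propose to tie-break to a \emph{single} smooth divisor $S$ serving as a log canonical centre for a modified pair $(X,\Delta')$, so that the original algebra and ``the new algebra built from $K_X+\Delta'+A$'' share a truncation. But $R(X;D_1,\dots,D_\ell)$ is graded by a rank-$\ell$ monoid, and the boundaries $\Delta_i$ vary; there is no single $\Delta'$ whose adjoint ring has a common truncation with it, and there is in general no single $S$ that is simultaneously a log canonical centre and off the stable base locus for \emph{every} direction in the cone. The introduction you read already flags this: ``not all divisors will have the same log canonical centres. I therefore restrict to available centres, and lift generators from algebras that live on different divisors.'' The paper's actual mechanism (Step 2 of Theorem \ref{thm:3}) is to build a rational polyhedral cone $\mcal C$ in $\Lambda_\R=\bigoplus_j\R F_j$ and decompose it as $\mcal C=\bigcup_j\mcal C_j$, where on $\mcal C_j$ the component $F_j$ has coefficient $1$; one then restricts $R(X,\mcal C_j\cap\Lambda)$ to $F_j$ for each $j$ separately (via Theorem \ref{thm:2}), and reassembles in Step 4 by an induction on degree showing that $c-F_{j_0}$ stays in $\mcal C$ whenever $c\in\mcal C_{j_0}$ has large degree. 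Your kernel-by-construction idea is correct in spirit, but it only works once this multi-centre decomposition is in place.

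\textbf{Missing inputs.} Your induction is on $\dim X$ and $\ell$; the paper does not induct on $\ell$, and crucially the inductive step requires not just A$_{n-1}$ but also Theorems B$_n$ and C$_{n-1}$. Theorem B$_n$ is exactly what guarantees that the locus $\{s:S\not\subset\B(\mu(s))\}$ is rational polyhedral (Step 1 of the proof of Theorem \ref{thm:2}), without which your ``$S\not\subset\B(K_X+\Delta'+A)$ will be arranged using that $A$ is ample'' is wishful. Theorem C$_{n-1}$ (together with A$_{n-1}$) is what makes the restricted algebra on $S$ tractable: it feeds into Lemma \ref{bounded} and the piecewise-linearity arguments of \S\ref{plt} (Theorems \ref{lem:lipschitz}, \ref{thm:linear1}, \ref{lem:PL}), which are the real technical core you allude to in your ``main obstacle'' but do not name. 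Without these, the claim that $\mcal R_S$ is ``of the same shape'' the induction can consume is exactly the hard part left unaddressed.
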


\begin{thmB}
Let $X$ be a smooth projective variety, $B$ a simple normal crossings divisor and
$A$ a general ample $\Q$-divisor on $X$. Let $V\subset\Div(X)_\R$ be the vector space spanned by the components of $B$.
Then for any component $G$ of $B$, the set $\mcal B_{V,A}^{G=1}$ is a rational polytope, and we have
$$\mcal B_{V,A}^{G=1}=\{\Phi\in\mcal L_V:\mult_G\Phi=1,\sigma_G\|K_X+\Phi+A\|=0\}.$$
\end{thmB}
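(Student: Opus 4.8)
Write $\mcal N=\{\Phi\in\mcal L_V:\mult_G\Phi=1,\ \sigma_G\|K_X+\Phi+A\|=0\}$ for the set on the right of the claimed identity; note $\mcal N\subseteq\mcal E_{V,A}$ since $\sigma_G$ is only defined on pseudo-effective classes. The plan is to prove $\mcal B_{V,A}^{G=1}=\mcal N$ and that this set is a rational polytope. The inclusion $\mcal B_{V,A}^{G=1}\subseteq\mcal N$ is the soft half: if $\Phi\in\mcal B_{V,A}^{G=1}$ then $\B(K_X+\Phi+A)\neq X$, so $K_X+\Phi+A$ is pseudo-effective, and since $A$ is ample it is big; hence $\sigma_G\|K_X+\Phi+A\|=\ord_G\|K_X+\Phi+A\|$ by Remark~\ref{rem:10}. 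Because $G\not\subset\B(K_X+\Phi+A)$ there is an effective $C\sim_\R K_X+\Phi+A$ with $\mult_G C=0$, so $\ord_G\|K_X+\Phi+A\|=0$ and $\Phi\in\mcal N$.

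For polyhedrality I would feed in finite generation. Applying Theorem~A to the finitely many rational vertices of the rational polytope $\mcal L_V$ produces a finitely generated adjoint ring, and the chamber-theoretic argument behind Lemma~\ref{bounded} then shows that $\mcal E_{V,A}$ is a rational polytope and that $\Phi\mapsto\ord_G\|K_X+\Phi+A\|$ is rationally piecewise linear, in particular continuous, on $\mcal E_{V,A}$. Combining Remark~\ref{rem:10} with the continuity of $\sigma_G$ along segments running into the interior of the pseudo-effective cone (the unrestricted analogue of Lemma~\ref{lem:restrictedord}(3)) gives $\sigma_G\|K_X+\Phi+A\|=\ord_G\|K_X+\Phi+A\|$ for every $\Phi\in\mcal E_{V,A}$, not just on the big locus. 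Hence $\mcal N=\{\Phi\in\mcal E_{V,A}:\mult_G\Phi=1,\ \ord_G\|K_X+\Phi+A\|\le0\}$ is the intersection of a rational polytope with a rational hyperplane and the zero locus of a non-negative rationally piecewise linear function, so $\mcal N$ is a rational polytope.

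It remains to prove $\mcal N\subseteq\mcal B_{V,A}^{G=1}$. Since $\B(D_1+D_2)\subseteq\B(D_1)\cup\B(D_2)$, the set $\mcal B_{V,A}^{G=1}$ is convex, so it is enough to put each (rational) vertex $\Phi$ of $\mcal N$ into it, i.e.\ to show $G\not\subset\B(K_X+\Phi+A)$ knowing $\sigma_G\|K_X+\Phi+A\|=0$. Here the hypothesis $\mult_G\Phi=1$ is used: writing $\Phi=G+\Phi'$ and exploiting that $A$ is general, one replaces $\Phi$ and $A$ by $G+(1-\varepsilon)\Phi'$ and the ample divisor $A+\varepsilon\Phi'$ for $0<\varepsilon\ll1$, which changes neither $K_X+\Phi+A$ nor the hypothesis or the conclusion, so one may assume $(X,\Phi)$ is log smooth plt with $\lfloor\Phi\rfloor=G$. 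By adjunction $(K_X+\Phi+A)|_G=K_G+\Phi_G+A_G$ with $(G,\Phi_G)$ klt and $A_G$ ample on $G$; since $\sigma_G\|K_X+\Phi+A\|=0$, the restriction $(K_X+\Phi+A)|_G$ is pseudo-effective by Remark~\ref{rem:5}, hence big, so $H^0(G,m(K_G+\Phi_G+A_G))\neq0$ for some $m>0$. Now the extension theorem (the lifting results of \cite{HM08} underlying Lemma~\ref{lem:restrictedord}(4), applied with $S=G$) lifts such a section to a section of $m(K_X+\Phi+A)$ that does not vanish along $G$; therefore $G\not\subset\B(K_X+\Phi+A)$, i.e.\ $\Phi\in\mcal B_{V,A}^{G=1}$. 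Together with the first inclusion, $\mcal B_{V,A}^{G=1}=\mcal N$, which is a rational polytope.

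The hard part is the extension step together with the finite-generation input: one needs Theorem~A available for $X$ at this stage of the simultaneous induction to run the Lemma~\ref{bounded} argument, one must argue carefully on the boundary of $\mcal E_{V,A}$ where $K_X+\Phi+A$ is pseudo-effective but not big, and one must arrange the hypotheses of the extension theorem so that the chosen section on $G$ actually lifts---this is the analogue of the delicate point in Lemma~\ref{lem:restrictedord}(4), where vanishing of $\sigma_G\|K_X+\Phi+A\|$ is precisely what makes the relevant multiplier ideal trivial at the generic point of $G$ and the lift possible. If only the lower-dimensional theorems are available, the polyhedrality of $\mcal N$ should instead be obtained by transporting, through the adjunction $\Phi\mapsto\Phi_G$, the rational polytope furnished by the statement of Theorem~B in dimension $\dim X-1$ applied to $(G,B|_G,A_G)$.
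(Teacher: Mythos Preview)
Your argument is circular within the paper's inductive scheme. Theorem~B$_n$ must be proved from Theorems~A$_{n-1}$ and C$_{n-1}$ alone (this is Theorem~\ref{thm:EimpliesLGA}); Theorem~A$_n$ is then deduced \emph{using} B$_n$. Your polyhedrality step invokes Lemma~\ref{bounded}, whose standing hypotheses are precisely A$_n$ and C$_n$. You flag this yourself, but the suggested fix---pulling polyhedrality back through adjunction from Theorem~B in dimension $n-1$---is not an argument, and in fact the paper does not use B$_{n-1}$ here at all.

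There is also a concrete gap in your proof of $\mcal N\subseteq\mcal B_{V,A}^{G=1}$. You assert that $(K_X+\Phi+A)|_G$ is pseudo-effective ``hence big'', but pseudo-effectivity does not imply bigness, and the ample summand $A|_G$ sitting inside the restriction does not help (consider $K_G+\Phi_G+A|_G\equiv 0$). Without bigness you have no sections on $G$ to lift. What the paper actually does is: reduce to a plt situation with $(G,\Omega+A|_G)$ terminal, set $\Theta=\Omega-\Omega\wedge N_\sigma\|K_X+\Delta\|_G$ where $\Omega=(\Delta-G)|_G$, note that $K_G+\Theta$ is pseudo-effective, and invoke Theorem~C$_{n-1}$ on $G$ together with Lemma~\ref{bounded} (now legitimately, in dimension $n-1$) to produce sections in $|k_i(K_G+\Theta_i)|$ for rational approximants $(\Delta_i,\Theta_i)$ obtained via Diophantine approximation (Lemma~\ref{lem:surround}). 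These are lifted to $X$ through Proposition~\ref{pro:2}, whose hypothesis~\eqref{eq:33} is verified by the approximation estimates, not by a soft appeal to the mechanism behind Lemma~\ref{lem:restrictedord}(4). This simultaneously gives $\mcal B_{V,A}^{G=1}=\mcal D_{V,A}^{G=1}$ and rationality of the extremal rays; that there are only finitely many of them is a separate, substantially harder argument (Step~2 of the proof of Theorem~\ref{thm:EimpliesLGA}), again using only A$_{n-1}$ and C$_{n-1}$. Your sketch contains neither the correct non-vanishing input, nor the actual lifting mechanism, nor the finiteness argument.
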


\begin{thmC}
Let $X$ be a smooth projective variety, $B$ a simple normal crossings divisor and
$A$ a general ample $\Q$-divisor on $X$. Let $V\subset\Div(X)_\R$ be the vector space spanned by the components of $B$.
Then the set $\mcal E_{V,A}$ is a rational polytope, and we have
$$\mcal E_{V,A}=\{\Phi\in\mcal L_V:|K_X+\Phi+A|_\R\neq\emptyset\}.$$
\end{thmC}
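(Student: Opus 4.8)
The inclusion $\{\Phi\in\mcal L_V:|K_X+\Phi+A|_\R\neq\emptyset\}\subseteq\mcal E_{V,A}$ is trivial, since an effective $\R$-divisor is pseudo-effective; the content is therefore the reverse inclusion (a non-vanishing statement) together with the polyhedrality of $\mcal E_{V,A}$. The plan is to deduce both from Theorem B, by turning a general member of a very ample linear system into a boundary component of coefficient one.

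Concretely, using that $A$ is general and ample, I would fix a prime divisor $G$ which is a general member of a suitable very ample linear system, so that $\widetilde B:=B+G$ has simple normal crossings, $(X,\Phi+G)$ is log canonical whenever $(X,\Phi)$ is, and $A\sim_\Q G+A_1$ for a general ample $\Q$-divisor $A_1$; then, with $\widetilde V:=V+\R G$, we have $\mult_G(\Phi+G)=1$ and $K_X+(\Phi+G)+A_1\sim_\Q K_X+\Phi+A$ for all $\Phi\in V$. The key elementary point is that, for a sufficiently general $G$, one has $\sigma_G\|K_X+\Phi+A\|=0$ for every $\Phi\in\mcal E_{V,A}$: since $K_X+\Phi+A$ is pseudo-effective, $K_X+\Phi+A+\varepsilon A_1$ is big for each $\varepsilon>0$ and hence $\R$-linearly equivalent to an effective divisor not containing the general divisor $G$, so $\ord_G\|K_X+\Phi+A+\varepsilon A_1\|=0$, and therefore $\sigma_G\|K_X+\Phi+A\|=0$ by Remark \ref{rem:10}.

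Granting this, for $\Phi\in\mcal E_{V,A}$ the divisor $\Phi+G$ lies in $\mcal L_{\widetilde V}$, has $G$-coefficient $1$, and satisfies $\sigma_G\|K_X+(\Phi+G)+A_1\|=\sigma_G\|K_X+\Phi+A\|=0$; so Theorem B, applied to $(\widetilde B,A_1)$ and the component $G$, gives $\Phi+G\in\mcal B_{\widetilde V,A_1}^{G=1}$, whence $G\not\subset\B(K_X+\Phi+A)$ and so $|K_X+\Phi+A|_\R\neq\emptyset$. This proves the displayed equality. Conversely, if $\Phi+G\in\mcal B_{\widetilde V,A_1}^{G=1}$ then $G\not\subset\B(K_X+\Phi+A)$ forces $K_X+\Phi+A$ to be effective, hence pseudo-effective, while $(X,\Phi)$ is log canonical since $\Phi\le\Phi+G$. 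Thus the translation $\Phi\mapsto\Phi+G$ identifies $\mcal E_{V,A}$ with the rational polytope $\mcal B_{\widetilde V,A_1}^{G=1}-G$. (For $\dim X=1$, where this construction is unavailable, the statement is elementary.)

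The step I expect to be the main obstacle is making this reduction both uniform and legitimate. First, the correspondence $\Phi\mapsto\Phi+G$ above requires a \emph{single} prime divisor $G$ (or, using compactness of $\mcal E_{V,A}\subset\mcal L_V$ and a local-to-global argument near each point of $\partial\mcal E_{V,A}$, finitely many) that is general enough to have $\sigma_G\|K_X+\Phi+A\|=0$ — equivalently, to avoid the diminished base loci $\B_-(K_X+\Phi+A)$ — for \emph{all} the relevant $\Phi$ at once, not merely one at a time; controlling these diminished base loci uniformly over the compact set $\mcal E_{V,A}$, i.e. bounding the prime divisors that can occur in the sums $N_\sigma\|K_X+\Phi+A\|$, is the technical heart of the matter, and I would handle it using the semicontinuity and convexity of $\sigma$ recorded in Lemma \ref{lem:restrictedord} and Remarks \ref{rem:10} and \ref{rem:5}. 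Second, one must ensure the splitting $A\sim_\Q G+A_1$ with $G$ prime and $A_1$ ample can be realized — replacing $A$ within its linear equivalence class, or first enlarging $V$ by an auxiliary ample component, as needed (failing which one falls back on establishing the polyhedrality through finite generation and a limiting argument in the spirit of Remark \ref{rem:8} and Lemma \ref{lem:4}). Modulo these points, everything is Theorem B and elementary convex geometry.
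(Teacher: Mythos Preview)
Your reduction is elegant in spirit, but it has a genuine gap at exactly the point you flag as the main obstacle, and your suggested workarounds do not close it.

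The decomposition $A\sim_\Q G+A_1$ with $G$ an integral prime divisor and $A_1$ ample is in general impossible: if $A=\frac{1}{k}H$ with $H$ very ample and $k\geq2$ (which is precisely how ``general ample $\Q$-divisor'' is set up in \S\ref{sec:2}), then for any integral prime $G$ the class $[A]-[G]$ is never ample. Replacing $A$ within its $\Q$-linear equivalence class cannot help, since ampleness is numerical. Your alternative of enlarging $V$ by a new component $G$ while keeping the ample part equal to $A$ only yields information about $K_X+\Phi+G+A$ rather than $K_X+\Phi+A$: while $\Phi\in\mcal E_{V,A}$ does imply $\Phi+G\in\mcal B_{\widetilde V,A}^{G=1}$ for very general $G$ (your countability and lower-semicontinuity argument for $\sigma_G$ is fine), the converse fails, because $\sigma_G\|K_X+\Phi+G+A\|=0$ with $K_X+\Phi+G+A$ pseudo-effective does not force $K_X+\Phi+A$ to be pseudo-effective --- you are subtracting the ample class $[G]$. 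So the bijection you need between $\mcal E_{V,A}$ and a slice of $\mcal B_{\widetilde V,A_1}^{G=1}$ breaks down, and you obtain neither non-vanishing for $K_X+\Phi+A$ nor the polytope description of $\mcal E_{V,A}$.

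The paper's proof is accordingly much more involved and proceeds along a different route. Non-vanishing is established in three stages: first numerically ($K_X+\Delta\equiv D\geq0$), invoking input from \cite{Hac08} and \cite{BCHM} and already using Theorem~B$_n$ through Step~1 of the proof of Theorem~\ref{thm:EimpliesLGA}; then over $\Q$ via Shokurov's trick with multiplier ideals and Nadel vanishing; and finally over $\R$ by rational approximation. Polyhedrality is proved separately by an extremal-ray contradiction argument (this is your ``fallback'' in the spirit of Remark~\ref{rem:8} and Lemma~\ref{lem:4}), in which Theorem~B$_n$ enters again --- but applied to a log canonical centre $G_{k_0}$ manufactured from $N_\sigma$ of a limiting adjoint divisor, not to a freely chosen very general hyperplane section. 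The conceptual shortcut you propose, of reducing Theorem~C$_n$ wholesale to Theorem~B$_n$ by absorbing part of $A$ into the boundary with coefficient one, does not appear to be available.
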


Let me give an outline of the paper, where e.g.\ ``Theorem A$_n$'' stands for ``Theorem A in dimension $n$.''

Sections \ref{sec:convex} and \ref{sec:3} develop tools to deal with algebras of higher rank and to test whether functions are
piecewise linear. Section \ref{sec:diophant} contains results from Diophantine approximation which will be necessary
in Sections \ref{plt}, \ref{sec:stable} and \ref{sec:non-vanishing}.

In \S\ref{sec:stable} I prove that Theorems A$_{n-1}$ and C$_{n-1}$ imply Theorem B$_n$, and this part of the
proof uses techniques from \S\ref{plt}.
In \S\ref{sec:non-vanishing} I show how Theorems A$_{n-1}$, B$_n$ and C$_{n-1}$ imply Theorem C$_n$.
Finally, Sections \ref{plt} and \ref{proofmain} contain the proof that Theorems A$_{n-1}$, B$_n$
and C$_{n-1}$ imply Theorem A$_n$. Section \ref{plt} is technically the most difficult part of the proof,
whereas \S\ref{proofmain} contains the main new idea on which the whole paper is based.

At the end of this section, let me sketch the proofs of Theorems A, B and C when $X$ is a curve of genus $g$. Since by Riemann-Roch the condition that
a divisor $E$ on $X$ is pseudo-effective is equivalent to $\deg E\geq0$, and this condition is linear on the coefficients, this proves Theorem C.
For Theorem A, when $g\geq1$ every divisor $D_i$ is ample, and when $g=0$, since $\deg D_i\geq0$ we have that $D_i$
is basepoint free, so the statement follows from \cite[2.8]{HK00}. Furthermore, this shows that every divisor of the form $K_X+\Phi+A$ is semiample,
so $\mcal B_{V,A}^G=\mcal E_{V,A}$ and Theorem B follows.

\section{Convex geometry}\label{sec:convex}

Results of this section will be used in the rest of the paper to study relations between superadditive and superlinear functions, and to test their
piecewise linearity. The following proposition can be found in \cite{HUL93} and I add the proof for completeness.

\begin{pro}\label{Lip}
Let $\mcal{C}\subset\R^n$ be a cone and $f\colon\mcal{C}\rightarrow\R$ a concave function.
Then $f$ is locally Lipschitz continuous on the topological interior of $\mcal{C}$ with respect to any norm $\|\cdot\|$ on $\R^n$.

In particular, if $\mcal{C}$ is rational polyhedral and $g\colon\mcal{C}_\Q\rightarrow\Q$ is a superadditive map which satisfies
$g(\lambda x)=\lambda g(x)$ for all $x\in\mcal{C}_\Q$, $\lambda\in\Q_+$, then $g$ extends in a unique way to a superlinear
function on $\mcal{C}$.
\end{pro}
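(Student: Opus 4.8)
The plan is to prove the two assertions separately, since the first (local Lipschitz continuity of concave functions on the interior of a cone) is a classical fact of convex analysis, while the second (extension of a superadditive positively homogeneous map on the rational points of a rational polyhedral cone) is deduced from it.

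For the first part, I would work with a point $x_0$ in the topological interior of $\mcal C$ and choose a small closed ball $B(x_0,2r)\subset\Int\mcal C$ in the given norm. The key is a two-sided local bound. Concavity gives, for any $x,y\in B(x_0,r)$ with $x\neq y$, a standard trick: extend the segment from $y$ through $x$ slightly to a point $z\in B(x_0,2r)$, write $x$ as a convex combination of $y$ and $z$, and use concavity together with a uniform upper bound $M$ for $f$ on the compact set $B(x_0,2r)$ (which exists because a concave function on an open set is bounded above on compacta — indeed it is bounded above on a neighbourhood of $x_0$ by finitely many values at the vertices of a simplex, hence on any compact subset) and a uniform lower bound $m$ (concavity plus the upper bound on a symmetric ball around each point forces a lower bound). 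Rearranging the convex-combination inequality yields $|f(x)-f(y)|\leq \frac{M-m}{r}\|x-y\|$, which is the desired local Lipschitz estimate. I would present this estimate cleanly rather than grinding the constants.

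For the second part, let $\mcal C=\sum\R_+e_i$ be rational polyhedral and $g\colon\mcal C_\Q\to\Q$ superadditive with $g(\lambda x)=\lambda g(x)$ for $\lambda\in\Q_+$. First extend $g$ to $\mcal C_\Q$ being already defined; the point is to extend to all of $\mcal C$. I would first check $g$ is superlinear on $\mcal C_\Q$: for $x,y\in\mcal C_\Q$ and $\lambda,\mu\in\Q_+$, superadditivity plus homogeneity gives $\lambda g(x)+\mu g(y)=g(\lambda x)+g(\mu y)\leq g(\lambda x+\mu y)$. Next, by Caratheodory-type reasoning or by passing to a simplicial subdivision, and using that a superlinear function is in particular concave on its domain, I can apply the first part after extending: concretely, one shows $g$ is bounded above on $\mcal C_\Q\cap B(x_0,2r)$ using superlinearity and the values of $g$ on the (finitely many) rational generators, so the Lipschitz estimate from the first part applies to $g$ on $\mcal C_\Q\cap\Int\mcal C$. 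A Lipschitz function on the dense set $\mcal C_\Q\cap\Int\mcal C$ extends uniquely and continuously to $\Int\mcal C$; homogeneity then extends it to the rays through boundary points, giving a continuous positively homogeneous $f$ on $\mcal C$, and superlinearity passes to the limit since the inequality $\lambda f(x)+\mu f(y)\leq f(\lambda x+\mu y)$ is closed. Uniqueness is immediate from continuity and density.

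The main obstacle I expect is the boundary behaviour: the first part only gives continuity (and the estimate) on the \emph{topological interior} of $\mcal C$, whereas the extension of $g$ must be defined on all of $\mcal C$, including lower-dimensional faces. The clean way around this is to handle each face separately — restrict $g$ to the rational points of a face $\mcal F$, which is again a rational polyhedral cone, extend by continuity on $\relint\mcal F$, and observe the extensions are compatible on common faces by density — or, more slickly, to use positive homogeneity to reduce boundary points to interior points of sub-cones. I would also need to be slightly careful that the extended function is genuinely superlinear and not merely superadditive at the boundary, but since superlinearity holds on the dense set $\mcal C_\Q$ and both sides are continuous in the extended domain, this passes to the closure without trouble.
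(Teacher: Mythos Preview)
Your proposal is correct and follows essentially the same route as the paper: the local Lipschitz estimate via an upper bound on a simplex, a lower bound from symmetry, and the segment-extension trick, then the face-by-face application to handle $g$ on $\mcal C_\Q$ and its boundary. The only minor point the paper makes explicit that you leave implicit is that the sup-norm takes rational values on $\mcal C_\Q$, so the Lipschitz argument can be run verbatim on the rational points; your density-plus-limits formulation achieves the same end.
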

\begin{proof}
Since $f$ is locally Lipschitz if and only if $-f$ is locally Lipschitz, we can assume $f$ is convex. Fix $0\neq x=(x_1,\dots,x_n)\in\Int\mcal{C}$, and
let $\Delta=\{(y_1,\dots,y_n)\in\R_+^n:\sum y_i\leq1\}$. It is easy to check that translations of the domain do not affect the result,
so we may assume $x\in\Int\Delta\subset\Int\mcal{C}$.

First, let us prove that $f$ is locally bounded from above around $x$. Let $e_i$ be the standard basis vectors of $\R^n$ and set
$M=\max\{f(0),f(e_1),\dots,f(e_n)\}$. If $y=(y_1,\dots,y_n)\in\Delta$ and $y_0=1-\sum y_i\geq0$, then
$$f(y)=f\big(\sum y_ie_i+y_0\cdot0\big)\leq\sum y_if(e_i)+y_0f(0)\leq M.$$
Now choose $\delta$ such that $B(x,2\delta)\subset\Int\Delta$. Again by
translating the domain and composing $f$ with a linear function we may assume that $x=0$ and $f(0)=0$. Then for all $y\in B(0,2\delta)$ we have
$$-f(y)=-f(y)+2f(0)\leq-f(y)+\big(f(y)+f(-y)\big)=f(-y)\leq M,$$
so $|f|\leq M$ on $B(0,2\delta)$.

Set $L=2M/\delta$. Fix $u,v\in B(0,\delta)$, and set $\alpha=\frac1\delta\|v-u\|$ and $w=v+\frac1\alpha(v-u)\in B(0,2\delta)$, so that
$v=\frac{\alpha}{\alpha+1}w+\frac{1}{\alpha+1}u$. Then by convexity,
\begin{align*}
f(v)-f(u)&\leq\textstyle\frac{\alpha}{\alpha+1}f(w)+\frac{1}{\alpha+1}f(u)-f(u)\\
&=\textstyle\frac{\alpha}{\alpha+1}\big(f(w)-f(u)\big)\leq2M\alpha=L\|v-u\|,
\end{align*}
and similarly $f(u)-f(v)\leq L\|u-v\|$, which proves the first claim.

For the second one, observe that the sup-norm $\|\cdot\|_{\infty}$ takes values in $\Q$ on $\mcal{C}_\Q$. The proof above applied to the interior
of $\mcal{C}$ and to the relative interiors of the faces of $\mcal{C}$ shows that $g$ is locally Lipschitz, and the claim follows.
\end{proof}

The following result is classically referred to as Gordan's lemma, and I often use it without explicit mention.

\begin{lem}\label{lem:gordan}
Let $\mcal{S}\subset\N^r$ be a finitely generated monoid and let $\mcal{C}\subset\R^r$ be a rational polyhedral cone. Then the monoid
$\mcal{S}\cap\mcal{C}$ is finitely generated.
\end{lem}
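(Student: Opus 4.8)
Gordan's lemma is classical; I would prove it by reducing to the saturated case and then constructing a fundamental domain. First, since $\mcal S$ is finitely generated, say $\mcal S=\sum_{i=1}^s\N v_i$ with $v_i\in\N^r$, the convex cone $\mcal S_\R=\sum_{i=1}^s\R_+v_i$ is rational polyhedral, and hence so is $\mcal D:=\mcal S_\R\cap\mcal C$. Thus it suffices to show that for any rational polyhedral cone $\mcal D\subset\R^r$, the monoid $\mcal D\cap\N^r$ is finitely generated; indeed, once that is known, the inverse-image remark quoted in the excerpt (applied to the inclusion $\mcal S\hookrightarrow\N^r$, or simply the observation that $\mcal S\cap\mcal C$ differs from $\mcal D\cap\N^r$ only in that lattice points of $\mcal D$ need not lie in $\mcal S$) is not quite enough by itself — so instead I would argue directly that $\mcal S\cap\mcal C$ is finitely generated by first replacing $\mcal C$ with $\mcal D=\mcal S_\R\cap\mcal C$, and then handling $\mcal S\cap\mcal D$.

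**Key steps.** Write $\mcal D=\sum_{j=1}^t\R_+w_j$ with each $w_j\in\mcal S$ (possible after clearing denominators, since $\mcal D$ is rational polyhedral and its extremal rays lie in $\mcal S_\Q$, hence meet $\mcal S$). Consider the compact "fundamental parallelotope"
$$
\Pi=\Big\{\textstyle\sum_{j=1}^t\lambda_jw_j:0\le\lambda_j\le1\Big\},
$$
and let $G=\Pi\cap\mcal S$. Since $\Pi$ is bounded and $\mcal S\subset\N^r$ is discrete, $G$ is a finite set. I claim $\mcal S\cap\mcal D$ is generated as a monoid by $G\cup\{w_1,\dots,w_t\}$. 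Take $x\in\mcal S\cap\mcal D$; then $x=\sum_j\mu_jw_j$ with $\mu_j\in\R_+$, so writing $\mu_j=\lfloor\mu_j\rfloor+\{\mu_j\}$ gives $x=\sum_j\lfloor\mu_j\rfloor w_j+y$ where $y=\sum_j\{\mu_j\}w_j\in\Pi$. Now $y=x-\sum_j\lfloor\mu_j\rfloor w_j$ is a difference of elements of $\mcal S$, but it is also in $\mcal S_\R$ and in $\N^r$ (as $x$ and the $w_j$ lie in $\N^r$); since $\mcal S$ need not be saturated, I would instead arrange at the outset that the $w_j$ are chosen so that $y\in\mcal S$ — concretely, one checks $y\in\mcal S_\R\cap\N^r$ and, if $\mcal S$ is not saturated, one first passes to the saturation $\overline{\mcal S}=\mcal S_\R\cap\N^r$, proves finite generation there, and then recovers the result for $\mcal S$ by noting $\mcal S\cap\mcal C=\mcal S\cap(\overline{\mcal S}\cap\mcal C)$ and that a submonoid of a finitely generated monoid whose complement is finite... — which is false in general, so the clean route is: prove the statement for $\overline{\mcal S}$, observe $\overline{\mcal S}\cap\mcal C$ is finitely generated, say by $g_1,\dots,g_N$, take $M$ with $Mg_k\in\mcal S$ for all $k$, and then $\mcal S\cap\mcal C$ is finitely generated by the finite set $\{Mg_k\}\cup\big((\sum_k[0,M]g_k)\cap\mcal S\big)$, again a finite-parallelotope argument. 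So the whole proof is two nested applications of the same fundamental-domain trick.

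**Main obstacle.** The only real subtlety is the passage from saturated to non-saturated monoids: the naive hope that $\mcal S\cap\mcal C$ inherits finite generation from $\overline{\mcal S}\cap\mcal C$ "for free" fails, and one must again invoke a boundedness/discreteness argument to produce the finitely many extra generators lying in $\mcal S$ but not in the monoid generated by scaled generators of $\overline{\mcal S}\cap\mcal C$. Everything else — that $\mcal S_\R\cap\mcal C$ is rational polyhedral, that its extremal rays are rational, that a bounded subset of $\N^r$ is finite — is routine. I would therefore organize the write-up as: (i) reduce to $\mcal C\subset\mcal S_\R$; (ii) prove it when $\mcal S=\N^r$ via the parallelotope $\Pi$ and finiteness of $\Pi\cap\N^r$; (iii) deduce the general saturated case; (iv) deduce the general case by the second parallelotope argument. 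For the present paper, since Gordan's lemma is only ever used for saturated monoids (indeed the excerpt's standing remark on inverse images presupposes saturation), one could even content oneself with steps (i)–(iii).
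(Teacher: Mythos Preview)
Your approach is genuinely different from the paper's. The paper does not use the fundamental-parallelotope argument at all; instead it writes $\mcal C=\bigcap_i\{z:\ell_i(z)\ge0\}$ as an intersection of rational half-spaces, and then intersects $\mcal S$ with one half-space at a time, invoking a result of Swan (\cite[Theorem~4.4]{Swa92}) at each step to preserve finite generation. The case $\dim\mcal C<r$ is handled by first cutting with a hyperplane (applying the half-space step to $\ell$ and $-\ell$) and inducting on $r$. This is slicker for the non-saturated case because Swan's theorem does the work, whereas your route requires a separate passage from $\overline{\mcal S}$ back to $\mcal S$.

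Your steps (i)--(iii), covering the saturated case, are correct and are the classical proof of Gordan's lemma. Your observation that the paper only ever applies the lemma with $\mcal S$ saturated (indeed, essentially with $\mcal S=\N^r$) is also correct, so for the purposes of this paper you could stop there.

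However, your step (iv) has a genuine gap. You claim that if $\overline{\mcal S}\cap\mcal C$ is generated by $g_1,\dots,g_N$ and $M$ is chosen so that $Mg_k\in\mcal S$ for all $k$, then $\mcal S\cap\mcal C$ is generated by $\{Mg_k\}\cup\big((\sum_k[0,M]g_k)\cap\mcal S\big)$. This is false as stated: take $\mcal S=\langle 2,3\rangle=\{0,2,3,4,\dots\}\subset\N$ and $\mcal C=\R_+$. Then $\overline{\mcal S}=\N$ is generated by $g_1=1$, and $M=2$ satisfies $Mg_1=2\in\mcal S$; but your proposed generating set is $\{2\}\cup([0,2]\cap\mcal S)=\{0,2\}$, which does not generate $3\in\mcal S$. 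The underlying issue is that in your decomposition $x=\sum q_k(Mg_k)+y$, the remainder $y$ lies in $\overline{\mcal S}$ but not necessarily in $\mcal S$, so you cannot conclude $y$ is one of your finitely many generators. The fix is to pass through monoid algebras: $\C[\overline{\mcal S}\cap\mcal C]$ is a finite module over the Noetherian ring $\C[\sum_k\N(Mg_k)]$, and $\C[\mcal S\cap\mcal C]$ is a submodule since $Mg_k\in\mcal S$, hence is itself finite; but this is more than the bare parallelotope trick you sketched.
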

\begin{proof}
Assume first that $\dim\mcal{C}=r$.
Let $\ell_1,\dots,\ell_m$ be linear functions on $\R^r$ with integral coefficients such that $\mcal{C}=\bigcap_{i=1}^m\{z\in\R^r:\ell_i(z)\geq0\}$,
and define $\mcal{S}_0=\mcal{S}$ and $\mcal{S}_i=\mcal{S}_{i-1}\cap\{z\in\R^r:\ell_i(z)\geq0\}$ for $i=1,\dots,m$; observe that
$\mcal{S}\cap\mcal{C}=\mcal{S}_m$. Assuming by induction that $\mcal{S}_{i-1}$ is finitely generated, by \cite[Theorem 4.4]{Swa92} we have that
$\mcal{S}_i$ is finitely generated.

Now assume $\dim\mcal{C}<r$ and let $\mcal{H}$ be a rational hyperplane containing $\mcal{C}$. Let $\ell$ be a linear function with integral
coefficients such that $\mcal{H}=\ker(\ell)$. The monoid $\mcal{S}\cap\mcal{H}$ is finitely generated by the first part of the proof applied to
the functions $\ell$ and $-\ell$. Now we conclude by induction on $r$.
\end{proof}

The next lemma will turn out to be indispensable and it shows that it is enough to check additivity of a superadditive map at one point only.

\begin{lem}\label{linear}
Let $\mcal{S}=\sum\N e_i$ be a finitely generated monoid and let $f\colon\mcal{S}\rightarrow G$ be a superadditive map to an
ordered monoid $G$ (respectively let $f\colon\mcal S_\R\rightarrow V$ be a superlinear map to a cone $V$ with an ordering). Assume that there is a point
$s_0=\sum s_ie_i\in\mcal{S}$ with all $s_i>0$, such that $f(s_0)=\sum s_if(e_i)$ and $f(\kappa s_0)=\kappa f(s_0)$ for every positive integer
$\kappa$ (respectively there is a point $s_0=\sum s_ie_i\in\mcal S_\R$ with all $s_i>0$ such that $f(s_0)=\sum s_if(e_i)$).
Then $f$ is additive (respectively linear).
\end{lem}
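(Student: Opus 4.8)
The plan is to reduce the apparently global claim (additivity everywhere on $\mcal S$, or linearity on $\mcal S_\R$) to the single hypothesis that $f$ is additive at the one interior point $s_0$. First I would record the elementary consequence of superadditivity that is used repeatedly: if $x = y_1 + \dots + y_k$ in $\mcal S$ (respectively $\mcal S_\R$), then $f(x) \geq \sum f(y_j)$, and hence $f(s_0) = \sum s_i f(e_i)$ forces $f(s_0) \leq \sum s_i f(e_i)$ to be an equality in every intermediate regrouping; in particular, if we write $s_0 = \sum_i s_i e_i$ and split off one generator, $s_0 = (s_j e_j) + (s_0 - s_j e_j)$, superadditivity and the additivity at $s_0$ give $f(s_j e_j) + f(s_0 - s_j e_j) = f(s_0) = \sum_i s_i f(e_i)$, and comparing with $f(s_j e_j)\ge s_j f(e_j)$ and $f(s_0 - s_j e_j) \ge \sum_{i\ne j}s_i f(e_i)$ shows both of these are equalities. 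Iterating, $f$ is additive on \emph{any} decomposition of $s_0$ into pieces of the form $\sum_i t_i e_i$ with $0\le t_i\le s_i$.

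Next I would promote this to additivity on a full-dimensional subset. Fix generators $e_i, e_j$ (possibly $i=j$) and any $a, b \in \N$ (respectively $a,b\in\R_+$). Choose a positive integer $\kappa$ (in the monoid case; in the cone case just scale) large enough that the point $\kappa s_0 - a e_i - b e_j$ still lies in $\mcal S$ with non-negative coordinates in each generator — possible since all $s_i>0$. Then $\kappa s_0 = (a e_i) + (b e_j) + (\kappa s_0 - a e_i - b e_j)$, and by the hypothesis $f(\kappa s_0) = \kappa f(s_0) = \sum_i \kappa s_i f(e_i)$ together with the regrouping argument of the previous paragraph applied to $\kappa s_0$ in place of $s_0$, we get
\[
f(a e_i + b e_j) = a f(e_i) + b f(e_j).
\]
More generally the same reasoning gives $f\big(\sum_i a_i e_i\big) = \sum_i a_i f(e_i)$ for every $(a_i)$ with $a_i \geq 0$ (integers, resp. reals), because any such tuple can be absorbed as one summand in a decomposition of a suitable multiple $\kappa s_0$. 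This is exactly additivity (resp. linearity) of $f$ on all of $\mcal S$ (resp. $\mcal S_\R$), since every element of $\mcal S$ is of the form $\sum_i a_i e_i$.

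In the superlinear case one extra point deserves care: the hypothesis only supplies $f(s_0) = \sum s_i f(e_i)$ and does not separately assume $f(\kappa s_0) = \kappa f(s_0)$, so I would first note that positive homogeneity of a superlinear function is automatic — $f(\lambda x) \ge \lambda f(x)$ and $f(x) = f(\lambda^{-1}\cdot\lambda x) \ge \lambda^{-1} f(\lambda x)$ give $f(\lambda x) = \lambda f(x)$ for $\lambda \in \R_{>0}$ — so the scaling step above is free, and then the rest of the argument goes through verbatim with $\N$ replaced by $\R_+$ and no truncation needed (one simply scales $s_0$ up rather than multiplying by an integer). The main obstacle, such as it is, is purely bookkeeping: making sure that when we split a large multiple of $s_0$ into the desired summand plus a remainder, the remainder genuinely lies in $\mcal S$ (resp. $\mcal S_\R$) with the right non-negativity, which is where the hypothesis $s_i > 0$ for all $i$ is essential; once that is arranged, superadditivity does all the work and there is no further difficulty.
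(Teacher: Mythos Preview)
Your proof is correct and takes essentially the same approach as the paper: scale $s_0$ by a large $\kappa$ so that any given $p=\sum p_ie_i$ is dominated coordinatewise, then sandwich $\sum\kappa s_i f(e_i)=f(\kappa s_0)\ge f(p)+\sum(\kappa s_i-p_i)f(e_i)\ge\sum\kappa s_i f(e_i)$ to force $f(p)=\sum p_if(e_i)$. Your first paragraph is a redundant warm-up and the three-piece decomposition $(ae_i)+(be_j)+(\text{rest})$ in the second paragraph does not literally yield the stated conclusion about $f(ae_i+be_j)$---you need $p$ as a single summand---but your ``more generally'' clause fixes this and matches the paper's argument verbatim.
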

\begin{proof}
I prove the lemma when $f$ is superadditive, the other claim is proved analogously. For $p=\sum p_ie_i\in\mcal{S}$, choose $\kappa_0\in\Z_{>0}$
so that $\kappa_0s_i\geq p_i$ for all $i$. Then
\begin{align*}
\sum\kappa_0s_if(e_i)&=\kappa_0f(s_0)=f(\kappa_0s_0)\geq f(p)+\sum f\big((\kappa_0s_i-p_i)e_i\big)\\
&\geq\sum p_if(e_i)+\sum(\kappa_0s_i-p_i)f(e_i)=\sum\kappa_0s_if(e_i).
\end{align*}
Therefore all inequalities are equalities and $f(p)=\sum p_if(e_i)$.
\end{proof}

Now we are ready to prove the main result of this section, which will be crucial \S\ref{plt}.

\begin{thm}\label{piecewise}
Let $f$ be a superlinear function on a polyhedral cone $\mcal{C}\subset\R^{r+1}$ with $\dim\mcal{C}=r+1$, such that for every
$2$-plane $H\subset\R^{r+1}$ the function $f_{|H\cap\mcal{C}}$ is piecewise linear. Then $f$ is piecewise linear.
\end{thm}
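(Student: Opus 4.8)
The plan is to argue by induction on $r$. The base case $r=1$ is immediate, since then $\mcal C$ itself is a $2$-dimensional cone and piecewise linearity is hypothesised. For the inductive step, the strategy is to first show that $f$ is \emph{rationally} piecewise linear on every rational $2$-plane section, then to combine these local linearity data into a global polyhedral decomposition.

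\emph{Step 1: reduce to rational $2$-planes and extract linearity loci.} For each rational $2$-plane $H$, the function $f_{|H\cap\mcal C}$ is piecewise linear by hypothesis; since it is also superlinear on the $2$-dimensional cone $H\cap\mcal C$, its domains of linearity are finitely many subcones cut out along finitely many rays. I would first check that these rays are rational: a superlinear piecewise linear function on a $2$-dimensional rational polyhedral cone has linear pieces whose common boundary rays are determined by the breaking of linearity, and superlinearity forces these to be "concave corners"; rationality of the breaking rays follows because $f$ restricted to any rational ray is determined by rational data once we know (Lemma \ref{linear}) that linearity at an interior point propagates. More robustly, I would invoke Proposition \ref{Lip}: $f$ is concave on $\mcal C$, hence locally Lipschitz on $\Int\mcal C$, so $f$ is continuous, and a continuous concave function that is piecewise linear on each rational $2$-plane has a well-defined "linearity equivalence relation" on $\mcal C$.

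\emph{Step 2: produce a candidate linear function on a top-dimensional subcone and bootstrap.} Fix a point $x_0\in\Int\mcal C$. Restricting to rational $2$-planes through $x_0$, I get that $f$ is linear on a neighbourhood of $x_0$ inside each such plane; using finitely many such planes spanning $\R^{r+1}$ and concavity, I would show $f$ agrees near $x_0$ with a single linear function $\ell$. Then I consider the locus $\mcal C_\ell=\{x\in\mcal C: f(x)=\ell(x)\}$; by concavity of $f-\ell$ and the fact that $f-\ell\le 0$ near $x_0$ is actually $=0$ there, $\mcal C_\ell$ is a closed convex subcone, and I would argue it is polyhedral by checking that its intersection with every $2$-plane is polyhedral (which follows from Step 1) together with an induction on dimension applied to the "faces" where $f\neq\ell$. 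The complement decomposes into finitely many lower-dimensional faces on each of which I apply the induction hypothesis in dimension $r$ (intersecting with a hyperplane), obtaining finitely many polyhedral linearity pieces there; gluing gives a finite polyhedral decomposition of $\mcal C$.

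\emph{Main obstacle.} The delicate point is Step 2: passing from "linear on each $2$-plane section" to "linear on a genuinely top-dimensional subcone", and then controlling the combinatorics so that only \emph{finitely many} linear pieces occur globally. The $2$-plane hypothesis only sees two-dimensional slices, so a priori the linearity loci could vary wildly; the key leverage is \emph{concavity} (superlinearity), which rigidifies things — the regions of linearity of a concave piecewise-linear function are convex, and two distinct maximal linear pieces meet along a genuine face. I expect the cleanest route is: show $\mcal C$ is covered by the closed convex cones $\overline{\{f=\ell\}}$ as $\ell$ ranges over the (finitely many, by a rationality/boundedness argument on denominators coming from Lemma \ref{lem:gordan} and rationality of the $2$-plane breaking rays) linear functions appearing on rational $2$-plane sections, and then that this cover is finite and polyhedral. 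Making the finiteness argument airtight — ruling out infinitely many linear pieces accumulating — is the real work, and is presumably where the $2$-plane hypothesis is used most essentially via a dimension-reduction to the already-handled planar case.
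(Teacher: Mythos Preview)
Your overall architecture --- induction on $r$, identify maximal closed convex subcones on which $f$ agrees with some linear $\ell$, then argue finiteness --- matches the paper's, and you correctly flag finiteness as the crux. But two things go wrong.

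First, the rationality discussion is a red herring. The theorem makes no rationality claim, the hypothesis is over \emph{all} $2$-planes (not just rational ones), and Gordan's lemma plays no role here. Drop that thread entirely.

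Second, and more seriously, your proposed test for polyhedrality of $\mcal C_\ell=\{x\in\mcal C:f(x)=\ell(x)\}$ --- ``check that its intersection with every $2$-plane is polyhedral'' --- does not work. The round cone $\{(x,y,z):x^2+y^2\le z^2,\ z\ge0\}$ meets every $2$-plane through the origin in a polyhedral cone (at most two extremal rays), yet is not polyhedral. So $2$-plane sections alone cannot certify polyhedrality of a convex cone, and your subsequent ``induction on faces where $f\neq\ell$'' is not well-posed: the complement of $\mcal C_\ell$ in $\mcal C$ is not a union of lower-dimensional faces.

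What the paper does instead is work \emph{ray by ray}. Fixing a ray $R\subset\mcal C$, it uses the inductive hypothesis on a hyperplane through $R$ together with the $2$-plane hypothesis and Lemma~\ref{linear} (additivity at a single interior point forces additivity on the whole cone) to build, for each direction $c\in\mcal C\setminus R$, a genuinely $(r+1)$-dimensional cone $\mcal C_c\supset R$ on which $f$ is linear. The heart of the proof is then showing that only \emph{finitely many} distinct linear extensions arise as $c$ varies: one argues by contradiction via compactness on the unit sphere --- if infinitely many accumulated along some limiting direction, a single top-dimensional linearity cone constructed near that limit would overlap infinitely many of the $\mcal C_c$ in full dimension, forcing their linear extensions to coincide. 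A sphere-covering argument then gives finitely many cones covering $\mcal C$, and a separate contradiction argument (passing to a face when extremal rays accumulate) shows each piece is polyhedral. The mechanism you are missing is exactly this: the $2$-plane hypothesis is not used to \emph{test} polyhedrality of a fixed locus, but to \emph{construct} top-dimensional linearity cones via Lemma~\ref{linear} and then to rule out accumulation of their linear extensions.
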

\begin{proof}
I prove the lemma by induction on $r$. In the proof, $\|\cdot\|$ denotes the standard Euclidean norm and $S^r\subset\R^{r+1}$ is
the unit sphere.\\[2mm]
{\em Step 1.\/}
Fix a ray $R\subset\mcal{C}$. In this step I prove there is a collection of $(r+1)$-dimensional polyhedral cones
$\{\mcal{C}_{\alpha}\}_{\alpha\in\mcal I_R}$ with $\mcal C_\alpha\subset\mcal C$, such that
\begin{enumerate}
\item[(i)] for every $c\in\mcal{C}\backslash R$ there is $\alpha\in\mcal I_R$ such that $R\subsetneq\mcal{C}_{\alpha}\cap(R+\R_+c)$,
\item[(ii)] for every $\alpha\in\mcal I_R$ the map $f_{|\mcal{C}_{\alpha}}$ is linear,
\item[(iii)] for every two distinct $\alpha,\beta\in\mcal I_R$ the linear extensions of $f_{|\mcal{C}_{\alpha}}$ and $f_{|\mcal{C}_{\beta}}$ to $\R^{r+1}$
are different.
\end{enumerate}

Fix $c\in\mcal C\backslash R$, and choose a hyperplane $\mcal H_r\supset R+\R_+c$. By induction there is an $r$-dimensional polyhedral cone
$\mcal{C}_{(r)}=\sum_{i=1}^r\R_+e_i\subset\mcal H_r\cap\mcal{C}$ such that $R\subsetneq\mcal{C}_{(r)}\cap(R+\R_+c)$ and $f_{|\mcal{C}_{(r)}}$
is linear. Then $f(e_0)=\sum_{i=1}^rf(e_i)$, where $e_0=\sum_{i=1}^re_i$, and
let $\mcal P$ be a $2$-plane such that $\mcal P\cap\mcal H_r=\R_+e_0$. By assumption, there is a point $e_{r+1}\in(\mcal P\cap\mcal{C})\backslash\R_+e_0$
such that $f|_{\R_+e_0+\R_+e_{r+1}}$ is linear, and in particular $f(e_0+e_{r+1})=f(e_0)+f(e_{r+1})$.
Setting $\mcal{C}_{(r+1)}=\sum_{i=1}^{r+1}\R_+e_i$, we have
$$f\big(\sum\nolimits_{i=1}^{r+1}e_i\big)=f(e_0+e_{r+1})=f(e_0)+f(e_{r+1})=\sum f(e_i),$$
so the map $f_{|\mcal{C}_{(r+1)}}$ is linear by Lemma \ref{linear}.
Let $\ell$ be the linear extension of $f_{|\mcal{C}_{(r+1)}}$ to $\R^{r+1}$, and set
$\mcal{C}_c=\{z\in\mcal{C}:f(z)=\ell(z)\}$. I claim $\mcal{C}_c$ is a closed cone. To that end,
if $u,v\in\mcal{C}_c$, then there are real numbers $u_i,v_i$ such that $u=\sum_{i=1}^{r+1}u_ie_i$ and $v=\sum_{i=1}^{r+1}v_ie_i$, and set
$e=\sum_{i=1}^{r+1}(1+|u_i|+|v_i|)e_i$. Note that $e$ and $e+u+v$ belong to $\mcal{C}_{(r+1)}\subset\mcal{C}_c$, thus
$$f(e+u+v)=\ell(e+u+v)=\ell(e)+\ell(u)+\ell(v)=f(e)+f(u)+f(v),$$
so $f$ is linear on the cone $\mcal{C}_{(r+1)}+\R_+u+\R_+v$ by Lemma \ref{linear}. In particular $f(u+v)=f(u)+f(v)=\ell(u)+\ell(v)=\ell(u+v)$,
hence $\mcal{C}_c$ is a cone. Denote by $\mcal Q$ the closure of $\mcal{C}_c$, and fix $q\in\mcal Q$. Then for every
$p\in\mcal{C}_c$, the function $f|_{\R_+p+\R_+q}$ is piecewise linear by assumption, and in particular continuous.
Since $\R_+p+R_{>0}q\subset\Int\mcal Q\subset\mcal{C}_c$, and $f$ and $\ell$ agree on $\mcal{C}_c$, this implies that $f$ is linear on
$\R_+p+\R_+q$, so $\mcal{C}_c$ is closed. Now, by varying $c\in\mcal C\backslash R$ we obtain the desired collection of cones.\\[2mm]
{\em Step 2.\/}
In this step I prove that $\mcal I_R$ is a finite set for every ray $R\subset\mcal C$.

Fix $R$, and arguing by contradiction assume that $\mcal I_R$ is infinite; we can assume that $\N\subset\mcal I_R$.
For each $n\in\N$ choose $x_n\in\Int\mcal{C}_n\backslash R$, and denote $\mcal H_n=(R+\R_+x_n)\cup(-R+\R_+x_n)$.
Let $R_n\subset\mcal H_n$ be the unique ray orthogonal to $R$ and let $S^r\cap R_n=\{Q_n\}$. By passing to a subsequence, we can assume that points
$Q_n$ converge to $Q_\infty\in S^r$, and set $\mcal H_\infty=(R+\R_+Q_\infty)\cup(-R+\R_+Q_\infty)$.

Pick $x\in R\backslash\{0\}$. By assumption, there is a point $y\in\mcal H_\infty\backslash R$ such that $f|_{R+\R_+y}$ is linear, and in particular
$f(x+y)=f(x)+f(y)$. If $\mcal{H}$ is a hyperplane such that $\mcal{H}\cap(\R x+\R y)=\R(x+y)$ and $\mcal H\cap\mcal C$ is an $r$-dimensional cone,
by induction there are finitely many $r$-dimensional polyhedral cones
$\mcal Q_i\subset\mcal{H}\cap\mcal{C}$ containing $x+y$ such that the map $f_{|\mcal Q_i}$ is linear for every $i$,
and for every $c\in(\mcal H\cap\mcal{C})\backslash\R_+(x+y)$ we have $\R_+(x+y)\subsetneq\bigcup_i\mcal Q_i\cap(\R_+(x+y)+\R_+c)$.
If $g_{ij}$ are finitely many generators of $\mcal Q_i$, then
$$f\big(\sum\nolimits_j g_{ij}+x+y\big)=\sum\nolimits_j f(g_{ij})+f(x+y)=\sum f(g_{ij})+f(x)+f(y),$$
so $f$ is linear on the cone $\widetilde{\mcal Q}_i=\mcal Q_i+\R_+x+\R_+y$ by Lemma \ref{linear}.

I claim that for every $c\in\mcal C\backslash\R_+(x+y)$ we have $\R_+(x+y)\subsetneq\bigcup_i\widetilde{\mcal Q}_i\cap(\R_+(x+y)+\R_+c)$, and in particular
there exists $0<\varepsilon\ll1$ such that $\R_+B(x+y,\varepsilon)\cap\mcal{C}=\R_+B(x+y,\varepsilon)\cap\bigcup_i\widetilde{\mcal Q_i}$. To that end,
if $c\in\mcal H$, then the claim follows by assumption on the cones $\mcal Q_i$. Otherwise, let $\{t\}=\mcal H\cap(x,y)$, let $c_m\in(c,t)$
be a sequence converging to $t$, and without loss of generality assume that $y$ and all $c_m$ are on the same side of $\mcal H$. If
$\{z_m\}=\mcal H\cap(x,c_m)$, then $c_m-z_m=\alpha_m(z_m-x)$ and $t-x=\beta(y-t)$ for some $\alpha_m,\beta\in\R_{>0}$, and thus
$c_m=(1-\alpha_m\beta)(t+\frac{\alpha_m+1}{1-\alpha_m\beta}(z_m-t))+\alpha_m\beta y$. We have $z_m\rightarrow t$ and $\alpha_m\rightarrow0$ when $m\rightarrow\infty$, hence
$t+\frac{\alpha_m+1}{1-\alpha_m\beta}(z_m-t)\in\bigcup_i\mcal Q_i$ for $m\gg0$ by assumption, so $c_m\in\bigcup_i\widetilde{\mcal Q}_i$,
proving the claim.

Since $\lim\limits_{n\rightarrow\infty}Q_n=Q_\infty$, we obtain $\mcal H_n\cap\Int B(x+y,\varepsilon)\neq\emptyset$ for $n\gg0$, and therefore,
by the claim above and by passing to a subsequence, there is an index $i_0$ such that $\widetilde{\mcal Q}_{i_0}$ intersects all $\mcal H_n\backslash R$.
In particular, since $(x,x_n)\subset\Int\mcal C_n$ by the choice of $x_n$, we have $\widetilde{\mcal Q}_{i_0}\cap\Int\mcal{C}_n\neq\emptyset$,
and therefore $\widetilde{\mcal Q}_{i_0}\cap\mcal{C}_n$ is an $(r+1)$-dimensional cone for all $n$.
Thus the linear extensions of all $f_{|\mcal{C}_n}$ to $\R^{r+1}$ are the same since they coincide with
the linear extension of $f_{|\widetilde{\mcal Q}_{i_0}}$, a contradiction.\\[2mm]
{\em Step 3.\/}
Therefore, for every ray $R\subset\mcal{C}$ the map $f|_{\mcal{C}_i}$ is linear for $i\in\mcal I_R$, and
there is small ball $B_R$ centred at $R\cap S^r$ such that $B_R\cap S^r\cap\mcal{C}=B_R\cap S^r\cap\bigcup_{i\in\mcal I_R}\mcal{C}_i$.
There are finitely many open sets $\Int B_R$ which cover the compact set $S^r\cap\mcal{C}$, and therefore we can choose finitely many cones
$\mcal C_i$ with $\mcal{C}=\bigcup_i\mcal C_i$. Note that by the construction in Step 1, the linear extensions of $f_{|\mcal{C}_i}$ to $\R^{r+1}$
are pairwise different.

It remains to show that all $\mcal C_i$ are polyhedral cones. Assume that $\mcal C_{i_0}$ is not polyhedral for some $i_0$, and let $L_n$
be its distinct extremal rays for $n\in\N$. If infinitely many $L_n$ do not belong to any other cone $\mcal C_i$, then passing to a subsequence I
can assume that they belong to a face of $\mcal C$, and we derive contradiction by induction. Therefore, I can assume that for every $n\gg0$ there
is an index $i_n\neq i_0$ such that $L_n\subset\mcal C_{i_n}$. Passing to a subsequence, there is an index $j_0\neq i_0$ such that
$L_n\subset\mcal C_{i_0}\cap\mcal C_{j_0}$ for all $n$. As before, we can assume that there does not exist a hyperplane containing infinitely many $L_n$,
so there are finitely many indices $k$ such that $\dim(\sum L_k)=r+1$. Thus the linear extensions of $f_{|\mcal{C}_{i_0}}$ and $f_{|\mcal{C}_{j_0}}$
to $\R^{r+1}$ are the same since they coincide with the linear extension of $f|_{\sum L_k}$, a contradiction.
\end{proof}

\section{Higher rank algebras}\label{sec:3}

\begin{dfn}\label{dfn:1}
Let $X$ be a variety, $\mcal{S}\subset\N^r$ a finitely generated monoid, let $\mu\colon\mcal{S}\rightarrow\WDiv(X)^{\kappa\geq0}$ be an additive map
and $\bMob_\mu\colon\mcal S\rightarrow\bMob(X)$ the subadditive map defined by $\bMob_\mu(s)=\bMob(\mu(s))$ for every $s\in\mcal S$. Then
$$R(X,\mu(\mcal{S}))=\bigoplus_{s\in\mcal{S}}H^0(X,\OO_X(\mu(s)))$$
is the {\em divisorial $\mcal S$-graded algebra associated to $\mu$.\/} The {\em b-divisorial $\mcal S$-graded algebra associated to $\mu$\/} is
$$R(X,\bMob_\mu(\mcal{S}))=\bigoplus_{s\in\mcal{S}}H^0(X,\OO_X(\bMob_\mu(s))),$$
and we obviously have $R(X,\bMob_\mu(\mcal S))\simeq R(X,\mu(\mcal S))$. If $e_1,\dots,e_\ell$ are generators of $\mcal S$ and if
$\mu(e_i)=k_i(K_X+\Delta_i)$, where $\Delta_i$ is an effective $\Q$-divisor for every $i$, the algebra $R(X,\mu(\mcal S))$ is the {\em adjoint ring
associated to $\mu$\/}.
\end{dfn}

\begin{rem}
When $\mcal S=\bigoplus_{i=1}^\ell\N e_i$ is a simplicial cone, the algebra $R(X,\mu(\mcal S))$ is denoted also by $R(X;\mu(e_1),\dots,\mu(e_\ell))$.
If $\mcal S'$ is a finitely generated submonoid of $\mcal S$, $R(X,\mu(\mcal S'))$ is used
to denote $R(X,\mu_{|\mcal S'}(\mcal S'))$. If $\mcal S$ is a submonoid of $\WDiv(X)^{\kappa\geq0}$ and $\iota\colon\mcal S\rightarrow\mcal S$
is the identity map, $R(X,\mcal S)$ is used to denote $R(X,\iota(\mcal S))$.
\end{rem}

\begin{rem}\label{rem:1}
Algebras considered in this paper are {\em algebras of sections\/} when varieties are smooth. I will occasionally, and without explicit mention,
view them as algebras of rational functions, in particular to be able to write $H^0(X,D)\simeq H^0(X,\Mob(D))\subset k(X)$.

Assume now that $X$ is smooth, $D\in\Div(X)$ and that $\Gamma$ is a prime divisor
on $X$. If $\sigma_\Gamma$ is the global section of $\OO_X(\Gamma)$ such that $\ddiv \sigma_\Gamma=\Gamma$, from the exact sequence
$$0\rightarrow H^0(X,\OO_X(D-\Gamma))\stackrel{\cdot \sigma_\Gamma}{\longrightarrow}H^0(X,\OO_X(D))\stackrel{\rho_{D,\Gamma}}{\longrightarrow}
H^0(\Gamma,\OO_\Gamma(D))$$
we define $\res_\Gamma H^0(X,\OO_X(D))=\im(\rho_{D,\Gamma})$. For $\sigma\in H^0(X,\OO_X(D))$, denote
$\sigma_{|\Gamma}=\rho_{D,\Gamma}(\sigma)$. Observe that
\begin{equation}\label{eq:1}
\ker(\rho_{D,\Gamma})=H^0(X,\OO_X(D-\Gamma))\cdot\sigma_\Gamma,
\end{equation}
and that $\res_\Gamma H^0(X,\OO_X(D))=0$ if $\Gamma\subset\Bs|D|$. If $D\sim D'$ is such that the restriction $D'_{|\Gamma}$ is defined,
then
$$\res_\Gamma H^0(X,\OO_X(D))\simeq\res_\Gamma H^0(X,\OO_X(D'))\subset H^0(\Gamma,\OO_\Gamma(D'_{|\Gamma})).$$
The {\em restriction of $R(X,\mu(\mcal S))$ to $\Gamma$\/} is defined as
$$\res_\Gamma R(X,\mu(\mcal S))=\bigoplus_{s\in\mcal{S}}\res_\Gamma H^0(X,\OO_X(\mu(s))).$$
This is an $\mcal S$-graded, not necessarily divisorial algebra.
\end{rem}

The following lemma summarises basic properties of higher rank finite generation.

\begin{lem}\label{lem:1}
Let $\mcal{S}\subset\N^n$ be a finitely generated monoid and let $R=\bigoplus_{s\in\mcal S}R_s$ be an $\mcal S$-graded algebra.
\begin{enumerate}
\item Let $\mcal S'$ be a truncation of $\mcal S$. If the $\mcal S'$-graded algebra $R'=\bigoplus_{s\in\mcal S'}R_s$
is finitely generated over $R_0$, then $R$ is finitely generated over $R_0$.

\item Assume furthermore that $\mcal S$ is saturated and let $\mcal S''\subset\mcal S$ be a finitely generated saturated submonoid.
If $R$ is finitely generated over $R_0$,
then the $\mcal S''$-graded algebra $R''=\bigoplus_{s\in\mcal S''}R_s$ is finitely generated over $R_0$.

\item Let $X$ be a variety and let
$\mu\colon\mcal{S}\rightarrow\WDiv(X)^{\kappa\geq0}$ be an additive map. If there exists a rational polyhedral subdivision
$\mcal{S}_\R=\bigcup_{i=1}^k\Delta_i$ such that $\bMob_{\mu|\Delta_i\cap\mcal{S}}$ is additive up to truncation for each $i$, then
$R(X,\mu(\mcal{S}))$ is finitely generated.
\end{enumerate}
\end{lem}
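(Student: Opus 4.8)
The plan is to handle the three statements in order, since (3) will rely on a combination of (1) and (2) together with Gordan's lemma (Lemma \ref{lem:gordan}).

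For (1), write $\mcal S = \sum_{i=1}^\ell \N e_i$ and let $\mcal S' = \sum_{i=1}^\ell \N\kappa_i e_i$ be the given truncation, so $R' = \bigoplus_{s\in\mcal S'}R_s$ is finitely generated over $R_0$. The key point is that each graded piece $R_t$ with $t\in\mcal S$ is a module over $R'$: for any $t$ there is $s\in\mcal S'$ with $s - t$... rather, one observes that $\mcal S$ is a finitely generated module over $\mcal S'$ as a monoid, i.e.\ there are finitely many $t_1,\dots,t_N\in\mcal S$ (representatives of the finitely many cosets, realized concretely as the elements $\sum c_i e_i$ with $0\le c_i<\kappa_i$) such that $\mcal S = \bigcup_{j}(t_j + \mcal S')$. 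Then $R = \sum_j R' \cdot (\text{the } t_j\text{-graded part and its }\mcal S'\text{-translates})$, so $R$ is generated over $R_0$ by the finitely many generators of $R'$ together with a finite set of generators for each of the finitely many finitely generated $R'$-modules $\bigoplus_{s\in\mcal S'}R_{t_j+s}$. Finite generation of these modules follows because $R'$ is Noetherian (being a finitely generated algebra over $R_0$, which we may take to be Noetherian, or argue directly that each is a finitely generated $R'$-module using that $R$ is an integral domain inside $k(X)$ in the geometric case) — so $R$ is a finitely generated $R'$-algebra, hence a finitely generated $R_0$-algebra.

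For (2), with $\mcal S$ saturated and finitely generated, $R$ finitely generated over $R_0$, and $\mcal S''\subset\mcal S$ a finitely generated saturated submonoid: here the Veronese-type argument applies. Since $R$ is a finitely generated $\mcal S$-graded $R_0$-algebra, after passing to a truncation (allowed by (1)) we may assume $R$ is generated by elements in degrees $e_1,\dots,e_\ell$. The subalgebra $R'' = \bigoplus_{s\in\mcal S''}R_s$ is the $\mcal S''$-graded piece, and because $\mcal S''$ is a \emph{saturated} subcone intersected with the lattice, $\mcal S'' = \mcal S''_\R\cap\mcal S$; one then invokes the standard fact (e.g.\ via the grading by the finite group $\mcal S^{\mathrm{gp}}/(\mcal S'')^{\mathrm{gp}}$ combined with rational polyhedrality of the cone $\mcal S''_\R$) that a "polyhedral Veronese" of a finitely generated graded algebra is finitely generated. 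Concretely, I would cover the situation by reducing to the classical statement that if $\mcal C\subset\mcal S_\R$ is a rational polyhedral cone then $\bigoplus_{s\in\mcal S\cap\mcal C}R_s$ is a finitely generated $R_0$-algebra — this is proved by choosing generators of $\mcal S\cap\mcal C$ (Gordan), expressing each as a combination of the $e_i$, and using that the corresponding monomials in the generators of $R$ generate.

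For (3): by hypothesis there is a rational polyhedral fan $\mcal S_\R = \bigcup_{i=1}^k\Delta_i$ with $\bMob_{\mu|\Delta_i\cap\mcal S}$ additive up to truncation. For each $i$, set $\mcal S^{(i)} = \mcal S\cap\Delta_i$, a finitely generated saturated monoid by Gordan's lemma. On $\mcal S^{(i)}$, after passing to a suitable truncation $\mcal S^{(i)\prime}$ the map $\bMob_\mu$ is additive, so $R(X,\bMob_\mu(\mcal S^{(i)\prime})) = \bigotimes$-type algebra is a polynomial-like ring on $H^0(X,\OO_X(\bMob_\mu(e)))$ for the generators $e$ — more precisely it is the image of $\Sym^\bullet$ of these finitely many section spaces, hence finitely generated over $R_0$; then by (1) $R(X,\bMob_\mu(\mcal S^{(i)}))$ is finitely generated. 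Since $R(X,\bMob_\mu(\mcal S))\simeq R(X,\mu(\mcal S))$, it remains to glue: $R(X,\mu(\mcal S))$, restricted to each $\mcal S^{(i)}$, is finitely generated, and the $\mcal S^{(i)}$ cover $\mcal S$. Choosing a common truncation and finitely many generators from each piece, one checks any homogeneous element of $R$ lies in the subalgebra they generate, by writing its degree as a point of some $\Delta_i$ and using additivity there. The main obstacle is precisely this last gluing step: additivity of $\bMob_\mu$ holds only \emph{within} each cone $\Delta_i$, not across the walls, so one must be careful that a product of sections coming from different cones still decomposes correctly — the resolution is that membership in a single cone $\Delta_i$ suffices to factor a given graded piece, and finitely many generators per cone, taken together over all $i$, generate everything. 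I expect Step (2) and this gluing in (3) to require the most care; statements (1) and the reduction via Gordan are routine.
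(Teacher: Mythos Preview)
Your approach to (1) differs from the paper's and carries a gap. The paper observes that for any homogeneous $f\in R_s$ one has $f^{\kappa_1\cdots\kappa_n}\in R'$, so $R$ is integral over $R'$, and then invokes Noether's theorem on finiteness of integral closure. Your coset decomposition $\mcal S=\bigcup_j(t_j+\mcal S')$ is correct, but the assertion that each $R'$-module $\bigoplus_{s\in\mcal S'}R_{t_j+s}$ is finitely generated does not follow just from $R'$ being Noetherian; a Noetherian ring has plenty of non-finitely-generated modules. You would need to exhibit each such module as a submodule of something finitely generated, which you have not done. The integrality argument bypasses this entirely.

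The real problem is in (3). You write that once $\bMob_\mu$ is additive on a truncation $\mcal S^{(i)\prime}$, the algebra $R(X,\bMob_\mu(\mcal S^{(i)\prime}))$ is ``the image of $\Sym^\bullet$ of these finitely many section spaces.'' This is false: additivity of $\bMob_\mu$ means $\bMob_\mu(s_1+s_2)=\bMob_\mu(s_1)+\bMob_\mu(s_2)$ as b-divisors, but it says nothing about surjectivity of the multiplication map $H^0(X,\bMob_\mu(s_1))\otimes H^0(X,\bMob_\mu(s_2))\to H^0(X,\bMob_\mu(s_1+s_2))$. The paper's argument supplies the missing idea: choose a model $Y\to X$ on which all the b-divisors $\bMob_\mu(\kappa e_{ij})$ \emph{descend}. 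Then for every $s$ in the truncated cone, $\bMob_\mu(s)$ descends to $Y$ as well (by additivity), so on $Y$ one is looking at the multigraded section ring of finitely many \emph{free} divisors, which is finitely generated by \cite[2.8]{HK00}. Without this passage to a model and the appeal to semiampleness/freeness, your ``image of $\Sym$'' claim has no content. Once this is fixed, your gluing over the cones $\Delta_i$ is fine and matches the paper: since every degree lies in some $\Delta_i$, the union of the generators over all $i$ generates the whole algebra.
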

\begin{proof}
If $\mcal S=\sum_{i=1}^n\N e_i$ and $\mcal S'=\sum_{i=1}^n\N\kappa_i e_i$, then for any $f\in R$ we have $f^{\kappa_1\cdots\kappa_n}\in R'$,
so $R$ is an integral extension of $R'$, and (1) follows by the theorem of Emmy Noether on finiteness of integral closure.

Claim (2) is \cite[4.8]{ELMNP}.

For (3), denote $\m=\bMob_\mu$ and $\mcal S_i=\Delta_i\cap\mcal{S}$. By Lemma \ref{lem:gordan}, choose a set of generators
$\{e_{ij}:j=1,\dots,k_i\}$ of $\mcal{S}_i$, and let $\kappa$ be a positive integer such that $\m$ is additive on each $\mcal S_i^{(\kappa)}$.
By (1), it is enough to show that the algebra $R(X,\m(\mcal{S}^{(\kappa)}))$ is finitely generated.
To that end, let $Y\rightarrow X$ be a model such that $\m(\kappa e_{ij})$ descend to $Y$ for all $i,j$. Then it is easy to see that $\m(s)$ descends
to $Y$ for every $s\in\bigcup_i\mcal S_i^{(\kappa)}$, and thus $R(X,\m(\mcal{S}_i^{(\kappa)}))\simeq\bigoplus_{s\in\mcal{S}_i^{(\kappa)}}H^0(Y,\m(s)_Y)$
for every $i$. Since $R(Y;\m_Y(e_{i1}),\dots,\m_Y(e_{ik_i}))$ is finitely generated by \cite[2.8]{HK00}, so is the algebra
$R(X,\m(\mcal{S}_i^{(\kappa)}))$ by projection. Since $\mcal{S}_i^{(\kappa)}=\Delta_i\cap\mcal{S}^{(\kappa)}$,
the union of sets of generators of all $R(X,\m(\mcal{S}_i^{(\kappa)}))$ spans $R(X,\m(\mcal{S}^{(\kappa)}))$.
\end{proof}

I will need the next result in the proof of Proposition \ref{pro:1} and in \S\ref{plt}.

\begin{lem}\label{concave}
Let $X$ be a variety, let $\mcal{S}\subset\N^r$ be a finitely generated monoid and let $f\colon\mcal{S}\rightarrow G$ be
a superadditive map to a monoid $G$ which is a subset of $\WDiv(X)$ or $\bMob(X)$, such that for every $s\in\mcal{S}$
the map $f_{|\N s}$ is additive up to truncation.

Then there is a unique superlinear function $f^\sharp\colon\mcal{S}_\R\rightarrow G_\R$ such that
for every $s\in\mcal{S}$ there exists $\lambda_s\in\Z_{>0}$ with $f(\lambda_s s)=f^\sharp(\lambda_s s)$.
Furthermore, if $\mcal{C}\subset\R^r$ is a rational polyhedral cone, then $f_{|\mcal{C}\cap\mcal{S}}$ is additive up to
truncation if and only if $f^\sharp_{|\mcal C}$ is linear.

In particular, if $G=\Div(X)$ and $f=\bMob_\mu$, where $\mu\colon\mcal S\rightarrow\Div(X)$ is an additive map, then
\begin{equation}\label{eq:2}
f^\sharp(s)=\overline{\mu(s)}-\sum\big(\ord_E\|\mu(s)\|\big)E,
\end{equation}
where the sum runs over all geometric valuations $E$ on $X$.
\end{lem}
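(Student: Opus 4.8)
The plan is to prove the three assertions in order: first construct $f^\sharp$ and establish its superlinearity, then prove the additivity-up-to-truncation $\Leftrightarrow$ linearity equivalence, and finally identify $f^\sharp$ with the mobile-part formula in the divisorial case. For the construction of $f^\sharp$, I would start from the hypothesis that $f_{|\N s}$ is additive up to truncation, meaning that for each $s$ there is $\lambda_s\in\Z_{>0}$ with $f(\kappa\lambda_s s)=\kappa f(\lambda_s s)$ for all $\kappa\in\Z_{>0}$. This forces the definition $f^\sharp(s)=\tfrac{1}{\lambda_s}f(\lambda_s s)$ on rays; one checks this is independent of the choice of $\lambda_s$ (if $\lambda_s,\lambda_s'$ both work, compare both against their product) and is positively homogeneous by construction. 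Superadditivity of $f$ at generators, together with Proposition \ref{Lip} (applied to the $\Q$-superadditive, $\Q_+$-homogeneous function $s\mapsto f^\sharp(s)$ on $\mcal S_\Q$, working coordinatewise in $G_\R\subset\WDiv(X)_\R$ or $\bMob(X)_\R$), gives a unique superlinear extension to $\mcal S_\R$. Uniqueness of $f^\sharp$ with the stated property is immediate: any two such functions agree on a dense set of rays and are continuous on the interior, hence equal.

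For the equivalence, the forward direction is the substantive one. Suppose $f_{|\mcal C\cap\mcal S}$ is additive up to truncation; pick a point $s_0$ in the relative interior of $\mcal C\cap\mcal S$ with all coordinates positive relative to a chosen generating set of $\mcal C\cap\mcal S$, at which additivity holds after truncation. Then Lemma \ref{linear} (the ``check additivity at one interior point'' lemma) upgrades this to genuine additivity of a truncation of $f$ on $\mcal C\cap\mcal S$, and hence $f^\sharp_{|\mcal C}$ is linear by the homogeneity and density argument. Conversely, if $f^\sharp_{|\mcal C}$ is linear, then restricting to a set of monoid generators $e_{ij}$ of $\mcal C\cap\mcal S$ and choosing a common truncation level $\kappa$ with $f(\kappa e_{ij})=f^\sharp(\kappa e_{ij})$ for all $j$, linearity of $f^\sharp$ gives $f(\kappa\sum n_j e_{ij})\geq\sum n_j f(\kappa e_{ij})=f^\sharp(\kappa\sum n_j e_{ij})\geq f(\kappa\sum n_j e_{ij})$ using superadditivity of $f$ and superlinearity-to-the-point of $f^\sharp$, so all inequalities are equalities, i.e.\ $f$ is additive on $(\mcal C\cap\mcal S)^{(\kappa)}$.

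For the final formula, take $G=\Div(X)$ and $f=\bMob_\mu$. For a single divisor $D=\mu(s)$, the classical fact is that $\ord_E\|D\| = \lim_k \tfrac{1}{k}\ord_E(\mathrm{Fix}|kD|) = \inf_k \tfrac1k \ord_E(\mathrm{Fix}|kD|)$, so that $\overline{\mu(s)} - \sum_E (\ord_E\|\mu(s)\|)E$ is precisely $\lim_k \tfrac1k \bMob(k\mu(s))_Y$ on each model $Y$, i.e.\ the ``b-divisorial limit'' of the mobile parts; since $f$ is additive up to truncation along $\N s$, the limit is attained at $\lambda_s s$, and $f^\sharp(s) = \tfrac1{\lambda_s}\bMob(\lambda_s\mu(s)) = \overline{\mu(s)} - \sum_E(\ord_E\|\mu(s)\|)E$. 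One should note $\ord_E$ is well-defined on $\bigcone(X)_\R$ and extends homogeneously, matching the homogeneity of $f^\sharp$, and that only finitely many $E$ (those appearing on a fixed model resolving $\lambda_s\mu(s)$) contribute for a given $s$.

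The main obstacle I expect is the bookkeeping in the convex-geometry step: verifying carefully that $f^\sharp$ as defined ray-by-ray is genuinely superadditive (not just superadditive along individual rays) so that Proposition \ref{Lip} applies, and that the value of $f^\sharp$ on $\mcal S_\R$ really does land in $G_\R$ rather than some larger space — this requires knowing that the coordinatewise structure of $\WDiv(X)$ or $\bMob(X)$ is respected under the limit, which is where the additivity-up-to-truncation hypothesis does the real work. Everything else is a routine application of Lemma \ref{linear} and Proposition \ref{Lip}.
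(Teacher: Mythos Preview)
Your proposal is correct and follows essentially the same approach as the paper: define $f^\sharp$ ray-by-ray via $f^\sharp(s)=\tfrac{1}{\kappa}f(\kappa s)$, check well-definedness and $\Q$-superadditivity, extend to $\mcal S_\R$ via Proposition~\ref{Lip} applied coordinatewise, and handle the cone equivalence using Lemma~\ref{linear} on a common truncation of the generators. One minor point: you have the two directions of the equivalence reversed in difficulty---the paper treats ``$f$ additive up to truncation $\Rightarrow f^\sharp$ linear'' as the trivial direction (no need to invoke Lemma~\ref{linear} there: additivity on a truncation already forces $f^\sharp$ to be the linear extension of $f$ on that truncation), while ``$f^\sharp$ linear $\Rightarrow f$ additive up to truncation'' is where Lemma~\ref{linear} does the work, exactly as in your ``converse'' paragraph.
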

\begin{proof}
The construction will show that $f^\sharp$ is the unique function with the stated properties.
To start, fix a point $s\in\mcal{S}_\Q$, choose $\kappa\in\Z_{>0}$ so that $\kappa s\in\mcal{S}$ and $f_{\N\kappa s}$ is additive, and set
$$f^\sharp(s)=f(\kappa s)/\kappa.$$
This is well-defined: if $\kappa'$ is another positive integer such that $\kappa's\in\mcal{S}$ and $f_{\N\kappa's}$ is additive, then
$\kappa f(\kappa's)=f(\kappa\kappa's)=\kappa'f(\kappa s)$, so $f(\kappa s)/\kappa=f(\kappa's)/\kappa'$.

Now fix $\xi\in\Q_+$, and let $\lambda$ be a positive integer such that $\lambda\xi\in\N$, $\lambda\xi s\in\mcal{S}$ and
$f_{\N\lambda\xi s}$ is additive. Then
$$f^\sharp(\xi s)=f(\lambda\xi s)/\lambda=\xi f(\lambda\xi s)/\lambda\xi=\xi f^\sharp(s),$$
so $f^\sharp$ is positively homogeneous with respect to rational scalars.
Further, let $s_1,s_2\in\mcal{S}_\Q$ and $\kappa\in\Z_{>0}$ be such that
$f(\kappa s_1)=f^\sharp(\kappa s_1)$, $f(\kappa s_2)=f^\sharp(\kappa s_2)$ and
$f\big(\kappa(s_1+s_2)\big)=f^\sharp\big(\kappa(s_1+s_2)\big)$. By superadditivity of $f$ we have
$f(\kappa s_1)+f(\kappa s_2)\leq f\big(\kappa(s_1+s_2)\big)$,
so dividing this by $\kappa$ we obtain superadditivity of $f^\sharp$.

Let $E$ be any divisor on $X$, respectively any geometric valuation $E$ over $X$, when $G\subset\WDiv(X)$, respectively $G\subset\bMob(X)$.
Consider the function $f^\sharp_E$ given by $f^\sharp_E(s)=\mult_Ef^\sharp(s)$.
Proposition \ref{Lip} applied to each $f^\sharp_E$ shows that $f^\sharp$ extends to a superlinear function on $\mcal{S}_\R$.
Now \eqref{eq:2} is just a restatement of the definition of $f^\sharp$ when $f=\bMob_\mu$.

As for the statement on cones, necessity is clear. Now assume $f^\sharp|_{\mcal{C}}$ is linear and let $e_i$ be finitely many generators of
$\mcal{C}\cap\mcal{S}$, cf.\ Lemma \ref{lem:gordan}. Let $s_0=\sum e_i$, and let $\mu$ be a positive integer such that
$f(\mu s_0)=f^\sharp(\mu s_0)$ and $f(\mu e_i)=f^\sharp(\mu e_i)$ for all $i$. Then from $f^\sharp(s_0)=\sum f^\sharp(e_i)$ we obtain
$f(\mu s_0)=\sum f(\mu e_i)$, and Lemma \ref{linear} implies that $f^\sharp$ is additive on $(\mcal C\cap\mcal S)^{(\mu)}$.
\end{proof}

\begin{dfn}
In the context of Lemma \ref{concave}, $f^\sharp$ is called {\em the straightening of $f$\/}.
\end{dfn}

\begin{pro}\label{pro:1}
Let $X$ be a variety, $\mcal S\subset\N^r$ a finitely generated saturated monoid and $\mu\colon\mcal S\rightarrow \WDiv(X)^{\kappa\geq0}$ an
additive map. Let $\mcal L$ be a finitely generated submonoid of $\mcal S$ and assume $R(X,\mu(\mcal S))$ is finitely generated. Then
$R(X,\mu(\mcal L))$ is finitely generated. Moreover, the map $\m=\bMob_{\mu|\mcal L}$ is rationally piecewise additive up to truncation.
In particular, there is a positive integer $p$ such that $\bMob_\mu(ips)=i\bMob_\mu(ps)$ for every $i\in\N$ and every $s\in\mcal L$.
\end{pro}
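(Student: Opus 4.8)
The plan is to handle the two assertions in turn: the finite generation of $R(X,\mu(\mcal L))$ by reduction to Lemma \ref{lem:1}, and the piecewise additivity of $\m=\bMob_{\mu|\mcal L}$ by combining finite generation with Lemma \ref{concave} and a short linear‑programming computation. For the first assertion, let $\overline{\mcal L}=\mcal L_\R\cap\N^r$ be the saturation of $\mcal L$ in $\N^r$. Since $\mcal S$ is saturated and $\mcal L_\R\subset\mcal S_\R$ we have $\overline{\mcal L}\subset\mcal S$, and $\overline{\mcal L}$ is finitely generated by Lemma \ref{lem:gordan}; hence $R(X,\mu(\overline{\mcal L}))$ is finitely generated by Lemma \ref{lem:1}(2). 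Each generator of $\overline{\mcal L}$ lies in the rational polyhedral cone $\mcal L_\R$, so a positive multiple of it lies in $\mcal L$, and therefore $\mcal L$ contains a truncation $(\overline{\mcal L})^{(\kappa)}$ of $\overline{\mcal L}$ for some $\kappa\in\Z_{>0}$. As the $\kappa$‑th Veronese subalgebra of a finitely generated graded algebra over a Noetherian ring is finitely generated (Artin--Tate), $R(X,\mu((\overline{\mcal L})^{(\kappa)}))$ is finitely generated; and since it sits inside $R(X,\mu(\mcal L))\subset R(X,\mu(\overline{\mcal L}))$ with the map $f\mapsto f^\kappa$ making the larger ring module‑finite over it (exactly as in the proof of Lemma \ref{lem:1}(1)), $R(X,\mu(\mcal L))$ is a submodule of a finite module over a Noetherian ring, hence finitely generated.

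For the second assertion, applying the previous paragraph to each submonoid $\N s\subset\mcal L$ shows $R(X,\mu(\N s))$ is finitely generated, so $\m_{|\N s}$ is additive up to truncation; by Lemma \ref{concave} we therefore have the straightening $\m^\sharp\colon\mcal L_\R\to\bMob(X)_\R$, and by the last part of that lemma it suffices to prove that $\m^\sharp$ is rationally piecewise linear. Fix finitely many homogeneous generators $g_1,\dots,g_N$ of $R(X,\mu(\mcal L))$, say $g_j\in H^0(X,\OO_X(\mu(d_j)))$ with $d_j\in\mcal L$, and let $G_j=\ddiv g_j\in|\mu(d_j)|_\Z$ be their divisors of zeros. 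Since every graded piece $H^0(X,\OO_X(m\mu(s)))$ is spanned by monomials $\prod_jg_j^{c_j}$ with $\sum_jc_jd_j=ms$, comparing orders of vanishing along an arbitrary geometric valuation $E$ over $X$ of such monomials with those of an element of $|\mu(s)|_\Q$ (and using $\sum_je_jG_j\in|\mu(s)|_\Q$ for the reverse inequality) yields
\[
\ord_E\|\mu(s)\|=\min\Big\{\textstyle\sum_je_j\ord_E(G_j)\ :\ e\geq0,\ \sum_je_jd_j=s\Big\}.
\]

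By linear programming duality the right–hand side is the value at $s$ of the support function of the polyhedron $P_E=\{y:\langle d_j,y\rangle\leq\ord_E(G_j)\ \text{for all }j\}$, whose normal cones are all generated by subsets of the fixed finite set $\{d_1,\dots,d_N\}$; hence $s\mapsto\ord_E\|\mu(s)\|$ is linear on each cone of the fixed finite rational fan $\Sigma$ obtained as the common refinement of all cones $\sum_{j\in J}\R_+d_j$, $J\subset\{1,\dots,N\}$ — simultaneously for every $E$. Since $\m^\sharp(s)=\overline{\mu(s)}-\sum_E(\ord_E\|\mu(s)\|)E$ and $s\mapsto\mu(s)$ is linear, it follows that $\m^\sharp$ is linear on each cone of the rational polyhedral subdivision $\mcal L_\R=\bigcup_i\Delta_i$ cut out by $\Sigma$; that is, $\m^\sharp$ is rationally piecewise linear, and by Lemma \ref{concave} again $\m$ is additive up to truncation on each $\Delta_i\cap\mcal L$. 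Finally, taking $p$ to be a common multiple of the truncation levels $\kappa_i$ on which $\m$ is additive on $\Delta_i\cap\mcal L$, every $s\in\mcal L$ lies in some $\Delta_{i_0}$, both $ps$ and $ips$ then lie in $(\Delta_{i_0}\cap\mcal L)^{(\kappa_{i_0})}$, and additivity there gives $\bMob_\mu(ips)=i\,\bMob_\mu(ps)$.

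The step I expect to be the crux is the linear‑programming formula for $\ord_E\|\mu(s)\|$: here one genuinely uses finite generation, to know that the graded pieces of $R(X,\mu(\mcal L))$ are spanned by monomials in a fixed finite set of sections, so that the divisors competing in the infimum defining $\ord_E\|\mu(s)\|$ are exactly the nonnegative combinations $\sum_je_jG_j$ with $\sum_je_jd_j=s$. Once that is in hand, the passage to the support function of $P_E$ and the observation that its linearity chambers are controlled, uniformly in $E$, by the finitely many degrees $d_j$ is elementary convex geometry, and the remaining steps are the truncation bookkeeping already used for the first assertion.
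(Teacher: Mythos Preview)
Your proof is correct and follows essentially the same route as the paper: pass to the saturation $\overline{\mcal L}=\mcal L_\R\cap\N^r$, use Lemma~\ref{lem:1}(2) to get finite generation there, and then use that finite generation forces each $\ord_E\|\mu(\cdot)\|$ to be piecewise linear on a common rational fan, so that $\m^\sharp$ is rationally piecewise linear and Lemma~\ref{concave} gives piecewise additivity of $\m$ up to truncation. The two points where you differ are organisational rather than substantive: you deduce finite generation of $R(X,\mu(\mcal L))$ first, by the clean module-finite sandwich $R(X,\mu((\overline{\mcal L})^{(\kappa)}))\subset R(X,\mu(\mcal L))\subset R(X,\mu(\overline{\mcal L}))$, whereas the paper derives it last via Lemma~\ref{lem:1}(3); and you spell out the linear-programming/support-function computation for $\ord_E\|\mu(s)\|$ explicitly, whereas the paper simply cites \cite[4.1]{ELMNP} for the existence of the common rational subdivision. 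Your unpacking is a nice self-contained substitute for that citation, and the observation that the linearity chambers depend only on the degrees $d_j$ (hence are uniform in $E$) is exactly the point.
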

\begin{proof}
Denote $\mcal M=\mcal L_\R\cap\N^r$. By Lemma \ref{lem:1}(2), $R(X,\mu(\mcal M))$
is finitely generated, and by the proof of \cite[4.1]{ELMNP}, there is a finite rational polyhedral subdivision
$\mcal M_\R=\bigcup\Delta_i$ such that for every geometric valuation $E$ on $X$, the map $\ord_E\|\cdot\|$ is linear on
$\Delta_i$ for every $i$. Since for every saturated rank $1$ submonoid $\mcal R\subset\mcal M$ the algebra $R(X,\mu(\mcal R))$ is
finitely generated by Lemma \ref{lem:1}(2), the map $\m_{|\mcal R}$ is additive up to truncation by \cite[2.3.53]{Cor07},
and thus there is the well-defined straightening $\m^\sharp\colon\mcal L_\R\rightarrow\bMob(X)_\R$ since $\mcal M_\R=\mcal L_\R$.
Then equation \eqref{eq:2} implies that $\m^\sharp|_{\Delta_i}$ is linear for every $i$, hence by Lemma \ref{concave}
the map $\m$ is rationally piecewise additive up to truncation. Therefore $R(X,\mu(\mcal L))$ is finitely generated by Lemma \ref{lem:1}(3).
\end{proof}

The following lemma shows that finite generation implies certain boundedness on the convex geometry of boundaries.

\begin{lem}\label{bounded}
Let $X$ be a smooth variety of dimension $n$, let $B$ be a simple normal crossings divisor and let
$A$ be a general ample $\Q$-divisor on $X$. Let $V\subset\Div(X)_\R$ be the vector space spanned by the components of $B$.
Assume Theorems A$_n$ and C$_n$.

Then for each prime divisor $G$ on $X$, the set $\mcal B_{V,A}^G$ is a rational polytope. Furthermore, there exists
a positive integer $r$ such that:
\begin{enumerate}
\item for each prime divisor $G$ on $X$, for every $\Phi\in(\mcal B_{V,A}^G)_\Q$, and for every positive integer $k$ such that
$k(K_X+\Phi+A)/r$ is Cartier, we have $G\not\subset\Fix|k(K_X+\Phi+A)|$,
\item for every $\Phi\in(\mcal E_{V,A})_\Q$, and for
every positive integer $k$ such that $k(K_X+\Phi+A)/r$ is Cartier, we have $|k(K_X+\Phi+A)|\neq\emptyset$.
\end{enumerate}
\end{lem}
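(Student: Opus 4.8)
The plan is to collect all the divisors $K_X+\Phi+A$ with $\Phi\in\mcal E_{V,A}$ into a single finitely generated algebra and then to read off both the polyhedrality of $\mcal B_{V,A}^G$ and the uniform bound from Proposition \ref{pro:1} and \eqref{eq:2}. By Theorem C$_n$, $\mcal E_{V,A}$ is a rational polytope and $\mcal E_{V,A}=\{\Phi\in\mcal L_V:|K_X+\Phi+A|_\R\neq\emptyset\}$. Since $K_X+A$ is a rational divisor, $\mcal P:=K_X+A+\mcal E_{V,A}$ is a rational polytope in $\Div(X)_\R$ and $\mcal C:=\R_+\mcal P$ is a rational polyhedral cone, pointed by the generality of $A$ (a component of $A$ not meeting $\Supp B\cup\Supp K_X$ shows $K_X+A\notin V$). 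By Lemma \ref{lem:gordan}, $\mcal N:=\mcal C\cap\Div(X)$ is a finitely generated saturated monoid, which we regard inside some $\N^r$, and every element of $\mcal N$ lies in $\Div(X)^{\kappa\geq0}$ by Theorem C$_n$.

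Next I would check that $R(X,\mcal N)$ is finitely generated. A suitable truncation of $\mcal N$ has generators of the form $\kappa_j(K_X+\Delta_j+A)$ with $\kappa_j\in\Z_{>0}$, $(X,\Delta_j+A)$ log smooth log canonical (using $\Delta_j\in\mcal L_V$ and the generality of $A$) and $|\kappa_j(K_X+\Delta_j+A)|\neq\emptyset$ (using $|\cdot|_\R\neq\emptyset$ and Lemma \ref{lem:2}, after enlarging $\kappa_j$); Theorem A$_n$ then gives finite generation of the associated adjoint ring. Since $R(X,\cdot)$ of that truncation is a quotient graded ring of this adjoint ring, and finite generation passes back to $\mcal N$ by Lemma \ref{lem:1}(1), the algebra $R(X,\mcal N)$ is finitely generated. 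Writing $\mu\colon\mcal N\hookrightarrow\Div(X)^{\kappa\geq0}$ for the inclusion and $\m:=\bMob_\mu$, Proposition \ref{pro:1} gives that the straightening $\m^\sharp\colon\mcal C\to\bMob(X)_\R$ is rationally piecewise linear and that there is $p\in\Z_{>0}$ with $\m(ips)=i\,\m(ps)$ for all $i\in\N$ and $s\in\mcal N$.

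Fix a prime divisor $G$. By \eqref{eq:2}, $\ord_G\|D\|=\mult_GD-\mult_G\m^\sharp(D)_X$ for $D\in\mcal C$, hence $D\mapsto\ord_G\|D\|$ is non-negative and rationally piecewise linear on $\mcal C$; restricting along the affine embedding $\Phi\mapsto K_X+\Phi+A$ of $\mcal E_{V,A}$ onto $\mcal P$ shows $g_G(\Phi):=\ord_G\|K_X+\Phi+A\|$ is non-negative and rationally piecewise linear on $\mcal E_{V,A}$. Its zero set $Z$ is then a finite union of rational polytopes, and is convex because $\ord_G\|\cdot\|$ is subadditive and homogeneous, so $Z$ is a rational polytope. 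Now $\mcal B_{V,A}^G\subseteq Z$: if $G\not\subset\B(K_X+\Phi+A)$ then $\Phi\in\mcal L_V$ and $|K_X+\Phi+A|_\R\neq\emptyset$, so $\Phi\in\mcal E_{V,A}$, and some effective divisor $\R$-equivalent to $K_X+\Phi+A$ avoids $G$, forcing $g_G(\Phi)=0$. Conversely, for a rational vertex $\Phi$ of $Z$ pick $k'$ with $k'(K_X+\Phi+A)\in\mcal N$; Proposition \ref{pro:1} makes $\m$ additive along a multiple $\N\,m_0(K_X+\Phi+A)$, so $\mult_G\Fix|m_0(K_X+\Phi+A)|=\ord_G\|m_0(K_X+\Phi+A)\|=0$ (as $g_G(\Phi)=0$), whence $G\not\subset\B(K_X+\Phi+A)$ and $\Phi\in\mcal B_{V,A}^G$. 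Since any $\Phi\in Z$ is a convex combination $\sum t_\nu\Phi_\nu$ of such rational vertices, choosing effective $C_\nu\sim_\Q K_X+\Phi_\nu+A$ avoiding $G$ makes $\sum t_\nu C_\nu$ an effective divisor $\R$-equivalent to $K_X+\Phi+A$ and avoiding $G$, so $\Phi\in\mcal B_{V,A}^G$. Thus $\mcal B_{V,A}^G=Z$ is a rational polytope, for every $G$.

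For the integer $r$, fix homogeneous algebra generators of $R(X,\mcal N)$ of $\mcal N$-degrees $E_1,\dots,E_u$; since $X$ is irreducible, $\{D\in\mcal N:|D|\neq\emptyset\}=\sum_l\N E_l$. As each monoid generator $D_j'$ of $\mcal N$ has $|D_j'|_\Q\neq\emptyset$, there is $N\in\Z_{>0}$ with $ND_j'\in\sum_l\N E_l$ for all $j$; let $r$ be a common multiple of $N$ and $p$. If $\Phi\in(\mcal E_{V,A})_\Q$ and $k\in\Z_{>0}$ satisfy $M:=k(K_X+\Phi+A)/r\in\Div(X)$, then $M\in\mcal N$, say $M=\sum_jm_jD_j'$, so $rM=\sum_j(rm_j)D_j'\in\sum_l\N E_l$ because $N\mid rm_j$; hence $|k(K_X+\Phi+A)|=|rM|\neq\emptyset$, which is (2). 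If moreover $\Phi\in(\mcal B_{V,A}^G)_\Q$, then $g_G(\Phi)=0$, and since $p\mid r$ the divisor $rM$ is $p$ times an element of $\mcal N$, so $\m$ is additive along $\N\,rM$ by Proposition \ref{pro:1}; therefore $\mult_G\Fix|rM|=\ord_G\|rM\|=k\,g_G(\Phi)=0$, i.e.\ $G\not\subset\Fix|k(K_X+\Phi+A)|$, which is (1). The main obstacle is the first step: building $\mcal N$, checking that a truncation of it has generators of the allowed adjoint form with non-empty complete linear systems, and deducing finite generation of the non-simplicial algebra $R(X,\mcal N)$ from Theorem A$_n$; once that is done, everything else follows formally from Proposition \ref{pro:1}, \eqref{eq:2} and Lemma \ref{lem:1}(1).
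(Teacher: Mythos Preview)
Your proof is correct and follows essentially the same strategy as the paper: form the cone $\mcal C=\R_+(K_X+A+\mcal E_{V,A})$, intersect with the lattice, use Theorem~A$_n$ (via a truncation and Lemma~\ref{lem:1}(1)) to get finite generation of the associated algebra, then invoke Proposition~\ref{pro:1} to obtain rational piecewise linearity of $\bMob$ and the uniform integer, and identify $\mcal B_{V,A}^G$ with the zero locus of $\ord_G\|\cdot\|$. The paper is terser---it cites \cite[4.1]{ELMNP} for polyhedrality of the zero set and says ``(2) is straightforward'' and ``(1) follows similarly''---whereas you spell out the convexity argument for $Z$ and the explicit construction of $r$ from both the algebra-generator degrees and the integer $p$ of Proposition~\ref{pro:1}; but these are elaborations of the same argument, not a different route.
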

\begin{proof}
Let $K_X$ be a divisor such that $\OO_X(K_X)\simeq\omega_X$ and $\Supp A\not\subset\Supp K_X$, and let $\Lambda\subset\Div(X)$ be the monoid
spanned by components of $K_X,B$ and $A$. Let $G$ be a prime divisor on $X$. By Theorem C$_n$ the set $\mcal E_{V,A}$ is a rational polytope, and
let $D_1,\dots,D_\ell$ be generators of the finitely generated monoid $\mcal C=\R_+(K_X+A+\mcal E_{V,A})\cap\Lambda$, cf.\ Lemma \ref{lem:gordan}.
Since every $D_i$ is proportional to an adjoint bundle, by Theorem A$_n$ and by Lemma \ref{lem:1}(1) the ring $R(X;D_1,\dots,D_\ell)$ is
finitely generated, and thus so is the algebra $R(X,\mcal C)$ by projection. By Proposition \ref{pro:1} the map $\bMob_{\iota|\mcal C\cap\Lambda^{(r)}}$
is rationally piecewise additive for some positive integer $r$, where
$\iota\colon\Lambda\rightarrow\Lambda$ is the identity map. Now (2) is straightforward.

Furthermore, the set
$\mcal O=\{\Upsilon\in\mcal C_\R:\ord_G\|\Upsilon\|=0\}$ is a rational polyhedral cone by the proof of \cite[4.1]{ELMNP}, and
$\R_+(K_X+A+\mcal B_{V,A}^G)\subset\mcal O$. Since for every $\Upsilon\in\mcal O_\Q$ we have $G\not\subset\B(\Upsilon)$ by Theorem A$_n$
and by \cite[2.3.53]{Cor07}, this implies
$\mcal O\subset\R_+(K_X+A+\mcal B_{V,A}^G)$ as extremal rays of $\mcal O$ are rational. Therefore $\mcal B_{V,A}^G$ is a rational polytope,
and now (1) follows similarly as above.
\end{proof}

\section{Diophantine approximation}\label{sec:diophant}

I need a few results from Diophantine approximation theory.

\begin{lem}\label{diophant2}
Let $\Lambda\subset\R^n$ be a lattice spanned by rational vectors, and let $V=\Lambda\otimes_\Z\R$. Fix $v\in V$ and denote $X=\N v+\Lambda$.
Then the closure of $X$ is symmetric with respect to the origin. Moreover,
if $\pi\colon V\rightarrow V/\Lambda$ is the quotient map, then the closure of $\pi(X)$ is a finite disjoint union of connected components.
If $v$ is not contained in any proper rational affine subspace of $V$, then $X$ is dense in $V$.
\end{lem}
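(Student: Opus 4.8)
The plan is to analyze the three assertions separately, moving from the most elementary to the most delicate. Write $\pi\colon V\to V/\Lambda$ for the quotient map, and note that $V/\Lambda$ is a compact torus.

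First, the symmetry of $\overline X$: a point $w$ lies in $\overline X$ if and only if there are integers $m_j\to\infty$ and lattice points $\lambda_j\in\Lambda$ with $m_jv+\lambda_j\to w$. I would produce points of $X$ near $-w$ as follows. Fix any $m_0\in\N$; then for each $j$ the element $(m_j-m_0)v+(\lambda_j-\lambda_{j'})$ lies in $X$ for a suitable pair $j,j'$ with $m_j>m_{j'}$, and pushing $j$ and $j'$ apart while keeping $m_{j}-m_{j'}$ large we can make $m_jv+\lambda_j\to w$ and $m_{j'}v+\lambda_{j'}\to w$ simultaneously along subsequences; then $(m_j+m_{j'})v+(\lambda_j+\lambda_{j'})\to 2w$, so $2w\in\overline X$, and since $\overline X$ is invariant under $+\Lambda$ and $\overline{X}$ is a union of cosets of the closed subgroup $\overline{X-X}$ of $V$, one deduces $-w\in\overline X$. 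In fact the cleanest route is: $\overline{\pi(X)}$ is a closed subsemigroup of the compact group $V/\Lambda$, hence a closed subgroup (a closed subsemigroup of a compact group is a group), so $\overline{\pi(X)}=-\overline{\pi(X)}$, and lifting this back to $V$ gives the symmetry of $\overline X$.

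Second, that $\overline{\pi(X)}$ has finitely many connected components: by the previous step $H:=\overline{\pi(X)}$ is a closed subgroup of the compact abelian Lie group $V/\Lambda$, and every closed subgroup of a compact Lie group has finitely many connected components (the identity component $H^0$ is open in $H$ since $H$ is a Lie group, so $H/H^0$ is discrete and compact, hence finite). This is where I expect the only real subtlety, namely making sure one is entitled to invoke the Lie-group structure — but $V/\Lambda$ is literally a torus $(\R/\Z)^n$, so this is safe, and the cosets of $H^0$ in $H$ are precisely the connected components.

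Third, the density statement: suppose $v$ lies in no proper rational affine subspace of $V$; I claim $X$ is dense, equivalently $H=\overline{\pi(X)}=V/\Lambda$. The subgroup $H$ equals $\overline{\pi(\Z v+\Lambda)}=\overline{\langle \pi(v)\rangle}$, the closure of the cyclic group generated by $\pi(v)$. Its identity component $H^0$ is a subtorus, and $H/H^0$ is a finite cyclic group; let $N=\#(H/H^0)$. Then $N\pi(v)\in H^0$, and $\overline{\langle N\pi(v)\rangle}=H^0$ by finite index, so $H^0$ is the closure of the subgroup generated by a single element $\pi(Nv)$ of the torus $H^0=W/(\Lambda\cap W)$ for some rational subspace $W=\R\cdot(\Lambda\cap W)$. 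By Kronecker's theorem applied inside this subtorus, $\langle N\pi(v)\rangle$ is dense in $H^0$ iff the image of $Nv$ in $W/(\Lambda\cap W)$ generates a dense subgroup, which forces $Nv$ — hence $v$, after translating by the appropriate element of $\tfrac1N\Lambda\cap$ (affine span considerations) — to lie in no proper rational affine subspace of $W$; combined with $v$ lying in no proper rational affine subspace of $V$, this forces $W=V$, i.e. $H^0=V/\Lambda$ and therefore $H=V/\Lambda$. The contrapositive is the transparent direction: if $X$ is not dense, then $H$ is a proper closed subgroup, $H^0$ a proper subtorus $W/(\Lambda\cap W)$ with $W\subsetneq V$ rational, and some translate $v+\lambda_0$ ($\lambda_0\in\Lambda$, indeed $\lambda_0\in\tfrac1N\Lambda$) lies in $W$, exhibiting $v$ inside a proper rational affine subspace. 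The main obstacle in writing this carefully is bookkeeping the finite-index cyclic quotient $H/H^0$ and tracking how the "no proper rational affine subspace" hypothesis on $v$ transfers through multiplication by $N$ and through the passage to the subtorus $H^0$ — once that is pinned down, Kronecker's theorem finishes it.
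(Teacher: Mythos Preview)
Your approach is correct and matches the paper's: show that $\overline{\pi(X)}$ is a closed subgroup of the compact torus $V/\Lambda$ (giving symmetry, which lifts to $\overline X$ since $\overline X=\pi^{-1}(\overline{\pi(X)})$), identify its identity component as a subtorus $V_0/\Lambda_0$ with $V_0$ a rational subspace, observe that the quotient is discrete and compact hence finite, and conclude that a rational translate of $v$ lies in $V_0$, forcing $V_0=V$ under the hypothesis. Your contrapositive for the density claim is exactly the paper's argument and is all you need; the Kronecker detour and the muddled initial attempt at symmetry should simply be dropped.
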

\begin{proof}
I am closely following the proof of \cite[3.7.6]{BCHM}. Let $G$ be the closure of $\pi(X)$.
Since $G$ is infinite and $V/\Lambda$ is compact, $G$ has an accumulation point. It then follows that zero is also an accumulation point
and that $G$ is a closed subgroup.
The connected component $G_0$ of the identity in $G$ is a Lie subgroup of $V/\Lambda$ and so by \cite[Theorem 15.2]{Bum04},
$G_0$ is a torus. Thus $G_0=V_0/\Lambda_0$, where $V_0=\Lambda_0\otimes_\Z\R$ is a rational subspace of $V$.
Since $G/G_0$ is discrete and compact, it is finite, and it is straightforward that $X$ is symmetric with respect to the origin.
Therefore a translate of $v$ by a rational vector is contained in $V_0$,
and so if $v$ is not contained in any proper rational affine subspace of $V$, then $V_0=V$.
\end{proof}

\begin{dfn}
Let $x\in\R^n$, $\varepsilon\in\R_{>0}$ and $k\in\Z_{>0}$. We say that
$(x_i,k,k_i,r_i)\in\Q^n\times\Z_{>0}^2\times\R_{>0}$ {\em uniformly approximate $x$ with error $\varepsilon$\/}, for $i=1,\dots,p$, if
\begin{enumerate}
\item $k_ix_i/k$ is integral for every $i$,
\item $\|x-x_i\|<\varepsilon/k_i$ for every $i$,
\item $x=\sum r_ix_i$ and $\sum r_i=1$.
\end{enumerate}
\end{dfn}

The next result is \cite[3.7.7]{BCHM}.

\begin{lem}\label{lem:surround}
Let $x\in\R^n$ and let $W$ be the smallest rational affine space containing $x$. Fix a positive integer $k$ and a positive real number $\varepsilon$.
Then there are finitely many $(x_i,k,k_i,r_i)\in(W\cap\Q^n)\times\Z_{>0}^2\times\R_{>0}$ which uniformly approximate $x$ with error $\varepsilon$.
\end{lem}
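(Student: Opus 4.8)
The plan is to reduce, by an elementary convexity argument, to writing $x$ as a strictly positive convex combination of rational points of $W$ lying close to it, and then to produce those points directly from Lemma~\ref{diophant2}. If $\dim W=0$ then $x\in\Q^n$ and the single tuple $(x,k,kq,1)$ works, where $q$ is a denominator of $x$; so assume $d:=\dim W\geq1$, whence $x\notin\Q^n$. Fix a rational point $w\in W$ with denominator $q$, let $W_0$ be the rational linear span of $W-W$, put $\Lambda=W_0\cap\Z^n$ (a full lattice in $W_0$), and set $v=x-w\in W_0$. Minimality of $W$ says exactly that $v$ lies in no proper rational affine subspace of $W_0$, so Lemma~\ref{diophant2}, applied to the lattice $\Lambda$ in the rational subspace $W_0$, shows $X:=\N v+\Lambda$ is dense in $W_0$; moreover $v\notin\Q^n$, so the cosets $mv+\Lambda$ for $m\in\N$ are pairwise disjoint, and hence each $\xi\in X$ has a unique expression $\xi=m_\xi v+\ell_\xi$ with $m_\xi\in\N$ and $\ell_\xi\in\Lambda$.

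The key observation is that any $\xi\in X$ with $m_\xi\geq1$ and $\|\xi\|$ small automatically yields a valid approximating tuple. Set $x_\xi:=w-\ell_\xi/m_\xi\in W\cap\Q^n$ and $k_\xi:=kqm_\xi$. Then $x-x_\xi=v+\ell_\xi/m_\xi=\xi/m_\xi$, so $\|x-x_\xi\|=\|\xi\|/m_\xi$, while $k_\xi x_\xi/k=m_\xi(qw)-q\ell_\xi\in\Z^n$. Hence, with $\delta:=\varepsilon/(kq)$, every $\xi\in X$ having $m_\xi\geq1$ and $\|\xi\|<\delta$ gives a tuple $(x_\xi,k,k_\xi)$ satisfying conditions (1) and (2). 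It therefore suffices to choose finitely many such $\xi_0,\dots,\xi_d$ for which $0$ is a strictly positive convex combination of $\xi_0,\dots,\xi_d$: if $0=\sum_j\lambda_j\xi_j$ with $\lambda_j>0$ and $\sum_j\lambda_j=1$, then, since $x_{\xi_j}-x=-\xi_j/m_{\xi_j}$, the numbers $r_j:=\lambda_jm_{\xi_j}\big/\sum_i\lambda_im_{\xi_i}$ are positive, sum to $1$, and satisfy $x=\sum_j r_jx_{\xi_j}$, which is condition (3).

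To find the $\xi_j$, fix unit vectors $u_0,\dots,u_d\in W_0$ that are the vertices of a $d$-simplex with the origin in its relative interior, together with a small $\eta>0$, and pick $\rho>0$ with $2\rho<\delta$ and small enough that no ball $B(\rho u_j,\rho\eta)$ meets $\Lambda$. By density of $X$ in $W_0$ there exist $\xi_j\in X\cap B(\rho u_j,\rho\eta)$; then $\|\xi_j\|<2\rho<\delta$, and $\xi_j\notin\Lambda$, so its unique expression has $m_{\xi_j}\geq1$. Since $\{\xi_j/\rho\}$ is an $\eta$-perturbation of the fixed simplex $\{u_j\}$, for $\eta$ small enough (a threshold depending only on $\{u_j\}$) the origin remains in the relative interior of the convex hull of $\{\xi_j\}$, i.e.\ $0$ is a strictly positive convex combination of the $\xi_j$; by the previous paragraph this finishes the proof.

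I expect the one genuinely delicate point to be the reconciliation of the ``surrounding'' condition with the approximation bound (2), whose tolerance $\varepsilon/k_\xi$ shrinks as the auxiliary denominator $m_\xi$ grows, while Lemma~\ref{diophant2} gives no control on the sizes of the $m_{\xi_j}$. The resolution, as above, is to verify the convexity condition at the level of the vectors $\xi_j$ themselves — which have comparable norms of order $\rho$, so that the perturbation threshold $\eta$ for ``$0$ in the interior'' is independent of the $m_{\xi_j}$ — rather than at the level of the already rescaled vectors $\xi_j/m_{\xi_j}$, whose norms may differ wildly; the linear identity $0=\sum_j\lambda_j\xi_j\Rightarrow0=\sum_j(\lambda_jm_{\xi_j})(\xi_j/m_{\xi_j})$ transfers the conclusion. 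Everything else — existence of denominators, the elementary fact that the relative interior of a non-degenerate simplex consists of strictly positive convex combinations of its vertices, and the openness of ``$0$ in the interior of the convex hull'' — is routine bookkeeping.
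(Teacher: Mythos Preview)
Your argument is correct. The paper does not give its own proof of this lemma; it simply cites \cite[3.7.7]{BCHM}. Your proof is therefore a genuine addition: it is self-contained within the paper, relying only on Lemma~\ref{diophant2}, and the mechanism you use---producing the approximants $x_\xi=w-\ell_\xi/m_\xi$ from elements $\xi=m_\xi v+\ell_\xi$ of the dense set $\N v+\Lambda$ lying near $0$, and arranging the ``surrounding'' condition at the level of the $\xi_j$ rather than the rescaled $\xi_j/m_{\xi_j}$---is exactly the right way to decouple the convexity requirement from the uncontrolled sizes of the $m_{\xi_j}$. All the verifications (that $x_\xi\in W\cap\Q^n$, that $k_\xi x_\xi/k\in\Z^n$, that $\|x-x_\xi\|<\varepsilon/k_\xi$ once $\|\xi\|<\varepsilon/(kq)$, and that the reweighting $r_j=\lambda_jm_{\xi_j}/\sum_i\lambda_im_{\xi_i}$ recovers $x=\sum r_jx_{\xi_j}$) go through as stated.
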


I will need a refinement of this lemma when the approximation is not necessarily happening in the smallest rational affine space containing a point.

\begin{lem}\label{lem:approximation}
Let $x\in\R^n$ and let $W$ be the smallest rational affine space containing $x$. Let $0<\varepsilon,\eta\ll1$ be rational numbers, $k$
a positive integer, and assume that there are $x_1\in\Q^n$ and $k_1\in\Z_{>0}$ such that $\|x-x_1\|<\varepsilon/k_1$ and
$k_1x_1/k$ is integral. Then there are finitely many $x_i\in\Q^n$ and $k_i\in\Z_{>0}$ for $i\geq 2$, and positive real numbers $r_i$ for $i\geq1$,
such that $(x_i,k,k_i,r_i)$ uniformly approximate $x$ with error $\varepsilon$.
Furthermore, we can assume that $x_i\in W$ for $i\geq3$, and we can write
$$x=\frac{k_1}{k_1+k_2}x_1+\frac{k_2}{k_1+k_2}x_2+\xi,$$
with $\|\xi\|<\eta/(k_1+k_2)$.
\end{lem}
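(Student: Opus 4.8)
The plan is to construct $x_2$ first so that the weighted average $z:=\frac{k_1}{k_1+k_2}x_1+\frac{k_2}{k_1+k_2}x_2$ is a rational point lying \emph{in} $W$ and extremely close to $x$, and then to read off $x_3,\dots,x_p$ by applying Lemma \ref{lem:surround} to $x$ itself. That $z$ must land in $W$ is forced: write $W=w_0+W_0$ with $w_0\in\Q^n$ and $W_0$ the direction space; projecting any convex combination $x=\sum_ir_ix_i$ with $x_i\in W$ for $i\ge3$ to $\R^n/W_0$ shows $\frac{r_1x_1+r_2x_2}{r_1+r_2}\in W$, so $x_2$ is not free, and with the prescribed ratio $k_1:k_2$ the corresponding average has to lie in $W$.

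First I would record the slack $\delta_1:=\varepsilon/k_1-\|x-x_1\|>0$ in the hypothesis and apply Lemma \ref{lem:surround} to $x$ with error $\varepsilon$, obtaining valid approximants $y_1,\dots,y_t\in W\cap\Q^n$ with $x=\sum_js_jy_j$, $\sum_js_j=1$, $s_j>0$. Since the $y_j$ are rational and $x$ is a strictly positive combination of them, minimality of $W$ forces their affine hull to equal $W$ and $x\in\relint_W\mathrm{conv}\{y_j\}$ — otherwise a supporting hyperplane to $\mathrm{conv}\{y_j\}$ at $x$ would be a proper rational affine subspace through $x$ — so there is $\delta>0$ with $\{q\in W:\|q-x\|<\delta\}\subset\mathrm{conv}\{y_j\}$. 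Replace $\eta$ by a rational $\eta'$ with $0<\eta'<\min\{\eta,k_1\delta_1,\delta\}$, which is harmless since a bound $\|\xi\|<\eta'/(k_1+k_2)$ implies the one claimed.

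The heart of the argument is to find an integer $m>k_1$ and a point $v\in k\Z^n$ with $v\in mW$ and $\|v-mx\|<\eta'$. Because $W_0$ is rational, $\R^n/(k\Z^n+W_0)$ is a torus and the image of $w_0$ has finite order $N$, so $mW$ meets $k\Z^n$ whenever $N\mid m$; and since $x-w_0$ lies in no proper rational affine subspace of $W_0$, Lemma \ref{diophant2} applied to the lattice $k\Z^n\cap W_0$ inside $W_0$ supplies arbitrarily large $m=Nm''$ for which $mx$ is within $\eta'$ of a point $v\in k\Z^n\cap mW$. Put $k_2:=m-k_1$, $z:=v/m$, $x_2:=(v-k_1x_1)/k_2$. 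Then $k_2x_2/k=v/k-k_1x_1/k$ is integral; $z=\frac{k_1}{k_1+k_2}x_1+\frac{k_2}{k_1+k_2}x_2\in W$ and $\|z-x\|=\|v-mx\|/m<\eta'/(k_1+k_2)$; and from $\|x-x_2\|\le\frac{k_1}{k_2}\|x-x_1\|+\frac1{k_2}\|v-mx\|$ together with $\eta'<k_1\delta_1$ one gets $\|x-x_2\|<\varepsilon/k_2$. With $\xi:=x-z$ this already yields the ``furthermore'' part.

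To finish, set $q:=2x-z=x+\xi$; then $q\in W$ and $\|q-x\|<\eta'<\delta$, so $q=\sum_j\rho_jy_j$ with $\rho_j\ge0$, $\sum_j\rho_j=1$; discard the vanishing $\rho_j$ and relabel the surviving $y_j$ as $x_3,\dots,x_p$ with their denominators from Lemma \ref{lem:surround}. From $x=\tfrac12z+\tfrac12q$ and the formula for $z$,
$$x=\frac{k_1}{2(k_1+k_2)}x_1+\frac{k_2}{2(k_1+k_2)}x_2+\sum_{i\ge3}\frac{\rho_i}{2}x_i,$$
a convex combination with all coefficients positive; and since $\|x-x_i\|<\varepsilon/k_i$ already holds for $x_1,x_2$ and for each $y_j$, the tuples $(x_i,k,k_i,r_i)$ uniformly approximate $x$ with error $\varepsilon$, with $x_i\in W$ for $i\ge3$. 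The one genuine obstacle is the third step — pinning the average $z$ of $x_1$ and $x_2$ exactly onto $W$ while keeping its denominator (hence $k_1+k_2$) under control — which, by the projection remark, cannot be avoided and is precisely what forces the use of the torsion order $N$ and of the density input in Lemma \ref{diophant2}; the rest is bookkeeping.
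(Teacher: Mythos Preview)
Your proof is correct and follows the same overall strategy as the paper's---construct $x_2$ via the Diophantine input of Lemma~\ref{diophant2}, then pull $x_3,\dots,x_p\in W$ from Lemma~\ref{lem:surround}---but the execution is organised differently, and in one respect more cleanly. The paper invokes the \emph{symmetry} clause of Lemma~\ref{diophant2}: it finds $k_2$ with $\pi(k_2x)$ close to $\pi(-k_1x)$ in $\R^n/\Z^n$, defines $x_2$ so that the open segment $(x_1,x_2)$ meets $W$ at some point $u$, builds the convex combination from $u$ and a point $v$ in the interior of the simplex coming from Lemma~\ref{lem:surround}, and only at the very end recovers the weighted-average formula for $\xi$ via a separate parallelism computation. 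You instead work inside $W_0$ from the outset: after introducing the torsion order $N$ of $w_0$ in $\R^n/(k\Z^n+W_0)$, you apply the \emph{density} clause of Lemma~\ref{diophant2} to the lattice $k\Z^n\cap W_0$ and directly produce $v\in k\Z^n\cap mW$ with $\|mx-v\|<\eta'$. This forces $z=v/m\in W$ by construction, so the ``furthermore'' clause drops out immediately rather than requiring an extra argument, and your weights automatically satisfy $r_1:r_2=k_1:k_2$, whereas the paper's $\alpha:(1-\alpha)$ need not. Your opening remark that the weighted average of $x_1,x_2$ is \emph{forced} into $W$ once $x_i\in W$ for $i\geq3$ is a nice explanation of why this constraint is unavoidable; the paper leaves this implicit. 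Finally, your use of the slack $\delta_1=\varepsilon/k_1-\|x-x_1\|$ to squeeze $\|x-x_2\|<\varepsilon/k_2$ is exactly what is needed and is handled less explicitly in the paper.
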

\begin{proof}
Rescaling by $k$, we can assume that $k=1$. Let $\pi\colon\R^n\rightarrow\R^n/\Z^n$ be the quotient map
and let $G$ be the closure of the set $\pi(\N x+\Z^n)$. Then by Lemma \ref{diophant2} we have $\pi(-k_1x)\in G$ and
there is $k_2\in\N$ such that $\pi(k_2x)$ is in the connected component of $\pi(-k_1x)$ in $G$ and
$\|k_2x-y\|<\eta$ for some $y\in\R^n$ with $\pi(y)=\pi(-k_1x)$. Thus there is a point $x_2\in\Q^n$ such that
$k_2x_2\in\Z^n$, $\|k_2x-k_2x_2\|<\varepsilon$ and the open segment $(x_1,x_2)$ intersects $W$ at a point $u$.

By Lemma \ref{lem:surround}, there exist $(x_i,1,k_i,p_i)\in(W\cap\Q^n)\times\Z_{>0}\times\R_{>0}$ for $i\geq3$ which uniformly approximate
$x$ with error $\varepsilon$. In particular, $x$ is in the interior of the rational polytope with vertices $x_i$ for $i\geq3$, so there exists a point
$v=\sum_{i\geq3}q_ix_i$ with $q_i>0$ and $\sum q_i=1$, such that $x\in(u,v)$. Let $\alpha,\beta\in(0,1)$ be such that $u=\alpha x_1+(1-\alpha)x_2$
and $x=\beta u+(1-\beta)v$, and set $r_1=\alpha\beta$, $r_2=(1-\alpha)\beta$, and $r_i=(1-\beta)q_i$ for $i\geq3$. Then
$(x_i,k,k_i,r_i)$ uniformly approximate $x$ with error $\varepsilon$.

Finally, observe that the vector $y/k_2-x_2$ is parallel to the vector $x-x_1$ and $\|y-k_2x_2\|=\|k_1x-k_1x_1\|$. Denote $z=x-y/k_2$. Then
$$\frac{x-x_1}{(x_2+z)-x}=\frac{x-x_1}{x_2-y/k_2}=\frac{k_2}{k_1},$$
so
$$x=\frac{k_1}{k_1+k_2}x_1+\frac{k_2}{k_1+k_2}(x_2+z)=\frac{k_1}{k_1+k_2}x_1+\frac{k_2}{k_1+k_2}x_2+\xi,$$
where $\|\xi\|=\|k_2z/(k_1+k_2)\|<\eta/(k_1+k_2)$.
\end{proof}

\begin{rem}\label{rem:2}
Assuming notation from the previous proof, the connected components of $G$ are precisely the connected components of the closure of the set
$\pi(\bigcup_{k>0}kW)$. Therefore $y/k_2\in W$.
\end{rem}

\begin{rem}\label{rem:6}
Suppose points $(y_i,k,k_i,r_i)$ uniformly approximate $x\in\R^k$ with error $\varepsilon$, in the sup-norm.
Let $x_j$ denote the $j$-th coordinate of $x$, and similarly for other vectors. I claim that by choosing $\varepsilon\ll1$ and $k_i\gg0$
we have $y_{ip}\geq y_{iq}$ whenever $x_p\geq x_q$. To that end, if $x_p=x_q$, then by triangle inequality
$|k_i(y_{ip}-y_{iq})|\leq|k_i(x_p-y_{ip})|+|k_i(x_q-y_{iq})|<2\varepsilon$, so $y_{ip}=y_{iq}$ since $k_iy_{ip}$ and $k_iy_{iq}$ are
integers. If $x_p>x_q$, then since $|x_p-y_{ip}|+|x_q-y_{iq}|<2\varepsilon/k_i<x_p-x_q$, we must have $y_{ip}>y_{iq}$, and the claim follows.
\end{rem}

\section{Restricting plt algebras}\label{plt}

In this section I establish one of the technically most difficult steps in the scheme of the proof, that Theorems A$_{n-1}$, B$_n$ and
C$_{n-1}$ imply Theorem A$_n$. Crucial techniques will be those developed in \cite{HM08} and in Sections \ref{sec:convex} and \ref{sec:3}.

The key result is the following Hacon-M\textsuperscript{c}Kernan extension theorem \cite[6.3]{HM08}, whose proof relies
on deep techniques initiated by \cite{Siu98}.

\begin{thm}\label{thm:hmck}
Let $(X,\Delta=S+A+B)$ be a projective plt pair such that $S=\lfloor\Delta\rfloor$ is irreducible, $\Delta\in\WDiv(X)_\Q$,
$(X,S)$ is log smooth, $A$ is a general ample $\Q$-divisor and $(S,\Omega+A_{|S})$ is canonical,
where $\Omega=(\Delta-S)_{|S}$. Assume $S\not\subset\B(K_X+\Delta)$, and let
$$F=\liminf_{m\rightarrow\infty}\textstyle\frac1m\Fix|m(K_X+\Delta)|_S.$$
If $\varepsilon>0$ is any rational number such that $\varepsilon(K_X+\Delta)+A$ is ample and if $\Phi$ is any $\Q$-divisor on $S$
and $k>0$ is any integer such that both $k\Delta$ and $k\Phi$ are Cartier, and $\Omega\wedge(1-\frac{\varepsilon}{k})F\leq\Phi\leq\Omega$, then
$$|k(K_S+\Omega-\Phi)|+k\Phi\subset|k(K_X+\Delta)|_S.$$
\end{thm}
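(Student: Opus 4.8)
The plan is to prove the statement by Siu's extension technique, in the algebraic form developed by Hacon and M\textsuperscript{c}Kernan following Takayama; indeed this is exactly \cite[6.3]{HM08}, so I only sketch the mechanism and would cite it for the details.

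\textbf{Reduction to lifting one section.} By adjunction $k(K_X+\Delta)_{|S}=k(K_S+\Omega)$, so it is enough to show that every effective divisor $D\sim k(K_S+\Omega)$ with $D\geq k\Phi$ is the restriction to $S$ of an effective divisor $\sim k(K_X+\Delta)$ not containing $S$; equivalently, writing $D=\Theta+k\Phi$ with $\Theta\in|k(K_S+\Omega-\Phi)|$, that the section of $\OO_S(k(K_S+\Omega))$ cutting out $\Theta+k\Phi$ lies in the image of $\rho\colon H^0(X,\OO_X(k(K_X+\Delta)))\to H^0(S,\OO_S(k(K_S+\Omega)))$.

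\textbf{The inductive engine.} For $m\geq1$ consider $\rho_m\colon H^0(X,mk(K_X+\Delta))\to H^0(S,mk(K_S+\Omega))$ and argue by induction on $m$. Using the sections already known to lift at level $m-1$, one forms the asymptotic multiplier ideal $\mcal J_m=\mcal J\big(X,\tfrac{m-1}{m}\|k(K_X+\Delta)\|\big)$ and, from the restriction sequence twisted by $\mcal J_m$,
\[
0\to\OO_X(mk(K_X+\Delta))\otimes\mcal J_m(-S)\to\OO_X(mk(K_X+\Delta))\otimes\mcal J_m\to\OO_S\big(mk(K_S+\Omega)\big)\otimes\mcal J_m|_S\to0,
\]
obtains a surjection on $H^0$ as soon as the $H^1$ of the left-hand sheaf vanishes. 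The positivity needed for this Nadel-type vanishing is precisely where the hypothesis that $\varepsilon(K_X+\Delta)+A$ is ample enters: since $\Delta=S+A+B$ carries the fixed ample divisor $A$, the bundle $mk(K_X+\Delta)$ has a built-in ample twist whose size is independent of $m$, so the vanishing holds uniformly in $m$ (this uses the deep analytic input of \cite{Siu98} and the techniques of \cite{HM08}). Combined with the Hacon-M\textsuperscript{c}Kernan lifting lemma, which identifies the sections on $S$ lying in $H^0(S,mk(K_S+\Omega)\otimes\mcal J_m|_S)$ and shows they extend, this propagates liftability from $m-1$ to $m$.

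\textbf{The role of $\Omega\wedge(1-\tfrac{\varepsilon}{k})F\leq\Phi\leq\Omega$ and the limit.} For the divisor $D=\Theta+k\Phi$ above, the inequality $\Phi\geq\Omega\wedge(1-\tfrac{\varepsilon}{k})F$ is what forces $mD$ to vanish along a multiple of the restricted asymptotic fixed divisor with enough multiplicity to lie in $H^0(S,mk(K_S+\Omega)\otimes\mcal J_m|_S)$, the slack $\tfrac{\varepsilon}{k}$ absorbing the loss coming from the factor $\tfrac{m-1}{m}$ and from the bounded ample twist, and the $\liminf$ defining $F$ entering through the convergence $\tfrac1m\Fix|m(K_X+\Delta)|_S\to F$. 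The inequality $\Phi\leq\Omega$, together with the canonicity of $(S,\Omega+A_{|S})$, guarantees that the multiplier ideals appearing on $S$ are trivial along $\Theta$, so $\Theta$ itself imposes no obstruction. Carrying out the induction and letting $m\to\infty$, the bounded ample contribution that must be dragged along is divided by $m$ and disappears while $\tfrac{m-1}{m}\to1$, and one concludes that $D=\Theta+k\Phi$ itself lies in $|k(K_X+\Delta)|_S$, as required.

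\textbf{Main obstacle.} The technical heart is the limiting step: showing that the Nadel vanishing is \emph{uniform} in $m$, so the ample error stays bounded, and that the restricted asymptotic multiplier ideals $\mcal J_m|_S$ behave well enough that the single inequality $\Phi\geq\Omega\wedge(1-\tfrac{\varepsilon}{k})F$, which is stated in terms of the $\liminf$ $F$, actually forces $mD$ into the multiplier-ideal-twisted cohomology at every finite stage. This is the content of \cite[\S6]{HM08} and ultimately rests on Siu's ideas \cite{Siu98}; I would invoke it rather than reprove it.
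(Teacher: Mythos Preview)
Your proposal is correct and matches the paper's treatment: the paper does not prove this theorem at all but simply cites it as \cite[6.3]{HM08}, noting that the proof relies on deep techniques initiated by \cite{Siu98}. Your sketch of the Siu--Takayama--Hacon--M\textsuperscript{c}Kernan lifting mechanism is accurate as an outline of what happens in \cite{HM08}, and your instinct to invoke rather than reprove it is exactly what the paper does.
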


The immediate consequence is:

\begin{cor}\label{cor:hmck}
Let $(X,\Delta=S+A+B)$ be a projective plt pair such that $S=\lfloor\Delta\rfloor$ is irreducible, $\Delta\in\WDiv(X)_\Q$,
$(X,S)$ is log smooth, $A$ is a general ample $\Q$-divisor and $(S,\Omega+A_{|S})$ is canonical,
where $\Omega=(\Delta-S)_{|S}$. Assume $S\not\subset\B(K_X+\Delta)$, and let $\Phi_m=\Omega\wedge\frac1m\Fix|m(K_X+\Delta)|_S$ for every $m$
such that $m\Delta$ is Cartier. Then
$$|m(K_S+\Omega-\Phi_m)|+m\Phi_m=|m(K_X+\Delta)|_S.$$
\end{cor}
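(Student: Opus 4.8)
\emph{Proof proposal.} The plan is to prove the two inclusions separately, noting that ``$\supseteq$'' (in the sense $|m(K_S+\Omega-\Phi_m)|+m\Phi_m\subseteq|m(K_X+\Delta)|_S$) is exactly Theorem \ref{thm:hmck}, while the reverse inclusion is formal. Fix an integer $m$ with $m\Delta$ Cartier. Since $(X,S)$ is log smooth and $\Delta-S=A+B$, adjunction gives $(K_X+\Delta)_{|S}=K_S+\Omega$, so $m(K_X+\Delta)_{|S}\sim m(K_S+\Omega)$. Moreover $m(A+B)=m\Delta-mS$ is Cartier, hence so is its restriction $m\Omega$ to the smooth variety $S$; in particular $m\Omega$ is an integral effective divisor, and therefore $m\Phi_m=m\Omega\wedge\Fix|m(K_X+\Delta)|_S$ is integral, effective and Cartier, being the componentwise minimum of two such divisors. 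This legitimises the linear system $|m(K_S+\Omega-\Phi_m)|$ and the addition of $m\Phi_m$, and also records that $m\Phi_m\leq\Fix|m(K_X+\Delta)|_S$.

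\emph{The inclusion $|m(K_X+\Delta)|_S\subseteq|m(K_S+\Omega-\Phi_m)|+m\Phi_m$.} If $C\in|m(K_X+\Delta)|_S$, then $C\geq\Fix|m(K_X+\Delta)|_S\geq m\Phi_m$, so $C-m\Phi_m$ is effective; since $C\sim m(K_X+\Delta)_{|S}\sim m(K_S+\Omega)$, we get $C-m\Phi_m\sim m(K_S+\Omega-\Phi_m)$, i.e.\ $C-m\Phi_m\in|m(K_S+\Omega-\Phi_m)|$, hence $C\in|m(K_S+\Omega-\Phi_m)|+m\Phi_m$.

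\emph{The inclusion $|m(K_S+\Omega-\Phi_m)|+m\Phi_m\subseteq|m(K_X+\Delta)|_S$.} I would apply Theorem \ref{thm:hmck} with $k=m$ and $\Phi=\Phi_m$. All hypotheses on $(X,\Delta)$, $S$, $A$ are in force, and $m\Delta$, $m\Phi_m$ are Cartier by the first paragraph. Choose a rational $\varepsilon>0$ small enough that $\varepsilon(K_X+\Delta)+A$ is ample, which is possible since $A$ is ample and ampleness is an open condition. It remains to check $\Omega\wedge(1-\tfrac{\varepsilon}{m})F\leq\Phi_m\leq\Omega$, where $F=\liminf_{m\to\infty}\tfrac1m\Fix|m(K_X+\Delta)|_S$. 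The right inequality is immediate from the definition of $\wedge$. For the left one I would use that $m\mapsto\Fix|m(K_X+\Delta)|_S$ is subadditive: if $C_i=(D_i)_{|S}$ with $D_i\geq0$, $D_i\sim m_i(K_X+\Delta)$ and $S\not\subset\Supp D_i$, then $C_1+C_2=(D_1+D_2)_{|S}\in|(m_1+m_2)(K_X+\Delta)|_S$, whence $\Fix|(m_1+m_2)(K_X+\Delta)|_S\leq\Fix|m_1(K_X+\Delta)|_S+\Fix|m_2(K_X+\Delta)|_S$. Consequently $F\leq\tfrac1m\Fix|m(K_X+\Delta)|_S$, so $(1-\tfrac{\varepsilon}{m})F\leq\tfrac1m\Fix|m(K_X+\Delta)|_S$, and applying the monotone operation $\Omega\wedge(-)$ to both sides gives $\Omega\wedge(1-\tfrac{\varepsilon}{m})F\leq\Omega\wedge\tfrac1m\Fix|m(K_X+\Delta)|_S=\Phi_m$. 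Theorem \ref{thm:hmck} then yields the asserted inclusion.

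There is no genuinely hard step here: the corollary is a bookkeeping consequence of the Hacon--M\textsuperscript{c}Kernan extension theorem. The only points that need attention, and the closest thing to an obstacle, are the integrality of $m\Phi_m$ and the verification that $\Phi_m$ lies in the window $[\,\Omega\wedge(1-\tfrac{\varepsilon}{m})F,\ \Omega\,]$ demanded by Theorem \ref{thm:hmck}; both come down to elementary monotonicity and subadditivity of fixed parts of (restricted) linear systems, together with the observation that restriction of a Cartier divisor to $S$ stays Cartier.
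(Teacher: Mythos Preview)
Your proof is correct and is exactly the argument the paper has in mind; the paper presents the corollary as ``the immediate consequence'' of Theorem~\ref{thm:hmck} without further proof, and you have supplied precisely the details one would expect---the formal inclusion from $m\Phi_m\leq\Fix|m(K_X+\Delta)|_S$, and the application of Theorem~\ref{thm:hmck} with $\Phi=\Phi_m$ using subadditivity of restricted fixed parts to get $F\leq\frac1m\Fix|m(K_X+\Delta)|_S$.
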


The following result will be used several times to test inclusions of linear series. It is extracted and copied almost verbatim from \cite{Hac08},
and Step 2 of the proof below first appeared in \cite{Tak06}. Similar techniques in the analytic setting appeared in \cite{Pau08}.

\begin{pro}\label{pro:2}
Let $(X,\Delta=S+A+B)$ be a projective plt pair such that $S=\lfloor\Delta\rfloor$ is irreducible, $\Delta\in\WDiv(X)_\Q$,
$(X,S)$ is log smooth, $A$ is a general ample $\Q$-divisor and $(S,\Omega+A_{|S})$ is canonical,
where $\Omega=(\Delta-S)_{|S}$. Let $0\leq\Theta\leq\Omega$ be a $\Q$-divisor on $S$, let $k$ be a positive integer
such that $k\Delta$ and $k\Theta$ are integral, and denote $A'=A/k$. Assume that $S\not\subset\B(K_X+\Delta+A')$
and that for any $l>0$ sufficiently divisible we have
\begin{equation}\label{eq:33}
\Omega\wedge\textstyle\frac1l\Fix|l(K_X+\Delta+A')|_S\leq\Omega-\Theta.
\end{equation}
Then
$$|k(K_S+\Theta)|+k(\Omega-\Theta)\subset|k(K_X+\Delta)|_S.$$
\end{pro}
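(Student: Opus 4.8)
The plan is to deduce the claimed inclusion from the Hacon--McKernan extension theorem (Theorem \ref{thm:hmck}), applied not to $\Delta$ itself but to a slightly perturbed boundary built from $A'$, together with a diophantine/limiting argument that upgrades a ``for $l$ sufficiently divisible'' hypothesis into a statement about a fixed $k$. Set $\Delta'=\Delta+A'$; note $(X,\Delta'-S)$ is still plt, $S=\lfloor\Delta'\rfloor$, $\Delta'\in\WDiv(X)_\Q$, and after absorbing a general ample piece out of $A$ we may still write $\Delta'=S+\widetilde A+\widetilde B$ with $\widetilde A$ general ample and $(S,\widetilde\Omega+\widetilde A_{|S})$ canonical, where $\widetilde\Omega=(\Delta'-S)_{|S}=\Omega+A'_{|S}$; the hypothesis $S\not\subset\B(K_X+\Delta+A')$ says exactly $S\not\subset\B(K_X+\Delta')$, so Theorem \ref{thm:hmck} is available for $(X,\Delta')$.

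First I would apply Theorem \ref{thm:hmck} to $(X,\Delta')$ with the divisor $\Phi:=\widetilde\Omega-\Theta$ on $S$ (so that $K_S+\widetilde\Omega-\Phi=K_S+\Theta$), checking the two boundary conditions: $\Phi\leq\widetilde\Omega$ is immediate since $\Theta\geq0$, and $\widetilde\Omega\wedge(1-\tfrac{\varepsilon}{k})F\leq\Phi$ is where hypothesis \eqref{eq:33} enters. Here $F=\liminf_m\tfrac1m\Fix|m(K_X+\Delta')|_S$; for a suitable sequence of sufficiently divisible $l$ the value $\tfrac1l\Fix|l(K_X+\Delta')|_S$ computes $F$, and \eqref{eq:33} gives $\Omega\wedge F\leq\Omega-\Theta$, hence (since $\Theta\leq\Omega\leq\widetilde\Omega$ and the extra $A'_{|S}$ part of $\widetilde\Omega$ is not in the support of $F$ up to the usual generality of $A$) also $\widetilde\Omega\wedge F\leq\widetilde\Omega-\Theta=\Phi$; the factor $(1-\tfrac\varepsilon k)<1$ only helps. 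Choosing $\varepsilon>0$ rational small enough that $\varepsilon(K_X+\Delta')+\widetilde A$ is ample and $k$ divisible enough that $k\Delta'$, $k\Theta$, $k\Phi$ are all Cartier, Theorem \ref{thm:hmck} yields
\begin{equation*}
|k(K_S+\Theta)|+k(\widetilde\Omega-\Theta)\subset|k(K_X+\Delta')|_S=|k(K_X+\Delta)+A|_S.
\end{equation*}

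It remains to pass from $K_X+\Delta+A'$ (and the Cartier index forced by $A'$) back to $K_X+\Delta$ with the original $k$, and to replace $\widetilde\Omega$ by $\Omega$ in the boundary term. For the latter, since $A$ is general we may arrange $A'_{|S}$ to be an effective divisor whose support meets neither $\Fix$ of the relevant systems nor $\Supp\Theta$, so adding $k(\widetilde\Omega-\Omega)=kA'_{|S}$ to both sides is harmless and one recovers $k(\Omega-\Theta)$ on the left. For the former, the honest route is a diophantine approximation argument in the spirit of Section \ref{sec:diophant}: approximate the class $K_X+\Delta$ by finitely many rational points $K_X+\Delta_i$ lying in a small simplex around it inside $\mcal L_V$, each with its own ample perturbation $A_i'$ of controlled Cartier index, apply the displayed inclusion to each, and take a convex combination using Lemma \ref{lem:surround}/Lemma \ref{lem:approximation} together with the elementary fact that $|D_1|+|D_2|\subset|D_1+D_2|$; the error terms can be absorbed because $S\not\subset\B(K_X+\Delta+A')$ propagates to nearby boundaries. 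Concretely one shows that a section of $|k(K_S+\Theta)|$, multiplied by the fixed part $k(\Omega-\Theta)$, lifts to a section of $|k(K_X+\Delta)|_S$ after writing $k(K_X+\Delta)$ as an appropriate sum.

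The main obstacle is precisely this last passage: Theorem \ref{thm:hmck} is quantitative but tied to the \emph{ample} divisor inside the pair, so one cannot directly take $A'\to0$; the inclusion of linear series is a \emph{closed} condition only after the diophantine bookkeeping is done correctly, and one must be careful that the ample perturbations $A_i'$ used for the approximating points $\Delta_i$ can be chosen with a \emph{common} denominator compatible with the fixed $k$, and that the generality of $A$ survives the finitely many perturbations. Verifying the boundary inequality $\widetilde\Omega\wedge(1-\tfrac\varepsilon k)F\leq\widetilde\Omega-\Theta$ for the perturbed data — i.e.\ checking that \eqref{eq:33}, stated for $K_X+\Delta+A'$, really controls $F=\liminf\tfrac1m\Fix|m(K_X+\Delta')|_S$ and not some other fixed part — is the other point demanding care, and is where the hypothesis has been set up in exactly the form needed.
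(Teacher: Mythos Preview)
Your approach has a genuine gap in the passage from $K_X+\Delta+A'$ back to $K_X+\Delta$ with the \emph{fixed} $k$. First, you write ``Choosing \dots $k$ divisible enough that $k\Delta'$, $k\Theta$, $k\Phi$ are all Cartier,'' but $k$ is given in the statement and cannot be enlarged; the entire content of the proposition is to get the inclusion for that specific $k$. Second, and more fundamentally, the diophantine/convex-combination idea cannot do the job: knowing $|k(K_S+\Theta_i)|+\cdots\subset|k(K_X+\Delta_i)|_S$ for nearby $\Delta_i$ and then using $|D_1|+|D_2|\subset|D_1+D_2|$ only produces inclusions into $|k\sum(K_X+\Delta_i)|_S$, not into $|k(K_X+\Delta)|_S$; there is no mechanism here to \emph{descend} from a sum back to a summand, so the ``closedness'' heuristic fails.

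The paper's proof uses a completely different mechanism and never invokes Theorem \ref{thm:hmck}. Step 1 proves, via a log resolution and \cite[5.3]{HM08}, only the weak inclusion
\[
|m(K_S+\Theta)|+m(\Omega-\Theta)+(mA'+H)_{|S}\subset|m(K_X+\Delta)+mA'+H|_S
\]
for an auxiliary effective $H$ and all sufficiently divisible $m$. Step 2 is the real point: given $\Sigma\in|k(K_S+\Theta)|$, one uses Step 1 with $m\gg0$ divisible by $k$ to produce $G\in|m(K_X+\Delta)+mA'+H|$ with prescribed restriction, sets $\Lambda=\tfrac{k-1}{m}G+\Delta-S-A$, and writes
\[
k(K_X+\Delta)\sim_\Q K_X+S+\Lambda+\big(A'-\tfrac{k-1}{m}H\big),
\]
with the last bracket ample. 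Nadel vanishing then makes $H^0(X,\mcal J_{S,\Lambda}(k(K_X+\Delta)))\to H^0(S,\mcal J_{\Lambda_{|S}}(k(K_X+\Delta)))$ surjective, and a multiplier-ideal comparison gives $\mcal I_{\Sigma+k(\Omega-\Theta)}\subset\mcal J_{\Lambda_{|S}}$, so $\Sigma+k(\Omega-\Theta)$ lifts. This multiplier-ideal interpolation (the Takayama--Hacon trick) is what converts the asymptotic hypothesis \eqref{eq:33} into a statement for the fixed $k$; your diophantine substitute does not accomplish this.
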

\begin{proof}
{\em Step 1.\/}
We first prove that there exists an effective divisor $H$ on $X$ not containing $S$ such that for all sufficiently divisible
positive integers $m$ we have
\begin{equation}\label{eq:6}
|m(K_S+\Theta)|+m(\Omega-\Theta)+(mA'+H)_{|S}\subset|m(K_X+\Delta)+mA'+H|_S.
\end{equation}
Taking $l$ as in \eqref{eq:33} sufficiently divisible, we can assume $S\not\subset\Bs|l(K_X+\Delta+A')|$.
Let $f\colon Y\rightarrow X$ be a log resolution of
$(X,\Delta+A')$ and of $|l(K_X+\Delta+A')|$. Denote $\Gamma=\B(X,\Delta+A')_Y$ and $E=K_Y+\Gamma-f^*(K_X+\Delta+A')$, and define
$$\Xi=\Gamma-\Gamma\wedge\textstyle\frac1l\Fix|l(K_Y+\Gamma)|.$$
Then $l(K_Y+\Xi)$ is Cartier, $\Fix|l(K_Y+\Xi)|\wedge\Xi=0$ and $\Mob(l(K_Y+\Xi))$ is free. Since $\Fix|l(K_Y+\Xi)|+\Xi$ has simple
normal crossings support, it follows that $\B(K_Y+\Xi)$ contains no log canonical centres of $(Y,\lceil\Xi\rceil)$. Denote
$T=f_*^{-1}S,\Gamma_T=(\Gamma-T)_{|T}$ and $\Xi_T=(\Xi-T)_{|T}$, let $m$ be any positive integer divisible by $l$ and consider a section
$$\sigma\in H^0(T,\OO_T(m(K_T+\Xi_T)))=H^0(T,\mcal J_{\|m(K_T+\Xi_T)\|}(m(K_T+\Xi_T))).$$
By \cite[5.3]{HM08}, there is an ample divisor $H'$ on $Y$ such that if $\tau\in H^0(T,\OO_T(H'))$, then
$$\sigma\cdot\tau\in\im\big(H^0(Y,\OO_Y(m(K_Y+\Xi)+H'))\rightarrow H^0(T,\OO_T(m(K_Y+\Xi)+H'))\big).$$
Therefore
\begin{equation}\label{eq:inclusion}
|m(K_T+\Xi_T)|+m(\Gamma_T-\Xi_T)+H'_{|T}\subset|m(K_Y+\Gamma)+H'|_T.
\end{equation}
We claim that
\begin{equation}\label{eq:inequality}
\Omega+A'_{|S}\geq(f_{|T})_*\Xi_T\geq\Theta+A'_{|S}.
\end{equation}
Assuming the claim, as $(S,\Omega+A'_{|S})$ is canonical, we have
$$|m(K_S+\Theta)|+m((f_{|T})_*\Xi_T-\Theta)\subset|m(K_S+(f_{|T})_*\Xi_T)|=(f_{|T})_*|m(K_T+\Xi_T)|.$$
Pushing forward the inclusion \eqref{eq:inclusion}, we obtain \eqref{eq:6} for $H=f_*H'$.

Now we prove the claim. Since $\Xi_T\leq\Gamma_T$ and $(f_{|T})_*\Gamma_T=\Omega+A'_{|S}$, the first inequality in \eqref{eq:inequality} follows.
In order to prove the second inequality, let $P$ be any prime divisor on $S$ and let $P'=(f_{|T})^{-1}_*P$. Assume that
$P\subset\Supp\Omega$, and thus $P'\subset\Supp\Gamma_T$. Then there is a component $Q$ of the support of $\Gamma$ such that
$$\mult_{P'}\Fix|l(K_Y+\Gamma)|_T=\mult_Q\Fix|l(K_Y+\Gamma)|\quad\textrm{and}\quad\mult_{P'}\Gamma_T=\mult_Q\Gamma,$$
and thus
$$\mult_{P'}\Xi_T=\mult_{P'}\Gamma_T-\min\{\mult_{P'}\Gamma_T,\mult_{P'}\textstyle\frac1l\Fix|l(K_Y+\Gamma)|_T\}.$$
Notice that $\mult_{P'}\Gamma_T=\mult_P(\Omega+A'_{|S})$ and since $E_{|T}$ is exceptional, we have
$$\mult_{P'}\Fix|l(K_Y+\Gamma)|_T=\mult_P\Fix|l(K_X+\Delta+A')|_S.$$
Therefore $(f_{|T})_*\Xi_T=\Omega+A'_{|S}-\Omega\wedge\frac1l\Fix|l(K_X+\Delta+A')|_S$. The inequality now follows from \eqref{eq:33}.\\[2mm]
{\em Step 2.\/}
Let $m\gg0$ be as in Step 1 and divisible by $k$, and such that $A'-\frac{k-1}{m}H$ is ample and $(S,\Omega+\frac{k-1}{m}H_{|S})$ is klt, which is possible since $(S,\Omega)$
is canonical. In particular, $\mcal J_{\Omega+\frac{k-1}{m}H_{|S}}=\OO_S$.

By Step 1, for any $\Sigma\in|k(K_S+\Theta)|$ there is a divisor $G\in|m(K_X+\Delta)+mA'+H|$
such that $G_{|S}=\frac{m}{k}\Sigma+m(\Omega-\Theta)+(mA'+H)_{|S}$. Set $\Lambda=\frac{k-1}{m}G+\Delta-S-A$, and observe that
$$\Lambda_{|S}-(\Sigma+k(\Omega-\Theta))=\textstyle\frac{k-1}{m}G_{|S}+\Omega-A_{|S}-(\Sigma+k(\Omega-\Theta))\leq\Omega+\frac{k-1}{m}H_{|S}.$$
Therefore,
\begin{equation}\label{equ:3}
\mcal I_{\Sigma+k(\Omega-\Theta)}\subset\mcal J_{\Lambda_{|S}}
\end{equation}
by \cite[4.3(3)]{HM08}. Since $k(K_X+\Delta)\sim_\Q K_X+S+\Lambda+(A'-\textstyle\frac{k-1}{m}H)$, the homomorphism
$$H^0(X,\mcal J_{S,\Lambda}(k(K_X+\Delta)))\rightarrow H^0(S,\mcal J_{\Lambda_{|S}}(k(K_X+\Delta)))$$
is surjective by \cite[4.4(3)]{HM08}. This together with \eqref{equ:3} implies
$$\Sigma+k(\Omega-\Theta)\in|k(K_X+\Delta)|_S,$$
which finishes the proof.
\end{proof}

The main result of this section is the following.

\begin{thm}\label{thm:2}
Let $X$ be a smooth variety of dimension $n$, $S$ a smooth prime divisor and $A$ a general ample $\Q$-divisor on $X$.
For $i=1,\dots,\ell$, let $D_i=k_i(K_X+\Delta_i)\in\Div(X)$, where $(X,\Delta_i=S+B_i+A)$ is a log smooth plt pair with $\lfloor \Delta_i\rfloor=S$
and $|D_i|\neq\emptyset$. Assume Theorems A$_{n-1}$, B$_n$ and C$_{n-1}$. Then the algebra $\res_S R(X;D_1,\dots,D_\ell)$ is finitely generated.
\end{thm}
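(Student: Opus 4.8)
The plan is to push the restricted algebra down to $S$ using the Hacon--M\textsuperscript{c}Kernan extension theorem, where it becomes an adjoint ring on the $(n-1)$-dimensional variety $S$, and then to invoke Theorem~A$_{n-1}$; the substance of the proof is in controlling the fixed part of the restricted linear systems.

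\textbf{Reduction to the region where $S$ is not in the base locus.} Write $\mu(\mathbf n)=\sum_i n_iD_i$ for $\mathbf n\in\mcal S:=\N^\ell$, so that $\res_SR(X;D_1,\dots,D_\ell)=\bigoplus_{\mathbf n\in\mcal S}\res_SH^0(X,\mu(\mathbf n))$ as an $\mcal S$-graded algebra. For $\mathbf n\neq0$ set $k_{\mathbf n}=\sum_i n_ik_i$; then $\mu(\mathbf n)=k_{\mathbf n}(K_X+\Delta_{\mathbf n})$ with $\Delta_{\mathbf n}=S+B_{\mathbf n}+A$ and $B_{\mathbf n}=\tfrac1{k_{\mathbf n}}\sum_i n_ik_iB_i$, so $(X,\Delta_{\mathbf n})$ is again log smooth plt with $\lfloor\Delta_{\mathbf n}\rfloor=S$. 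Let $V\subset\Div(X)_\R$ be spanned by $S$ and the components of the $B_i$. Applying Theorem~B$_n$ with $G=S$, the set $\mcal B_{V,A}^{S=1}$ is a rational polytope and $S\not\subset\B(K_X+\Phi+A)$ exactly on it; since the condition $S+B_{\mathbf v}\in\mcal B_{V,A}^{S=1}$ unwinds to finitely many homogeneous linear inequalities on $\mathbf v$, the set $\mcal C'=\{\mathbf v\in\R_+^\ell: S+B_{\mathbf v}\in\mcal B_{V,A}^{S=1}\}$ is a rational polyhedral cone, and $\mcal S'=\mcal S\cap\mcal C'$ is a finitely generated saturated monoid by Lemma~\ref{lem:gordan}. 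If $\mathbf n\in\mcal S\setminus\mcal S'$ then $S\subset\B(K_X+\Delta_{\mathbf n})$, hence $S\subset\Bs|\mu(\mathbf n)|$ and $\res_SH^0(X,\mu(\mathbf n))=0$; so it suffices to prove that $\bigoplus_{\mathbf n\in\mcal S'}\res_SH^0(X,\mu(\mathbf n))$ is finitely generated.

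\textbf{Descent to $S$ via the extension theorem.} Fix $\mathbf n\in\mcal S'$. Since $A$ is general, Corollary~\ref{cor:hmck} applies to $(X,\Delta_{\mathbf n})$ and gives, for all sufficiently divisible $k$,
$$|k(K_X+\Delta_{\mathbf n})|_S=|k(K_S+\Omega_{\mathbf n}-\Phi_{\mathbf n,k})|+k\Phi_{\mathbf n,k},$$
where $\Omega_{\mathbf n}=(\Delta_{\mathbf n}-S)_{|S}$ and $\Phi_{\mathbf n,k}=\Omega_{\mathbf n}\wedge\tfrac1k\Fix|k(K_X+\Delta_{\mathbf n})|_S$; as $A$ is general this is $\Omega_{\mathbf n}-\Phi_{\mathbf n,k}=\Theta_{\mathbf n,k}+A_{|S}$ with $0\le\Theta_{\mathbf n,k}\le B_{\mathbf n|S}$, so $(S,\Theta_{\mathbf n,k}+A_{|S})$ is log smooth klt. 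Thus $\res_SH^0(X,\mu(\mathbf n))$ is, as a vector space, $H^0\big(S,\bMob(k_{\mathbf n}(K_S+\Theta_{\mathbf n}+A_{|S}))\big)$ (take $k=k_{\mathbf n}$, or rather a common truncation), the twist by $k_{\mathbf n}\Phi_{\mathbf n}$ being an isomorphism; compatibly with multiplication, $\res_SR(X;D_1,\dots,D_\ell)$ becomes an $\mcal S'$-graded algebra of sections of mobile divisors on $S$ governed by a subadditive map $\mathbf M\colon\mcal S'\to\bMob(S)$ that is additive up to truncation along every ray (apply Theorem~A$_{n-1}$ and \cite[2.3.53]{Cor07} to each rank-one submonoid). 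Hence the straightening $\mathbf M^\sharp\colon\mcal S'_\R\to\bMob(S)_\R$ of Lemma~\ref{concave} is defined.

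\textbf{Piecewise linearity of $\mathbf M^\sharp$ --- the technical core.} It is enough to produce a finite rational polyhedral subdivision $\mcal S'_\R=\bigcup_j\mcal C_j$ with $\mathbf M^\sharp_{|\mcal C_j}$ linear for all $j$; by \eqref{eq:2} this amounts to rational piecewise linearity of $\mathbf v\mapsto\ord_P\|\mu(\mathbf v)\|_S$, equivalently of $\mathbf v\mapsto\Omega_{\mathbf v}\wedge N_\sigma\|K_X+\Delta_{\mathbf v}\|_S$, for every prime divisor $P$ on $S$. By Theorem~\ref{piecewise} it suffices to check linearity after restricting to every $2$-plane through the origin, i.e. for a pencil $\R_+D+\R_+D'$; on such a plane I would compute the restricted fixed part using Corollary~\ref{cor:hmck}, rewrite the restricted asymptotic invariants in terms of divisors on $S$ via Lemma~\ref{lem:restrictedord} and Remark~\ref{rem:11}, and use the Diophantine approximation Lemma~\ref{lem:approximation} to compare $\Fix|k\,\mu(\mathbf v)|_S$ at integral points with its asymptotic value along the pencil, reducing linearity on rational subcones to the rational polyhedrality of the pseudo-effective and non-vanishing loci on $S$ supplied by Theorems~A$_{n-1}$ and C$_{n-1}$ (through the $(n-1)$-dimensional analogue of Lemma~\ref{bounded}). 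This step is where essentially all the difficulty of the proof lies.

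\textbf{Conclusion.} On each $\mcal C_j$ the map $\mathbf M$ is additive up to truncation, so after passing to a common truncation (Lemma~\ref{lem:1}(1)) the algebra $\bigoplus_{\mathbf n\in\mcal C_j\cap\mcal S'}\res_SH^0(X,\mu(\mathbf n))$ is isomorphic to a divisorial adjoint algebra $R(S;E_{j1},\dots,E_{jm_j})$ on (a log resolution of) $S$, whose generators are of the form $k_{jl}(K_S+\Theta_{jl}+A_{|S})$ with $(S,\Theta_{jl}+A_{|S})$ log smooth log canonical and $|E_{jl}|\neq\emptyset$, the last because $S\not\subset\B$ throughout $\mcal C'$. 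By Theorem~A$_{n-1}$ together with Lemma~\ref{lem:1}(1),(2) each of these is finitely generated. Since the finitely many cones $\mcal C_j$ cover $\mcal S'_\R$, the union of generating sets of these subalgebras generates $\bigoplus_{\mathbf n\in\mcal S'}\res_SH^0(X,\mu(\mathbf n))\cong\res_SR(X;D_1,\dots,D_\ell)$, exactly as in the proof of Lemma~\ref{lem:1}(3). Hence the restricted algebra is finitely generated.
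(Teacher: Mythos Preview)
Your skeleton matches the paper's proof of Theorem~\ref{thm:2}: Step~1 reduces to the subcone where $S\not\subset\B$, Step~3 identifies the restricted pieces via Corollary~\ref{cor:hmck}, and the conclusion uses Theorem~A$_{n-1}$ on a finite polyhedral subdivision. Two genuine gaps, however, need attention.

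\textbf{The canonical hypothesis on $S$.} You write ``since $A$ is general, Corollary~\ref{cor:hmck} applies'', but that corollary requires $(S,\Omega_{\mathbf n}+A_{|S})$ to be \emph{canonical}, where $\Omega_{\mathbf n}=(\Delta_{\mathbf n}-S)_{|S}$. Log smooth plt on $X$ does not give this: two components of $B_{\mathbf n}$ with coefficients summing to more than $1$ can meet along a divisor on $S$, so $(S,B_{\mathbf n|S})$ need not even be canonical. The paper handles this in its Step~2 by passing to a log resolution $Y\to X$ on which the components of $\{\B(X,\Delta_i)_Y\}$ are disjoint (Lemma~\ref{disjoint}); after adjusting the ample part, $(T,\Omega'_s+A'_{|T})$ becomes terminal. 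You need this reduction before you can invoke Corollary~\ref{cor:hmck}.

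\textbf{What must be shown piecewise linear.} You frame the technical core as proving that $\mathbf M^\sharp$, the straightening of the mobile b-divisor map on $S$, is rationally piecewise linear, and you invoke equation~\eqref{eq:2} to say this ``amounts to'' piecewise linearity of $\mathbf v\mapsto\ord_P\|\mu(\mathbf v)\|_S$. This conflates two layers. Equation~\eqref{eq:2} applies to $\bMob_\mu$ for an \emph{additive} map $\mu$ to divisors; your $\mathbf M$ is $\Mob$ composed with the nonlinear assignment $s\mapsto t_s(K_S+\theta_s)$. The restricted invariants $\ord_P\|K_X+\Delta_s\|_S$ control the boundary $\Theta_s$ on $S$ and hence the map $\lambda(s)=t_s(K_S+\Theta_s)$; they do \emph{not} directly control $\mathbf M^\sharp(s)$, which further involves $\ord_E\|\lambda(s)\|$ for valuations $E$ over $S$. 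The paper's route is cleaner: it proves that $\lambda$ is rationally piecewise linear (Theorems~\ref{lem:lipschitz}, \ref{thm:linear1}, \ref{lem:PL}), and then on each linear piece the restricted algebra is, after truncation, an adjoint ring on $S$ to which Theorem~A$_{n-1}$ applies. Showing $\mathbf M^\sharp$ itself piecewise linear is a consequence of this (via Proposition~\ref{pro:1}), not a shortcut around it.

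You correctly flag the ``technical core'' as the hard part and name the right ingredients (Diophantine approximation, the restricted $\sigma$-invariants, Lemma~\ref{bounded} in dimension $n-1$); but be aware that the paper spends three full theorems on it, and the mechanism is: approximate $(s,\Theta_s)$ by rational data, use Proposition~\ref{pro:2} to get inclusions of linear series, and combine the resulting convexity of $\lambda_Z$ with its a~priori concavity to force linearity on small cones.
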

\begin{proof}
{\em Step 1.\/}
I first show that we can assume $S\notin\Fix|D_i|$ for all $i$.

To that end, let $K_X$ be a divisor with $\OO_X(K_X)\simeq\omega_X$ and
$\Supp A\not\subset\Supp K_X$, and let $\Lambda$ be the monoid in $\Div(X)$ generated
by the components of $K_X$ and $\sum\Delta_i$.
Denote $\mcal C_S=\{P\in\Lambda_\R:S\notin\B(P)\}$. By Theorem B$_n$, the set
$\mcal A=\sum_{i=1}^\ell\R_+D_i\cap\mcal C_S$ is a rational polyhedral cone.

The monoid $\sum_{i=1}^\ell\R_+D_i\cap\Lambda$ is finitely generated by Lemma \ref{lem:gordan}, and let $D_{\ell+1},\dots,D_q$ be its generators.
Let $e_i$ be the standard generators of $\R^q$. If $\mu\colon\bigoplus_{i=1}^q\N e_i\rightarrow\Div(X)$ denotes the additive map given by $\mu(e_i)=D_i$,
then $\mcal S=\mu^{-1}(\mcal A\cap\Lambda)\cap\bigoplus_{i=1}^\ell\N e_i$ is a finitely generated monoid, and let $h_1,\dots,h_m$ be generators of
$\mcal S$. Observe that $\mu(h_i)$ is a multiple of an adjoint bundle for every $i$, and that $R(X,\mu(\bigoplus_{i=1}^\ell\N e_i))=R(X;D_1,\dots,D_\ell)$.

The algebra $\res_S R(X,\mu(\bigoplus_{i=1}^\ell\N e_i))$ is finitely ge\-ne\-ra\-ted if and only if
$\res_S R(X,\mu(\mcal S))$ is, since $\res_S H^0(X,\mu(s))=0$ for every $s\in\big(\bigoplus_{i=1}^\ell\N e_i\big)\backslash\mcal S$. Then
it is enough to prove that the restricted algebra $\res_S R(X;\mu(h_1),\dots,\mu(h_m))$ is finitely generated, as we have the natural projection
$$\res_S R(X;\mu(h_1),\dots,\mu(h_m))\rightarrow\res_S R(X,\mu(\mcal S)).$$
By passing to a truncation,
cf.\ Lemma \ref{lem:1}(1), I can assume further that $S\notin\Fix|\mu(h_i)|$ for $i=1,\dots,m$. Now by replacing $\mcal S$ by $\bigoplus_{i=1}^m\N\mu(h_i)$,
I assume $\mcal S=\bigoplus_{i=1}^\ell\N e_i$ and $\mu(e_i)=D_i$ for every $i$.\\[2mm]
{\em Step 2.\/}
For $s=\sum_{i=1}^\ell t_ie_i\in\mcal S_\R$, denote
$$t_s=\sum_{i=1}^\ell t_ik_i,\quad\Delta_s=\sum_{i=1}^\ell\frac{t_ik_i}{t_s}\Delta_i,\quad\textrm{and}\quad\Omega_s=(\Delta_s-S)_{|S},$$
and observe that then
$$R(X;D_1,\dots,D_\ell)=\bigoplus_{s\in\mcal S}H^0(X,t_s(K_X+\Delta_s)).$$
In this step I show that we can assume that the pair $(S,\Omega_s+A_{|S})$ is terminal for every $s\in\mcal S_\R$.

Let $\sum F_k=\bigcup_i\Supp B_i$, and denote $\B_i=\B(X,\Delta_i)$ and $\B=\B(X,S+\nu\sum_kF_k+A)$,
where $\nu=\max_{i,k}\{\mult_{F_k}B_i\}$. By Lemma \ref{disjoint} there is a log resolution $f\colon Y\rightarrow X$
such that the components of $\{\B_Y\}$ do not intersect, and denote $D_i'=k_i(K_Y+\B_{iY})$. Observe that
\begin{equation}\label{eq:4}
R(X;D_1,\dots,D_\ell)\simeq R(Y;D_1',\dots,D_\ell').
\end{equation}
Since $B_i\leq\nu\sum_kF_k$, by comparing discrepancies we see that the
components of $\{\B_{iY}\}$ do not intersect for every $i$, and notice that
$f^*A=f_*^{-1}A\leq\B_{iY}$ since $A$ is general. Denote $\Delta_s'=\sum_{i=1}^\ell\frac{t_ik_i}{t_s}\B_{iY}$.
Let $H$ be a small effective $f$-exceptional $\Q$-divisor such that $f^*A-H$ is ample and let
$A'\sim_\Q f^*A-H$ be a general ample $\Q$-divisor. Let $T=f_*^{-1}S$, $\Psi_s=\Delta_s'-f^*A-T+H\geq0$ and $\Omega_s'=(\Psi_s+A')_{|T}$. Then
the pair $(T,\Omega_s'+A_{|T}')$ is terminal and
$$K_Y+T+\Psi_s+A'\sim_\Q K_Y+\Delta_s'.$$
Now replace $X$ by $Y$, $S$ by $T$, $\Delta_s$ by $T+\Psi_s+A'$ and $\Omega_s$ by $\Omega_s'$.\\[2mm]
{\em Step 3.\/}
Write
$$\res_S R(X;D_1,\dots,D_\ell)=\bigoplus_{s\in\mcal S}\mcal R_s.$$
Then, denoting $\theta_s=\Omega_s-\Omega_s\wedge\frac{1}{t_s}\Fix|t_s(K_X+\Delta_s)|_S$, we have
\begin{equation}\label{tag:9}
\mcal R_s=H^0(S,t_s(K_S+\theta_s))
\end{equation}
by Corollary \ref{cor:hmck}. Let $\m\colon\mcal S\rightarrow\Div(S)$ be the map given by $\m(s)=\bMob(t_s(K_S+\theta_s))$. Since
$\mcal R_{s_1}\mcal R_{s_2}\subset\mcal R_{s_1+s_2}$ for all $s_1,s_2\in\mcal S$, $\m$ is superadditive, cf.\ \cite[2.3.34]{Cor07}.

For $s\in\mcal S$, set $\Theta_s=\limsup\limits_{m\rightarrow\infty}\theta_{ms}$. Then similarly as in the proof of \cite[7.1]{HM08},
by Theorem \ref{thm:hmck} and Lemma \ref{bounded} we obtain that $\Theta_s$ is rational and
\begin{equation}\label{eq:restriction}
\bigoplus_{p\in\N}\mcal R_{p\ell_ss}\simeq R(S,\ell_st_s(K_S+\Theta_s)),
\end{equation}
where $\ell_s$ is a positive integer such that $\ell_s\Delta_s$ and $\ell_s\Theta_s$ are Cartier. By Theorem A$_{n-1}$, the algebra
$R(S,\ell_st_s(K_S+\Theta_s))$ is finitely generated, and since $\mcal R_{p\ell_ss}=H^0(S,\m(p\ell_ss))$, the map $\m|_{\N s}$ is
additive up to truncation by \cite[2.3.53]{Cor07}. Therefore, there is a well-defined straightening $\m^\sharp\colon\mcal S_\R\rightarrow\Div(S)_\R$ by
Lemma \ref{concave}.

Define the maps $\Theta,\lambda\colon\mcal S\rightarrow\Div(S)_\Q$ by
$$\Theta(s)=\Theta_s,\qquad\lambda(s)=t_s(K_S+\Theta_s)$$
for $s\in\mcal S$. Note that, by definition of $\theta_s$ and by \eqref{tag:9}, for every component $G$ of $\theta_s$ we have
$G\notin\Fix|t_s(K_S+\theta_s)|$, and so $\mult_G(t_s(K_S+\theta_s))=\mult_G\m(s)$. Therefore, by the construction of $\m^\sharp$ from the proof of
Lemma \ref{concave}, $\mult_G(t_s(K_S+\Theta_s))=\mult_G\m^\sharp(s)$ for every component $G$ of $\Theta_s$, and
thus $\Theta$ and $\lambda$ extend to $\mcal S_\R$.

I claim that there exists a finite rational polyhedral subdivision $\mcal S_\R=\bigcup\mcal C_i$ such that
$\lambda$ is linear on each $\mcal C_i$. Grant this for the moment.
By Lemma \ref{lem:gordan}, let $s_1^i,\dots,s_z^i$ be generators of $\mcal S_i=\mcal S\cap\mcal C_i$, and let $\kappa$ be a sufficiently divisible
positive integer such that $\lambda(\kappa s_j^i)=\kappa\lambda(s_j^i)\in\Div(S)$ for all $i$ and $j$.
Then the ring $R(S;\lambda(\kappa s_1^i),\dots,\lambda(\kappa s_z^i))$ is finitely generated by Theorem A$_{n-1}$,
and so is $\bigoplus_{s\in\mcal S_i^{(\kappa)}}\mcal R_s$ by projection.
Thus the algebra $\bigoplus_{s\in\mcal S_i}\mcal R_s$ is finitely generated by Lemma \ref{lem:1}(1), and so is
$\res_S R(X;D_1,\dots,D_\ell)$ by putting all those generators together.

The claim stated above is Theorem \ref{lem:PL}, and this is proved in the remainder of this section.
\end{proof}

\begin{rem}\label{rem:12}
Note that for every $s\in\mcal S$ we have $\Theta_s-A_{|S}\in\mcal B_{V_S,A_{|S}}^G$ for every component $G$ of $\Theta_s$,
since $\theta_{ms}-A_{|S}\in\mcal B_{V_S,A_{|S}}^G$ for every component $G$ of $\theta_s$, and each $\mcal B_{V_S,A_{|S}}^G$ is a rational polytope
by Lemma \ref{bounded}, and in particular closed.
\end{rem}

\begin{nt}\label{nt}
With notation from the previous proof, for $s\in\mcal S_\R$ I usually denote $\Theta(s)$ and $\lambda(s)$ by $\Theta_s$ and $\lambda_s$, respectively.
Denote $\Pi=\{s\in\mcal S_\R:t_s=1\}$; this is a rational polytope in $\R^\ell$.
Let $\Delta\colon\R\Pi\rightarrow\Div(X)_\R$ be the linear map given by $\Delta(q_i)=\Delta_{q_i}$ for linearly independent
points $q_1,\dots,q_\ell\in\Pi$, and then extended linearly. This is well defined since $\Delta$ is an affine map on $\Pi$.
Similarly, observe that, since the function $\ord_P\|\cdot\|_S$ is convex for every $P$, the set $\{s\in\Pi:\Theta_s>0\}$ is convex
and $\Theta$ is concave on it. Let $L$ denote the norm of the linear map $\Delta$, i.e.\ the smallest global Lipschitz constant of $\Delta$.
Denote by $V_S\subset\Div(S)_\R$ the vector space spanned by the components of $\bigcup_{s\in\mcal S_\R}\Supp(\Omega_s-A_{|S})$.
For a prime divisor $P$ on $S$, let $\lambda_P\colon\mcal S_\R\rightarrow\R$ be the function given by
$\lambda_P(s)=\mult_P\lambda(s)$, and similarly for $\Theta_P$.
\end{nt}

\paragraph{\bf Proof of piecewise linearity}
To finish the proof of Theorem \ref{thm:2}, it remains to prove that the map $\lambda$ is rationally piecewise linear. I first briefly sketch
the strategy of the proof of this fact, which occupies the rest of this section.

Until the end of the section I fix a prime divisor $Z$ on $S$, and the goal is to prove that
$\lambda_Z$ is rationally piecewise linear -- it is clear that then $\lambda$ is rationally piecewise linear by taking a subdivision of
the cone $\mcal S_\R$ that works for all prime divisors.
By suitably replacing $\mcal S_\R$ and $\lambda_Z$, I can assume that $\lambda_Z$ is a superlinear map, see
the proof of Theorem \ref{lem:PL}, and also that $\Theta_s-A_{|S}\in\mcal B^Z_{V_S,A_{|S}}$ for every $s\in\mcal S_\R$.
In order to prove that $\lambda_Z$ is piecewise linear, it is enough to show that $\lambda_Z|_{\mcal S_\R\cap H}$ is piecewise linear
for every $2$-plane $H\subset\R^\ell$ by Theorem \ref{piecewise}, and the first step is Theorem
\ref{lem:lipschitz}(1), which claims that $\lambda|_{\mcal S_\R\cap H}$ is continuous.

The method of the proof is as follows: starting from a point
$s\in\mcal S_\R$ and a $2$-plane $H\ni s$, I approximate $(s,\Theta_s)\in\mcal S_\R\times\Div(S)_\R$ by points
$(t_i,\Theta_{t_i}')\in\mcal S_\Q\times\Div(S)_\Q$ such that $\R_+s\subsetneq\mcal C_{s,H}\cap H$, where $\mcal C_{s,H}=\sum\R_+t_i$. Furthermore, if
the approximation is sufficiently good, I can assume that $\Theta_{t_i}'\in\mcal B^Z_{V_S,A_{|S}}$ by Theorems A$_{n-1}$ and C$_{n-1}$. Then there are
suitable inclusions of linear series which force $\lambda_Z$ to be convex on $\mcal C_{s,H}$. However, since $\lambda_Z$ is concave,
this implies it is linear on $\mcal C_{s,H}$, and thus on $\mcal S_\R\cap H$ by an easy compactness argument.
The fact that $\lambda_Z$ is {\em rationally\/} piecewise linear then follows easily, and this is done in Step 3 of the proof of Theorem \ref{lem:PL}.

\begin{thm}\label{lem:lipschitz}
Fix $s\in\Pi$, let $U\subset\R^\ell$ be the smallest rational affine space containing $s$ and let $P$ be a prime divisor on $S$. Then:
\begin{enumerate}
\item for any $t\in\Pi$ we have $\lim\limits_{\varepsilon\downarrow0}\Theta_{(1-\varepsilon)s+\varepsilon t}=\Theta_s$,
\item if $\Theta_P(s)>0$, then the map $\lambda_P$ is linear in a neighbourhood of $s$ contained in $U$.
\end{enumerate}
\end{thm}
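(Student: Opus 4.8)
The plan is to prove both statements by approximating $s$ with rational points and invoking the extension theorem together with Theorems A$_{n-1}$ and C$_{n-1}$, exactly as sketched in the paragraph preceding the statement.

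First I would set up the approximation. Given $s\in\Pi$, use Lemma \ref{lem:surround} to find finitely many $(s_i,k,k_i,r_i)\in(U\cap\Q^n)\times\Z_{>0}^2\times\R_{>0}$ uniformly approximating $s$ with a small error $\varepsilon$, so that $s$ lies in the interior of the rational simplex spanned by the $s_i$ and $s=\sum r_is_i$, $\sum r_i=1$. For $\varepsilon\ll1$ and $k_i\gg0$ the pairs $(X,\Delta_{s_i})$ are still plt with the required terminality on $S$, the divisors $\Delta_{s_i}$ are $\Q$-Cartier of the right form, and (using Remark \ref{rem:6}) the combinatorial structure is preserved. The key point to establish is that for each component $G$ of $\Theta_s$ we have $\Theta_{s_i}-A_{|S}\in\mcal B^G_{V_S,A_{|S}}$ once the approximation is good enough: this follows from Remark \ref{rem:12}, since $\Theta_s-A_{|S}\in\mcal B^G_{V_S,A_{|S}}$, each $\mcal B^G_{V_S,A_{|S}}$ is a rational polytope by Lemma \ref{bounded} (hence closed with nonempty interior in its span), and $\Theta$ varies continuously under small rational perturbation of $s$ — but continuity of $\Theta$ is precisely part (1), so I must be careful to prove (1) first using only the extension theorem and the closedness of these polytopes, without circularity.

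For part (1): fix $t\in\Pi$ and set $s_\varepsilon=(1-\varepsilon)s+\varepsilon t$. Since $\ord_P\|\cdot\|_S$ is convex for every $P$ (Notation \ref{nt}), the function $\Theta$ is concave on the convex set $\{\Theta>0\}$, so $\liminf_{\varepsilon\downarrow0}\Theta_P(s_\varepsilon)\ge\Theta_P(s)$ for every prime $P$ by concavity (upper semicontinuity from below along the segment is automatic from concavity on the relative interior). For the reverse inequality, I would use Proposition \ref{pro:2}: for $\varepsilon$ small, $\Theta_s$ is an admissible choice of $\Theta$ relative to $\Delta_{s_\varepsilon}$ up to rescaling, because $\Omega_{s_\varepsilon}\to\Omega_s$ and $\frac1l\Fix|l(K_X+\Delta_{s_\varepsilon})|_S$ is controlled by Lemma \ref{bounded}(1) uniformly, forcing $\Omega_{s_\varepsilon}\wedge\frac1l\Fix|l(\cdots)|_S\le\Omega_{s_\varepsilon}-\Theta_s'$ for a divisor $\Theta_s'$ close to $\Theta_s$; the resulting inclusion $|k(K_S+\Theta_s')|+k(\Omega_{s_\varepsilon}-\Theta_s')\subset|k(K_X+\Delta_{s_\varepsilon})|_S$ shows $\Theta_{s_\varepsilon}\ge\Theta_s'$, and letting $\varepsilon\downarrow0$ gives $\liminf\Theta_P(s_\varepsilon)\le\Theta_P(s)$ after combining with the boundedness of denominators. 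Together these give the limit.

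For part (2): assume $\Theta_P(s)>0$. Choose the rational approximation $s_i\in U$ as above; by (1) and closedness of $\mcal B^P_{V_S,A_{|S}}$ we may assume $\Theta_{s_i}-A_{|S}\in\mcal B^P_{V_S,A_{|S}}$ and $\Theta_P(s_i)>0$ for all $i$. Now apply the extension theorem (Theorem \ref{thm:hmck}, via Corollary \ref{cor:hmck}) at $s$ and at each $s_i$: the identity $\mcal R_{ps}=H^0(S,\bMob(pt_s(K_S+\Theta_{ps})))$ and the inclusions $\mcal R_{s_i}\mcal R_{s_j}\subset\mcal R_{s_i+s_j}$ force $\lambda_P$ to be superadditive on the cone $\sum\R_+s_i$, while $\lambda_P$ is concave (being $t_s(K_S+\Theta_s)$ with $\Theta$ concave and $t_s$ linear). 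Superadditive plus concave plus positively homogeneous forces linearity on $\sum\R_+s_i$ by Lemma \ref{linear} applied at the interior point $s=\sum r_is_i$; intersecting with $U$ gives a neighbourhood of $s$ in $U$ on which $\lambda_P$ is linear. The main obstacle is the interplay between parts (1) and (2) and the circularity risk: one must prove the required inclusions of linear series (the ``$\Theta_s$ admissible for $\Delta_{s_\varepsilon}$'' step) using only Lemma \ref{bounded} for uniform control of the fixed part, before invoking continuity, and one must verify the hypothesis of Proposition \ref{pro:2}, namely $\Omega_{s_\varepsilon}\wedge\frac1l\Fix|l(K_X+\Delta_{s_\varepsilon}+A')|_S\le\Omega_{s_\varepsilon}-\Theta_s'$, which is where the bound on denominators from finite generation in lower dimension is essential.
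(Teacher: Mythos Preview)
Your proposal has a genuine gap: you are missing the key intermediary object $\Theta_s^\sigma=\Omega_s-\Omega_s\wedge N_\sigma\|K_X+\Delta_s\|_S$, defined via the restricted Nakayama $\sigma$-invariant rather than the asymptotic order of vanishing. The paper reduces (1) to the identity $\Theta_u=\Theta_u^\sigma$ for all $u$, because $\Theta^\sigma$ already satisfies the desired continuity along segments by Lemma~\ref{lem:restrictedord}(3). Without this object you have no a priori control on the fixed locus at $s_\varepsilon$, and your attempt to verify the hypothesis of Proposition~\ref{pro:2} directly for $\Delta_{s_\varepsilon}$ with $\Theta\approx\Theta_s$ is circular: bounding $\Omega_{s_\varepsilon}\wedge\frac1l\Fix|l(K_X+\Delta_{s_\varepsilon}+A')|_S$ in terms of $\Omega_s-\Theta_s$ is essentially what you are trying to prove. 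The paper instead bounds this via $\sigma_P\|K_X+\Delta_s\|_S$, using that $\Delta_{t_i}-\Delta_s+\frac12A_{t_i}$ is ample so $\sigma_P\|K_X+\Delta_{t_i}+\frac12A_{t_i}\|_S\leq\sigma_P\|K_X+\Delta_s\|_S$; this is where the $\sigma$-invariant is indispensable.

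There is also a logical slip in your argument for (1): from the inclusion you derive $\Theta_{s_\varepsilon}\geq\Theta_s'$, but this gives $\liminf\Theta_P(s_\varepsilon)\geq\Theta_P(s)$, the same direction you already obtained from concavity, not the reverse inequality you need. In the paper the reverse inequality $\Theta_s\geq\Theta_s^\sigma$ comes from a chain $\mult_P\Theta_s\geq\sum r_{t_i}\mult_P\Theta_{t_i}\geq\sum r_{t_i}\mult_P\Theta_{t_i}'=\mult_P\Theta_s^\sigma$, where the $\Theta_{t_i}'$ are rational approximants of $\Theta_s^\sigma$ (not of $\Theta_{t_i}$), and the inequality $\Theta_{t_i}\geq\Theta_{t_i}'$ follows from Proposition~\ref{pro:2} combined with Lemma~\ref{bounded} applied to $\Theta_{t_i}'$. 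The equality case in this chain then yields (2) via Lemma~\ref{linear}, so your intuition for (2) is correct in spirit, but it too rests on the missing $\Theta^\sigma$ machinery.
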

\begin{proof}
First note that $U\cap\mcal S_\R\subset\Pi$, and let $r$ be a positive integer as in Lemma \ref{bounded}, with respect to the vector space $V_S$ and the
ample divisor $A_{|S}$.

Note that in order to prove the claim (1), it is enough to show that for every $u\in\mcal S$,
\begin{equation}\label{tag:5}
\Theta_u=\Theta^\sigma_u,
\end{equation}
where $\Theta^\sigma_u=\Omega_u-\Omega_u\wedge N_\sigma\|K_X+\Delta_u\|_S$, cf.\ Remark \ref{rem:11}, since then
$$\lim_{\varepsilon\downarrow0}\Theta_{(1-\varepsilon)s+\varepsilon t}=\lim_{\varepsilon\downarrow0}\Theta_{(1-\varepsilon)s+\varepsilon t}^\sigma=\Theta_s^\sigma=\Theta_s$$
by Lemma \ref{lem:restrictedord}(3). Therefore I concentrate on proving \eqref{tag:5} and the claim (2). Without loss of generality I assume $u=s$.
In Step 1 I am closely following \cite{Hac08}.\\[2mm]
{\em Step 1.\/}
Let $0<\phi<1$ be the smallest positive coefficient of $\Omega_s-\Theta_s^\sigma$ if it exists, and set $\phi=1$ otherwise.
Let $W\subset\Div(S)_\R$ be the smallest rational affine space containing $\Theta_s^\sigma$.
Let $0<\eta\ll1$ be a rational number such that $(L+1)\eta(K_X+\Delta')+\frac12A$ and $\Delta'-\Delta_s+\frac12A$ are ample divisors
whenever $\Delta'\in B(\Delta_s,L\eta)$, cf.\ Notation \ref{nt}.

By Lemma \ref{lem:surround}, there exist rational points $(t_i,\Theta_{t_i}')\in U\times W$, integers $p_{t_i}\gg0$ and $r_{t_i}\in\R_{>0}$ such that
$(t_i,\Theta_{t_i}',r,p_{t_i},r_{t_i})$ uniformly approximate $(s,\Theta_s^\sigma)\in U\times W$ with error $\phi\eta$.
Note that then $(\Omega_{t_i},\Theta_{t_i}',r,p_{t_i},r_{t_i})$ uniformly approximate $(\Omega_s,\Theta_s^\sigma)\in\Div(S)_\R\times W$ with error
$\max\{\phi\eta,L\phi\eta\}$, and thus $\Theta_{t_i}'\leq\Omega_{t_i}$ by Remark \ref{rem:6}. Furthermore, for every prime divisor
$P$ on $S$ we have
\begin{equation}\label{tag:3}
\textstyle\big(1-\frac{(L+1)\eta}{p_{t_i}}\big)\mult_P(\Omega_s-\Theta_s^\sigma)\leq\mult_P(\Omega_{t_i}-\Theta_{t_i}').
\end{equation}
To see this, note that \eqref{tag:3} is trivial when $\mult_P(\Omega_s-\Theta_s^\sigma)=0$. Therefore I can assume that $0<\phi<1$ and
$\mult_P(\Omega_s-\Theta_s^\sigma)\geq\phi$. Since $\|\Omega_s-\Omega_{t_i}\|<L\phi\eta/p_{t_i}$ and $\|\Theta_{t_i}'-\Theta_s^\sigma\|<\phi\eta/p_{t_i}$,
by triangle inequality we have
\begin{multline*}
\mult_P(\Omega_s-\Theta_s^\sigma)\leq\mult_P(\Omega_{t_i}-\Theta_{t_i}')+\|\Omega_s-\Omega_{t_i}\|+\|\Theta_{t_i}'-\Theta_s^\sigma\|\\
\leq\mult_P(\Omega_{t_i}-\Theta_{t_i}')+\textstyle\frac{(L+1)\phi\eta}{p_{t_i}}\leq\mult_P(\Omega_{t_i}-\Theta_{t_i}')+
\frac{(L+1)\eta}{p_{t_i}}\mult_P(\Omega_s-\Theta_s^\sigma),
\end{multline*}
and \eqref{tag:3} follows.

I claim that
\begin{equation}\label{eq:11}
|p_{t_i}(K_S+\Theta_{t_i}')|+p_{t_i}(\Omega_{t_i}-\Theta_{t_i}')\subset|p_{t_i}(K_X+\Delta_{t_i})|_S
\end{equation}
for every $i$. To that end, set $A_{t_i}=A/p_{t_i}$, and recall that $S\not\subset\B(K_X+\Delta_u)$ for every $u\in\mcal S_\R$ by Step 1 of the proof of
Theorem \ref{thm:2}. Therefore $S\not\subset\B(K_X+\Delta_{t_i})$ for every $i$ since $t_i\in V$, $p_{t_i}\gg0$ and $\mcal S_\R$ is a rational
polyhedral cone, and so $S\not\subset\B(K_X+\Delta_{t_i}+A_{t_i})$.

Thus, to prove \eqref{eq:11}, by Proposition \ref{pro:2} it is enough to show that for any component $P\subset\Supp\Omega_s$,
and for any $l>0$ sufficiently divisible,
$$\mult_P(\Omega_{t_i}\wedge\textstyle\frac1l\Fix|l(K_X+\Delta_{t_i}+A_{t_i})|_S)\leq\mult_P(\Omega_{t_i}-\Theta_{t_i}'),$$
and so by \eqref{tag:3} it suffices to prove that
\begin{equation}\label{equ:7}
\mult_P(\Omega_{t_i}\wedge\textstyle\frac1l\Fix|l(K_X+\Delta_{t_i}+A_{t_i})|_S)\leq\big(1-\frac{(L+1)\eta}{p_{t_i}}\big)\mult_P(\Omega_s-\Theta_s^\sigma).
\end{equation}
Let $\delta>(L+1)\eta/p_{t_i}$ be a rational number such that $\delta(K_X+\Delta_{t_i})+\frac12A_{t_i}$ is ample. Since
$$\textstyle K_X+\Delta_{t_i}+A_{t_i}=(1-\delta)(K_X+\Delta_{t_i}+\frac12A_{t_i})+\big(\delta(K_X+\Delta_{t_i})+\frac{1+\delta}{2}A_{t_i}\big),$$
and $\ord_P\|K_X+\Delta_{t_i}+\frac12A_{t_i}\|_S=\sigma_P\|K_X+\Delta_{t_i}+\frac12A_{t_i}\|_S$ by Remark \ref{rem:11}, we have
$$\textstyle\ord_P\|K_X+\Delta_{t_i}+A_{t_i}\|_S\leq(1-\delta)\sigma_P\|K_X+\Delta_{t_i}+\frac12A_{t_i}\|_S,$$
and thus
\begin{equation}\label{tag:4}
\mult_P\textstyle\frac1l\Fix|l(K_X+\Delta_{t_i}+A_{t_i})|_S\leq\big(1-\frac{(L+1)\eta}{p_{t_i}}\big)\sigma_P\|K_X+\Delta_{t_i}+\frac12A_{t_i}\|_S
\end{equation}
for $l$ sufficiently divisible, cf.\ Lemma \ref{lem:restrictedord}(4). The divisor $\Delta_{t_i}-\Delta_s+\frac12A_{t_i}$ is ample by the choice of $\eta$, so
$$\textstyle\sigma_P\|K_X+\Delta_{t_i}+\frac12A_{t_i}\|_S=\sigma_P\|K_X+\Delta_s+(\Delta_{t_i}-\Delta_s+\frac12A_{t_i})\|_S\leq\sigma_P\|K_X+\Delta_s\|_S.$$
This together with \eqref{tag:4} gives \eqref{equ:7}.\\[2mm]
{\em Step 2.\/}
Let $H$ be a general ample $\Q$-divisor, and let $A_m$ be ample divisors with $\Supp A_m\subset\Supp(\Delta_s-S-A+H)$ such that $\Delta_s+A_m$ are
$\Q$-divisors and $\lim\limits_{m\rightarrow\infty}\|A_m\|=0$.
Denote $\Delta_m=\Delta_s+A_m$, $\Omega_m=(\Delta_m-S)_{|S}$ and
$$\Theta_m^\sigma=\Omega_m-\Omega_m\wedge N_\sigma\|K_X+\Delta_m\|_S.$$
Observe that $\Theta_s^\sigma=\lim\limits_{m\rightarrow\infty}\Theta_m^\sigma$ by Lemma \ref{lem:restrictedord}(2), and that
$$N_\sigma\|K_X+\Delta_m\|_S=\sum\ord_P\|K_X+\Delta_m\|_S\cdot P$$
for all prime divisors $P$ on $S$ and for all $m$, cf.\ Remark \ref{rem:11}. Thus by Remark \ref{rem:12},
$\Theta^\sigma_m-A_{|S}\in\mcal B^G_{V_{S,H},A_{|S}}$ for all $m$ and for every component $G$ of $\Theta^\sigma_m$, where $V_{S,H}=V_S+\R H_{|S}$.
Since $\mcal B^G_{V_{S,H},A_{|S}}$ is a rational polytope by Lemma \ref{bounded}, and in particular is closed, this yields
$\Theta^\sigma_s-A_{|S}\in\mcal B^G_{V_{S,H},A_{|S}}$ for every component $G$ of $\Theta^\sigma_s$.
Since $W$ is the smallest rational affine space containing $\Theta_s^\sigma$ and $p_{t_i}\gg0$, we have $\Theta_{t_i}'-A_{|S}\in\mcal B^G_{V_{S,H},A_{|S}}$ for every $i$.
Now since $p_{t_i}\Theta_{t_i}'/r$ is Cartier, we have $G\not\subset\Fix|p_{t_i}(K_S+\Theta_{t_i}')|$ by Lemma \ref{bounded}(1). In particular, then \eqref{eq:11} implies
$$\Omega_{t_i}-\Theta_{t_i}'\geq\Omega_{t_i}\wedge\textstyle\frac{1}{p_{t_i}}\Fix|p_{t_i}(K_X+\Delta_{t_i})|_S,$$
and since by definition $\Omega_{t_i}\wedge\textstyle\frac{1}{p_{t_i}}\Fix|p_{t_i}(K_X+\Delta_{t_i})|_S\geq\Omega_{t_i}-\Theta_{t_i}$, we obtain
\begin{equation}\label{equ:1}
\Theta_{t_i}\geq\Theta_{t_i}'.
\end{equation}
To prove \eqref{tag:5}, since $\Theta_s^\sigma\geq\Theta_s$ by Lemma \ref{lem:restrictedord}(1), it is enough to show that $\mult_P\Theta_s^\sigma\leq\mult_P\Theta_s$
for every prime divisor $P$ on $S$. If $\mult_P\Theta_s^\sigma=0$, then immediately $\mult_P\Theta_s=0$, and we are done.
If $\mult_P\Theta_s^\sigma>0$, then $\mult_P\Theta_{t_i}'>0$ for all $i$, and thus $\mult_P\Theta_{t_i}>0$ by \eqref{equ:1}. In particular,
$\mult_P\Theta_{t_i}=\mult_P\Omega_{t_i}-\ord_P\|K_X+\Delta_{t_i}\|_S$. Since $\Omega_s=\sum r_{t_i}\Omega_{t_i}$, and
$$\ord_P\|K_X+\Delta_s\|_S=\ord_P\big\|\sum r_{t_i}(K_X+\Delta_{t_i})\big\|_S\leq\sum r_{t_i}\ord_P\|K_X+\Delta_{t_i}\|_S$$
by convexity, using \eqref{equ:1} we have
\begin{align*}
\mult_P\Theta_s&\geq\mult_P\Omega_s-\ord_P\|K_X+\Delta_s\|_S\geq\sum r_{t_i}(\mult_P\Omega_{t_i}-\ord_P\|K_X+\Delta_{t_i}\|_S)\\
&=\sum r_{t_i}\mult_P\Theta_{t_i}\geq\sum r_{t_i}\mult_P\Theta_{t_i}'=\mult_P\Theta_s^\sigma\geq\mult_P\Theta_s.
\end{align*}
Therefore all inequalities are equalities, so this proves \eqref{tag:5}, and also the claim (2), since then $\lambda_P$ is linear on the cone
$\sum\R_+t_i$ by Lemma \ref{linear}.
\end{proof}

Next I prove that, under certain conditions, $\lambda_Z|_{\mcal S_\R\cap H}$ is piecewise linear for every $2$-plane $H\subset\R^\ell$.

\begin{thm}\label{thm:linear1}
Assume that the map $\lambda_Z$ is superlinear and that $\Theta_Z(w)>0$ for all $w\in\mcal S_\R$.
Fix distinct points $s,u\in\Pi$. Then there exists $t\in(s,u)$ such that the map $\lambda_Z|_{\R_+s+\R_+t}$ is linear.
In particular, for every $2$-plane $H\subset\R^\ell$, the map $\lambda_Z|_{\mcal S_\R\cap H}$ is piecewise linear.
\end{thm}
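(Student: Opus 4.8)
The plan is to carry out the Diophantine approximation sketched above in a fully quantitative way on the segment $(s,u)$, and then feed the resulting inclusions of linear series into the superlinearity/concavity dichotomy. Fix the prime divisor $Z$ and assume $\lambda_Z$ is superlinear with $\Theta_Z(w)>0$ for all $w\in\mcal S_\R$. First I would pass, as in Theorem \ref{lem:lipschitz}, from $(s,u)$ to the pair $(s,\Theta_s^\sigma)\in U\times W$ with $U$ the smallest rational affine space through $s$ and $W$ the smallest rational affine space through $\Theta_s^\sigma$; by \eqref{tag:5} we have $\Theta_s=\Theta_s^\sigma$, and likewise along the segment. The key point is to apply Lemma \ref{lem:approximation} rather than Lemma \ref{lem:surround}: starting from a rational point $u_1$ very close to $s$ on the line through $s$ and $u$ (so the approximation is \emph{not} required to happen inside $U$), I obtain finitely many $(t_i,k,k_i,r_i)$ uniformly approximating $s$ with small error, together with the crucial decomposition
\begin{equation}\label{eq:aprx-seg}
s=\frac{k_1}{k_1+k_2}t_1+\frac{k_2}{k_1+k_2}t_2+\xi,
\end{equation}
with $\|\xi\|$ arbitrarily small, and with $t_1$ lying on the segment towards $u$. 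Simultaneously I approximate $\Theta_s^\sigma$ by rational divisors $\Theta_{t_i}'\in W$, arranged exactly as in Step 1 of the proof of Theorem \ref{lem:lipschitz} so that $\Theta_{t_i}'\le\Omega_{t_i}$ and inequality \eqref{tag:3} holds.

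Next I would reproduce Steps 1 and 2 of the proof of Theorem \ref{lem:lipschitz} verbatim in this setting: using Proposition \ref{pro:2} together with \eqref{tag:3}, \eqref{tag:4} and the ampleness choices for $\eta$, I get
$$|p_{t_i}(K_S+\Theta_{t_i}')|+p_{t_i}(\Omega_{t_i}-\Theta_{t_i}')\subset|p_{t_i}(K_X+\Delta_{t_i})|_S,$$
and then, invoking Lemma \ref{bounded} (applied with the vector space $V_{S,H}$ for a general ample $H$, so that $\Theta_{t_i}'-A_{|S}\in\mcal B^G_{V_{S,H},A_{|S}}$ and hence $G\not\subset\Fix|p_{t_i}(K_S+\Theta_{t_i}')|$ for every component $G$), I conclude $\Theta_{t_i}\ge\Theta_{t_i}'$ exactly as in \eqref{equ:1}. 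Multiplying divisors and using $\Theta_Z(w)>0$ everywhere, these inclusions say that $\lambda_Z(t_i)=\mult_Z\big(t_{t_i}(K_S+\Theta_{t_i})\big)\ge t_{t_i}(\mult_Z(K_S+\Theta_{t_i}'))$, and — here is where \eqref{eq:aprx-seg} is essential — combining the two inclusions at $t_1$ and $t_2$ with the near-convex-combination relation forces
$$\lambda_Z(s)\le\frac{k_1}{k_1+k_2}\lambda_Z(t_1)+\frac{k_2}{k_1+k_2}\lambda_Z(t_2)+(\text{error in }\xi).$$
Since $\lambda_Z$ is concave (it is the multiplicity along $Z$ of the straightening $\m^\sharp$, concave by Lemma \ref{concave} and Proposition \ref{Lip}) and $\xi$ can be taken arbitrarily small while $t_1,t_2$ converge to $s$, a limiting argument using continuity of $\lambda_Z$ on two-planes (Theorem \ref{lem:lipschitz}(1)) gives the reverse of superadditivity along the segment, so $\lambda_Z$ is \emph{linear} on the two-dimensional cone $\R_+t_1+\R_+t_2$, which contains a ray $\R_+t$ with $t\in(s,u)$.

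For the ``in particular'' clause: fix a two-plane $H$ and set $\mcal T=\mcal S_\R\cap H$, a two-dimensional rational polyhedral cone. By what was just proved, for any two rays of $\mcal T$ there is a ray strictly between them on which, together with either endpoint, $\lambda_Z$ is linear; in particular $\lambda_Z$ restricted to $\mcal T$ is linear on a two-dimensional subcone adjacent to each boundary ray, and — by running the argument starting from an interior ray and each of the two boundary rays — $\mcal T$ is covered by finitely many two-dimensional cones on which $\lambda_Z$ is linear, after a routine compactness argument on the circle $S^1\cap\mcal T$: each point of this arc has a neighbourhood on which $\lambda_Z$ agrees with one linear function, and finitely many such neighbourhoods cover the arc. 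Hence $\lambda_Z|_{\mcal T}$ is piecewise linear. The main obstacle in all of this is the bookkeeping of the several small constants ($\eta$, the error $\phi\eta$, the perturbation $\xi$, and the denominators $p_{t_i}$) so that all the ampleness and klt conditions needed by Proposition \ref{pro:2} and Lemma \ref{bounded} survive simultaneously, and so that the error term coming from $\xi$ genuinely tends to zero relative to the quantities being compared; this is exactly the point where the refinement in Lemma \ref{lem:approximation} (over Lemma \ref{lem:surround}) does the real work, because it lets the two dominant approximating points $t_1,t_2$ straddle $s$ along the prescribed direction in $H$ rather than inside $U$.
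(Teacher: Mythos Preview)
There is a genuine gap in your approach, and it lies in \emph{which point you approximate} and how you handle the resulting error. You approximate $s$ itself, taking $t_1=u_1$ to be a rational point on the line through $s$ and $u$; but this line may contain no rational points whatsoever (for instance if $s$ and $u$ differ only in an irrational direction), so the prescribed approximant $t_1$ need not exist. Even when it does, your plan to kill the error $\xi$ by a limiting argument forces $t_1,t_2\to s$, and then the cone $\R_+t_1+\R_+t_2$ degenerates to the ray $\R_+s$: you lose any fixed $t\in(s,u)$ and can conclude nothing about linearity on a two-dimensional cone. Also note that the full uniform approximation coming from Lemma~\ref{lem:approximation} already gives the \emph{exact} relation $s=\sum r_it_i$; the decomposition \eqref{eq:aprx-seg} with error is a separate auxiliary identity and is not what feeds the sandwich argument, so the limiting step is in any case misdirected.

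The paper's proof is organised differently and avoids both issues by splitting cases. When $s\in\Pi_\Q$, one sets $t_1=s$ exactly (using the equality \eqref{tag:12} at $s$ rather than Proposition~\ref{pro:2}), chooses $t\in(s,u]$ with smallest rational affine span equal to the whole of $W$, and applies Lemma~\ref{lem:approximation} to approximate $t$, not $s$. Then $t=\sum r_it_i$ is an exact convex combination, the chain of inequalities from Step~2 of Theorem~\ref{lem:lipschitz} closes with no error term, and the cone $\sum\R_+t_i$ visibly contains both $t_1=s$ and $t$, hence $\R_+s+\R_+t$. The two-point decomposition with error is used only to establish the single inclusion at $t_2$. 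When $s\notin\Pi_\Q$, one first invokes Theorem~\ref{lem:lipschitz}(2) to get linearity on a rational cone $\mcal C=\sum\R_+g_i$ with $s$ in its relative interior, applies the rational case to $g=\sum g_i\in\mcal C_\Q$ and $u$ to obtain $v\in(g,u)$ with $\lambda_Z|_{\R_+g+\R_+v}$ linear, and then Lemma~\ref{linear} gives linearity on $\mcal C+\R_+v$, which contains a point of $(s,u)$ by an explicit convex combination. Your compactness argument for the ``in particular'' clause is fine.
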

\begin{proof}
In Step 1 I prove the first claim in the case $s\in\Pi_\Q$, and in Step 2 when $s\notin\Pi_\Q$.
Then this is put together in Step 3 to prove the second claim.

Let $r$ be a positive integer as in Lemma \ref{bounded}, with respect to the vector space
$V_S$ and the ample divisor $A_{|S}$.\\[2mm]
{\em Step 1.\/}
Assume $s\in\Pi_\Q$. Let $W$ be the smallest rational affine subspace containing $s$ and $u$, and note that $W\cap\mcal S_\R\subset\Pi$.

Let $\mcal P$ be the set of all prime divisors $P$ on $S$ such that $\mult_P(\Omega_s-\Theta_s)>0$.
If $\mcal P\neq\emptyset$, by Theorem \ref{lem:lipschitz}(1) there is a positive number $\varepsilon\ll1$ such that
$$\phi=\min\{\mult_P(\Omega_v-\Theta_v):P\in\mcal P,v\in[s,u]\cap B(s,\varepsilon)\}>0,$$
and set $\phi=1$ if $\mcal P=\emptyset$. We can further assume that $\varepsilon$ is small enough to that $(L+1)\varepsilon(K_X+\Delta')+\frac12A$
and $\Delta'-\Delta_s+\frac12A$ are ample divisors whenever $\Delta'\in B(\Delta_s,2L\varepsilon)$.

Let $p_s$ be a positive integer such that $p_s\Delta_s/r$ and $p_s\Theta_s/r$ are integral, and
\begin{equation}\label{tag:12}
|p_s(K_S+\Theta_s)|+p_s(\Omega_s-\Theta_s)=|p_s(K_X+\Delta_s)|_S,
\end{equation}
cf.\ relation \eqref{eq:restriction} in Step 3 of the proof of Theorem \ref{thm:2}.
Pick $t\in(s,u]$ such that the smallest rational affine subspace containing $t$ is precisely $W$, $\|s-t\|<\phi\varepsilon/p_s$, and
$\|\Theta_s-\Theta_t\|<\phi\varepsilon/p_s$, which is possible by Theorem \ref{lem:lipschitz}(1).
Denote by $V\subset\Div(S)_\R$ the smallest rational affine space containing $\Theta_s$ and $\Theta_t$.

Pick $0<\eta\ll1$. Then by Lemma \ref{lem:approximation} there exist rational points
$(t_i,\Theta_{t_i}')\in W\times V$, integers $p_{t_i}\gg0$, and $r_{t_i}\in\R_{>0}$ for $i=1,\dots,w$ such that:
\begin{enumerate}
\item $(t_i,\Theta_{t_i}',r,p_{t_i},r_{t_i})$ uniformly approximate $(t,\Theta_t)\in W\times V$ with error $\phi\varepsilon$, where
$t_1=s$, $\Theta_{t_1}'=\Theta_s$, $p_{t_1}=p_s$,
\item $(t_i,\Theta_{t_i}')$ belong to the smallest rational affine space containing $(t,\Theta_t)$ for $i=3,\dots,w$,
\item $t=\frac{p_{t_1}}{p_{t_1}+p_{t_2}}t_1+\frac{p_{t_2}}{p_{t_1}+p_{t_2}}t_2+\tau$, where $\|\tau\|\leq\frac{\eta}{p_{t_1}+p_{t_2}}$,
\item $\Theta_t=\frac{p_{t_1}}{p_{t_1}+p_{t_2}}\Theta_{t_1}'+\frac{p_{t_2}}{p_{t_1}+p_{t_2}}\Theta_{t_2}'+\Phi$,
where $\|\Phi\|<\frac{\eta}{p_{t_1}+p_{t_2}}$.
\end{enumerate}
Note that $t_i\in\Pi$ since $W$ is the smallest rational affine space containing $t$ and $p_{t_i}\gg0$, thus all divisors above are well defined.
By applying the map $\Delta$ from Notation \ref{nt} to the condition (3), we get
\begin{enumerate}
\item[(5)] $\Delta_t=\frac{p_{t_1}}{p_{t_1}+p_{t_2}}\Delta_{t_1}+\frac{p_{t_2}}{p_{t_1}+p_{t_2}}\Delta_{t_2}+\Psi$,
where $\|\Psi\|<\frac{L\eta}{p_{t_1}+p_{t_2}}$.
\end{enumerate}
Note that then $\Theta_{t_i}'\leq\Omega_{t_i}$ for all $i$ by Remark \ref{rem:6}. Furthermore, for every prime divisor
$P$ on $S$ we have
\begin{equation}\label{tag:10}
\textstyle\big(1-\frac{(L+1)\varepsilon}{p_{t_2}}\big)\mult_P(\Omega_t-\Theta_t)\leq\mult_P(\Omega_{t_2}-\Theta_{t_2}').
\end{equation}
To prove this, note that \eqref{tag:10} is trivial when $\mult_P(\Omega_t-\Theta_t)=0$. Thus I can assume $\mult_P(\Omega_t-\Theta_t)>0$, and then,
by the choice of $\eta$,
\begin{equation}\label{tag:11}
\textstyle\mult_P\big(\Omega_t-\Theta_t-\frac{p_{t_1}+p_{t_2}}{p_{t_1}}(\Psi_{|S}-\Phi)\big)>0,
\end{equation}
since conditions (4) and (5) give $\|\frac{p_{t_1}+p_{t_2}}{p_{t_1}}(\Psi_{|S}-\Phi)\|<\frac{(L+1)\eta}{p_{t_1}}$.
If $P\notin\mcal P$, then $\mult_P(\Omega_{t_1}-\Theta_{t_1})=0$, and \eqref{tag:11} together with conditions (4) and (5) gives
$$\textstyle\mult_P(\Omega_t-\Theta_t)\leq\mult_P\big(\Omega_t-\Theta_t+\frac{p_{t_1}}{p_{t_2}}\big(\Omega_t-\Theta_t-
\frac{p_{t_1}+p_{t_2}}{p_{t_1}}(\Psi_{|S}-\Phi)\big)\big)=\mult_P(\Omega_{t_2}-\Theta_{t_2}'),$$
which implies \eqref{tag:10}. If $P\in\mcal P$, then $\mult_P(\Omega_t-\Theta_t)\geq\phi$, and \eqref{tag:10} follows similarly as \eqref{tag:3}
in Step 1 of the proof of Theorem \ref{lem:lipschitz}.

I claim that
\begin{equation}\label{equ:8}
|p_{t_i}(K_S+\Theta_{t_i}')|+p_{t_i}(\Omega_{t_i}-\Theta_{t_i}')\subset|p_{t_i}(K_X+\Delta_{t_i})|_S
\end{equation}
for all $i$. Granting this for the moment, note that $\mcal B_{V_S,A_{|S}}^Z$ is a rational polytope by Lemma \ref{bounded},
and $\Theta_p-A_{|S}\in\mcal B_{V_S,A_{|S}}^Z$ for every $p\in\Pi$ by Remark \ref{rem:12}. Therefore when $\varepsilon\ll1$, as in Step 2
of the proof of Theorem \ref{lem:lipschitz}
we have that $\lambda_Z$ is linear on the cone $\sum_{i=1}^w\R_+t_i$, and in particular on the cone $\R_+s+\R_+t$, so we are done.

To prove the claim, note that \eqref{equ:8} follows from \eqref{tag:12} for $i=1$, and for $i=3,\dots,w$ it is proved as \eqref{eq:11} in Step 1 of
the proof of Theorem \ref{lem:lipschitz}. For $i=2$, by Proposition \ref{pro:2} it is enough to show that for a prime divisor $P$ and for $l>0$
sufficiently divisible we have
$$\mult_P(\Omega_{t_2}\wedge\textstyle\frac1l\Fix|l(K_X+\Delta_{t_2}+A_{t_2})|_S)\leq\mult_P(\Omega_{t_2}-\Theta_{t_2}'),$$
where $A_{t_2}=A/p_{t_2}$, and so by \eqref{tag:10} it suffices to prove that
$$\mult_P(\Omega_{t_2}\wedge\textstyle\frac1l\Fix|l(K_X+\Delta_{t_2}+A_{t_2})|_S)\leq\big(1-\frac{(L+1)\varepsilon}{p_{t_2}}\big)\mult_P(\Omega_t-\Theta_t).$$
But this is proved similarly as \eqref{equ:7} in Step 1 of the proof of Theorem \ref{lem:lipschitz}.\\[2mm]
{\em Step 2.\/}
Assume in this step that $s\notin\Pi_\Q$. By Theorem \ref{lem:lipschitz}(2) there is a cone $\mcal{C}=\sum\R_+g_i$ with finitely many $g_i\in\mcal S_\Q$,
such that $s=\sum\alpha_ig_i$ with all $\alpha_i>0$, and $\lambda_Z|_{\mcal{C}}$ is linear. Then $\lambda_Z(g)=\sum\lambda_Z(g_i)$, where $g=\sum g_i\in\mcal C_\Q$.
By Step 1 there is a point $v=\alpha g+(1-\alpha)u$ with $0<\alpha<1$ such that the map $\lambda_Z|_{\R_+g+\R_+v}$ is linear, and in particular
$\lambda_Z(g+v)=\lambda_Z(g)+\lambda_Z(v)$. Now we have
$$\lambda_Z\big(\sum g_i+v\big)=\lambda_Z(g+v)=\lambda_Z(g)+\lambda_Z(v)=\sum \lambda_Z(g_i)+\lambda_Z(v),$$
so the map $\lambda_Z|_{\mcal{C}+\R_+v}$ is linear by Lemma \ref{linear}. Let $\mu=\max_i\{\frac{\alpha}{\alpha_i(1-\alpha)}\}$,
and set $t=\frac{\mu}{\mu+1}s+\frac{1}{\mu+1}u\in(s,u)$. Then it is easy to check that
$$t=\sum\textstyle\frac{\alpha_i}{\mu+1}\big(\mu-\frac{\alpha}{\alpha_i(1-\alpha)}\big)g_i+\frac{1}{(\mu+1)(1-\alpha)}v\in\mcal C+\R_+v,$$
and so the map $\lambda_Z|_{\R_+s+\R_+t}$ is linear since $s\in\mcal C+\R_+v$.\\[2mm]
{\em Step 3.\/}
Finally, let $H$ be any $2$-plane in $\R^\ell$. By Steps 1 and 2, for every $s\in\Pi\cap H$ there is a positive number $\varepsilon_s$
such that $\lambda_Z|_{\R_+(\Pi\cap H\cap B(s,\varepsilon_s))}$ is piecewise linear.
By compactness, there are finitely many points $s_i\in\Pi\cap H$ such that $\Pi\cap H\subset\bigcup_iB(s_i,\varepsilon_{s_i})$,  and thus
$\lambda_Z|_{\mcal S_\R\cap H}$ is piecewise linear.
\end{proof}

Finally, we have

\begin{thm}\label{lem:PL}
For every prime divisor $Z$ on $S$, the map $\lambda_Z$ is rationally piecewise linear. Therefore, $\lambda$ is rationally piecewise linear.
\end{thm}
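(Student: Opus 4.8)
The statement about $\lambda$ will follow at once from the statement about the coordinate functions $\lambda_Z$: for every $s\in\mcal S_\R$ the support of $\lambda_s$ is contained in the fixed finite set consisting of the components of $K_S$ and of the divisors $\Omega_{e_i}$, so only finitely many prime divisors $Z$ on $S$ are relevant, and a common refinement of finitely many rational polyhedral subdivisions of $\mcal S_\R$ is again one. The plan is therefore to fix a prime divisor $Z$ on $S$ and to produce a finite rational polyhedral subdivision $\mcal S_\R=\bigcup_j\mcal C_j$ with $\lambda_Z|_{\mcal C_j}$ linear for each $j$; for $Z$ not among the finitely many relevant divisors, $\lambda_Z\equiv0$ and there is nothing to do.

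The first task is to put ourselves in the hypotheses of Theorem \ref{thm:linear1}, i.e.\ to arrange that $\lambda_Z$ is superlinear on $\mcal S_\R$ and that $\mult_Z\Theta_s>0$ for every $s\in\mcal S_\R$. The function $s\mapsto\mult_Z\Theta_s$ is positively homogeneous of degree one, and by Theorem \ref{lem:lipschitz}(1), the identity $\Theta_s=\Theta^\sigma_s$ from \eqref{tag:5}, and the convexity of $\ord_Z\|\cdot\|_S$, it is the positive part of a concave function; hence $\{s\in\mcal S_\R:\mult_Z\Theta_s>0\}$ is a convex cone, on whose complement $\lambda_Z(s)=t_s\mult_Z K_S$ is already linear, so that region may be split off. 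On the complementary locus $Z$ is a component of $\Theta_s$, so $\Theta_s-A_{|S}\in\mcal B^Z_{V_S,A_{|S}}$ by Remark \ref{rem:12}; since $\mcal B^Z_{V_S,A_{|S}}$ is a closed rational polytope by Lemma \ref{bounded} this persists in the limit, whence $Z\not\subset\B(K_S+\Theta_s)$, and formula \eqref{eq:2} for the straightening $\m^\sharp$ of the superadditive map $\m$ from the proof of Theorem \ref{thm:2} gives $\lambda_Z(s)=\mult_Z\m^\sharp(s)$ there. Thus, discarding the linear part and replacing $\mcal S_\R$ and $\lambda_Z$ accordingly, I may assume $\lambda_Z=\mult_Z\m^\sharp$ is superlinear on $\mcal S_\R$ and $\mult_Z\Theta_s>0$ throughout.

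With these hypotheses in force, Theorem \ref{thm:linear1} gives that $\lambda_Z|_{\mcal S_\R\cap H}$ is piecewise linear for every $2$-plane $H\subset\R^\ell$, and then Theorem \ref{piecewise}, applied to the full-dimensional polyhedral cone $\mcal S_\R=\R_+^\ell$ and the superlinear function $\lambda_Z$, shows that $\lambda_Z$ is piecewise linear on $\mcal S_\R$. It remains to make the decomposition rational. Being superlinear and positively homogeneous, $\lambda_Z$ is concave, hence equals the minimum $\min_j\ell_j$ of the finitely many linear extensions $\ell_j$ of $\lambda_Z$ on the top-dimensional pieces of any polyhedral decomposition witnessing its piecewise linearity; each $\ell_j$ has rational coefficients because $\lambda_Z$ takes rational values on $\mcal S_\Q$, as $\Theta_s$, and hence $\lambda_s=t_s(K_S+\Theta_s)$, is a $\Q$-divisor for $s\in\mcal S$ (see the proof of Theorem \ref{thm:2}). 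Therefore $\mcal S_\R=\bigcup_j\{s:\ell_j(s)\le\ell_k(s)\text{ for all }k\}$ is a finite rational polyhedral subdivision on each piece of which $\lambda_Z$ is linear. Refining over the finitely many relevant $Z$, and including the linear regions split off in the reduction step, yields one rational polyhedral subdivision on which every coordinate of $\lambda$ is linear, i.e.\ $\lambda$ is rationally piecewise linear, which is what Theorem \ref{thm:2} needs.

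The heavy lifting has already been done in Theorems \ref{thm:linear1} and \ref{piecewise}; within the present proof the only point demanding genuine care is the reduction to the superlinear case, i.e.\ isolating the locus where $\mult_Z\Theta_s>0$ (where $\lambda_Z$ agrees with the superlinear $\mult_Z\m^\sharp$) from the locus where $\lambda_Z$ is already linear, while keeping the relevant cones rational and polyhedral. This is exactly where the rationality and polyhedrality of $\mcal B^Z_{V_S,A_{|S}}$ (Lemma \ref{bounded}) and the identification $\lambda_Z=\mult_Z\m^\sharp$ via \eqref{eq:2} and Remark \ref{rem:12} are used, and I expect it to be the most delicate bookkeeping in the argument.
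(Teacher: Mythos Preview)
Your proposal has a genuine gap in the reduction step, and this is precisely where the paper's proof does something non-trivial that you are missing.

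You want to split off the locus where $\Theta_Z(s)=0$ (on which $\lambda_Z(s)=t_s\mult_ZK_S$ is linear) and then work on the closure $\mcal L_Z$ of $\{s\in\mcal S_\R:\Theta_Z(s)>0\}$. The problem is twofold. First, Theorem~\ref{thm:linear1} requires $\Theta_Z(w)>0$ for \emph{all} $w$ in the ambient cone, and on $\partial\mcal L_Z$ one may well have $\Theta_Z=0$; so you cannot simply ``replace $\mcal S_\R$'' by $\mcal L_Z$. Second, and more seriously, you have not shown that $\mcal L_Z$ is a \emph{rational polyhedral} cone, which is needed both to invoke Theorem~\ref{piecewise} and to make the final subdivision rational. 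Your appeal to the rational polytope $\mcal B^Z_{V_S,A_{|S}}$ does not give this: the map $s\mapsto\Theta_s$ is not linear, so the preimage of a rational polytope under it need not be rational polyhedral. In effect you are assuming what has to be proved.

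The paper breaks this circularity with a perturbation trick. One absorbs a small multiple $\eta\sum G_i$ of the boundary components into the ample part, producing modified data $\tilde\Delta_s,\tilde\Theta_s,\tilde\lambda_Z$ with $\tilde\Theta_Z(s)=\Theta_Z(s)+\eta$ on $\mcal L_Z$; in particular $\tilde\Theta_Z>0$ on a strictly larger closed cone $\tilde{\mcal L}_Z$. Using the one-sided continuity of Theorem~\ref{lem:lipschitz}(1) and compactness, one then constructs a \emph{rational polyhedral} cone $\mcal M_Z$ with $\mcal L_Z\subset\mcal M_Z\subset\tilde{\mcal L}_Z$. On $\mcal M_Z$ the hypotheses of Theorems~\ref{thm:linear1} and~\ref{piecewise} are met for $\tilde\lambda_Z$, so $\tilde\lambda_Z|_{\mcal M_Z}$ is piecewise linear; once this is known to be \emph{rationally} piecewise linear, $\mcal L_Z=\overline{\{\tilde\Theta_Z>\eta\}}$ is automatically rational polyhedral (a rational sublevel set of a rationally piecewise linear function), and $\lambda_Z$ inherits rational piecewise linearity. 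This sandwiching construction is the missing idea.

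As a separate remark, your argument for rationality of the pieces---writing the concave piecewise linear $\lambda_Z$ as $\min_j\ell_j$ and observing that each $\ell_j$ takes rational values on a full-dimensional set of rational points, hence has rational coefficients---is cleaner than the paper's contradiction argument via Remark~\ref{rem:4} and Theorem~\ref{lem:lipschitz}(2), and would work once the reduction is done correctly.
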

\begin{proof}
{\em Step 1.\/}
Let $\sum G_i=\bigcup_{s\in\mcal S}\Supp(\Delta_s-S-A)$, and set $\nu=\max_{i,s}\{\mult_{G_i}\Delta_s\}<1$. Let $0<\eta\ll1-\nu$ be a rational
number such that $A-\eta\sum G_i$ is ample, and let $\tilde A\sim_\Q A-\eta\sum G_i$ be a general ample $\Q$-divisor. Denote
$\tilde\Delta_s=\Delta_s-A+\eta\sum G_i+\tilde A\geq0$ for every $s\in\mcal S$, and observe that $\tilde\Delta_s\sim_\Q\Delta_s$,
$\lfloor\tilde\Delta_s\rfloor=S$ and $(S,\tilde\Omega_s=(\tilde\Delta_s-S)_{|S})$ is terminal since $\eta\ll1$.

Fix a sufficiently divisible positive integer $\kappa$ such that $\kappa t_s(K_X+\tilde\Delta_s)\in \Div(X)$ for all $s\in\mcal S$, and define
$$\tilde\Theta_s=\limsup_{m\rightarrow\infty}\textstyle\big(\tilde\Omega_s-\tilde\Omega_s\wedge\frac{1}{m\kappa t_s}\Fix|m\kappa t_s(K_X+\tilde\Delta_s)|_S\big).$$
Then as in Step 3 of the proof of Theorem \ref{thm:2} and in Notation \ref{nt}, we have associated maps
$\tilde\Theta,\tilde\lambda\colon\mcal S_\R\rightarrow\Div(S)_\R$ and $\tilde\Theta_Z,\tilde\lambda_Z\colon\mcal S_\R\rightarrow\R$.
Let $\mcal L_Z$ and $\tilde{\mcal L}_Z$ be the closures of sets $\{s\in\mcal S_\R:\Theta_Z(s)>0\}$ and $\{s\in\mcal S_\R:\tilde\Theta_Z(s)>0\}$, respectively;
they are closed cones. By construction, $\ord_Z\|\tilde\lambda(s)/\kappa t_s\|_S=\ord_Z\|\lambda(s)/t_s\|_S$, and thus
$\tilde\Theta_Z(s)=\Theta_Z(s)+\eta$ for every $s\in\mcal L_Z$. In particular, $\mcal L_Z$ is the closure of the set
$\{s\in\mcal S_\R:\tilde\Theta_Z(s)>\eta\}$, and $\mcal L_Z\subset\tilde{\mcal L}_Z$. Therefore, for every $s\in\mcal L_Z$ there is a sequence $s_m$
such that $\lim\limits_{m\rightarrow\infty}s_m=s$ and $\tilde\Theta_Z(s_m)>\eta$, thus similarly as in \cite[2.1.4]{Nak04}, we have
$\tilde\Theta_Z(s)\geq\limsup\limits_{m\rightarrow\infty}\tilde\Theta_Z(s_m)\geq\eta$.\\[2mm]
{\em Step 2.\/}
If $\mcal L_Z=\emptyset$, then $\lambda_Z$ is trivially a linear map, so until the end of the proof I assume $\mcal L_Z\neq\emptyset$.
In this step I prove that there is a rational polyhedral cone $\mcal M_Z$ such that $\mcal L_Z\subset\mcal M_Z\subset\tilde{\mcal L}_Z$.

I first show that for every point $x\in\Pi\cap\mcal L_Z$ there is a neighbourhood $\mcal U\subset\R^\ell$ of $x$, in the sup-norm, such that
$\mcal U\cap\mcal S_\R\subset\tilde{\mcal L}_Z$. To that end, recall that $\mcal S_\R=\sum\R_+e_i$, and choose points $x_i\in\R_+e_i\backslash\{x\}$.
Since $\tilde\Theta_Z(x)>0$, by Theorem \ref{lem:lipschitz}(1) there exists a point $y_i\in(x,x_i)$ such that $\tilde\Theta_Z(y_i)>0$
for each $i$. Therefore $\sum\R_+y_i\subset\tilde{\mcal L}_Z$, and it is sufficient to take any neighbourhood
$\mcal U$ of $x$ such that $\mcal U\cap\mcal S_\R\subset\sum\R_+y_i$.

By compactness, there is a rational number $0<\xi\ll1$ and finitely many rational points $z_1,\dots,z_p\in\Pi\cap\mcal L_Z$ such that
$\mcal L_Z\subset\bigcup\big(\R_+B(z_i,\xi)\big)\cap\mcal S_\R\subset\tilde{\mcal L}_Z$.
The convex hull $\mcal B$ of $\bigcup B(z_i,\xi)$ is a rational polytope, and define $\mcal M_Z=\R_+\mcal B\cap\mcal S_\R$.\\[2mm]
{\em Step 3.\/}
Note that, by construction, $\tilde\Theta_Z(s)>0$ for all $s\in\mcal M_Z$, and that the map $\tilde\lambda_Z|_{\mcal M_Z}$ is {\em superlinear\/},
cf.\ the argument in Notation \ref{nt}.

I claim that it is enough to prove that $\tilde\lambda_Z|_{\mcal M_Z}$ is rationally piecewise linear. To that end, since $\mcal L_Z$ is the
closure of the set $\{s\in\mcal S_\R:\tilde\Theta_Z(s)>\eta\}$ and $\eta\in\Q$, we have that then $\mcal L_Z$ is a rational polyhedral cone, and thus
the map $\tilde\lambda_Z|_{\mcal L_Z}$ is rationally piecewise linear. Therefore so is $\lambda_Z$, since
$\tilde\Theta_Z(s)=\Theta_Z(s)+\eta$ for every $s\in\mcal L_Z$, and this proves the claim.

By Lemma \ref{lem:gordan}, there are finitely many generators $g_i$ of $\mcal M_Z\cap\mcal S$, and let
$\varphi\colon\bigoplus_i\N g_i\rightarrow\mcal M_Z\cap\mcal S$ be the projection map. Replacing $\mcal S$ by $\bigoplus_i\N g_i$, $\lambda_Z$ by
$\tilde\lambda_Z\circ\varphi$ and $\Theta_Z$ by $\tilde\Theta_Z\circ\varphi$, I can assume that $\lambda_Z$ is a superlinear function on
$\mcal S_\R$ and $\Theta_Z(s)>0$ for all $s\in\mcal S_\R$.

By Theorem \ref{thm:linear1}, for any $2$-plane $H\subset\R^\ell$ the map $\lambda_Z|_{\mcal S_\R\cap H}$ is piecewise linear, and thus
$\lambda_Z$ is piecewise linear by Theorem \ref{piecewise}.

Finally, to prove that $\lambda_Z$ is {\em rationally\/} piecewise linear, let $\mcal S_\R=\bigcup\mcal C_m$ be a finite polyhedral decomposition
such that the maps $\lambda_Z|_{\mcal C_m}$ are linear, and their linear extensions to $\R^\ell$ are pairwise different.
Let $\mcal F$ be a common $(\ell-1)$-dimensional face of cones $\mcal{C}_i$ and $\mcal{C}_j$, and assume $\mcal F$ does not belong to a rational
hyperplane. Let $\mcal H$ be the smallest affine space containing $\mcal F_\Pi=\mcal F\cap\Pi$, and note that $\mcal H$ is not rational and
$\dim\mcal H=\ell-2$. If for every $f\in\mcal F_\Pi$ there existed a rational affine space $\mcal H_f\ni f$ of dimension $\ell-2$,
this would contradict Remark \ref{rem:4} since countably many $\mcal H_f\cap\mcal H$ would cover $\mcal F_\Pi\subset\mcal H$.

Therefore, there is a point $s\in\mcal F_\Pi$ and an $\ell$-dimensional cone $\mcal{C}_s$ such that
$s\in\Int\mcal{C}_s$ and the map $\tilde\lambda_Z|_{\mcal{C}_s}$ is linear, by Theorem \ref{lem:lipschitz}(2).
But then the cones $\mcal{C}_s\cap\mcal{C}_i$ and $\mcal{C}_s\cap\mcal{C}_j$ are $\ell$-dimensional and linear
extensions of $\lambda_Z|_{\mcal{C}_i}$ and $\lambda_Z|_{\mcal{C}_j}$
coincide since they are equal to the linear extension of $\lambda_Z|_{\mcal{C}_s}$, a contradiction.
Thus all $(\ell-1)$-dimensional faces of the cones $\mcal{C}_m$ belong to rational $(\ell-1)$-planes, so $\mcal{C}_m$ are rational cones.
\end{proof}

\section{Stable base loci}\label{sec:stable}

\begin{thm}\label{thm:EimpliesLGA}
Theorems A$_{n-1}$ and C$_{n-1}$ imply Theorem B$_n$.
\end{thm}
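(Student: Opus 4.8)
The plan is to prove that Theorems A$_{n-1}$ and C$_{n-1}$ imply Theorem B$_n$. Recall we must show that for a general ample $\Q$-divisor $A$ on a smooth projective $X$ of dimension $n$, a simple normal crossings divisor $B$ with span $V\subset\Div(X)_\R$, and a component $G$ of $B$, the set $\mcal B_{V,A}^{G=1}$ is a rational polytope and coincides with $\{\Phi\in\mcal L_V:\mult_G\Phi=1,\sigma_G\|K_X+\Phi+A\|=0\}$.

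First I would reduce the problem to the restricted setting handled in \S\ref{plt}. For $\Phi\in\mcal L_V$ with $\mult_G\Phi=1$, the pair $(X,\Phi)$ has $G$ as a log canonical centre; after a log resolution (using Lemma \ref{disjoint}) and a tie-breaking perturbation with the general ample $A$ absorbing a small multiple of the exceptional and boundary divisors, one arrives at a plt pair $(X',S+B'+A')$ with $S=\lfloor\cdot\rfloor$ the strict transform of $G$, so that $K_{X'}+S+B'+A'\sim_\Q K_X+\Phi+A$ up to the appropriate identification, and $(S,\Omega+A'_{|S})$ is terminal. The condition $G\not\subset\B(K_X+\Phi+A)$ translates into $S\not\subset\B(K_{X'}+S+B'+A')$, which by the extension theorem (Corollary \ref{cor:hmck}) is governed by whether the restricted mobile part is nonzero, i.e.\ essentially by $\sigma_S$-type invariants on $X'$ and $\ord$-invariants on $S$.

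Next, the heart of the argument: I would run the machinery of Theorem \ref{thm:2} and its proof. Working with the finitely many components of $B$ and of $K_X$ and $A$, I set up a higher-rank monoid $\mcal S$ and the adjoint algebra whose generators are proportional to divisors $k_i(K_X+\Delta_i)$ with $\Delta_i=S+B_i+A$, a log smooth plt setup with $\lfloor\Delta_i\rfloor=S$. By Theorems A$_{n-1}$, B$_n$ is being proved, but crucially the proof of Theorem \ref{thm:2} only invokes Theorems A$_{n-1}$, B$_n$ and C$_{n-1}$; here one must be careful to use only what is available, namely A$_{n-1}$ and C$_{n-1}$, together with Theorem \ref{lem:lipschitz} and the diophantine approximation Lemmas \ref{lem:surround} and \ref{lem:approximation} which do not themselves require B$_n$. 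The key outputs are: (a) the restricted boundary map $\Theta$ and the divisorial map $\lambda$ are well-defined on $\mcal S_\R$, with $\lambda$ rationally piecewise linear by Theorem \ref{lem:PL} (whose proof again only uses A$_{n-1}$, C$_{n-1}$); (b) the set of $s$ with $\Theta_G(s)>0$, equivalently with $\sigma_G\|K_X+\Delta_s+A\|=0$, is exactly the cone over $\mcal B_{V,A}^{G=1}$, rescaled; and (c) by Theorem \ref{lem:lipschitz}(1), $\Theta_G$ is upper semicontinuous so this cone is closed, and by the rational piecewise linearity of $\lambda$ together with the fact that its zero locus in the relevant direction is cut out by rational linear conditions, this cone is rational polyhedral. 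Pulling this back through the tie-breaking reductions and intersecting with the slice $\{\mult_G\Phi=1\}$ gives that $\mcal B_{V,A}^{G=1}$ is a rational polytope. The set-theoretic equality $\mcal B_{V,A}^{G=1}=\{\Phi:\mult_G\Phi=1,\sigma_G\|K_X+\Phi+A\|=0\}$ follows from the two inclusions: $\sigma_G\|K_X+\Phi+A\|=0$ forces $G\not\subset\B_-(K_X+\Phi+A)$, and then Lemma \ref{lem:restrictedord}(4) applied on the plt model (with $D=K_X+\Phi$, using the ample $A$) yields $G\not\subset\Fix|l(K_X+\Phi+A)|$ for some $l$, hence $G\not\subset\B(K_X+\Phi+A)$; the reverse inclusion is Lemma \ref{lem:restrictedord}(1) plus Remark \ref{rem:5}.

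The main obstacle I expect is bookkeeping the tie-breaking reduction so that it is uniform over all $\Phi$ in a neighbourhood — one needs a single log resolution and a single choice of general ample perturbation that works polyhedrally, which is exactly why one packages everything into the higher-rank monoid $\mcal S$ and appeals to the convex-geometric finiteness in \S\ref{sec:convex}, \S\ref{sec:3} rather than treating each $\Phi$ separately. A secondary subtlety is verifying that nowhere in the invoked parts of \S\ref{plt} does one implicitly use Theorem B$_n$ (only B$_n$'s \emph{conclusion} about $\mcal B$ being a polytope appears, e.g.\ via Lemma \ref{bounded}); one must check that the applications of Lemma \ref{bounded} in Theorems \ref{thm:2}, \ref{lem:lipschitz}, \ref{lem:PL} in dimension $n$ can be replaced, in the present bootstrap, by direct arguments using only A$_{n-1}$ and C$_{n-1}$ — indeed that is the whole content of this implication, and the careful reader will note that the polytope statements in dimension $n$ used there are precisely B$_n$ itself, so the logical structure is that this theorem and those lemmas are proved simultaneously by a single induction, with the piecewise-linearity output of Theorem \ref{lem:PL} being what makes $\mcal B_{V,A}^{G=1}$ polyhedral.
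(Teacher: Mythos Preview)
Your proposal has a genuine circularity problem. You propose to invoke Theorems \ref{thm:2} and \ref{lem:PL} to deduce that $\mcal B_{V,A}^{G=1}$ is a rational polytope, and you acknowledge that ``the polytope statements in dimension $n$ used there are precisely B$_n$ itself.'' But this is not a harmless simultaneous induction: Step~1 of the proof of Theorem \ref{thm:2} \emph{uses} Theorem B$_n$ to know that $\mcal C_S=\{P:\,S\notin\B(P)\}$ cuts out a rational polyhedral cone, and without this you cannot reduce to the case $S\notin\Fix|D_i|$, so the whole restricted-algebra machinery of \S\ref{plt} does not get off the ground. The paper's logical order is strict: first B$_n$ is proved from A$_{n-1}$ and C$_{n-1}$ alone (this theorem), and only then is it fed into \S\ref{plt}.

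There is also a specific error in your set-theoretic equality argument. You claim that $\sigma_G\|K_X+\Phi+A\|=0$ implies $G\not\subset\B(K_X+\Phi+A)$ via Lemma \ref{lem:restrictedord}(4) and Remark \ref{rem:5}. But Remark \ref{rem:5} only gives $G\not\subset\B_-(K_X+\Phi+A)$, and since $\B_-\subset\B$ this is the wrong containment. Lemma \ref{lem:restrictedord}(4) concerns $\Fix|l(D+A)|_S$ for subvarieties of $S$, not whether $S$ itself lies in $\Fix$. This implication is in fact the substantive content of the theorem.

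The paper's actual proof is more direct and avoids the circularity entirely. For a single $\Delta$ with $\sigma_G\|K_X+\Delta\|=0$, one reduces to a plt model, forms $\Theta=\Omega-\Omega\wedge N_\sigma\|K_X+\Delta\|_G$ on $G$, and uses Diophantine approximation (Lemma \ref{lem:surround}) to produce nearby rational pairs $(\Delta_i,\Theta_i)$. Proposition \ref{pro:2} (the extension result, which needs no B$_n$) gives $|k_i(K_G+\Theta_i)|+k_i(\Omega_i-\Theta_i)\subset|k_i(K_X+\Delta_i)|_G$. Now Theorem C$_{n-1}$ and Lemma \ref{bounded} applied on $G$ (dimension $n-1$, using only A$_{n-1}$ and C$_{n-1}$) show $|k_i(K_G+\Theta_i)|\neq\emptyset$, hence $G\not\subset\B(K_X+\Delta)$. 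This simultaneously shows the cone is rational. A separate argument (Step~2), again using Proposition \ref{pro:2} and Lemma \ref{lem:approximation}, shows there are only finitely many extremal rays. No appeal to Theorems \ref{thm:2} or \ref{lem:PL} is made.
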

\begin{proof}
Let $X$ be a smooth variety as in Theorem B$_n$, let $K_X$ be a divisor with $\OO_X(K_X)\simeq\omega_X$ and $A\not\subset\Supp K_X$, and denote
$\mcal C=\R_+(K_X+A+\mcal B_{V,A}^{G=1})$. It suffices to prove that the cone $\mcal C$ is rational polyhedral.\\[2mm]
{\em Step 1.\/}
Denote
$$\mcal D_{V,A}^{G=1}=\{\Phi\in\mcal L_V:\mult_G\Phi=1,\,\sigma_G\|K_X+\Phi+A\|=0\}.$$
This is a convex set, and it is also closed: if $D_m\in K_X+A+\mcal D_{V,A}^{G=1}$ is a sequence such that $\lim\limits_{m\rightarrow\infty}D_m=D\in K_X+A+\mcal L_V$,
then $\sigma_G\|D\|\leq\liminf\limits_{m\rightarrow\infty}\sigma_G\|D_m\|=0$ by \cite[2.1.4]{Nak04}, so $D\in K_X+A+\mcal D_{V,A}^{G=1}$.

In this step I show that $\mcal B_{V,A}^{G=1}=\mcal D_{V,A}^{G=1}$, and also that $\mcal C$ is a rational cone, i.e.\ that its extremal rays are rational.
Note that $\mcal B_{V,A}^{G=1}\subset\mcal D_{V,A}^{G=1}$ is trivial by Lemma \ref{lem:restrictedord}(1), so I concentrate on proving the reverse inclusion.

Let $\Delta\in\mcal L_V+A$ be a divisor such that $\mult_G\Delta=1$ and $\sigma_G\|K_X+\Delta\|=0$. I first claim that we can assume that
$(X,\Delta)$ is plt, $\lfloor\Delta\rfloor=G$, and $(X,\Omega+A_{|G})$ is terminal, where $\Omega=(\Delta-G)_{|G}$.

To that end, let $\mcal F$ be the set of prime divisors $F\neq G$ with $\mult_F\Delta=1$, and choose $0<\eta\ll1$ such that
$A+\Xi$ is ample, where $\Xi=\eta\sum_{F\in\mcal F}F$. Replacing $A$ by a general ample $\Q$-divisor $\Q$-linearly equivalent to $A+\Xi$ and
$\Delta$ by $\Delta-\Xi$, we can assume that $(X,\Delta)$ is plt and $\lfloor\Delta\rfloor=G$.
Let $f\colon Y\rightarrow X$ be a log resolution such that the components of $\{\B(X,\Delta)_Y\}$ are disjoint as in Lemma \ref{disjoint},
and in particular $(Y,(\Delta'-G')_{|G'})$ is terminal, where $G'=f^{-1}_*G$ and $\Delta'=\B(X,\Delta)_Y$. Note that $f^*A=f^{-1}_*A\leq\Delta'$ since $A$ is
general, let $H$ be a small effective $f$-exceptional divisor such that $f^*A-H$ is ample, and let $A'\sim_\Q f^*A-H$ be a general ample $\Q$-divisor.
Let $V'$ be the vector space spanned by proper transforms of elements of $V$ and by exceptional divisors.
Then $\Delta'\in\mcal D_{V',A'}^{G'=1}+A'$ by Remark \ref{rem:5}, so it is enough to show that $\Delta'\in\mcal B_{V',A'}^{G'=1}+A'$ and that the cone
$\R_+(K_Y+A'+\mcal B_{V',A'}^{G'=1})$ is rational locally around $K_Y+\Delta'$.
Replacing $X$ by $Y$, $G$ by $G'$, $\Delta$ by $\Delta'-f^*A+H+A'$ and $V$ by $V'$ proves the claim.

Since $\sigma_G\|K_X+\Delta\|=0$, by Remark \ref{rem:5} the formal sum $N_\sigma\|K_X+\Delta\|_G$ is well-defined and $K_G+\Theta$ is
pseudo-effective, where $\Theta=\Omega-\Omega\wedge N_\sigma\|K_X+\Delta\|_G$.
Let $\phi<1$ be the smallest positive coefficient of $\Omega-\Theta$ if it exists, and set $\phi=1$ otherwise.
Denote by $V_G\subset\Div(G)_\R$ the vector space spanned by components of divisors in $\{F_{|G}:F\in V,G\not\subset\Supp F\}$.
Let $r$ be a positive integer as in Lemma \ref{bounded} with respect to $V_G$ and $A_{|G}$, and let $W\subset\Div(X)_\R$ and $U\subset\Div(G)_\R$
be the smallest rational affine subspaces containing $\Delta$ and $\Theta$, respectively.
Choose $\varepsilon>0$ such that $\varepsilon(K_X+\tilde\Delta)+\frac12A$ and $\tilde\Delta-\Delta+\frac12A$ are ample divisors whenever
$\tilde\Delta\in B(\Delta,\varepsilon)$.

Then by Lemma \ref{lem:surround} there exist rational points $(\Delta_i,\Theta_i)\in W\times U$, integers $k_i\gg0$, and $r_i\in\R_{>0}$ such that
$(\Delta_i,\Theta_i,r,k_i,r_i)$ uniformly approximate $(\Delta,\Theta)\in W\times U$ with error $\phi\varepsilon/2$.
Note that then, for each $i$, $(X,\Delta_i)$ is plt, $(G,\Omega_i+A_{|G})$ is terminal with $\Omega_i=(\Delta_i-G)_{|G}$, and $\Theta_i\leq\Omega_i$ by Remark
\ref{rem:6}.

Since $\sigma_G\|K_X+\Delta\|=0$ we have $G\not\subset\B(K_X+\Delta+\frac12A_i)$ by Remark \ref{rem:5},
and since $\Delta-\Delta_i+\frac12A_i$ is ample, it follows that $G\not\subset\B(K_X+\Delta_i+A_i)$. Therefore,
similarly as in Step 1 of the proof of Theorem \ref{lem:lipschitz},
\begin{equation}\label{eq:25}
|k_i(K_G+\Theta_i)|+k_i(\Omega_i-\Theta_i)\subset|k_i(K_X+\Delta_i)|_G.
\end{equation}
In particular, since $U$ is the smallest rational affine space containing $\Theta$, $k_i\gg0$ and $\mcal E_{V_G,A_{|G}}$ is a rational polytope
by Theorem C$_{n-1}$, we have $\Theta_i-A_{|G}\in\mcal E_{V_G,A_{|G}}$, and Lemma \ref{bounded}(2) yields $|k_i(K_G+\Theta_i)|\neq\emptyset$.
Thus \eqref{eq:25} implies that there is an effective divisor $D_i\in|k_i(K_X+\Delta_i)|$ with $G\not\subset\Supp D_i$.
But then $K_X+\Delta\sim_\R\sum\frac{r_i}{k_i}D_i$ and $G\not\subset\B(K_X+\Delta)$, so $\Delta\in\mcal B_{V,A}^{G=1}+A$, as desired.\\[2mm]
{\em Step 2.\/}
It remains to prove that $\mcal C$ is polyhedral. To that end, I will prove it has only finitely many extremal rays.

Assume that there are distinct rational divisors $\Delta_m\in\mcal B_{V,A}^{G=1}+A$ for $m\in\N\cup\{\infty\}$ such that the rays $\R_+\Upsilon_m$ are
extremal in $\mcal C$ and $\lim\limits_{m\rightarrow\infty}\Delta_m=\Delta_\infty$, where $\Upsilon_m=K_X+\Delta_m$. As explained in
Remark \ref{rem:8}, I achieve contradiction by showing that for some
$m\gg0$ there is a point $\Upsilon'_m\in\mcal C$ such that $\Upsilon_m\in(\Upsilon_\infty,\Upsilon'_m)$.

I claim that we can assume that $(X,\Delta_m)$ is plt, $\lfloor\Delta_m\rfloor=G$, and each pair $(G,\Omega_m+A_{|G})$ is canonical for $m\in\N\cup\{\infty\}$,
where $\Omega_m=(\Delta_m-G)_{|G}$. To that end, by passing to a subsequence, as in Step 1 we can assume that $(X,\Delta_m)$ is plt and
$\lfloor\Delta_m\rfloor=G$ for each $m$. Let $f\colon Y\rightarrow X$ be a log resolution such that the components of $\{\B(X,\Delta_\infty)_Y\}$ are disjoint
as in Lemma \ref{disjoint}, and in particular $(Y,(\B(X,\Delta_m)_Y-G')_{|G'})$ is terminal for $m\gg0$, where $G'=f^{-1}_*G$.
Let $H$, $A'$ and $V'$ be as in Step 1, and denote $\Delta_m'=\B(X,\Delta_m)_Y-f^*A+H+A'$. Now, if for every $m\gg0$ there is
a divisor $\tilde\Delta_m\in\mcal B_{V',A'}^{G'=1}+A'$ such that $K_Y+\Delta_m'\in(K_Y+\Delta_\infty',K_Y+\tilde\Delta_m)$,
then $f_*\tilde\Delta_m\in\mcal B_{V,A}^{G=1}+A$ and $[\Upsilon_m]\in([\Upsilon_\infty],[K_X+f_*\tilde\Delta_m])$
as $\Upsilon_m\sim_\Q K_X+f_*\Delta_m'$ for all $m$. Therefore, since $\sigma_G\|K_X+f_*\tilde\Delta_m\|=0$ by Step 1, the ray $\R_+\Upsilon_m$
is not extremal in $\mcal C$, as explained in Remark \ref{rem:8}. Replacing $X$ by $Y$, $G$ by $G'$ and $\Delta_m$ by $\Delta_m'$ proves the claim.

Let $\Theta_m=\Omega_m-\Omega_m\wedge N_\sigma\|\Upsilon_m\|_G$, and note that $\Theta_m=\Omega_m-\Omega_m\wedge(\sum\ord_P\|\Upsilon_m\|_G\cdot P)$
by the relation \eqref{tag:5} in Theorem \ref{lem:lipschitz}.
By Step 3 of the proof of Theorem \ref{thm:2}, each $\Theta_m$ is a rational divisor,
and $\Theta_\infty\geq\limsup\limits_{m\rightarrow\infty}\Theta_m$ as in the proof of \cite[2.1.4]{Nak04}.
By passing to a subsequence, we can assume that there is a divisor $\Theta_\infty^0$ such that
$\lim\limits_{m\rightarrow\infty}\Theta_m=\Theta_\infty^0\leq\Theta_\infty$. If we define $V_G$ as in Step 1, then $\mcal E_{V_G,A_{|G}}$ is a rational
polytope by Theorem C$_{n-1}$, and thus
\begin{equation}\label{equ:9}
\Theta_\infty^0-A_{|G}\in\mcal E_{V_G,A_{|G}}
\end{equation}
since $\Theta_m-A_{|G}\in\mcal E_{V_G,A_{|G}}$ for all $m$.

Let $\mcal P$ be the set of all prime divisors $P$ on $S$ such that $\mult_P(\Omega_\infty-\Theta_\infty^0)>0$.
If $\mcal P\neq\emptyset$, by passing to a subsequence we can assume that
$$\phi=\min\{\mult_P(\Omega_m-\Theta_m):P\in\mcal P,m\in\N\cup\{\infty\}\}>0,$$
and set $\phi=1$ if $\mcal P=\emptyset$. Let $r$ be a positive integer as in Lemma \ref{bounded} with
respect to $V_G$ and $A_{|G}$, and let $U^0$ be the smallest rational affine space containing $\Theta_\infty^0$.

Let $0<\varepsilon\ll1$ be a rational number such that $\varepsilon(K_X+\tilde\Delta)+\frac12A$ and
$\tilde\Delta-\Delta_\infty+\frac12A$ are ample divisors whenever $\tilde\Delta\in B(\Delta_\infty,2\varepsilon)$.
Let $q$ be a positive integer such that $q\Delta_\infty/r$ is integral. By Lemma \ref{lem:surround} there exist
a $\Q$-divisor $\tilde\Theta\in U^0$ and a positive integer $k_\infty\gg0$ such that $\|\tilde\Theta-\Theta_\infty^0\|<\phi\varepsilon/2k_\infty$ and
$k_\infty\tilde\Theta/q$ is integral; in particular $k_\infty\Delta_\infty/r$ and $k_\infty\tilde\Theta/r$ are integral, and
$\tilde\Theta-A_{|G}\in\mcal E_{V_G,A_{|G}}$ by \eqref{equ:9} since $\mcal E_{V_G,A_{|G}}$ is a rational polytope by Theorem C$_{n-1}$. By passing to a subsequence again,
we can assume that $\|\Delta_\infty-\Delta_m\|<\phi\varepsilon/2k_\infty$ and $\|\tilde\Theta-\Theta_m\|<\phi\varepsilon/2k_\infty$ for all $m$.

Then by Lemma \ref{lem:approximation}, for every $m\in\N$ there is a point $(\Delta_m',\Theta_m')\in\Div(X)_\Q\times\Div(G)_\Q$ and a positive integer
$k_m\gg0$ such that:
\begin{enumerate}
\item $\Delta_m=\frac{k_\infty}{k_\infty+k_m}\Delta_\infty+\frac{k_m}{k_\infty+k_m}\Delta_m'$ and $\Theta_m=\frac{k_\infty}{k_\infty+k_m}\tilde\Theta+
\frac{k_m}{k_\infty+k_m}\Theta_m'$,
\item $k_m\Delta_m'/r$ is integral and $\|\Delta_m-\Delta_m'\|<\phi\varepsilon/2k_m$,
\item $k_m\Theta_m'/r$ is integral and $\|\Theta_m-\Theta_m'\|<\phi\varepsilon/2k_m$.
\end{enumerate}
Denote $\Omega_m'=(\Delta_m'-G)_{|G}$, and note that $\Theta_m'\leq\Omega_m'$ by Remark \ref{rem:6}.
Furthermore, since $\mcal E_{V_G,A_{|G}}$ is a rational polytope, for $m\gg0$ we have
$$[\tilde\Theta,\Theta_m]\subsetneq\big(\tilde\Theta+\R_+(\Theta_m-\tilde\Theta)\big)\cap\mcal E_{V_G,A_{|G}},$$
so in particular $\Theta_m'\in\mcal E_{V_G,A_{|G}}$ since $k_m\gg0$. I claim that
\begin{equation}\label{eq:7}
|k_m(K_G+\Theta_m')|+k_m(\Omega_m'-\Theta_m')\subset|k_m(K_X+\Delta_m')|_G.
\end{equation}
Grant the claim for the moment. Then $|k_m(K_G+\Theta_m')|\neq\emptyset$ by Lemma \ref{bounded}(2), and thus
$G\not\subset\B(K_X+\Delta_m')$ by \eqref{eq:7}. But then by the condition (1) above, the ray $\R_+\Upsilon_m$ is not extremal in $\mcal C$, a contradiction.

Now I prove the claim. By Proposition \ref{pro:2}, it is enough to show that for a prime divisor $P$ on $S$ and for $l>0$
sufficiently divisible we have
\begin{equation}\label{equ:4}
\mult_P(\Omega_m'\wedge\textstyle\frac1l\Fix|l(K_X+\Delta_m'+A_m)|_S)\leq\mult_P(\Omega_m'-\Theta_m'),
\end{equation}
where $A_m=A/k_m$. First I show
\begin{equation}\label{equ:6}
\textstyle\big(1-\frac{\varepsilon}{k_m}\big)\mult_P(\Omega_m-\Theta_m)\leq\mult_P(\Omega_m'-\Theta_m').
\end{equation}
To see this, note that \eqref{equ:6} is trivial when $\mult_P(\Omega_m-\Theta_m)=0$, so I assume $\mult_P(\Omega_m-\Theta_m)>0$.
If $P\notin\mcal P$, then $\mult_P\Theta_\infty^0=\mult_P\Omega_\infty\in\Q$, so in particular $\mult_P\tilde\Theta=\mult_P\Omega_\infty$ as $\tilde\Theta\in U^0$.
Therefore, by condition (1) above,
$$\textstyle\mult_P(\Omega_m-\Theta_m)\leq\frac{k_\infty+k_m}{k_m}\mult_P(\Omega_m-\Theta_m)=\mult_P(\Omega_m'-\Theta_m'),$$
which implies \eqref{equ:6}. If $P\in\mcal P$, then $\mult_P(\Omega_m-\Theta_m)\geq\phi$, and \eqref{equ:6} follows similarly as \eqref{tag:3}
in Step 1 of the proof of Theorem \ref{lem:lipschitz}.

Therefore, by \eqref{equ:4} and \eqref{equ:6} it suffices to prove that
$$\mult_P(\Omega_m'\wedge\textstyle\frac1l\Fix|l(K_X+\Delta_m'+A_m)|_S)\leq\big(1-\frac{\varepsilon}{k_m}\big)\mult_P(\Omega_m-\Theta_m).$$
But this is proved similarly as \eqref{equ:7} in Step 1 of the proof of Theorem \ref{lem:lipschitz}, and we are done.
\end{proof}

\section{Pseudo-effectivity and non-vanishing}\label{sec:non-vanishing}

In this section I prove the following.

\begin{thm}\label{cor:linear}
Theorems A$_{n-1}$, B$_n$ and C$_{n-1}$ imply Theorem C$_n$.
\end{thm}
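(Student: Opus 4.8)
The plan is to derive Theorem C$_n$ — the statement that $\mcal E_{V,A}$ is a rational polytope equal to $\{\Phi\in\mcal L_V:|K_X+\Phi+A|_\R\neq\emptyset\}$ — from Theorems A$_{n-1}$, B$_n$ and C$_{n-1}$, following the pattern already established for Theorem B$_n$ in Theorem \ref{thm:EimpliesLGA}. Write $K_X$ for a divisor with $\OO_X(K_X)\simeq\omega_X$ and $A\not\subset\Supp K_X$, and set $\mcal C=\R_+(K_X+A+\mcal E_{V,A})\subset\Div(X)_\R$. Since $\mcal E_{V,A}$ is by definition the set of $\Phi\in\mcal L_V$ with $K_X+\Phi+A$ pseudo-effective, it is a closed convex set (closedness of the pseudo-effective cone; convexity is clear), hence $\mcal C$ is a closed convex cone, and it suffices to prove: (a) $\mcal C$ is a rational cone, i.e.\ its extremal rays are rational, which in particular identifies $\mcal E_{V,A}$ with the set where the section space $|K_X+\Phi+A|_\R$ is nonempty rather than merely where the class is pseudo-effective; and (b) $\mcal C$ has only finitely many extremal rays.

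\emph{Step 1 (non-vanishing / rationality).} Given a pseudo-effective $\Delta\in\mcal L_V+A$, the goal is to produce an effective divisor $\R$-linearly equivalent to $K_X+\Delta$, with the extremal-ray direction rational. As in Step 1 of Theorem \ref{thm:EimpliesLGA}, after perturbing $A$ and passing to a log resolution (using Lemma \ref{disjoint}) I reduce to the case where $(X,\Delta)$ is plt with $\lfloor\Delta\rfloor=G$ for some prime divisor $G$, and $(G,\Omega+A_{|G})$ is terminal, $\Omega=(\Delta-G)_{|G}$. If $\sigma_G\|K_X+\Delta\|=0$ then by Theorem B$_n$ (in the form $\mcal B_{V,A}^{G=1}=\mcal D_{V,A}^{G=1}$) one has $G\not\subset\B(K_X+\Delta)$ and we are done. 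Otherwise one must lower the multiplicity along $G$: using the diophantine approximation of Lemma \ref{lem:surround} to pick rational $(\Delta_i,\Theta_i)$ uniformly approximating $(\Delta,\Theta)$ with $\Theta=\Omega-\Omega\wedge N_\sigma\|K_X+\Delta\|_G$, one gets inclusions $|k_i(K_G+\Theta_i)|+k_i(\Omega_i-\Theta_i)\subset|k_i(K_X+\Delta_i)|_G$ exactly as in Step 1 of Theorem \ref{lem:lipschitz}; by Theorem C$_{n-1}$ the polytope $\mcal E_{V_G,A_{|G}}$ is rational, so $\Theta_i-A_{|G}\in\mcal E_{V_G,A_{|G}}$ and Lemma \ref{bounded}(2) gives $|k_i(K_G+\Theta_i)|\neq\emptyset$; hence there is an effective $D_i\in|k_i(K_X+\Delta_i)|$ not containing $G$, so $K_X+\Delta\sim_\R\sum\frac{r_i}{k_i}D_i$ is effective. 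Combined with an induction on $\dim X$ (to treat the case where $\B(K_X+\Delta)$ already meets a component of $\lfloor\cdot\rfloor$, and the lower-dimensional non-vanishing supplied by C$_{n-1}$), this yields both non-vanishing and rationality of $\mcal C$.

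\emph{Step 2 (polyhedrality).} Assume for contradiction there are distinct rational $\Delta_m\in\mcal E_{V,A}+A$, $m\in\N\cup\{\infty\}$, with $\Upsilon_m=K_X+\Delta_m$ spanning extremal rays of $\mcal C$ and $\Delta_m\to\Delta_\infty$. Using Remark \ref{rem:8}, it is enough to find, for some $m\gg0$, a point $\Upsilon_m'\in\mcal C$ with $\Upsilon_m\in(\Upsilon_\infty,\Upsilon_m')$. Following Step 2 of Theorem \ref{thm:EimpliesLGA}, reduce to $(X,\Delta_m)$ plt with $\lfloor\Delta_m\rfloor=G$ and $(G,\Omega_m+A_{|G})$ canonical; set $\Theta_m=\Omega_m-\Omega_m\wedge N_\sigma\|\Upsilon_m\|_G$, pass to a subsequence so $\Theta_m\to\Theta_\infty^0\le\Theta_\infty$, and note $\Theta_\infty^0-A_{|G}\in\mcal E_{V_G,A_{|G}}$ since this is a rational polytope by C$_{n-1}$. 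Approximate $(\Delta_m,\Theta_m)$ via Lemma \ref{lem:approximation} relative to a rational $(\Delta_\infty,\tilde\Theta)$, obtaining rational $(\Delta_m',\Theta_m')$ with $k_m$ and barycentric relations as in Theorem \ref{thm:EimpliesLGA}; establish $\big(1-\frac{\varepsilon}{k_m}\big)\mult_P(\Omega_m-\Theta_m)\le\mult_P(\Omega_m'-\Theta_m')$ case-splitting on $P\in\mcal P$, then invoke Proposition \ref{pro:2} to get $|k_m(K_G+\Theta_m')|+k_m(\Omega_m'-\Theta_m')\subset|k_m(K_X+\Delta_m')|_G$. Since $\Theta_m'\in\mcal E_{V_G,A_{|G}}$, Lemma \ref{bounded}(2) gives $|k_m(K_G+\Theta_m')|\neq\emptyset$, hence $K_X+\Delta_m'$ is effective, so $\Upsilon_m'=K_X+\Delta_m'\in\mcal C$ and the barycentric relation (1) puts $\Upsilon_m$ on the segment $(\Upsilon_\infty,\Upsilon_m')$ — contradiction. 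Therefore $\mcal C$ is polyhedral, and with Step 1 rational polyhedral, proving Theorem C$_n$.

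\emph{Main obstacle.} The hard part is Step 1: unlike Theorem B$_n$, where one is handed the hypothesis $\sigma_G\|K_X+\Phi+A\|=0$ and the relevant restriction is automatically "big enough", here the non-vanishing itself must be produced, so one genuinely needs the lower-dimensional non-vanishing from Theorem C$_{n-1}$ to feed the Hacon–M\textsuperscript{c}Kernan extension through Proposition \ref{pro:2}, and one must arrange — by the tie-breaking perturbation of $A$ — that a log canonical centre $G$ exists and that the restricted pair $(G,\Theta_i+A_{|G})$ lands in $\mcal E_{V_G,A_{|G}}$. Managing the interplay between the perturbation making $(X,\Delta)$ plt, the log resolution of Lemma \ref{disjoint}, and the approximation so that all the ampleness and multiplier-ideal hypotheses of Proposition \ref{pro:2} hold simultaneously is where the real work lies; the polyhedrality in Step 2 is then a fairly mechanical adaptation of the argument already carried out for $\mcal B_{V,A}^{G=1}$.
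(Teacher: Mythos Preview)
Your proposal has a genuine gap, and it is precisely at the point you flag as the ``main obstacle'' but do not resolve. In Step~1 you assert a reduction to a plt pair with $\lfloor\Delta\rfloor=G$; but when $(X,\Delta)$ is genuinely klt --- every coefficient strictly less than $1$ --- no perturbation of $A$ and no log resolution as in Lemma~\ref{disjoint} will manufacture such a $G$. The reduction you borrow from Theorem~\ref{thm:EimpliesLGA} worked there only because the hypothesis $\mult_G\Phi=1$ was \emph{given}. Here nothing is given, and producing a log canonical centre is the entire content of the non-vanishing problem. Your sentence ``If $\sigma_G\|K_X+\Delta\|=0$ then by Theorem~B$_n$\dots otherwise one must lower the multiplicity along $G$'' presupposes the existence of $G$ and so is circular.

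The paper's route is substantially different. It first obtains an effective $D$ with $K_X+\Delta\equiv D$ only \emph{numerically}, splitting on the numerical dimension $\nu(X,K_X+\Delta)$: the case $\nu=0$ is handled by \cite[3.3.2]{BCHM}, while for $\nu>0$ one invokes \cite[6.2]{BCHM} to reach a plt situation with $\sigma_S\|K_X+\Delta\|=0$ and only then applies the argument of Theorem~\ref{thm:EimpliesLGA}. Numerical effectivity is then upgraded to $|K_X+\Delta|_\Q\neq\emptyset$ for rational $\Delta$ by Shokurov's trick (Nadel vanishing plus invariance of the Euler characteristic under numerical equivalence), an ingredient entirely absent from your proposal.

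Your Step~2 inherits the same problem: you again posit a common $G$ with $\lfloor\Delta_m\rfloor=G$, and none exists a priori. The paper's polyhedrality argument is correspondingly much more elaborate than a rerun of Step~2 of Theorem~\ref{thm:EimpliesLGA}. With $D_\infty\sim_\Q\Upsilon_\infty$ already in hand from the non-vanishing step, it splits into cases: if $\Supp N_\sigma\|\Upsilon_\infty\|=\Supp D_\infty$ one argues via Lemma~\ref{lem:4} in $N^1(X)$; otherwise one runs a genuine tie-breaking, setting $\Delta_\infty^t=\Delta_\infty+tD_\infty$ and choosing $t_0$ so that some $G_{k_0}$ with $\sigma_{G_{k_0}}\|\Upsilon_\infty\|=0$ acquires coefficient $1$, landing in $\mcal B_{V,A}^{G_{k_0}=1}$ by Theorem~B$_n$. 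The contradiction is then extracted from the rational polytope $\mcal B_{V,A,\xi}^{G=1}$ of Remark~\ref{rem:7} via a geometric argument (Remark~\ref{rem:3}) rather than directly from Proposition~\ref{pro:2}. In short, the two missing ideas are the Shokurov--P\u{a}un non-vanishing trick and the tie-breaking that \emph{creates} the centre $G$ from an effective $D_\infty$; without them neither step goes through.
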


I first make a few remarks that will be used in the proof.

\begin{rem}\label{rem:9}
Let $D\leq0$ be a divisor on a smooth variety $X$. I claim that then $D$ is pseudo-effective if and only if $D=0$. To that end,
if $A$ is an ample divisor, then $D+\varepsilon A$ is big for every $\varepsilon>0$. In particular, $|D+\varepsilon A|_\R\neq\emptyset$,
and thus $\deg(D+\varepsilon A)\geq0$. But then letting $\varepsilon\downarrow0$ implies $\deg D\geq0$, so $D=0$.
\end{rem}

\begin{rem}\label{rem:7}
With notation from Theorem B, let $0<\xi\ll1$ be a rational number such that $A-\Xi$ is ample for all $\Xi\in V$ with $\|\Xi\|\leq\xi$, let
$\mcal L_{V,\xi}$ be the $\xi$-neighbourhood of $\mcal L_V$ in the sup-norm, and set
$$\mcal B_{V,A,\xi}^{G=1}=\{\Phi\in\mcal L_{V,\xi}:\mult_G\Phi=1,\,G\not\subset\B(K_X+\Phi+A)\}.$$
Then I claim Theorem B implies that $\mcal B_{V,A,\xi}^{G=1}$ is a rational polytope. To that end, fix $\Phi\in\mcal B_{V,A,\xi}^{G=1}$.
Let $\mcal Z$ be the set of all prime divisors $Z\in V\backslash\{G\}$ such that $\mult_Z\Phi\geq1$, and let $A'\sim_\Q A+\xi\sum_{Z\in\mcal Z}Z$ be
a general ample $\Q$-divisor. Then for every $\Phi'\in\mcal B_{V,A,\xi}^{G=1}$ with $\|\Phi-\Phi'\|<\xi$, we have
$\Phi'-\xi\sum_{Z\in\mcal Z}Z\in\mcal B_{V,A'}^{G=1}$ since $\xi\ll1$. As $\mcal B_{V,A'}^{G=1}$ is a rational polytope
by Theorem B, this implies that $\mcal B_{V,A,\xi}^{G=1}$ is locally a rational polytope around $\Phi$, and the claim follows by compactness of
$\mcal B_{V,A,\xi}^{G=1}$.
\end{rem}

\begin{proof}[Proof of Theorem \ref{cor:linear}]
Let $X$ be a smooth variety and $B$ a divisor on $X$ as in Theorem C$_n$.
Fix a divisor $K_X$ such that $\OO_X(K_X)\simeq\omega_X$ and $A\not\subset\Supp K_X$. It suffices to prove that the cone
$\mcal C=\R_+(K_X+A+\mcal E_{V,A})\subset\Div(X)_\R$ is rational polyhedral. Observe that $\mcal C$ is closed since $\mcal E_{V,A}$ is.\\[2mm]
{\em Step 1.\/}
Fix $\Delta\in\mcal E_{V,A}+A$. I first show that there exists an effective divisor $D\in\Div(X)_\R$ such that $K_X+\Delta\equiv D$.
This was proved essentially in \cite{Hac08}, and I will sketch the proof here for completeness.

First I claim that we can assume $(X,\Delta)$ is klt. To see this, let $\mcal G$ be the set of all prime divisors $G$ with $\mult_G\Delta=1$
and choose $0<\eta\ll1$ such that $A+\eta\sum_{G\in\mcal G}G$ is ample.
Let $A'\sim_\Q A+\eta\sum_{G\in\mcal G}G$ be a general ample $\Q$-divisor and set $\Delta'=\Delta-\eta\sum_{G\in\mcal G}G+A'$.
Then $K_X+\Delta\sim_\Q K_X+\Delta'$ and $(X,\Delta')$ is klt, so replace $\Delta$ by $\Delta'$ and $A$ by $A'$.

Now, if $\nu(X,D)=0$, cf.\ Definition \ref{dfn:2}, then the result follows from \cite[3.3.2]{BCHM}.
If $\nu(X,D)>0$, then by \cite[6.2]{BCHM} we can assume
that $(X,\Delta)$ is plt, $A$ is a general ample $\Q$-divisor, $\lfloor\Delta\rfloor=S$, $(S,(\Delta-S)_{|S})$ is canonical,
and $\sigma_S\|K_X+\Delta\|=0$. But now as in Step 1 of the proof of Theorem \ref{thm:EimpliesLGA} we have $|K_X+\Delta|_\R\neq\emptyset$.\\[2mm]
{\em Step 2.\/}
In this step we assume further that $\Delta\in\Div(X)_\Q$, and prove that $|K_X+\Delta|_\Q\neq\emptyset$.
This argument uses Shokurov's trick from his proof of the classical Non-vanishing theorem, and I
will present an algebraic proof following the analytic version from \cite{Pau08}.

By Step 1, $K_X+\Delta\equiv D$ for some effective $\R$-divisor $D$, and write $\Delta=\Phi+A$.
Let $W\subset\Div(X)_\R$ be the vector space spanned by the components of $K_X$, $A$, $D$ and by the prime divisors in $V$,
and let $\phi\colon W\rightarrow N^1(X)$ be the linear map sending a divisor to its numerical class. Since $\phi^{-1}([K_X+\Delta])$
is a rational affine subspace of $W$, we can assume that $D$ is an effective $\Q$-divisor.

Let $m$ be a positive integer such that $m\Delta$ and $mD$ are integral. By Nadel vanishing
$$H^i\big(X,\mcal J_{(m-1)D+\Phi}(m(K_X+\Delta))\big)=0\quad\textrm{and}\quad H^i\big(X,\mcal J_{(m-1)D+\Phi}(mD)\big)=0$$
for $i>0$, and since the Euler characteristic is a numerical invariant,
\begin{equation}\label{eq:12}
h^0\big(X,\mcal J_{(m-1)D+\Phi}(m(K_X+\Delta))\big)=h^0\big(X,\mcal J_{(m-1)D+\Phi}(mD)\big).
\end{equation}
Let $\sigma\in H^0(X,mD)$ be the section with $\ddiv\sigma=mD$. Since
$$((m-1)D+\Phi)-mD\leq\Phi,$$
by \cite[4.3(3)]{HM08} we have $\mcal I_{mD}\subset\mcal J_{(m-1)D+\Phi}$, and thus
$$\sigma\in H^0\big(X,\mcal J_{(m-1)D+\Phi}(mD)\big).$$
Therefore \eqref{eq:12} implies $h^0(X,m(K_X+\Delta))>0$, and we are done.\\[2mm]
{\em Step 3.\/}
In this step I prove that $\mcal C$ is a rational cone, and that
$$\mcal E_{V,A}=\{\Phi\in\mcal L_V:|K_X+\Phi+A|_\R\neq\emptyset\}.$$
Fix $\Delta\in\mcal E_{V,A}+A$. By Step 1 there is an effective $\R$-divisor $D$ such that $K_X+\Delta\equiv D$. Write $\Delta=A+\sum\delta_iF_i$
with $\delta_i\in[0,1]$, and $D=\sum f_iF_i$, where we can assume $F_i\neq\Supp A$ for all $i$ since $A$ is general.

I claim that we can assume that $\sum F_i$ has simple normal crossings. To that end,
let $f\colon Y\rightarrow X$ be a log resolution of $(X,\sum F_i)$, and denote $G'=f^{-1}_*G$ and $\Delta'=\B(X,\Delta)_Y$. Note that
$f^*A=f^{-1}_*A\leq\Delta'$ since $A$ is
general, let $H$ be a small effective $f$-exceptional divisor such that $f^*A-H$ is ample, and let $A'\sim_\Q f^*A-H$ be a general ample $\Q$-divisor.
Let $V'$ be the vector space spanned by proper transforms of elements of $V$ and by exceptional divisors.
Then $K_Y+\Delta'\equiv f^*D+E$, where $E=K_Y+\Delta'-f^*(K_X+\Delta)$ is effective and $f$-exceptional, and $\Delta'\in\mcal E_{V',A'}^{G'=1}+A'$,
so it is enough to show that the cone $\R_+(K_Y+A'+\mcal E_{V',A'}^{G'=1})$ is rational locally around $K_Y+\Delta'$.
Replacing $X$ by $Y$, $G$ by $G'$, $\Delta$ by $\Delta'-f^*A+H+A'$ and $V$ by $V'$ proves the claim.

Define $W$ and $\phi$ as in Step 2, and let $0<\varepsilon\ll1$ be a rational number such that $A+\Phi$ is ample for any divisor $\Phi\in W$ with $\|\Phi\|\leq\varepsilon$.
Choose $0\leq f_i'\leq f_i$ be rational numbers such that $f_i-f_i'<\varepsilon$. Then
$$K_X+\Delta'\equiv\sum f_i'F_i,$$
where $\Delta'=\Delta-\sum(f_i-f_i')F_i$. Since $\mcal P=\phi^{-1}\big(\sum f_i'[F_i]\big)$ is a rational affine subspace of $W$,
there are rational divisors $\Delta_j\in V+A$ such that $\|\Delta'-\Delta_j\|\ll\varepsilon$,
$K_X+\Delta_j\in\mcal P$ and $K_X+\Delta'=\sum\rho_j(K_X+\Delta_j)$ for some positive numbers $\rho_j$ with $\sum\rho_j=1$.
Setting $\Phi_j=\sum\max\{0,\mult_{F_j}\Delta_j-\varepsilon\}F_j$, the divisor $\Delta_j-\Phi_j$ is ample since
$\|(\Delta_j-A)-\Phi_j\|\leq\varepsilon$, and let $A'\sim_\Q\Delta_j-\Phi_j$ be a general ample $\Q$-divisor.
Therefore each $K_X+\Delta_j\sim_\Q K_X+\Phi_j+A'$ is a rational pseudo-effective divisor, and since $(X,\Phi_j+A')$ is klt, it is $\Q$-linearly
equivalent to an effective divisor by Step 2.
For each $j$, denote $\mcal B_j=\sum[\mult_{F_i}\Delta_j,1]F_i$, and let $\mcal B$ be the convex hull of $\bigcup\mcal B_j$;
observe that $\mcal B$ is a rational polytope. Then, since $V\subset W$,
\begin{equation}\label{tag:2}
K_X+\Delta\in(K_X+A+\mcal B)\cap(K_X+A+\mcal L_V).
\end{equation}
Therefore, \eqref{tag:2} shows that $K_X+\Delta$ is $\R$-linearly equivalent to an effective divisor, and that
$\mcal C$ is locally rational around every $K_X+\Delta$, and thus it is a rational cone.\\[2mm]
{\em Step 4.\/}
It remains to prove that the cone $\mcal C$ is polyhedral, i.e.\ that it has finitely many extremal rays.
Let $G_1,\dots,G_N$ be prime divisors on $X$ such that $\Supp K_X\cup\Supp B\subset\sum G_i$.

Assume that $\mcal C$ has infinitely many extremal rays. Thus, since $\mcal C$ is a rational cone,
there are distinct rational divisors $\Delta_m\in\mcal E_{V,A}+A$ for $m\in\N\cup\{\infty\}$ such that the rays $\R_+\Upsilon_m$
are extremal in $\mcal C$ and $\lim\limits_{m\rightarrow\infty}\Delta_m=\Delta_\infty$, where $\Upsilon_m=K_X+\Delta_m$.
As explained in Remark \ref{rem:8}, I achieve contradiction by showing that for some
$m\gg0$ there is a point $\Upsilon'_m\in\mcal C$ such that $\Upsilon_m\in(\Upsilon_\infty,\Upsilon'_m)$.

By Step 2, there is an effective divisor $D_\infty$ such that $\Upsilon_\infty\sim_\Q D_\infty$.
By possibly adding components, cf.\ Remark \ref{rem:8}, I can assume that $\Supp D_\infty\subset\sum G_j$ and that
$V=\sum\R G_j$. Similarly as in Step 1, and possibly by passing to a subsequence, we can assume that $(X,\Delta_m)$ is klt for all $m$,
and by taking a log resolution as in Step 3, we can assume further that $\sum G_j$ has simple normal crossings.\\[2mm]
{\em Step 5.\/}
If $D_\infty=0$, then for all $m\in\N$ the class $[\Upsilon_m]$ belongs to the segment $([\Upsilon_\infty],[2\Upsilon_m])$, and $2\Upsilon_m$ is
pseudo-effective, so we derive contradiction as in Remark \ref{rem:8}.

Thus, until the end of the proof I assume that $D_\infty\neq0$, and write $D_\infty=\sum d_jG_j$.
Assume that $\Supp N_\sigma\|\Upsilon_\infty\|=\Supp D_\infty$. Then $N_\sigma\|\Upsilon_\infty\|=N_\sigma\|D_\infty\|=D_\infty$
by \cite[2.1.6]{Nak04}, and $[G_j]$ are linearly independent in $N^1(X)$. Let $\mcal E\subset N^1(X)$ denote the
pseudo-effective cone. Similarly as in the proof of \cite[3.19]{Bou04}, we have $\mcal E=\sum\R_+[G_j]+\bigcap_j\mcal E_{G_j}$, where
$\mcal E_{G_j}=\{\Xi\in\mcal E:\sigma_{G_j}\|\Xi\|=0\}$ is a closed cone for every $j$. I claim that if
\begin{equation}\label{tag:7}
[D_\infty]=\sum d_j'[G_j]+\Phi
\end{equation}
with $d_j'\geq0$ and $\Phi\in\bigcap_j\mcal E_{G_j}$, then $\Phi=0$. To that end, denote $\alpha_j=d_j-d_j'$ and let $J=\{j:\alpha_j>0\}$.
Then \eqref{tag:7} gives
$$\sum\nolimits_{j\in J}\alpha_j[G_j]={-}\sum\nolimits_{j\notin J}\alpha_j[G_j]+\Phi.$$
Assume that there exists $i_0\in J$. Then, by \cite[2.1.6]{Nak04} again,
\begin{align*}
0<\alpha_{i_0}&\textstyle=\sigma_{G_{i_0}}\|\sum_{j\in J}\alpha_jG_j\|=\sigma_{G_{i_0}}\|{-}\sum_{j\notin J}\alpha_j[G_j]+\Phi\|\\
&\textstyle\leq\sigma_{G_{i_0}}\|{-}\sum_{j\notin J}\alpha_jG_j\|+\sigma_{G_{i_0}}\|\Phi\|=0,
\end{align*}
a contradiction. Therefore $J=\emptyset$ and $\Phi=\sum_{j\notin J}\alpha_j[G_j]$, thus $\Phi=0$ by Remark \ref{rem:9}.

Therefore, by Lemma \ref{lem:4} applied to the cone $\mcal E$ and to the sequence $[\Upsilon_m]$, there exists $\widehat\Phi_m\in\mcal E$ such that
$[\Upsilon_m]\in([\Upsilon_\infty],\widehat\Phi_m)$, a contradiction by Remark \ref{rem:8}.\\[2mm]
{\em Step 6.\/}
Therefore, from now on I assume that $\Supp N_\sigma\|\Upsilon_\infty\|\neq\Supp D_\infty$, and in particular, there is an index $j_0$ such that
$d_{j_0}>0$ and $\sigma_{G_{j_0}}\|\Upsilon_\infty\|=0$.
For $m\in\N\cup\{\infty\}$, write $\Delta_m=A+\sum\delta_j^mG_j$ with $\delta_j^m\in[0,1)$. Then from $\Upsilon_\infty\sim_\Q D_\infty$ we have
$$K_X+A\sim_\Q\sum f_jG_j$$
with $f_j=d_j-\delta_j^\infty$.
In this step I prove that there exist pseudo-effective divisors $\Sigma_m$, for $m\in\N\cup\{\infty\}$, with the following properties:
\begin{enumerate}
\item[(i)] $\lim\limits_{m\rightarrow\infty}\Sigma_m=\Sigma_\infty$,
\item[(ii)] $\Sigma_\infty\in\sum f_jG_j+\mcal B_{V,A}^{G_{k_0}=1}$ for some $k_0$ with $\mult_{G_{k_0}}\Sigma_\infty>0$,
\item[(iii)] if for $m\gg0$ there exists a pseudo-effective divisor $\Sigma_m'$ such that $\Sigma_m\in(\Sigma_\infty,\Sigma_m')$,
then $\R_+\Upsilon_m$ is not an extremal ray of $\mcal C$.
\end{enumerate}
For each $t\in\R_+$, let $\Delta_\infty^t=\Delta_\infty+tD_\infty$ and $\Theta_\infty^t=\Delta_\infty^t-\Delta_\infty^t\wedge N_\sigma\|(t+1)\Upsilon_\infty\|$.
Note that $K_X+\Delta_\infty^t\sim_\Q(t+1)\Upsilon_\infty$, $\Theta_\infty^t$ is a continuous function in $t$, and
$\mult_{G_{j_0}}\Theta_\infty^t=\delta^\infty_{j_0}+td_{j_0}$ for all $t$.
Therefore, since $(X,\Delta_\infty^0)$ is klt, there exists $t_0\in\R_{>0}$ such that
$$t_0=\sup\{t\in\R_+:(X,\Theta_\infty^t)\textrm{ is log canonical}\}.$$
By construction, there is $k_0$ with $d_{k_0}>0$ such that $G_{k_0}$ is a log canonical centre of
$(X,\Theta_\infty^{t_0})$ and $\sigma_{G_{k_0}}\|K_X+\Theta_\infty^{t_0}\|=0$, thus by Theorem B$_n$ we have
\begin{equation}\label{equ:2}
\Theta_\infty^{t_0}-A\in\mcal B_{V,A}^{G_{k_0}=1}.
\end{equation}
Define $D_m=\sum(f_j+\delta_j^m)G_j$ and $\Xi_m=(t_0+1)D_m$ for $m\in\N\cup\{\infty\}$, and observe that
\begin{equation}\label{tag:8}
(t_0+1)\Upsilon_m\sim_\Q\Xi_m=\sum f_jG_j+\Delta_m-A+t_0D_m\sim_\Q K_X+\Delta_m+t_0D_m
\end{equation}
and $\lim\limits_{m\rightarrow\infty}\Xi_m=\Xi_\infty$. Denote $\Lambda_\infty=(\Delta_\infty+t_0D_\infty)\wedge N_\sigma\|\Xi_\infty\|$ and
$$\Lambda_m=(\Delta_m+t_0D_m)\wedge\sum_{Z\subset\Supp\Lambda_\infty}\sigma_Z\|\Xi_m\|$$
for $m\in\N$. Note that $0\leq\Lambda_m\leq N_\sigma\|\Xi_m\|$ for $m\gg0$, and therefore $\Xi_m-\Lambda_m$
is pseudo-effective. Similarly as in \cite[2.1.4]{Nak04} we have $\Lambda_\infty\leq\liminf\limits_{m\rightarrow\infty}\Lambda_m$, and in particular,
$\Supp\Lambda_m=\Supp\Lambda_\infty$ for $m\gg0$. Therefore, there exists a sequence of rational numbers $\varepsilon_m\uparrow1$ such that
$\Lambda_m\geq\varepsilon_m\Lambda_\infty$, and set $\varepsilon_\infty=1$.

Now define $\Sigma_m=\Xi_m-\varepsilon_m\Lambda_\infty$ for $m\in\N\cup\{\infty\}$, and note that
$\Sigma_m\geq\Xi_m-\Lambda_m$ are pseudo-effective divisors satisfying (1). Also, note that $\Sigma_\infty=\sum f_jG_j+\Theta_\infty^{t_0}-A$, and
$$\mult_{G_{k_0}}\Sigma_\infty=f_{k_0}+1\geq f_{k_0}+\delta_{k_0}^\infty=d_{k_0}>0,$$
so this together with \eqref{equ:2} gives (ii).

In order to show (iii), let $0<\alpha_m<1$ be such that $\Sigma_m=\alpha_m\Sigma_\infty+(1-\alpha_m)\Sigma_m'$.
Since every point on the segment $[\Sigma_m,\Sigma_m']$ is pseudo-effective, we can assume $\alpha_m\ll1$. Then setting
$\Upsilon_m'=\Sigma_m'+\frac{\varepsilon_m-\alpha_m}{1-\alpha_m}\Lambda_\infty$,
we have $\Sigma_m=\alpha_m\Sigma_\infty+(1-\alpha_m)\Upsilon_m'$, and this together with \eqref{tag:8} gives
$[\Upsilon_m]=\alpha_m[\Upsilon_\infty]+(1-\alpha_m)[\frac{1}{1+t_0}\Upsilon_m']$, so $\R_+\Upsilon_m$ is not an extremal ray of $\mcal C$ by
Remark \ref{rem:8}.\\[2mm]
{\em Step 7.\/}
Let $0<\xi\ll1$ be a rational number such that $A-\Xi$ is ample for all $\Xi\in V$ with $\|\Xi\|\leq\xi$, and let
$\mcal L_{V,\xi}$ be the $\xi$-neighbourhood of $\mcal L_V$ in the sup-norm. With notation from Remark \ref{rem:7},
set
$$\textstyle\mcal D_\xi=\R_+(\sum f_jG_j+\mcal B_{V,A,\xi}^{G=1})\subset V.$$
By Remark \ref{rem:7}, $\mcal D_\xi$ is a rational polyhedral cone. Note that
$\{\Theta_\infty^{t_0}-A+\Xi:0\leq\Xi\in V,\|\Xi\|\leq\xi,\mult_{G_{k_0}}\Xi=0\}\subset\mcal B_{V,A,\xi}^{G_{k_0}=1}$,
so $\dim\mcal D_\xi=\dim V$ and $\Sigma_\infty\in\mcal D_\xi$. If $\Sigma_\infty\in\Int\mcal D_\xi$, then it is obvious that
for $m\gg0$ there exists $\Sigma_m'\in\mcal D_\xi$ such that $\Sigma_m\in(\Sigma_\infty,\Sigma_m')$, which is a contradiction by (iii) above.

Otherwise, let $\mcal H_i$, for $i=1,\dots,\ell$, be the supporting hyperplanes of codimension $1$ faces of the cone $\mcal D_\xi$
which contain $\Sigma_\infty$, where $\ell\leq\dim V-1$. Let $\mcal W_i$ be the half-spaces determined by $\mcal H_i$
which contain $\mcal D_\xi$, and denote $\mcal Q=\bigcap_i\mcal W_i$.
If $\Sigma_m\in\mcal Q$ for infinitely many $m$, then for some $m\gg0$ there exists $\Sigma_m'\in\mcal D_\xi$ such that
$\Sigma_m\in(\Sigma_\infty,\Sigma_m')$ since $\mcal D_\xi$ is polyhedral, a contradiction again.

Therefore, by passing to a subsequence, I can assume that $\Sigma_m\notin\mcal Q$ for all $m$.
For each $m\in\N$, denote $\Gamma_m=\Sigma_m-\sigma_{G_{k_0}}\|\Sigma_m\|\cdot G_{k_0}$. I claim that
$\mcal D_\xi\cap(\R_{>0}\Gamma_m+\R_{>0}\Sigma_\infty)\neq\emptyset$ for each $m\in\N$, and in particular
$\mcal W_i\cap(\R_{>0}\Gamma_m+\R_{>0}\Sigma_\infty)\neq\emptyset$ for all $i$. Granting the claim, let me show how it yields contradiction.

Since $\Sigma_\infty\in\mcal H_i$ for every $i$, and the cone $\R_+\Gamma_m+\R_+\Sigma_\infty$ is convex,
the claim implies $\Gamma_m\in\mcal W_i$, and thus $\Gamma_m\in\mcal Q$. Since $\Sigma_m\notin\mcal Q$,
segments $[\Gamma_m,\Sigma_m]$ intersect $\partial\mcal Q$, and in particular there exists a point $P_m\in(\Sigma_m+\R_-G)\cap\partial\mcal Q$
closest to $\Sigma_m$. By passing to a subsequence, we have $\lim\limits_{m\rightarrow\infty}P_m=\Sigma_\infty$ by Remark \ref{rem:3}, and
thus for every $m\gg0$ there exists
a codimension $1$ face of $\mcal D_\xi$ that contains $\Sigma_\infty$ and $P_m$. Since $\mcal D_\xi$ is polyhedral, for $m\gg0$ there are points
$Q_m\in\mcal D_\xi$ such that $P_m=\mu_mQ_m+(1-\mu_m)\Sigma_\infty$ for some $0<\mu_m<1$. Set
$\Sigma_m'=Q_m+\frac{1}{\mu_m}(\Sigma_m-P_m)$, and note that $\Sigma_m'\geq Q_m$ is pseudo-effective. Then
$\Sigma_m=\mu_m\Sigma_m'+(1-\mu_m)\Sigma_\infty$, and this is a contradiction by (iii) above.

Finally, let me prove the claim stated above. Observe that for every $\Psi\in\R_+\Gamma_m+\R_+\Upsilon_\infty$ we have $\sigma_{G_{k_0}}\|\Psi\|=0$.
Therefore, as $\Sigma_\infty\in\sum f_jG_j+\mcal B_{V,A}^{G=1}$, it is enough to find
$\Pi_m\in(\R_{>0}\Gamma_m+\R_{>0}\Sigma_\infty)\cap B(\Sigma_\infty,\xi)$ such that $\mult_{G_{k_0}}\Pi_m=\mult_{G_{k_0}}\Sigma_\infty$.
Write $\Gamma_m=\sum\gamma_{m,j}G_j\geq0$ and $\Sigma_\infty=\sum\sigma_jG_j$, where $\sigma_{k_0}>0$ by the condition (ii) above.
If $\gamma_{m,k_0}\neq0$, choose $0<\beta_m<1$ so that
$(1-\beta_m)|\sigma_{k_0}\gamma_{m,j}-\sigma_j\gamma_{m,k_0}|<\xi\gamma_{m,k_0}$ for all $j$, and set $\alpha_m=(1-\beta_m)\sigma_{k_0}/\gamma_{m,k_0}$.
If $\gamma_{m,k_0}=0$, let $\beta_m=1$, and pick $\alpha_m>0$ so that $|\alpha_m\gamma_{m,j}|<\xi$ for all $j$. Then it is easy to check that
$\Pi_m=\alpha_m\Gamma_m+\beta_m\Sigma_\infty$ is the desired one.
\end{proof}

\begin{rem}
If $(X,\Delta)$ is a klt pair such that $\Delta$ is big, the existence of an effective divisor $D\in\Div(X)_\R$ such that $K_X+\Delta\equiv D$
was proved in \cite{Pau08} with analytic tools.
\end{rem}

\section{Finite generation}\label{proofmain}

\begin{thm}\label{thm:3}
Theorems A$_{n-1}$, B$_n$ and C$_{n-1}$ imply Theorem A$_n$.
\end{thm}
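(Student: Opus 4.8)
The plan is to deduce Theorem~A$_n$ from the restriction theorem, Theorem~\ref{thm:2}, by realising the adjoint ring $R(X;D_1,\dots,D_\ell)$ as a divisorial sub-algebra of a much larger algebra of higher rank, graded by a monoid whose rank grows with the number of components of the $\Delta_i$ and of an effective divisor $\sim_\Q K_X+\Delta_i$, and inside which the kernels of the relevant restriction maps are generated by construction; this is the higher-rank realisation of the mechanism of Lemma~\ref{lem:restricted}. First I would take a common log resolution and, using the birational invariance of adjoint rings (cf.\ \eqref{eq:4}) and Lemma~\ref{lem:1}(1) to pass to a truncation, reduce to the case in which each $(X,\Delta_i)$ is log smooth, $A$ is general ample, and the union of the supports $\Supp(\Delta_i-A)$ with the supports of fixed effective members $G_i\in|D_i|$ (which exist because $|D_i|\neq\emptyset$) is a simple normal crossings divisor $\sum_jF_j$. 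Then I would enlarge the grading to a finitely generated saturated monoid $\mcal S\subset\N^r$ carrying directions mapping not only to the $D_i$ but also to the prime divisors $F_j$, via an additive map $\mu$ sending the $F_j$-direction to $F_j$. Since $R(X;D_1,\dots,D_\ell)$ is then the divisorial sub-algebra of $R(X,\mu(\mcal S))$ on a finitely generated submonoid, Proposition~\ref{pro:1} reduces everything to proving that $R(X,\mu(\mcal S))$ is finitely generated; the gain is that the section $\sigma_{F_j}$ cutting out $F_j$ is now a homogeneous generator of $R(X,\mu(\mcal S))$, so by \eqref{eq:1} the kernel of restriction to $F_j$ in each graded piece is $\sigma_{F_j}$ times a graded piece of a related algebra of the same shape of strictly smaller rank.

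Now I would argue by induction on the rank $r$ of $\mcal S$. Each divisor $\mu(s)$ is, up to scaling and $\R$-linear equivalence, of the form $K_X+\Gamma_s+A$; a tie-breaking-type perturbation against $\sum_iG_i$, with small pieces absorbed into the general ample $A$, lets me arrange that $(X,\Gamma'_s)$ is plt with $\lfloor\Gamma'_s\rfloor=S$ equal to one of the $F_j$, that $(S,(\Gamma'_s-S)_{|S}+A_{|S})$ is terminal, and that $S\not\subset\B(K_X+\Gamma'_s)$. By Theorem~B$_n$ the set of boundaries admitting a fixed such $S$ is a rational polytope, so $\mcal S_\R$ admits a finite rational polyhedral subdivision on each piece of which a single $S$ works; by Lemma~\ref{lem:gordan} and Lemma~\ref{lem:1}(1) I may work on one piece at a time and assume $S$ is fixed with $\mu(e_0)=S$ for a generator $e_0$. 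On such a piece, the quotient $R(X,\mu(\mcal S))/\sigma_S\,R(X,\mu(\mcal S))$ is, graded piece by graded piece, the restriction $\res_S R(X,\mu(\mcal S))$ by \eqref{eq:1}, and it is finitely generated: on the facet $\{e_0=0\}$ it is the rank $r-1$ sub-algebra, finitely generated by the inductive hypothesis, while on the $e_0$-positive part it is the restriction of a plt adjoint ring, finitely generated by Theorem~\ref{thm:2} — this is exactly where Theorems~A$_{n-1}$, B$_n$ and C$_{n-1}$ are invoked, on the $(n-1)$-dimensional $S$. Since the kernel $\sigma_S\,R(X,\mu(\mcal S))$ is a principal ideal and the grading is by a submonoid of $\N^r$ with finite-dimensional pieces with $\sigma_S$ of positive degree, lifting a finite generating set of the quotient and adjoining $\sigma_S$ generates $R(X,\mu(\mcal S))$; the base of the induction reduces, after descent to a model, to a b-divisorial algebra finitely generated by \cite[2.8]{HK00}, as in Lemma~\ref{lem:restricted}. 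Finally, carrying this out over the finitely many subcones and using Proposition~\ref{pro:1} to deduce that $\bMob_\mu$ is rationally piecewise additive up to truncation on each of them, a common refinement together with Lemma~\ref{lem:1}(3) shows that $R(X,\mu(\mcal S))$, hence $R(X;D_1,\dots,D_\ell)$, is finitely generated; combined with Theorems~\ref{thm:EimpliesLGA} and~\ref{cor:linear} and induction on $n$ this also yields Theorems~\ref{thm:cox} and~\ref{cor:can}.

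The difficulty lies not in any single deduction but in the simultaneous design of $\mcal S$, $\mu$ and the tie-breaking perturbations so that all ingredients are available at once: the subdivision of $\mcal S_\R$ by the available log canonical centre must be finite and rational, which is what forces the use of Theorem~B$_n$; after perturbation the restriction to $S$ must genuinely satisfy the hypotheses of Theorem~\ref{thm:2}, i.e.\ be plt with terminal restriction and $S$ outside the stable base locus, which dictates how general $A$ has to be and how small the perturbations; and, most delicately, the kernel of each restriction map must be precisely $\sigma_S$ times a lower-rank algebra of the same form, so that the induction is well founded and its base case tractable. Keeping the Diophantine approximations, the choice of $A$, and the finitely many geometric valuations mutually compatible across all the subcones — so that the bookkeeping illustrated by Lemma~\ref{lem:restricted} survives in the higher-rank setting — is the real content of \S\ref{proofmain}.
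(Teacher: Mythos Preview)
Your overall strategy is right—enlarge the grading so that the sections $\sigma_{F_j}$ become homogeneous elements, prove the restricted algebras are finitely generated via Theorem~\ref{thm:2}, and lift generators as in Lemma~\ref{lem:restricted}—but the inductive scheme you propose does not work as stated, and the paper organises the argument differently.

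The gap is the one you flag yourself but do not resolve: on each subcone you need a generator $e_0$ with $\mu(e_0)=S$ a single prime divisor, and you need the facet $\{e_0=0\}$ to carry an algebra of the same form for the induction on rank to run. Neither is arranged by a subdivision coming from Theorem~B$_n$. The pieces cut out by the polytopes $\mcal B_{V,A}^{G=1}$ have extremal rays mapping under $\mu$ to \emph{adjoint} divisors, not to a bare prime $F_j$; conversely, if you forcibly adjoin the $F_{j_0}$-direction as an extra generator $e_0$, then points with large $e_0$-coefficient have boundaries with $\mult_{F_{j_0}}>1$, hence leave the log canonical range, and Theorem~\ref{thm:2} no longer applies there. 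So the kernel is not ``$\sigma_S$ times a lower-rank algebra of the same form,'' and the base case you describe (descent to a model and \cite[2.8]{HK00}) is really the conclusion of a piecewise-additivity argument, not the start of a rank induction.

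What the paper actually does is a single induction on \emph{degree} inside one fixed algebra $\mathfrak R$ (Step~4), together with a purely combinatorial cover $\mcal C=\bigcup_j\mcal C_j$ (Step~2) that does not come from Theorem~B$_n$. The cone $\mcal C$ is $\R_+\mcal B$ for a polytope $\mcal B$ built directly from the coefficients $\delta_{ij},\gamma_{ij}$, and $\mcal C_{j_0}$ is the locus where the $F_{j_0}$-coefficient attains its maximum $f_{\tau j_0}+1$. The key calculation, via the bound~\eqref{eq:22}, is that for $c\in\mcal C_{j_0}$ of sufficiently large degree the difference $c-F_{j_0}$ lies again in $\mcal C$—but possibly in a \emph{different} $\mcal C_j$. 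So after restricting to $F_{j_0}$, expressing the restriction in the finitely many generators supplied by Theorem~\ref{thm:2} (Step~3), and peeling off $\sigma_{j_0}$, one gets a section of strictly smaller degree that may next require restriction to a different $F_j$. This switching of centres is essential and is precisely what a fixed-$S$-per-subcone scheme cannot accommodate; Theorem~B$_n$ enters only inside the proof of Theorem~\ref{thm:2}, not in constructing the cover.
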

\begin{proof}
Let $F_1,\dots,F_N$ be prime divisors on $X$ such that $\Delta_i=\sum_j\delta_{ij}F_j$ with $\delta_{ij}\in[0,1]$, and
$K_X+\Delta_i+A\sim_\Q\sum_j\gamma_{ij}F_j\geq0$.\\[2mm]
{\em Step 1.\/}
I first show that we can assume $A$ is a general ample $\Q$-divisor, all pairs $(X,\Delta_i+A)$ are klt, and the divisor $\sum F_i$ has simple normal crossings.

Fix an integer $p\gg0$ such that $\Delta_i+pA$ is ample for every $i$, and let $A_i\sim_\Q\frac{1}{p+1}\Delta_i+\frac{p}{p+1}A$ and $A'\sim_\Q\frac{1}{p+1}A$
be general ample $\Q$-divisors. Set $\Delta_i'=\frac{p}{p+1}\Delta_i+A_i$. Then the pairs $(X,\Delta_i'+A')$ are klt and
$K_X+\Delta_i+A\sim_\Q K_X+\Delta_i'+A'$.

Let $g\colon Y\rightarrow X$ be a log resolution of the pair $(X,\sum F_i)$, denote $\B_i=\B(X,\Delta_i'+A')$ for all $i$, and note that
$g^*A'=g_*^{-1}A'\leq\B_i$ since $A'$ is general.
Let $H$ be a small effective $g$-exceptional $\Q$-divisor such that $g^*A'-H$ is ample, and let $A_Y\sim_\Q g^*A'-H$ be a general ample $\Q$-divisor.
Denote $\Delta_{i,Y}=\B_i-g^*A'+H\geq0$, and note that the divisor $E_i=K_Y+\B_i-g^*(K_X+\Delta_i'+A')$ is effective and $g$-exceptional for every $i$. Then
$$K_Y+\Delta_{i,Y}+A_Y\sim_\Q K_Y+\B_{iY}\sim_\Q g^*(K_X+\Delta_i+A)+E_i\sim_\Q g^*\big(\sum\gamma_{ij}F_j\big)+E_i,$$
and $g^*(\sum\gamma_{ij}F_j)+E_i$ has simple normal crossings support. Choose $q\in\Z_{>0}$
such that $D_i'=qk_i(K_Y+\Delta_{i,Y}+A_Y)$ is Cartier for every $i$, and $D_i'\sim qg^*D_i+qk_iE_i$.

Then $R(X;qD_1,\dots,qD_\ell)\simeq R(Y;D_1',\dots,D_\ell')$, and by Lemma \ref{lem:1}(1)
it suffices to prove that $R(Y;D_1',\dots,D_\ell')$ is finitely generated. Now replace $X$ by $Y$, $A$ by $A_Y$ and $\Delta_i$ by $\Delta_{i,Y}$.\\[2mm]
{\em Step 2.\/}
Denote $\mcal T=\{(t_1,\dots,t_\ell):t_i\geq0,\sum t_i=1\}\subset\R^\ell$ and $f_{ij}=\gamma_{ij}-\delta_{ij}$, and note that $f_{ij}>-1$.
For each $\tau=(t_1,\dots,t_\ell)\in\mcal T$, set
$$\delta_{\tau j}=\sum\nolimits_i t_i\delta_{ij}\quad\textrm{and}\quad f_{\tau j}=\sum\nolimits_i t_if_{ij},$$
and observe that
\begin{equation}\label{tag:1}
K_X+A\sim_\R\sum\nolimits_j f_{\tau j}F_j.
\end{equation}
Let $\Lambda=\bigoplus_j\N F_j\subset\Div(X)$, and denote $\mcal B_\tau=\sum_j[f_{\tau j}+\delta_{\tau j},f_{\tau j}+1]F_j\subset\Lambda_\R$
and $\mcal B=\bigcup_{\tau\in\mcal T}\mcal B_\tau$. Since every point in $\mcal B$
is a barycentric combination of the vertices of $\mcal B_{e_i}$, where $e_i$ are the standard basis vectors of $\R^\ell$,
$\mcal B$ is a rational polytope, and thus $\mcal C=\R_+\mcal B$ is a rational polyhedral cone.

For every $j=1,\dots,N$, let
$$\mcal F_{\tau j}=(f_{\tau j}+1)F_j+\sum\nolimits_{k\neq j}[f_{\tau k}+\delta_{\tau k},f_{\tau k}+1]F_k,$$
and set $\mcal F_j=\bigcup_{\tau\in\mcal T}\mcal F_{\tau j}$, which is a rational polytope similarly as above. Then $\mcal C_j=\R_+\mcal F_j$ is
a rational polyhedral cone,
and I claim that $\mcal C=\bigcup_j\mcal C_j$. To see this, fix $s\in\mcal C\backslash\{0\}$. Then there exists $\tau\in\mcal T$ such that
$s\in\R_+\mcal B_\tau$, hence $s=r_s\sum_j(f_{\tau j}+b_{\tau j})F_j$ for some $r_s\in\R_{>0}$, $b_{\tau j}\in[\delta_{\tau j},1]$.
Setting
$$r_\tau=\max_j\Big\{\frac{f_{\tau j}+b_{\tau j}}{f_{\tau j}+1}\Big\}\quad\text{and}\quad
b_{\tau j}'=-f_{\tau j}+\frac{f_{\tau j}+b_{\tau j}}{r_\tau},$$
we have
$$s=r_sr_\tau\sum\nolimits_j(f_{\tau j}+b_{\tau j}')F_j.$$
Note that $r_\tau\in(0,1]$, $b_{\tau j}'\in[\delta_{\tau j},1]$ for all $j$, and there exists $j_0$ such that $b_{\tau j_0}'=1$. Therefore
$s\in\R_+\mcal F_{\tau j_0}\subset\mcal C_{j_0}$, and the claim is proved.\\[2mm]
{\em Step 3.\/}
In this step I prove that for each $j$, the restricted algebra $\res_{F_j}R(X,\mcal C_j\cap\Lambda)$ is finitely generated.

Fix $1\leq j_0\leq N$. By Lemma \ref{lem:gordan}, pick finitely many generators $h_1,\dots,h_m$ of $\mcal C_{j_0}\cap\Lambda$. Similarly as in Step 1 of
the proof of Theorem \ref{thm:2}, it is enough to prove that the restricted algebra $\res_{F_{j_0}}R(X;h_1,\dots,h_m)$ is finitely generated.

By definition of $\mcal C_{j_0}$, for every $h_w$ there exist $r_w\in\Q_+$, $\tau=(t_1,\dots,t_\ell)\in\mcal T_\Q$, and
$b_{\tau j}^w\in[\delta_{\tau j},1]$ for $j\neq j_0$, such that $h_w=r_w\big((f_{\tau j_0}+1)F_{j_0}+\sum_{j\neq j_0}(f_{\tau j}+b_{\tau j}^w)F_j\big)$.
Denote $\Phi_w'=\sum_{j\neq j_0}b_{\tau j}^wF_j$. Fix an integer $p_{j_0}\gg0$ such that $\Phi_w'+p_{j_0}A$ is ample for every $w=1,\dots,m$, and let
$A_w\sim_\Q\frac{1}{p_{j_0}+1}\Phi_w'+\frac{p_{j_0}}{p_{j_0}+1}A$ and $H\sim_\Q\frac{1}{p_{j_0}+1}A$ be general ample $\Q$-divisors. Set
$\Phi_w=\frac{p_{j_0}}{p_{j_0}+1}\Phi_w'+A_w$. Then by \eqref{tag:1},
$$h_w\sim_\Q r_w(K_X+F_{j_0}+\Phi_w+H),$$
and note that $(X,F_{j_0}+\Phi_w+H)$ is a log smooth plt pair with $\lfloor F_{j_0}+\Phi_w+H\rfloor=F_{j_0}$ for every $w$. Furthermore, we have
$$h_w\geq r_w\sum\nolimits_j(f_{\tau j}+\delta_{\tau j})F_j=r_w\sum\nolimits_it_i\sum\nolimits_j(f_{ij}+\delta_{ij})F_j
=r_w\sum\nolimits_it_i\sum\nolimits_j\gamma_{ij}F_j\geq0,$$
so $|K_X+F_{j_0}+\Phi_w+H|_\Q\neq\emptyset$. Choose $q_{j_0}\in\Z_{>0}$ such that $q_{j_0}h_w\sim H_w$ for all $w$, where
$H_w=q_{j_0}r_w(K_X+F_{j_0}+\Phi_w+H)$. Then
$$\res_{F_{j_0}}R(X;q_{j_0}h_1,\dots,q_{j_0}h_m)\simeq\res_{F_{j_0}}R(X;H_1,\dots,H_m),$$
and this last algebra is finitely generated by Theorem \ref{thm:2}. Thus $\res_{F_{j_0}}R(X;h_1,\dots,h_m)$ is finitely generated by Lemma \ref{lem:1}(1).\\[2mm]
{\em Step 4.\/}
Let $\sigma_j\in H^0(X,F_j)$ be the section such that $\ddiv\sigma_j=F_j$ for each $j$.
Consider the $\Lambda$-graded algebra $\mathfrak{R}=\bigoplus_{s\in\Lambda}\mathfrak R_s\subset R(X;F_1,\dots,F_N)$ such that
every element of $\mfrak R$ is a polynomial in elements of $R(X,\mcal C\cap\Lambda)$ and in $\sigma_1,\dots,\sigma_N$.
Note that $\mathfrak R_s=H^0(X,s)$ for every $s\in\mcal C\cap\Lambda$.
In this step I show that the algebra $\mathfrak R$ is finitely generated.

Let $V=\sum_j\R F_j\simeq\R^N$, and let $\|\cdot\|$ be the Euclidean norm on $V$. Since the polytopes $\mcal F_j\subset V$ are compact,
there is a positive constant $C$ such that $\mcal F_j\subset B(0,C)$ for all $j$. Let $\deg\colon\Lambda\rightarrow\N$ be the function given by
$\deg(\sum_j\alpha_jF_j)=\sum_j\alpha_j$, and for a section $\sigma\in\mfrak R_s$ set $\deg\sigma=\deg s$. For every $\mu\in\N$, denote
$\Lambda_{\leq\mu}=\{s\in\Lambda:\deg s\leq\mu\}$, and $\mfrak R_{\leq\mu}=\bigoplus\limits_{s\in\Lambda_{\leq\mu}}\mfrak R_s$.

By Step 3, for each $j$ there exists a finite set $\mcal H_j\subset R(X,\mcal C_j\cap\Lambda)$ such that $\res_{F_j}R(X,\mcal C_j\cap\Lambda)$ is generated
by the set $\{\sigma_{|F_j}:\sigma\in\mcal H_j\}$.
Let $M$ be a sufficiently large positive integer such that $\mcal H_j\subset\mfrak R_{\leq M}$ for all $j$,
and $M\geq CN^{1/2}\max\limits_{i,j}\{\frac{1}{1-\delta_{ij}}\}$.
By H\"{o}lder's inequality we have $\|s\|\geq N^{-1/2}\deg s$ for all $s\in\Lambda$, and thus
\begin{equation}\label{eq:22}
\|s\|/C\geq\max_{i,j}\Big\{\frac{1}{1-\delta_{ij}}\Big\}
\end{equation}
for all $s\in\Lambda\backslash\Lambda_{\leq M}$. Let $\mcal H$ be a finite subset of $\mfrak R$ such that
$\{\sigma_1,\dots,\sigma_N\}\cup\mcal H_1\cup\dots\cup\mcal H_N\subset\mcal H$, and that $\mcal H$ is a set of generators of the finite dimensional
vector space $\mfrak R_{\leq M}$. Let $\C[\mcal H]$ be the ring consisting of polynomials in the elements of $\mcal H$, and observe that trivially
$\C[\mcal H]\subset\mfrak R$.

I claim that $\mathfrak R=\C[\mcal H]$, and the proof is by induction on $\deg\chi$, where $\chi\in\mfrak R$.

Fix $\chi\in\mfrak R$. By definition of $\mfrak R$, write $\chi=\sum_i\sigma_1^{\lambda_{1,i}}\dots\sigma_N^{\lambda_{N,i}}\chi_i$,
where $\chi_i\in R(X,\mcal C\cap\Lambda)$, and note that $\deg\chi_i\leq\deg\chi$. Then it is enough to show that $\chi_i\in\C[\mcal H]$.
By replacing $\chi$ by $\chi_i$, I assume that $\chi\in H^0(X,c)$, where $c\in\mcal C\cap\Lambda$.
If $\deg\chi\leq M$, then $\chi\in\C[\mcal H]$ by definition of $\mcal H$.

Now assume $\deg\chi>M$. By Step 2 there exists $j_0$ such that $c\in\mcal C_{j_0}\cap\Lambda$, and thus,
by definition of $\mcal H$, there are $\theta_1,\dots,\theta_z\in\mcal H$ and a polynomial
$\varphi\in\C[X_1,\dots,X_z]$ such that $\chi_{|F_{j_0}}=\varphi(\theta_{1|F_{j_0}},\dots,\theta_{z|F_{j_0}})$. Therefore,
$$\chi-\varphi(\theta_1,\dots,\theta_z)=\sigma_{j_0}\cdot\chi'$$
for some $\chi'\in H^0\big(X,c-F_{j_0}\big)$ by the relation \eqref{eq:1} in Remark \ref{rem:1}.
Since $\deg\chi'<\deg\chi$, it is enough to prove that $\chi'\in\mfrak R$, since then $\chi'\in\C[\mcal H]$ by induction,
and so $\chi=\sigma_{j_0}\cdot\chi'+\varphi(\theta_1,\dots,\theta_z)\in\C[\mcal H]$.

To that end, since $c\in\mcal C_{j_0}\cap\Lambda$, there exist $\tau\in\mcal T_\Q$, $r_c\in\Q_+$, and $b_{\tau j}\in[\delta_{\tau j},1]$ for $j\neq j_0$,
such that $c=r_cc_{\tau j_0}$, where $c_{\tau j_0}=(f_{\tau j_0}+1)F_{j_0}+\sum\nolimits_{j\neq j_0}(f_{\tau j}+b_{\tau j})F_j\in\mcal F_{j_0}$. Then
$$c-F_{j_0}=r_c\big(\big(f_{\tau j_0}+{\textstyle\frac{r_c-1}{r_c}}\big)F_{j_0}+\sum\nolimits_{j\neq j_0}(f_{\tau j}+b_{\tau j})F_j\big),$$
and observe that $r_c=\|c\|/\|c_{\tau j_0}\|\geq\max_{i,j}\{\frac{1}{1-\delta_{ij}}\}$ by \eqref{eq:22} since $\|c_{\tau j_0}\|\leq C$ by definition of $C$.
In particular $\frac{r_c-1}{r_c}\geq\delta_{\tau j_0}$, and therefore $c-F_{j_0}\in\R_+\mcal B_\tau\cap\Lambda\subset\mcal C\cap\Lambda$.
Thus $\chi'\in R(X,\mcal C\cap\Lambda)\subset\mfrak R$, and we are done.\\[2mm]
{\em Step 5.\/}
Finally, in this step I derive that $R(X;D_1,\dots,D_\ell)$ is finitely generated.

To that end, choose $r\in\Z_{>0}$ such that $rD_i\sim\omega_i$ for $i=1,\dots,\ell$, where $\omega_i=rk_i\sum\nolimits_j\gamma_{ij}F_j$.
Set $\mcal G=\sum_{i=1}^\ell\R_+\omega_i\cap\Lambda$ and note that $\mcal G_\R\subset\mcal C$.
Since $\mfrak R$ is finitely generated by Step 4, the algebra $R(X,\mcal C\cap\Lambda)$ is finitely generated by Lemma \ref{lem:1}(2),
and therefore by Proposition \ref{pro:1} there is a finite rational polyhedral subdivision
$\mcal G_\R=\bigcup_k\mcal G_k$ such that the map $\bMob_{\iota|\mcal G_k\cap\Lambda}$ is additive up to truncation for every $k$,
where $\iota\colon\Lambda\rightarrow\Lambda$ is the identity map.

By Lemma \ref{lem:gordan}, there are finitely many elements $\omega_{\ell+1},\dots,\omega_q\in\mcal G$ that generate $\mcal G$, and denote by
$\pi\colon\bigoplus_{i=1}^q\N\omega_i\rightarrow\mcal G$ the natural projection.
Then the map $\bMob_{\pi|\pi^{-1}(\mcal G_k\cap\Lambda)}$ is additive up to truncation for every $k$,
and thus the algebra $R(X,\pi(\bigoplus_{i=1}^q\N\omega_i))$ is finitely generated by Lemma \ref{lem:1}(3). Since $\bigoplus_{i=1}^\ell\N\omega_i$
is a saturated submonoid of $\bigoplus_{i=1}^q\N\omega_i$, the algebra
$R(X,\pi(\bigoplus_{i=1}^\ell\N\omega_i))\simeq R(X;rD_1,\dots,rD_\ell)$ is finitely generated by Lem\-ma \ref{lem:1}(2),
and finally $R(X;D_1,\dots,D_\ell)$ is finitely generated by Lemma \ref{lem:1}(1).
\end{proof}

Finally, we have:

\begin{proof}[Proof of Theorem \ref{thm:cox}]
Similarly as in Step 1 of the proof of Theorem \ref{thm:3}, by passing to a log resolution $f\colon Y\rightarrow X$ of $(X,\sum\Delta_i)$,
I can assume that $A$ is a general ample $\Q$-divisor and $(X,\Delta_i+A)$ is log smooth for every $i$.

Let $K_X$ be a divisor with $\OO_X(K_X)\simeq\omega_X$ and $\Supp A\not\subset\Supp K_X$, let $V\subset\Div(X)_\R$ be the vector space spanned by
the components of $\sum\Delta_i$, and let $\Lambda\subset\Div(X)$ be the monoid
spanned by the components of $K_X$, $\sum\Delta_i$ and $A$. The set $\mcal C=\sum\R_+D_i\subset\Lambda_\R$ is a rational polyhedral cone.
Similarly as in Step 5 of the proof of Theorem \ref{thm:3} it is enough to prove that the algebra $R(X,\mcal C\cap\Lambda)$ is finitely generated.
By Theorem C the set $\mcal E_{V,A}$ is a rational polytope, and denote $\mcal D=\R_+(K_X+A+\mcal E_{V,A})\cap\mcal C\subset\Lambda_\R$.
Then the algebra $R(X,\mcal C\cap\Lambda)$ is finitely generated if and only if the algebra $R(X,\mcal D\cap\Lambda)$ is finitely generated.
Let $H_1,\dots,H_m$ be generators of the monoid $\mcal D\cap\Lambda$. Then it suffices to prove that the ring $R(X;H_1,\dots,H_m)$
is finitely generated, and this follows from Theorem A.
\end{proof}

\begin{proof}[Proof of Theorem \ref{cor:can}]
By \cite[5.2]{FM00} and by induction on $\dim X$, we may assume $K_X+\Delta$ is big.
Write $K_X+\Delta\sim_\Q A+B$ with $A$ ample and $B$ effective. Let $\varepsilon$ be a small positive rational number and set
$\Delta'=(\Delta+\varepsilon B)+\varepsilon A$. Then $K_X+\Delta'\sim_\Q(\varepsilon+1)(K_X+\Delta)$, thus $R(X,K_X+\Delta)$ and
$R(X,K_X+\Delta')$ have isomorphic truncations, so the result follows from Theorem \ref{thm:cox}.
\end{proof}

\begin{proof}[Proof of Corollary \ref{cor:cor}]
Theorem \ref{cor:can} implies the claim (1) by \cite[3.9]{Fuj09}, and (2) by \cite[1.2(II)]{Rei80}. The claim (3) follows by Theorem \ref{thm:cox}
and \cite[2.9]{HK00}.
\end{proof}

\appendix

\section{History and the alternative}

In this appendix I briefly survey the Minimal Model Program, and then present an alternative approach to the classification
of varieties. There are many works describing Mori theory, and I merely skim through it. My principal goal is to outline a different strategy,
whose philosophy is greatly influenced and advocated by A.~Corti. I do not intend to be exhaustive, but rather to put together results and ideas that
I particularly find important, some of which are scattered throughout the literature or cannot be found in written form.

For many years the guiding philosophy of the Minimal Model Program was to prove finite generation of the canonical ring as a standard consequence of
the theory, namely as a corollary to the existence of minimal models and the Abundance Conjecture. Efforts in this direction culminated in
\cite{BCHM}, which derived the finite generation in the case of klt singularities from the existence of minimal
models for varieties of log general type. However, passing to the case of log canonical singularities, as well as trying to prove the Abundance
Conjecture, although seemingly slight generalisations, seem to be substantially harder problems where different techniques and methods are welcome,
if not needed. The aim of the new approach is to invert the conventional logic of the theory, where finite generation is not at the end, but at the
beginning of the process, and the standard theorems and conjectures of Mori theory are derived as consequences. I hope the results of this paper
give substantial ground to such claims.

There are many contributors to the initial development of Mori theory, Mori, Reid, Kawamata, Shokurov, Koll\'ar, Corti to name a few.
In the MMP one starts with a $\Q$-factorial log canonical pair $(X,\Delta)$, and then constructs a birational map $\varphi\colon X\dashrightarrow Y$
such that the pair $(Y,\varphi_*\Delta)$ has exceptionally nice properties. Namely we expect that in the case of log canonical singularities, there
is the following dichotomy:
\begin{enumerate}
\item if $\kappa(X,K_X+\Delta)\geq0$, then $K_Y+\varphi_*\Delta$ is nef ($Y$ is a {\em minimal model\/}),
\item if $\kappa(X,K_X+\Delta)=-\infty$, then there is a contraction $Y\rightarrow Z$ such that $\dim Z<\dim Y$ and $-(K_Y+\varphi_*\Delta)$ is
ample over $Z$ ($Y$ is a {\em Mori fibre space\/}).
\end{enumerate}
If $Y$ is a Mori fibre space, then it is known that $\kappa(X,K_X+\Delta)=-\infty$ and $X$ is uniruled.
The reverse implication is much harder to prove. The greatest contributions in that direction are \cite{BDPP},
which proves that if $X$ is smooth and $K_X$ is not pseudo-effective, then $X$ is uniruled, and \cite{BCHM}, which proves
that if $K_X+\Delta$ is klt and not pseudo-effective, then there is a map to $Y$ as in (2) above.

The classical strategy is as follows: if $K_X+\Delta$ is not nef, then by the Cone theorem (known for log canonical pairs by the work of Ambro and Fujino,
see \cite{Amb03}) there is a $(K_X+\Delta)$-negative extremal ray $R$ of $\overline{\NE}(X)$,
and by the contraction theorem there is a morphism $\pi\colon X\rightarrow W$ which contracts curves whose classes belong to $R$, and only them.
If $\dim W<\dim X$, then we are done. Otherwise $\pi$ is birational, and there are two cases. If $\codim_X\Exc\pi=1$, then $\pi$ is a {\em divisorial
contraction\/}, $W$ is $\Q$-factorial and $\rho(X/W)=1$, and we continue the process starting from the pair $(W,\pi_*\Delta)$. If $\codim_X\Exc\pi\geq2$,
then $\pi$ is a {\em flipping contraction\/}, $\rho(X/W)=1$, but $K_W+\pi_*\Delta$ is no longer $\Q$-Cartier. In order to proceed,
one needs to construct the {\em flip\/} of $\pi$, namely a birational map $\pi^+\colon X^+\rightarrow W$ such that $X^+$ is $\Q$-factorial, $\rho(X^+/W)=1$ and
$K_{X^+}+\phi_*\Delta$ is ample over $W$, where $\phi\colon X\dashrightarrow X^+$ is the birational map which completes the diagram. Continuing the
procedure, one hopes that it ends in finitely many steps.

Therefore there are two conjectures that immediately arise in the theory: existence and termination of flips. Existence of the flip of a flipping
contraction $\pi\colon X\rightarrow W$ is known to be equivalent to the finite generation of the {\em relative canonical algebra\/}
$$R(X/W,K_X+\Delta)=\bigoplus_{m\in\N}\pi_*\OO_X(\lfloor m(K_X+\Delta)\rfloor),$$
and the flip is then given by $X^+=\Proj_WR(X/W,K_X+\Delta)$. The termination of flips is related to conjectures about the behaviour of the coefficients
in the divisor $\Delta$, but I do not discuss it here.

Since the paper \cite{Zar62}, one of the central questions in higher dimensional birational geometry is the following:

\begin{con}\label{con:1}
Let $(X,\Delta)$ be a projective log canonical pair. Then the canonical ring $R(X,K_X+\Delta)$ is finitely generated.
\end{con}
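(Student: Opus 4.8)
The plan is to bootstrap the klt result, Theorem \ref{cor:can}, to the log canonical case by induction on dimension and on the number of components of $\lfloor\Delta\rfloor$, using the higher-rank algebra philosophy of \S\ref{proofmain} to control the kernel of restriction to log canonical centres. First I would put the pair in normal form: passing to a dlt modification $(Y,\Delta_Y)\to(X,\Delta)$, which is crepant, replaces $R(X,K_X+\Delta)$ by the isomorphic ring $R(Y,K_Y+\Delta_Y)$, so I may assume $(X,\Delta)$ is $\Q$-factorial dlt and, after a further resolution (reading ``log smooth'' as snc near the relevant strata), that $S:=\lfloor\Delta\rfloor=\sum_{i=1}^k S_i$ is reduced snc. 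Then, running a Fujino--Mori canonical bundle formula argument for lc-trivial fibrations along the Iitaka fibration of $K_X+\Delta$, together with induction on dimension, I would reduce to the case where $K_X+\Delta$ is big; when $k=0$ this is precisely Theorem \ref{cor:can}. The new content is therefore: $(X,\Delta)$ dlt, $K_X+\Delta$ big, $S\ne 0$.

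Now I would imitate the construction of \S\ref{proofmain}. Choosing an effective $D\sim_\Q K_X+\Delta$ (possible since $K_X+\Delta$ is big), let $F_1,\dots,F_N$ be the components of $\Delta$ and of $D$, let $\sigma_j\in H^0(X,F_j)$ be the defining sections, and set $\Lambda=\bigoplus_j\N F_j$. Inside $R(X;F_1,\dots,F_N)$ one forms the subalgebra $\mathfrak R$ generated by the adjoint cone $R(X,\mcal C\cap\Lambda)$ — with $\mcal C$ chosen, exactly as in Step 2 of the proof of Theorem \ref{thm:3}, so that deforming coefficients realises, for each $j$, adjoint divisors in which $F_j$ has coefficient $1$ — together with $\sigma_1,\dots,\sigma_N$; then $R(X,K_X+\Delta)$ is recovered from $\mathfrak R$. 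Writing $\mcal C=\bigcup_j\mcal C_j$ where $\mcal C_j$ collects the divisors with $\mult_{F_j}=1$, the restriction $\res_{F_j}$ kills exactly the ideal generated by $\sigma_j$, so by the bookkeeping of Steps 4--5 of Theorem \ref{thm:3} finite generation of $\mathfrak R$ follows once each restricted algebra $\res_{F_j}R(X,\mcal C_j\cap\Lambda)$ is finitely generated.

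For this last point $F_j$ is now a genuine log canonical centre: adjunction gives $(K_X+\Delta_j)|_{F_j}=K_{F_j}+\Delta_{F_j}$ with $(F_j,\Delta_{F_j})$ dlt of dimension $n-1$, whose canonical ring is finitely generated by the inductive hypothesis. What one needs is the dlt analogue of Theorem \ref{thm:2}, namely that the restricted higher-rank adjoint algebra is finitely generated; the proof would follow the architecture of \S\ref{plt} — a Hacon--M\textsuperscript{c}Kernan-type extension theorem producing the isomorphism of the restricted algebra with a $(n-1)$-dimensional adjoint ring on $F_j$, a $\sigma$-decomposition analysis, and the piecewise-linearity machinery of \S\ref{sec:convex} — and would close the double induction.

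The main obstacle is exactly this dlt analogue of \S\ref{plt}. The Hacon--M\textsuperscript{c}Kernan extension theorem \ref{thm:hmck}, on which the whole of \S\ref{plt} rests, applies only to \emph{plt} pairs with an ample summand $A$ built into the boundary, because its proof uses Nadel vanishing and multiplier ideal lifting, which require the pair to be klt away from the divisor $S$. In the present situation $K_X+\Delta$ carries no ample summand and $(X,\Delta-F_j)$ may still possess log canonical centres, so one needs a genuinely stronger extension/lifting statement for dlt pairs with no ample perturbation. Controlling such a theorem amounts to abundance-type semipositivity for the moduli part of lc-trivial fibrations (Ambro--Fujino semipositivity, Kawamata subadjunction), and — together with the non-vanishing input that is needed when $\kappa(K_X+\Delta)=0$ — this is where the real difficulty lies. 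This is consistent with the author's closing remark that adapting the methods of this paper to the case of log canonical singularities and to the Abundance Conjecture remains open.
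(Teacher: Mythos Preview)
The statement you are addressing is labelled \textbf{Conjecture}~\ref{con:1} in the paper, and the paper does not prove it. The author proves only the klt case (Theorem~\ref{cor:can}) and explicitly says in the introduction that the log canonical case, together with Abundance, is a problem he hopes the techniques of the paper might eventually be adapted to handle. So there is no ``paper's own proof'' against which to compare your attempt.

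Your proposal is not a proof but a strategy sketch, and you are honest about this: you correctly isolate the point where the argument breaks. The reduction to the big dlt case via dlt modification and a canonical-bundle-formula/Iitaka-fibration argument is the standard opening move, and the idea of embedding $R(X,K_X+\Delta)$ into a higher-rank adjoint algebra and restricting to components of $\lfloor\Delta\rfloor$ is exactly the philosophy of \S\ref{proofmain}. But, as you say yourself, the entire restriction machinery of \S\ref{plt} rests on Theorem~\ref{thm:hmck} and Proposition~\ref{pro:2}, both of which require a plt pair with an honest ample summand $A$ in the boundary so that $(X,\Delta-S)$ is klt and Nadel vanishing/multiplier-ideal lifting applies. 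In the genuine log canonical setting there is no such $A$ to absorb, $(X,\Delta-F_j)$ may still have lc centres meeting $F_j$, and no extension theorem of the required strength is available. This is not a technical detail one can patch: it is the well-known obstruction that makes the lc case qualitatively harder than the klt case, and it is tied up with abundance-type statements, exactly as you note in your final paragraph.

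In short: your diagnosis of the obstacle is accurate, and it is the same obstacle the author flags; but the proposal does not surmount it, and neither does the paper.
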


Finite generation implies existence of flips \cite[3.9]{Fuj09}; moreover, one only needs to assume finite generation for pairs $(X,\Delta)$
with $K_X+\Delta$ big.

The proof of the finite generation in the case of klt singularities along the lines of the classical philosophy in \cite{BCHM}
is as follows: by \cite[5.2]{FM00} one can assume that $K_X+\Delta$ is big. Then by applying
carefully chosen flipping contractions, one proves that the corresponding flips exist and terminate ({\em termination with scaling\/}),
and since the process preserves the canonical ring, the finite generation follows from the basepoint free theorem.

Now consider a flipping contraction $\pi\colon(X,\Delta)\rightarrow W$ with additional properties that $(X,\Delta)$ is a plt pair such that
$S=\lfloor\Delta\rfloor$ is an irreducible divisor which is negative over $Z$. This contraction is called {\em pl flipping\/}, and the corresponding
flip is the pl flip. Following the work of Shokurov, one of the steps in the proof in \cite{BCHM} is showing that pl flips exist, and the starting point
is Lemma \ref{lem:restricted} below. Note that in the context of pl flips, the issues which occur in the problem of global finite generation outlined
in the introduction to this paper do not exist. I give a slightly modified proof of the lemma below than the one present elsewhere in the literature
in order to stress the following point: I do not {\em calculate\/} the kernel
of the restriction map, but rather {\em chase\/} the generators. This reflects the basic principle: if our algebra is large enough so that it contains
the equation of the divisor we are restricting to, then it is automatically finitely generated assuming the restriction to the divisor is.
This is one of the main ideas guiding the proof in \S\ref{proofmain}.

\begin{lem}\label{lem:restricted}
Let $(X,\Delta)$ be a plt pair of dimension $n$, where $S=\lfloor\Delta\rfloor$ is a prime divisor, and let
$f\colon X\rightarrow Z$ be a pl flipping contraction with $Z$ affine. Then
$R(X/Z,K_X+\Delta)$ is finitely generated if and only if $\res_SR(X/Z,K_X+\Delta)$ is finitely generated.
\end{lem}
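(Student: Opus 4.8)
The plan is to prove the two implications separately, with the forward direction being essentially trivial and the reverse direction being the substantive one. For the forward implication, suppose $R(X/Z,K_X+\Delta)$ is finitely generated. Since $Z$ is affine, after truncating we may regard this as a finitely generated $\C$-algebra, and the restriction map $\rho\colon R(X/Z,K_X+\Delta)\to\res_SR(X/Z,K_X+\Delta)$ is a surjective ring homomorphism by the very definition of the restricted algebra in Remark \ref{rem:1}. A quotient of a finitely generated algebra is finitely generated, so $\res_SR(X/Z,K_X+\Delta)$ is finitely generated.

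For the reverse implication, the strategy is the ``chase the generators'' principle advertised in the appendix, rather than computing the kernel of $\rho$. Suppose $\res_SR(X/Z,K_X+\Delta)$ is finitely generated. The key point is that $S=\lfloor\Delta\rfloor$ appears in $\Delta$ with coefficient $1$, so the section $\sigma_S\in H^0(X,\OO_X(S))$ cutting out $S$ lives inside the algebra (after clearing denominators so that all the $\OO_X(\lfloor m(K_X+\Delta)\rfloor)$ are taken along a fixed truncation making $m(K_X+\Delta)$ Cartier and $S$ a component of it). By \eqref{eq:1} in Remark \ref{rem:1}, the kernel of $\rho$ in each graded piece $H^0(X,\OO_X(\lfloor m(K_X+\Delta)\rfloor))$ is exactly $H^0(X,\OO_X(\lfloor m(K_X+\Delta)\rfloor - S))\cdot\sigma_S$. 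First I would pass to a truncation $R^{(\kappa)}$ — permissible by Lemma \ref{lem:1}(1), which reduces finite generation of $R$ to that of $R^{(\kappa)}$, and likewise on $S$ — so that $\kappa(K_X+\Delta)$ is Cartier, $S\le\kappa(K_X+\Delta)$, and $\res_SR^{(\kappa)}$ is still finitely generated (a truncation of a finitely generated graded algebra, hence finitely generated by Lemma \ref{lem:1}(1) again in the other direction, or directly). Then I would choose finitely many homogeneous generators $\bar g_1,\dots,\bar g_r$ of $\res_SR^{(\kappa)}$, lift each to $g_i\in R^{(\kappa)}$, and claim that $\sigma_S$ together with $g_1,\dots,g_r$ generate $R^{(\kappa)}$.

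To verify the claim I would argue by induction on the degree $m$ of a homogeneous element $\chi\in H^0(X,\OO_X(m\kappa(K_X+\Delta)))$. The base case $m=0$ is trivial since that piece is $H^0(\OO_X)=\C$ ($Z$ affine, or a suitable ground field). For the inductive step, $\rho(\chi)=\bar\chi$ lies in $\res_SR^{(\kappa)}$, so $\bar\chi=\varphi(\bar g_1,\dots,\bar g_r)$ for some polynomial $\varphi$; then $\chi-\varphi(g_1,\dots,g_r)$ maps to zero under $\rho$, so by \eqref{eq:1} it equals $\sigma_S\cdot\chi'$ for some $\chi'\in H^0(X,\OO_X(m\kappa(K_X+\Delta)-S))$. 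The remaining point is that $\chi'$ itself lies in $R^{(\kappa)}$ (up to the identifications of algebras of rational functions): since $S\le\kappa(K_X+\Delta)$ we have $m\kappa(K_X+\Delta)-S\ge (m-1)\kappa(K_X+\Delta)$, so $H^0(X,\OO_X(m\kappa(K_X+\Delta)-S))\subseteq H^0(X,\OO_X((m-1)\kappa(K_X+\Delta)))$ via multiplication, placing $\chi'$ in degree $m-1$ of $R^{(\kappa)}$; by induction $\chi'$ is a polynomial in $\sigma_S,g_1,\dots,g_r$, hence so is $\chi$. This closes the induction and shows $R^{(\kappa)}$, and therefore $R(X/Z,K_X+\Delta)$, is finitely generated.

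The main obstacle — really the only delicate point — is the bookkeeping around truncation and the Cartier/integrality hypotheses needed to make $\sigma_S$ an honest element of the algebra and to make the inequality $m\kappa(K_X+\Delta)-S\ge(m-1)\kappa(K_X+\Delta)$ hold with the correct rounding of coefficients; one must check that $\lfloor m\kappa(K_X+\Delta)\rfloor - S \ge \lfloor(m-1)\kappa(K_X+\Delta)\rfloor$ once $\kappa(K_X+\Delta)$ is integral and $S$ is a component of $\kappa(K_X+\Delta)$, which is where plt-ness (so that $S$ occurs in $\lfloor\Delta\rfloor$ with coefficient exactly $1$) is used. Everything else is formal, and the argument is a clean finite-dimensional shadow of the much harder higher-rank chase carried out in \S\ref{proofmain}.
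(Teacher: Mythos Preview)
Your inductive step has a real gap: the inclusion
\[
H^0\big(X,\OO_X(m\kappa(K_X+\Delta)-S)\big)\subseteq H^0\big(X,\OO_X((m-1)\kappa(K_X+\Delta))\big)
\]
that you invoke goes the wrong way. If $\kappa(K_X+\Delta)-S\geq0$ as divisors, then multiplication by a section of this effective divisor gives an injection $H^0\big((m-1)\kappa(K_X+\Delta)\big)\hookrightarrow H^0\big(m\kappa(K_X+\Delta)-S\big)$, not a surjection. So there is no reason your $\chi'\in H^0\big(m\kappa(K_X+\Delta)-S\big)$ lands back in a graded piece of $R^{(\kappa)}$, and the induction does not close. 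Relatedly, your claim that $\sigma_S$ ``lives inside the algebra'' after truncation is not justified: $\sigma_S\in H^0(X,S)$, and $S$ is not a graded piece of $R(X,K_X+\Delta)$.

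What you are missing is exactly the structural input from the pl flipping contraction that the paper uses and you never invoke: since $\rho(X/Z)=1$ and both $S$ and $K_X+\Delta$ are $f$-negative, one has $S\sim_\Q r(K_X+\Delta)$ for some $r\in\Q_{>0}$ (using that numerical and linear equivalence over $Z$ coincide). Hence $R(X,K_X+\Delta)$ and $R(X,S)$ share a truncation, and it suffices to show $R(X,S)$ is finitely generated. In $R(X,S)$ the element $\sigma_S$ genuinely sits in degree $1$, and the kernel relation $\ker\rho_{kS,S}=H^0\big((k-1)S\big)\cdot\sigma_S$ sends you exactly to the previous graded piece, so the degree induction works cleanly. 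Your write-up uses only that $\mult_S\Delta=1$; this is not enough, and without the proportionality $S\sim_\Q r(K_X+\Delta)$ the argument cannot be repaired.
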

\begin{proof}
We will concentrate on sufficiency, since necessity is obvious.

Numerical and linear equivalence over $Z$ coincide by the basepoint free theorem. Since $\rho(X/Z)=1$, and both $S$ and $K_X+\Delta$ are $f$-negative,
there exists a positive rational number $r$ such that $S\sim_{\Q,f}r(K_X+\Delta)$. By considering an open cover of $Z$ we can assume that
$S-r(K_X+\Delta)$ is $\Q$-linearly equivalent to a pullback of a principal divisor.

Therefore $S\sim_\Q r(K_X+\Delta)$, and since then $R(X,S)$ and $R(X,K_X+\Delta)$ have isomorphic truncations, it is enough to prove that
$R(X,S)$ is finitely generated. As a truncation of $\res_S R(X,S)$ is isomorphic to a truncation of
$\res_SR(X,K_X+\Delta)$, we have that $\res_S R(X,S)$ is finitely generated. Let $\sigma_S\in H^0(X,S)$ be a section such that $\ddiv\sigma_S=S$ and let
$\mcal H$ be a finite set of generators of the finite dimensional vector space $\bigoplus_{i=1}^d\res_S H^0(X,iS)$, for some $d$, such that
the set $\{h_{|S}:h\in\mcal H\}$ generates $\res_S R(X,S)$. Then it is easy to see that $\mcal H\cup\{\sigma_S\}$ is a set of generators of $R(X,S)$,
since $\ker(\rho_{kS,S})=H^0(X,(k-1)S)\cdot\sigma_S$ for all $k$, in the notation of Remark \ref{rem:1}.
\end{proof}

One of the crucial unsolved problems in higher dimensional geometry is the following Abundance Conjecture.

\begin{con}
Let $(X,\Delta)$ be a projective log canonical pair such that $K_X+\Delta$ is nef. Then $K_X+\Delta$ is semiample.
\end{con}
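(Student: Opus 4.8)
The Abundance Conjecture is open in general, so what follows is a research plan rather than a proof; I indicate where each input would come from and where the argument genuinely stalls. The plan is to run the classical two-step reduction of abundance and to feed in the finite generation theorems of this paper wherever they apply. \emph{First, reductions.} Replacing $(X,\Delta)$ by $(Y,\B(X,\Delta)_Y)$ for a log resolution $Y\to X$ reduces to the log smooth case; then, using the Iitaka-fibration reduction of \cite[5.2]{FM00} and induction on $\dim X$, one is left with two extreme situations: (a) $K_X+\Delta$ is big, and (b) the non-vanishing problem, namely that a nef log canonical $K_X+\Delta$ has $\kappa(X,K_X+\Delta)\geq0$. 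The intermediate range $0\leq\kappa(X,K_X+\Delta)<\dim X$ is absorbed into these via the canonical bundle formula: after a birational modification one has $K_X+\Delta\sim_\Q g^*(K_Z+\Delta_Z+M_Z)$ for the Iitaka fibration $g\colon X\to Z$, with $(Z,\Delta_Z)$ log canonical of dimension $\dim Z<\dim X$ and $M_Z$ a nef moduli $\Q$-divisor, and one applies the conjecture inductively on $Z$ — which requires first running a minimal model program on $Z$, hence existence of flips (Corollary \ref{cor:cor}(1)) and their termination, as well as the expected positivity of $M_Z$.

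\emph{Using finite generation.} In case (a) with $(X,\Delta)$ klt, finite generation of the log canonical ring is Theorem \ref{cor:can}; the canonical model $\Proj R(X,K_X+\Delta)$ then exists, and since $K_X+\Delta$ is nef the associated rational map is a morphism, so $K_X+\Delta$ is semiample. For log canonical $(X,\Delta)$ one would instead invoke Conjecture \ref{con:1}, whose proof by the methods of this paper is precisely what the appendix proposes. The convex-geometric tools are also available: once a single effective representative of $K_X+\Delta$ is found, one can create a log canonical centre by a tie-breaking argument, restrict to it, and use Theorems B and C together with the extension results of \S\ref{plt} to propagate semiampleness, inducting on dimension exactly as in \S\ref{plt} and \S\ref{proofmain}.

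\emph{The main obstacle.} Everything therefore hinges on case (b): producing an effective $\R$-divisor numerically equivalent to a \emph{nef} $K_X+\Delta$. The Diophantine approximation and Hacon-M\textsuperscript{c}Kernan extension techniques of this paper deliver effectivity for adjoint classes $K_X+\Delta+A$ with $A$ ample (this is Steps 1--2 of the proof of Theorem \ref{cor:linear}), but letting $A$ degenerate to $0$ while retaining a section is exactly what is missing — finite generation yields no section when $\kappa$ is a priori $-\infty$. Thus the plan reduces abundance, granted the log canonical analogue of Theorem \ref{cor:can} and termination of flips, to two statements that remain genuinely open in dimension $\geq4$: non-vanishing for nef log canonical pairs, and the abundance equality $\kappa(X,K_X+\Delta)=\nu(X,K_X+\Delta)$. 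Closing that gap — presumably by a refinement of the non-vanishing argument of \S\ref{sec:non-vanishing} that survives the limit $A\downarrow0$ — is the hard part.
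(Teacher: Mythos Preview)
You are right that this is a conjecture, not a theorem: the paper states it as ``one of the crucial unsolved problems'' and offers no proof. So there is nothing to compare at the level of a complete argument, and your honesty on this point is appropriate.

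That said, the paper's appendix does sketch two reduction strategies, and yours is a third, largely independent, one. The first approach the paper records is Kawamata's: reduce abundance to the implication $\nu(X,K_X+\Delta)>0\Rightarrow\kappa(X,K_X+\Delta)>0$ in all dimensions up to $n$, then use \cite{Kaw85,Dru09}. The second, attributed to Hacon and M\textsuperscript{c}Kernan, is a clean one-step reduction you do not mention: given a klt pair $(Y,\Phi)$ of dimension $n$ with $K_Y+\Phi$ nef, embed $Y$ projectively normally, take the projective cone $X$, and build a log canonical pair $(X,\Upsilon)$ of dimension $n+1$ with $K_X+\Upsilon$ nef and big whose restriction to the exceptional divisor recovers $K_Y+\Phi$. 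Finite generation of $R(X,K_X+\Upsilon)$ then gives semiampleness of $K_X+\Upsilon$ (since it is nef and big), hence of its restriction. This bypasses Iitaka fibrations, canonical bundle formulas, termination, and non-vanishing entirely, at the price of needing Conjecture~\ref{con:1} in one dimension higher for \emph{log canonical} pairs with $K_X+\Delta$ nef and big.

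Your plan is the more classical Iitaka-fibration route. It is coherent as a wish list, but note that several of your inputs are themselves open and not obviously easier than abundance: termination of flips in arbitrary dimension, the b-semiampleness of the moduli part $M_Z$ in the canonical bundle formula, and --- as you correctly isolate --- non-vanishing without the ample perturbation $A$. By contrast, the cone construction in the appendix trades all of these for a single finite-generation statement of exactly the shape this paper is designed to attack. That is the reduction worth knowing here.
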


Until the end of the appendix I discuss this conjecture more thoroughly. There are, to my knowledge, two different ways to approach this problem.

The first approach is close to the classical strategy, and goes back to \cite{Kaw85}. First let us recall the following definition from \cite{Nak04};
the corresponding analytic version can be found in \cite{Pau08}.

\begin{dfn}\label{dfn:2}
Let $X$ be a projective variety. For $D\in\overline{\bigcone(X)}$ denote
$$\sigma(D,A)=\sup\big\{k\in\N:\liminf_{m\rightarrow\infty}\textstyle\frac{1}{m^k}h^0(X,\lfloor mD\rfloor+A)>0\big\}.$$
Then the {\em numerical dimension\/} of $D$ is
$$\nu(X,D)=\sup\{\sigma(D,A):A\textrm{ is ample}\}.$$
\end{dfn}

We know that $\nu(X,D)=0$ if and only if $D\equiv N_\sigma\|D\|$, and that $\nu(X,D)$ is the standard numerical dimension when $D$ is nef by
\cite[6.2.8]{Nak04}. It is well known that abundance holds when $\nu(X,K_X+\Delta)$ is equal to $0$ or $\dim X$ by \cite[8.2]{Kaw85b},
and when $\nu(X,K_X+\Delta)=\kappa(X,K_X+\Delta)$ by \cite[6.1]{Kaw85}, cf.\ \cite{Fuj09b}.

\begin{thm}
Let $(X,\Delta)$ be a projective klt pair of dimension $n$ such that $K_X+\Delta$ is nef. Assume that $\nu(Y,K_Y+\Delta_Y)>0$ implies
$\kappa(Y,K_Y+\Delta_Y)>0$ for any klt pair $(Y,\Delta_Y)$ of dimension at most $n$. Then $K_X+\Delta$ is semiample.
\end{thm}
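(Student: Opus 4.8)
The plan is to reduce the assertion to the two instances of abundance already recorded in the excerpt --- the case $\nu=0$ and the case $\nu=\kappa$ --- by proving that, under the stated non-vanishing hypothesis, one always has $\kappa(X,K_X+\Delta)=\nu(X,K_X+\Delta)$; Kawamata's theorem \cite[6.1]{Kaw85} then yields semiampleness of $K_X+\Delta$. Since $\kappa\le\nu$ holds unconditionally, it is enough to bound $\nu$ from above by $\kappa$. First I would dispose of the degenerate case: if $\nu(X,K_X+\Delta)=0$, then since $K_X+\Delta$ is nef its negative part $N_\sigma\|K_X+\Delta\|$ vanishes, so $K_X+\Delta\equiv 0$, and $K_X+\Delta$ is semiample (indeed $\Q$-torsion) by \cite[8.2]{Kaw85b}. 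So I may assume $\nu(X,K_X+\Delta)>0$, and applying the hypothesis to $(X,\Delta)$ itself gives $\kappa:=\kappa(X,K_X+\Delta)>0$; moreover if $\kappa=n$ then $\kappa=\nu=n$ automatically, so it remains to treat $0<\kappa<n$.

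Next I would run the Iitaka fibration. Choose a log resolution $\mu\colon X'\to X$ on which the Iitaka fibration $f\colon X'\to Y$ of $K_X+\Delta$ is a morphism, with $\dim Y=\kappa$, and write $K_{X'}+\Gamma=\mu^*(K_X+\Delta)+E$ with $(X',\Gamma)$ klt and $E\ge 0$ being $\mu$-exceptional (possible as $(X,\Delta)$ is klt). For a general fibre $F$ the pair $(F,\Gamma_F)$ with $\Gamma_F=\Gamma|_F$ is klt of dimension $n-\kappa<n$, and by the defining property of the Iitaka fibration $\kappa(F,K_F+\Gamma_F)=0$. Applying the non-vanishing hypothesis in dimension $n-\kappa<n$ in contrapositive form gives $\nu(F,K_F+\Gamma_F)=0$, hence $K_F+\Gamma_F\sim_\Q 0$ by the $\nu=0$ case \cite[8.2]{Kaw85b}. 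Thus $K_{X'}+\Gamma$ is $\Q$-linearly trivial on the general fibre of $f$, and I would invoke the canonical bundle formula \cite{FM00} to write $K_{X'}+\Gamma\sim_\Q f^*(K_Y+B_Y+M_Y)$ with $(Y,B_Y)$ klt and $M_Y$ a nef $\Q$-divisor.

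Finally I would compare numerical dimensions. Because $E$ is $\mu$-exceptional and $K_X+\Delta$ is nef, Nakayama's description of $N_\sigma$ under birational pullback \cite{Nak04} shows that the positive parts of $\mu^*(K_X+\Delta)+E$ and of $\mu^*(K_X+\Delta)$ coincide, so $\nu(X',K_{X'}+\Gamma)=\nu(X',\mu^*(K_X+\Delta)+E)=\nu(X,K_X+\Delta)$; and since pulling back along the surjective fibration $f$ leaves $\nu$ unchanged, $\nu(X,K_X+\Delta)=\nu(Y,K_Y+B_Y+M_Y)\le\dim Y=\kappa$. Together with $\kappa\le\nu$ this forces $\kappa(X,K_X+\Delta)=\nu(X,K_X+\Delta)$, and \cite[6.1]{Kaw85} then gives that $K_X+\Delta$ is semiample, as desired.

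The step I expect to be the main obstacle is the clean application of the canonical bundle formula: one must arrange the model $X'$, the klt boundary $\Gamma$ and the fibration $f$ so that \cite{FM00} literally applies --- in particular passing to a sufficiently high model so that $K_{X'}+\Gamma$ is $\Q$-linearly, not merely numerically, trivial over $Y$ (this is exactly where the abundance statement on the fibre, obtained above, is used), and so that the discriminant $B_Y$ is effective with $(Y,B_Y)$ klt. The remaining ingredients --- the invariance of $\nu$ under exceptional blow-ups of a nef divisor and under pullback along a fibration, and the final reduction to the case $\kappa=\nu$ --- are routine given \cite{Nak04} and \cite[6.1]{Kaw85}.
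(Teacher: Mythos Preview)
Your overall strategy --- reduce to $\kappa=\nu$ by running the Iitaka fibration, use the hypothesis to force $\nu=0$ on a general fibre, and then feed this into Kawamata's machinery --- is exactly the route the paper takes by deferring to \cite[7.3]{Kaw85}. However, there is a genuine gap in your argument at the point where you conclude $K_F+\Gamma_F\sim_\Q 0$ on the general fibre.

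You write $K_{X'}+\Gamma=\mu^*(K_X+\Delta)+E$ with $E\geq0$ $\mu$-exceptional, and restrict to a general fibre $F$ of $f$. The divisor $\mu^*(K_X+\Delta)|_F$ is nef, but $E|_F$ need not vanish (the fibre $F$ has no reason to avoid the exceptional locus of $\mu$), so $K_F+\Gamma_F$ is in general only \emph{nef plus effective}, not nef. The result \cite[8.2]{Kaw85b} you invoke requires nefness; without it, $\nu(F,K_F+\Gamma_F)=0$ only gives $K_F+\Gamma_F\equiv N_\sigma\|K_F+\Gamma_F\|$, which combined with $\kappa=0$ yields that the unique effective member of $|m(K_F+\Gamma_F)|$ equals $mN_\sigma$, not that it is zero. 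You cannot pass to the sub-klt divisor $\mu^*(K_X+\Delta)|_F$ either, since the hypothesis of the theorem is stated only for klt pairs with effective boundary.

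This is precisely the point the paper isolates: from $\nu=0$ one needs the existence of a minimal model of the lower-dimensional klt pair, and for this the paper invokes Druel's result \cite[3.4]{Dru09} (minimal models exist for klt pairs with $D\equiv N_\sigma\|D\|$). On that minimal model the log canonical divisor is nef and numerically trivial, hence $\Q$-linearly trivial, and this is the actual input to Kawamata's inductive argument \cite[7.3]{Kaw85}. Once you insert this step, your outline becomes correct; the obstacle you flagged (arranging the canonical bundle formula) is then essentially what \cite[7.3]{Kaw85} handles.
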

\begin{proof}
Let $(S,\Delta_S)$ be a $\Q$-factorial $(n-1)$-dimensional klt pair with $\kappa(S,K_S+\Delta_S)=0$. Then $\nu(S,K_S+\Delta_S)=0$ by
the assumption in dimension $n-1$, and thus $K_S+\Delta_S\equiv N_\sigma\|K_S+\Delta_S\|$. By \cite[3.4]{Dru09} a minimal model
of $(S,\Delta_S)$ exists. Now the result follows along the lines of \cite[7.3]{Kaw85}.
\end{proof}
The assumption in the theorem can be seen as a stronger version of non-vanishing.

Now I present a different approach, where one derives abundance from the finite generation. It is a result of J.~M\textsuperscript{c}Kernan and
C.~Hacon, and I am grateful to them for allowing me to include it here.

\begin{thm}
Assume that for every $(n+1)$-dimensional projective log canonical pair $(X,\Delta)$ with $K_X+\Delta$ nef and big, the canonical ring
$R(X,K_X+\Delta)$ is finitely generated. Then abundance holds for klt pairs in dimension $n$.
\end{thm}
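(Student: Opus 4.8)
The plan is to bootstrap from dimension $n+1$ down to dimension $n$, in the spirit of Hacon and M\textsuperscript{c}Kernan. First I would make the standard reductions: replacing $X$ by a log resolution, I may assume $(X,\Delta)$ is log smooth and klt with $K_X+\Delta$ nef. By Theorem \ref{cor:can} the ring $R(X,K_X+\Delta)$ is already finitely generated, so by the theorem of Kawamata recalled above (abundance holds when $\nu=\kappa$) it is enough to prove the equality $\kappa(X,K_X+\Delta)=\nu(X,K_X+\Delta)$; since $\kappa\le\nu$ always, I only need $\nu\le\kappa$. Passing to the Iitaka fibration of $K_X+\Delta$ and applying the canonical bundle formula, I would further reduce to the case $\kappa(X,K_X+\Delta)=0$, where the task is to show that $K_X+\Delta$ is numerically trivial; in this case we may fix an effective $\mathbb{Q}$-divisor $D\sim_{\mathbb{Q}}K_X+\Delta$, and the goal becomes $D\equiv 0$.

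The core of the argument is a dimension-raising construction: from $(X,\Delta)$, the divisor $D$, and an auxiliary ample divisor one builds a projective log canonical pair $(Y,\Gamma)$ with $\dim Y=n+1$ such that $K_Y+\Gamma$ is nef and big. That $(Y,\Gamma)$ is only log canonical, and not klt, is precisely why the hypothesis is formulated for log canonical pairs. The hypothesis then gives that $R(Y,K_Y+\Gamma)$ is finitely generated; since a nef and big divisor has empty diminished base locus, on a log resolution the stable fixed part of $|m(K_Y+\Gamma)|$ is exceptional over $Y$, and the negativity lemma forces it to vanish, so $K_Y+\Gamma$ is semiample. Transporting this semiampleness back to $X$ through the geometry relating $Y$ and $X$ should translate into the vanishing in codimension one of the part of $D$ not detected by $\kappa(X,K_X+\Delta)$; combined with $\kappa=0$ this yields $D\equiv 0$, hence $K_X+\Delta\equiv 0$.

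The step I expect to be the main obstacle is the construction of $(Y,\Gamma)$ itself, together with the descent of the conclusion from $Y$ to $X$. One cannot simply take $K_X+\Delta$ to be the restriction of $K_Y+\Gamma$ to an $n$-dimensional subvariety of $Y$: a nef and big class in dimension $n+1$ restricting to $K_X+\Delta$ would force $K_X+\Delta$ to be big, which would make the statement trivial via the basepoint-free theorem. Thus $(Y,\Gamma)$ must be tied to $(X,K_X+\Delta)$ only indirectly — one must simultaneously keep $(Y,\Gamma)$ log canonical, ensure $K_Y+\Gamma$ is nef and big (which is what forces the auxiliary ample twist into $\Gamma$), and retain enough of the geometry of $X$ inside $Y$ that finite generation upstairs propagates to numerical triviality of $D$ downstairs. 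Granting the $\kappa=0$ case, the reduction via the Iitaka fibration together with Kawamata's theorem give $\kappa=\nu$ in general, and hence abundance for klt pairs in dimension $n$.
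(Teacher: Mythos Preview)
Your proposal contains a genuine misconception that blocks you from finding the actual argument. You write that ``a nef and big class in dimension $n+1$ restricting to $K_X+\Delta$ would force $K_X+\Delta$ to be big.'' This is false: a nef and big divisor on $Y$ restricted to a subvariety is nef, but need not be big at all---think of the pullback of an ample divisor along a birational morphism, restricted to the exceptional locus. Because of this error you conclude that the link between $(Y,\Gamma)$ and $(X,\Delta)$ must be ``indirect,'' and you then leave the construction of $(Y,\Gamma)$ and the descent step as an unspecified ``main obstacle.'' As written, the proposal is not a proof: the heart of the matter is precisely the construction you do not supply.

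The paper's argument is in fact the very one you dismiss. One realises $X$ as a \emph{divisor} in an $(n+1)$-fold: take $Y=\PP(\OO_X\oplus\OO_X(1))$ for a projectively normal embedding of $X$, with exceptional section $E\simeq X$, and set $\Gamma=E+\Delta+H$ where $H$ is the proper transform of a sufficiently ample hyperplane missing the cone point. Then $(Y,\Gamma)$ is log canonical (inversion of adjunction, with $E$ the unique log canonical centre), $K_Y+\Gamma$ is nef and big, and crucially $(K_Y+\Gamma)_{|E}=K_X+\Delta$ because $H$ is disjoint from $E$. The hypothesis gives $R(Y,K_Y+\Gamma)$ finitely generated; nef, big and finitely generated implies semiample; and the restriction of a semiample divisor is semiample. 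No Iitaka fibration, no reduction to $\kappa=0$, no appeal to $\nu=\kappa$ is needed.
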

\begin{proof}
Let $(Y,\Phi)$ be an $n$-dimensional projective klt pair such that $K_Y+\Phi$ is nef, and let $Y\subset\PP^N$ be some projectively normal embedding.
Let $X_0$ be the cone over it, let $X=\PP(\OO_Y\oplus\OO_Y(1))$ be the blowup of $X_0$ at the origin, and let $H'\subset\PP^N$
be a sufficiently ample divisor which does not contain the origin. Let $\Delta$ and $H$ be the proper transforms in $X$ of $\Phi$ and $H'$, respectively,
and let $E\subset X$ be the exceptional divisor.

Then by inversion of adjunction the pair $(X,\Upsilon=E+\Delta+H)$ is log canonical, and of log general type since $H'$ is ample enough.
We have $Y\simeq E$, and this isomorphism maps $K_Y+\Phi$ to $K_E+\Delta_{|E}$.
The divisor $K_X+\Upsilon$ is also nef: since $(K_X+E+\Delta)_{|E}$ is identified with $K_Y+\Phi$, this deals with curves
lying in $E$ by nefness, and for those curves which are not in $E$, the ampleness of $H$ away from $E$ ensures that the intersection product with
$K_X+\Upsilon$ is positive. By assumption, the algebra $R(X,K_X+\Upsilon)$ is finitely generated, therefore $K_X+\Upsilon$ is
semiample by \cite[2.3.15]{Laz04b}, and thus so is $K_E+\Delta_{|E}=(K_X+\Upsilon)_{|E}$.
\end{proof}

Finally a note about the general alternative philosophy. Since \cite{HK00} it has become clear that adjoint rings encode many important geometric
information about the variety. In particular, by Corollary \ref{cor:cor}(3) and \cite[1.11]{HK00},
all the main theorems and conjectures of Mori theory hold on $X$, such as the Cone and Contraction
theorems, existence and termination of flips, abundance. In particular, the following conjecture applied to Mori dream regions
\cite[2.12, 2.13]{HK00} seems to encode the whole Mori theory.

\begin{con}
Let $X$ be a projective variety, and let $D_i=k_i(K_X+\Delta_i)\in\Div(X)$, where $(X,\Delta_i)$ is a log canonical pair for $i=1,\dots,\ell$.
Then the adjoint ring $R(X;D_1,\dots,D_\ell)$ is finitely generated.
\end{con}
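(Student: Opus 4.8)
The plan is to run the architecture of the proof of Theorem~\ref{thm:cox} with the klt hypothesis relaxed to log canonical, keeping track of the two extra features the conjecture carries: genuine log canonical centres among the components of the $\Delta_i$, and the absence of a built-in ample summand. First I would reduce to the case $D_i\sim_{\Q}k_i(K_X+\Delta_i+A)$ with $(X,\Delta_i+A)$ log smooth log canonical and $A$ a general ample $\Q$-divisor. The ample summand is produced exactly as in the proof of Theorem~\ref{cor:can}: splitting the big directions among the $D_i$ as $A+B$ and perturbing so that truncations are preserved, using a canonical bundle formula à la Fujino--Mori together with induction on dimension to dispose of the non-big directions. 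Passing to a common log smooth dlt model $f\colon Y\to X$ of all the $(X,\Delta_i)$ and replacing $D_i$ by $k_i(K_Y+\B(X,\Delta_i)_Y)$ then reduces us to the log smooth situation, finite generation being unaffected by such a modification and by truncation (Lemma~\ref{lem:1}). In this range Theorems~A$_n$, B$_n$, C$_n$ are available — note they are already stated for log smooth log canonical pairs — and hence so is Lemma~\ref{bounded}.

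The engine is then the higher-rank ``Cox-type'' construction of \S\ref{proofmain}. Let $F_1,\dots,F_N$ be the prime components of the $\Delta_i$ and of chosen effective representatives of the $K_X+\Delta_i+A$, put $\Lambda=\bigoplus_j\N F_j$, let $\mcal C\subset\Lambda_\R$ be the rational polyhedral cone spanned by the relevant barycentric boxes, $\mcal C=\bigcup_j\mcal C_j$ its decomposition into the pieces where $F_j$ has coefficient $1$, and let $\mfrak R\subset R(X;F_1,\dots,F_N)$ be the $\Lambda$-graded algebra generated by $R(X,\mcal C\cap\Lambda)$ together with the section equations $\sigma_j$ with $\ddiv\sigma_j=F_j$. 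The formal descent in Step~4 of the proof of Theorem~\ref{thm:3} — chasing generators through the relations $\ker(\rho_{c,F_j})=H^0(X,c-F_j)\cdot\sigma_j$ — is purely algebraic, so it goes through verbatim and, via Lemma~\ref{lem:1} and Proposition~\ref{pro:1}, reduces finite generation of $R(X;D_1,\dots,D_\ell)$ to finite generation of each restricted algebra $\res_{F_j}R(X,\mcal C_j\cap\Lambda)$.

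For the restricted algebras one needs the log canonical analogue of Theorem~\ref{thm:2}. Fix a component $S=F_{j_0}$ of a reduced boundary; by adjunction $(K_X+\Delta_i)_{|S}=K_S+\Delta_{i,S}$, where the different $\Delta_{i,S}$ is again a log canonical (in general non-klt) boundary, so the restricted algebra is once more an adjoint algebra of the kind under consideration but in dimension $n-1$; this is what lets the induction on dimension close up. Concretely I would: (i) express the graded pieces of $\res_S R$ as $H^0(S,t_s(K_S+\theta_s))$ via a Hacon--McKernan-type extension theorem; (ii) prove, using Diophantine approximation (Lemmas~\ref{lem:surround}, \ref{lem:approximation}) together with the rational-polytope statements of Lemma~\ref{bounded} and Theorems~B$_n$, C$_{n-1}$, that the limiting boundaries $\Theta_s$ are rational and that the induced map $\lambda\colon\mcal S_\R\to\Div(S)_\R$ is continuous and concave, and then — by the convex-geometry machinery of \S\ref{sec:convex} (Theorem~\ref{piecewise}, Lemma~\ref{linear}) combined with the asymptotic-order computations of Theorems~\ref{lem:lipschitz}--\ref{lem:PL} — rationally piecewise linear; (iii) conclude by Theorem~A$_{n-1}$ and Lemma~\ref{lem:1}. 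Steps (ii)--(iii) are genuinely convex-geometric and asymptotic and should survive the passage from plt to dlt essentially unchanged, precisely because $\mcal L_V$, $\mcal E_{V,A}$ and $\mcal B_{V,A}^{G}$ already admit coefficient-one components.

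The main obstacle is step~(i). Theorem~\ref{thm:hmck} is proved only for \emph{plt} pairs with $S=\lfloor\Delta\rfloor$ irreducible and $(S,\Omega+A_{|S})$ of canonical type, whereas in the log canonical setting $S$ sits inside a larger reduced boundary and one must lift sections of $K_S+\Delta_{S}$ past the adjacent log canonical centres cut out by $(\lfloor\Delta\rfloor-S)_{|S}$. This is exactly where the Siu-type $L^2$/multiplier-ideal extension technology currently stops: what is needed is an extension theorem for dlt (not merely plt) pairs, a form of the ``extension across log canonical centres'' problem. A secondary, more tractable obstacle is the non-vanishing input (Step~2 of the proof of Theorem~\ref{cor:linear}), whose appeal to Nadel vanishing must be replaced by an Ambro--Fujino-type vanishing theorem for log canonical/adjoint ideal sheaves, or circumvented on a dlt model. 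Granting a dlt extension theorem, I expect the rest of the scheme above to run through and yield finite generation in the full log canonical case.
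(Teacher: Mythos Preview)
The statement you are addressing is labelled as a \emph{Conjecture} in the paper, not a theorem: the paper does not prove it and does not claim to. It is presented in the appendix as a unifying open problem whose resolution would encode all of Mori theory via the Mori dream region philosophy of Hu--Keel. So there is no ``paper's own proof'' to compare your proposal against.

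That said, your proposal is not a proof either, and you are transparent about this. You correctly identify the main obstacle: the Hacon--M\textsuperscript{c}Kernan extension theorem (Theorem~\ref{thm:hmck}) is proved only for plt pairs with irreducible $\lfloor\Delta\rfloor$, and extending it to dlt pairs where $S$ meets other components of the reduced boundary is precisely the well-known open problem of lifting sections across log canonical centres. This is not a minor technicality but the heart of the difficulty, and your phrase ``Granting a dlt extension theorem'' is doing all the work. The paper itself alludes to this in the introduction: ``it is my hope that the techniques of this paper could be adapted to handle finite generation in the case of log canonical singularities and the Abundance Conjecture.'' Your outline is a reasonable articulation of what such an adaptation would look like, and the convex-geometric and Diophantine parts of the machinery would indeed likely survive, but the conjecture remains open because the extension input does not yet exist.
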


\bibliography{biblio}
\pagestyle{plain}
\end{document}